\definecolor{darkgreen}{rgb}{0.0, 0.6, 0.13}
\definecolor{airforceblue}{rgb}{0.0, 0.30, 0.69}
\newtheorem{theorem}{Theorem}%[section]
\newtheorem{definition}{Definition}[section]
\newtheorem{lemma}{Lemma}[section]
\newtheorem{corollary}{Corollary}[section]
\newtheorem{remark}{Remark}[section]
\newtheorem{proposition}{Proposition}
\newcommand{\norm}[1]{\left\lVert#1\right\rVert}
 \theoremstyle{remark}
 \numberwithin{equation}{section}
\def\sqw{\hbox{\rlap{\leavevmode\raise.3ex\hbox{$\sqcap$}}$%
\sqcup$}}
\def\findem{\ifmmode\sqw\else{\ifhmode\unskip\fi\nobreak\hfil
\penalty50\hskip1em\null\nobreak\hfil\sqw
\parfillskip=0pt\finalhyphendemerits=0\endgraf}\fi}
\begin{document}

\title[On the transition of the Rayleigh-Taylor instability]{On the transition of the Rayleigh-Taylor instability  in 2d water waves}

\author[Qingtang Su]{Qingtang Su}
\address{Department of Mathematics, University of Southern California, Los Angeless, CA,  90089, USA}
\email{qingtang@usc.edu}

\subjclass[2010]{}
\thanks{}

\begin{abstract}

    In this paper we prove the existence of water waves with sign-changing Taylor sign coefficients, that is, the strong Taylor sign holds initially, while breaks down at a later time, and vice versa. Such a phenomenon can be regarded as the transition between the stable and unstable regime in the sense of Rayleigh-Taylor of water waves. As a byproduct, we prove the sharp wellposedness of 2d water waves in Gevrey-2 spaces. 

\end{abstract}

\maketitle
\setcounter{tocdepth}{2}

\tableofcontents

\section{Introduction}
\subsection{The background and the main result}
We consider the motion of an inviscid and incompressible ideal fluid with a free surface in two space dimensions (that is, the interface separating the fluid and the vacuum is one dimensional), such as the surface waves in the ocean. We refer such fluid as \emph{water waves}. We denote the fluid region by $\Omega(t)$, with a free interface $\Sigma(t)$. The equations of motion are Euler's equations, coupled to the motion of the boundary, and with vanishing boundary condition for the pressure. It is assumed that the fluid region is below the air region. Assume that the density of the fluid is
1, the gravitational field is $-\vec{k}$, where $\vec{k}$ is the unit vector pointing in the upward vertical direction. In two dimensions, if the surface tension is zero, then the motion of the fluid is described by
\begin{equation}\label{vortex_model}
\begin{cases}
\begin{rcases}
v_t+v\cdot \nabla v=-\nabla P-(0,1)\\
div~v=0,\\
\end{rcases}
\quad \quad \quad \Omega(t)\\
P\equiv 0\quad \quad  \quad\quad \quad \quad\quad \quad \quad\quad \quad \quad\quad~~\quad\Sigma(t)\\
(1,v) \text{ is tangent to the free surface } (t, \Sigma(t)).
\end{cases}
\end{equation}
where $v$ is the fluid velocity, $P$ is the fluid pressure.

This system, and many variants and generalizations, has been extensively studied in the literature. The so-called \emph{Taylor-sign condition}(also referred as \emph{Rayleigh-Taylor sign condition} in many literature) $-\frac{\partial P}{\partial\Vec{n}}>0$ on the pressure is an important stability condition for the water waves problem. We shall call $-\frac{\partial P}{\partial \Vec{n}}$ the \emph{Taylor-sign coefficient}. If the \emph{Taylor-sign condition} fails, the system is, in general, unstable, see, for example, \cite{beale1993growth}, \cite{birkhoff1962helmholtz}, \cite{taylor1950instability}, \cite{ebin1987equations}. We refer such instability as the \emph{Rayleigh-Taylor instability}. In the irrotational case and without a bottom the validity of the \emph{Taylor-sign condition} was shown by Wu \cite{Wu1997, Wu1999}, and was the key to obtaining the first local-in-time existence results for large data in Sobolev spaces. In the case of non-trivial vorticity or with a bottom it is widely believed that the \emph{Taylor-sign condition} can fail and the sign condition has to be part of the assumptions for the initial data.  In the irrotational case, Nalimov \cite{Nalimov}, Yosihara \cite{Yosihara} and Craig \cite{Craig} proved local well-posedness for 2d water waves equation for small initial data. In S. Wu's breakthrough works \cite{Wu1997, Wu1999} she proved local-in-time well-posedness without smallness assumption. Since then, a lot of interesting local well-posedness results were obtaind, see for example \cite{alazard2014cauchy}, \cite{ambrose2005zero}, \cite{christodoulou2000motion}, \cite{coutand2007well}, \cite{iguchi2001well}, \cite{lannes2005well}, \cite{lindblad2005well}, \cite{ogawa2002free}, \cite{shatah2006geometry}, \cite{zhang2008free}, \cite{ai2019low}, \cite{ai2020low}, \cite{miao2020well}, and the references therein. See also \cite{Wu1, Wu2, Wu3, Wu4} for water waves with non-smooth interface. For the formation of splash singularities, see for example \cite{castro2012finite}, \cite{castro2013finite}, \cite{coutand2014finite}, \cite{coutand2016impossibility}.
Regarding the local-in-time wellposedness with regular vorticity, see \cite{iguchi1999free},\cite{ogawa2002free}, \cite{ogawa2003incompressible}, \cite{christodoulou2000motion}, ,\cite{christodoulou2000motion}, \cite{lindblad2005well},\cite{zhang2008free}, and \cite{su2018long} for water waves with point vortices.  In the irrotational case, almost global and global well-posedness for water waves were proved in \cite{Wu2009,Wu2011}, \cite{germain2012global}, \cite{Ionescu2015}, \cite{AlazardDelort},  and see also \cite{HunterTataruIfrim1,HunterTataruIfrim2}, \cite{wang2018global}, \cite{zheng2019long}. In the rotational case, see \cite{ifrim2015two}, \cite{bieri2017motion}, \cite{ginsberg2018lifespan}, and \cite{su2018long}.

All the aforementioned results require the \emph{Taylor-sign condition} to hold. Shinbrot \cite{shinbrot1976initial}, Kano, and Nishida \cite{kano1979ondes} justify the Friedrichs expansion for the
water-waves equations in terms of the shallowness parameter using the Cauchy-Kowalevski theorem. They take analytic initial data, and the \emph{Taylor-sign condition} is not assumed. Very recently, Alazard, Burq, and Zuily \cite{alazardAnalytic2020} revisit the analysis of the
water-problem with analytic data, using tools and methods that they developed previously to
study the Cauchy problem with rough initial data. Their result allows a bottom with Sobolev regularity. Yet none of these results proved the existence of solutions with the \emph{Taylor sign} hold initially while failing at a later time. As we know, the \emph{Taylor sign condition} is in some sense a criterion for the stability of the water waves.
It is natural to ask the following question:

\vspace*{1ex}

\noindent \textbf{\emph{Question 1:}} Is there a solution to the system (\ref{vortex_model}) such that the strong \emph{Taylor-sign condition} holds at $t=0$, while fails at some $t_0>0$?

\vspace*{1ex}

The rigorous mathematical analysis of the transition of the \emph{Rayleigh-Taylor instability} is an interesting yet less-understood subject. From the technical point of view, the break down of the \emph{Taylor sign condition} corresponds to the loss of derivatives of the solution. In \cite{castro2012rayleigh}, Castro, Cordoba, Fefferman, and Gancedo showed that the Rayleigh-Taylor condition for the Muskat problem may hold initially but break down in finite time. In their case, the solution has instant analyticity, that is, even the initial data has finite smoothness, the solution will simultaneously become real analytic after the initial time. Such instant analyticity can compensate for the loss of derivative caused by the break down of the Rayleigh-Taylor condition. For the water waves, there is no such instant analyticity, which makes the problem more difficult. To the best of our knowledge, such a transition of the Rayleigh-Taylor instability/stability was not known for the water waves. Of course, for infinite depth and irrotational water waves, the \emph{Taylor sign condition} always holds, as long as the free interface is smooth and non-self-intersecting. In order that (\ref{vortex_model}) admits a solution with the \emph{Taylor sign} breaks down, we must assume that the initial data has nontrivial vorticity. 

The interaction of the free surface and the vorticity is in general very complicated. In order to control the motion of the vorticity and therefore make the interaction between the free surface and the vorticity predictable, we assume that the initial vorticity is highly concentrated in a small region. In idealized cases, we can assume that the vorticity is given by the \emph{point vortices}, that is, the vorticity distribution $\omega:=curl~v$ is a linear combination of dirac masses, i.e.,
\begin{equation}\label{vorticitydistribution}
    \omega(\cdot,t)=\sum_{j=1}^N\lambda_j \delta_{z_j(t)}(\cdot),
\end{equation}
where $\{z_j(t)\}\subset \Omega(t)$, and $\lambda_j\in \mathbb{R}$. Since the vorticity of the 2d Euler is transported following the fluid, if the initial vorticity of (\ref{vortex_model}) is a Dirac delta mass, then $\omega$ might remain as such a point vortex for all t. The
question is, as the singular vorticity $\lambda_j \delta_{z_j(t)}$ generates a singularly rotational part $\frac{\lambda_j}{\pi}\nabla^{\perp}\log|z-z_j(t)|$ of $v$ (Here, $z=(x,y)$ and $\nabla^{\perp}=(-\partial_y, \partial_x)$), following what vector field should $z_j(t)$ move?  Since the above singular vector
field is purely rotational about $z_j(t)$ and does not move that particle $z_j(t)$, it is reasonable to expect
that the dynamics of $z_j(t)$ is governed only by the remaining smooth part of $v$, that is,
\begin{equation}\label{pointvorticesmotion}
    \dot{z}_j(t)=\Big(v(z,t)-\sum_{j=1}^N\frac{\lambda_j}{\pi}\nabla^{\perp}\log|z-z_j(t)|\Big)\Big|_{z=z_j(t)}.
\end{equation}
This well-known result is rigorously established by considering a family of vortex patch solutions whose
initial vorticity limiting (weakly) to a Dirac delta mass (see \cite{marchioro2012mathematical} Theorem 4.1, 4.2 for more details). 

The system (\ref{vortex_model})-(\ref{vorticitydistribution})-(\ref{pointvorticesmotion}) is a model for the motion of submerged bodies (see e.g. \cite{chang2001vortex},\cite{dalrymple2006numerical}) and it is believed to give some  insight into the problem of turbulence (\cite{marchioro2012mathematical}, chap 4, \S 4.6). There have been many numerical studies on this system (see for example, \cite{curtis2017vortex}, \cite{hill1975numerical}, \cite{fish1991vortex}, \cite{marcus1989interaction}, \cite{telste1989potential}, \cite{willmarth1989vortex}) and studies on the local in time well-posedness of the Cauchy problem in the irrotational case (that is, no vorticity), and for regular vortex distributions.  In \cite{su2018long}, the author proved local wellposedness of (\ref{vortex_model}) with point vortices under the strong Taylor sign assumption, and obtain extended lifespan with small initial data when there is a pair of counter-rotating point vortices travelling downward.

In the case of point vortices analyzed in this paper, we give an affirmative answer to \emph{Question 1}. The main result of this paper is 
\begin{theorem}\label{main}
For any given constants $0<\eta_0<1$ and $\eta_1>0$, there exist $T_0>0$ and a nonempty open set $\mathcal{O}$ in a category of infinite smoothness such that for each $\mathfrak{u}_0\in \mathcal{O}$, the system (\ref{vortex_model})-(\ref{vorticitydistribution})-(\ref{pointvorticesmotion}) with initial value $\mathfrak{u}_0$ admits a unique smooth solution on $[0, T_0]$. Moreover, 
the \emph{Taylor sign coefficient} $-\frac{\partial P}{\partial\Vec{n}}$ of the corresponding solution satisfies:
\begin{itemize}
\item [(1)] At $t=0$, $\inf_{\alpha\in\mathbb{R}}(-\frac{\partial P}{\partial\Vec{n}}(\alpha,0))\geq 1-\eta_0$; 
\item [(2)] At $t=T_0$, $\inf_{\alpha\in \mathbb{R}}(-\frac{\partial P}{\partial \Vec{n}}(\alpha,T_0)\leq -\eta_1$.
\end{itemize}
\end{theorem}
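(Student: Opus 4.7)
The plan is to (i) establish a Gevrey-2 local wellposedness theory for the coupled water-wave/point-vortex system that does \emph{not} require any assumption on the sign of the Taylor coefficient, and then (ii) construct a concrete initial configuration within this class for which the Taylor sign coefficient is close to $1$ everywhere at $t=0$ yet falls below $-\eta_1$ at some surface point at a later time $T_0$. The choice of Gevrey-$2$ is essentially forced: linearizing \eqref{vortex_model} around a state with negative Taylor sign produces a dispersion relation whose unstable branch grows like $e^{c\sqrt{|\xi|}\,t}$ in Fourier, and only a Gevrey-$2$ weight $e^{-\rho\sqrt{|\xi|}}$ can absorb such an exponential loss. Thus Gevrey-$2$ is both the natural setting and, by a Hadamard-type argument, the sharp one in which to pass through the transition.

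For step (i), I would work in Lagrangian (or conformal) coordinates and split $v=v_{\mathrm{irr}}+v_{\mathrm{vor}}$, where $v_{\mathrm{vor}}$ is determined from \eqref{vorticitydistribution} via Biot--Savart together with its image across the interface, reducing the problem to an evolution equation for the interface coupled with the ODEs \eqref{pointvorticesmotion}. The energy functional would be a Gevrey-$2$ norm, schematically $E(t)=\sum_k e^{2\rho(t)\sqrt{k}}\,k^{2m}\,|\widehat u(k)|^2$, with a time-shrinking radius $\rho(t)=\rho_0-\gamma t$. The crux is to choose $\gamma$ strictly larger than the worst-case instability rate of the unsigned Taylor coefficient so that $\dot E\le 0$, even when $-\partial P/\partial\vec n$ is large and negative. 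This would yield a Cauchy--Kowalevski-type local existence on an interval $[0,T_*]$ whose length depends only on the initial Gevrey-$2$ norm, the magnitudes $|\lambda_j|$, and the initial distances of the $z_j(0)$ to the interface.

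For step (ii), I would take a counter-rotating pair $\lambda_1=-\lambda_2=\lambda$ placed symmetrically below an initially flat interface with vanishing irrotational velocity, and orient the pair so that, by the self-induced motion dictated by \eqref{pointvorticesmotion}, it is driven \emph{toward} the surface. Using the on-surface identity $-\partial P/\partial\vec n=(v_t+v\cdot\nabla v)\cdot\vec n+\vec n\cdot(0,1)$ together with the explicit Biot--Savart expression for $v_{\mathrm{vor}}$, the downward material acceleration at the point of the surface closest to the pair scales like $\lambda^2/d(t)^3$, where $d(t)$ is the vortex--surface distance. Tuning $(d(0),\lambda)$ then allows the initial Taylor sign to be made $\geq 1-\eta_0$ (by taking $d(0)$ large or $\lambda$ small) while, at a time $T_0<T_*$ close to the putative collision time, the same quantity at that surface point is $\leq -\eta_1$ with slack. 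Openness of $\mathcal O$ in Gevrey-$2$ then follows from continuous dependence of the pointwise Taylor sign functional on the initial data, since both conditions (1)--(2) are strict inequalities.

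The principal obstacle is step (i): the half-derivative loss per unit time produced by an unsigned Taylor coefficient must be counterbalanced \emph{exactly}, not merely to leading order, by the shrinkage of the Gevrey radius, which rules out standard paralinearization whose subprincipal errors are of the same order; a careful symmetrization at the sharp $\sqrt{|\xi|}$ level together with commutator estimates in the time-weighted Gevrey norm will be required. A secondary technical difficulty is the self-consistent treatment of the point-vortex singularities within this Gevrey-$2$ framework, since the decomposition $v=v_{\mathrm{irr}}+v_{\mathrm{vor}}$ must be compatible with the function spaces and the ODEs \eqref{pointvorticesmotion} must close cleanly with the interface evolution without generating spurious singular couplings. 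Once these are in hand, the verification of (1)--(2) in step (ii) reduces to a controlled asymptotic analysis of the Biot--Savart field and the vortex trajectory ODE.
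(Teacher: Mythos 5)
Your proposal matches the paper's strategy at every key step: Gevrey-$2$ local wellposedness with a linearly shrinking analyticity radius to absorb the half-derivative loss from an unsigned Taylor coefficient, followed by a counter-rotating vortex pair driven toward the free surface so that the ratio $\lambda^2/d^3$ crosses the critical threshold. Two remarks on the parameter tuning in step (ii), which is where your sketch is loose. First, ``taking $\lambda$ small'' with $d(0)$ fixed does not work, and you cannot run ``close to the putative collision time'': the travel time is $\sim d(0)/\lambda$, whereas the Gevrey lifespan is at most $\sim L_0/\delta_0\sim 1/\sup_t\|A\|_{L^\infty}$, which becomes $O(1)$ (and then shrinks) once $\lambda^2/d^3\gtrsim 1$, so you must stop while the pair is still far from the interface. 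The paper's balance is $\lambda\sim|y_0|^{3/2}$ with initial depth $|y_0|$ and final depth $|y_0|^{9/10}$, giving travel time $\sim|y_0|^{-1/2}$ comfortably inside the lifespan $\sim|y_0|^{-7/16}$; a single forward run from a deeper start can also close but needs $d(0)\ll\lambda\ll d(0)^{3/2}$ rather than ``$d(0)$ large \emph{or} $\lambda$ small.'' Second, rather than that single tight run, the paper uses a cleaner time-reversal device: it places the pair at the intermediate depth $|y_0|$, evolves forward with $\lambda>0$ to depth $|y_0|^{9/10}$ where $\inf A_1\leq-\eta_1$, and separately evolves forward with $\lambda<0$ (equivalently backward in time) to depth $O(|y_0|^{3/2})$ where $\inf A_1\geq 1-\eta_0$, then concatenates by time-reversibility and uniqueness. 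This decouples the two estimates, so the radius-shrinking rate $\delta_0$ only needs to dominate the worst-case $\|A\|_{L^\infty}$ on each leg separately, and the near-$1$ estimate is read off from a far-field vortex pair rather than squeezed from the same run.
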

In another words, the infimum of the Taylor sign coefficient can transit from almost the constant one to an arbitrary large yet negative number.
\begin{remark}
The category of infinite smoothness in \emph{Theorem} \ref{main} is indeed the Gevrey-2 space, see \emph{Definition} \ref{Gevrey} for the definition of such spaces, and \emph{Theorem} \ref{taylorsignfailtheorem2} for the precise quantitative version of \emph{Theorem} \ref{main}.
\end{remark}

Such a result is our first step to investigate systematically the transition of the Rayleigh-Taylor instability (from stable regime to unstable regime, and vice versa) in water waves. Since (\ref{vortex_model})-(\ref{vorticitydistribution})-(\ref{pointvorticesmotion}) is reversible in time, \emph{Theorem} \ref{main} implies that there exists a nonempty open set of initial data in a category of infinite smoothness with a pair of point vortices such that $\inf_{\alpha\in \mathbb{R}}(-\frac{\partial P}{\partial \Vec{n}}(\alpha,0))<-\eta_1$, while $\inf_{\alpha\in \mathbb{R}}(-\frac{\partial P}{\partial \Vec{n}}(\alpha, T_0))>1-\eta_0$ at the finite time $0<T_0<\infty$.

\begin{remark}
Our method actually works if the pair of point vortices are replaced by a pair of concentrated vortex patches.
\end{remark}

\subsection{The difficulty and the strategy}
In Riemann mapping variables (See \S \ref{Riemannvariable} for the precise derivations), the water waves system can be written in the form
\begin{equation}\label{temporaly}
    \begin{cases}
     D_tU=-\Re\{D_tQ\}+A\Lambda W\\
     D_tW=-U+\Re\{Q\}-\Re\{[D_tZ,\mathbb{H}](\frac{1}{Z_{\alpha}}-1)\}-2\Re \{(I-\mathbb{H})Q\},\\
     Z-\alpha=(I+\mathbb{H})W,\\
     F=(I+\mathbb{H})U.
    \end{cases}
\end{equation}
where $U, W$ are real-valued functions, $D_t=\partial_t+b\partial_{\alpha}$, $A$ and $b$ are real-valued functions depending on $U, W$ and $Q$, and $Q=-\sum_{j=1}^N\frac{\lambda_j i}{2\pi}\frac{1}{Z(\alpha,t)-z_j(t)}$. $\mathbb{H}$ is the standard Hilbert transform. $A$ plays the role of the \emph{Taylor-sign condition}. If $\inf_{\alpha\in \mathbb{R}}A\geq 0$, then the \emph{Taylor-sign condition} holds; otherwise, the \emph{Taylor-sign condition} fails.

\subsubsection{A mechanism for solution with $A$ changes sign: the interaction of vorticity and the free interface}
Let $A_1:=A|Z_{\alpha}|^2$.  It suffices to construct solutions to the water waves with a sign-changing $A_1$. In \cite{su2018long}, the author derived a formula for $A_1$, which we record as follows. 
\begin{equation}\label{temp_riemann_formula_A1}
         A_1=1+\frac{1}{2\pi}\int \frac{|D_tZ(\alpha,t)-D_tZ(\beta,t)|^2}{(\alpha-\beta)^2}d\beta-\sum_{j=1}^N \frac{\lambda_j}{2\pi} Re\Big\{\Big((I-\mathbb{H})\frac{Z_{\alpha}}{(Z(\alpha,t)-z_j(t))^2}\Big)(D_tZ-\dot{z}_j(t))\Big\}.
 \end{equation}
Here, $D_tZ$ is the trace of the velocity field along the free boundary. We split $A_1$ as
$$A_1=A_1^{pv}+A_1^{int},$$
where $A_1^{pv}$ represents the unperturbed case, i.e., corresponding to $Z(\alpha,t)=\alpha$, $F=0$; $A_1^{int}$ represents the contribution due to the interaction of the wave $F$ and the point vortices.  Our idea is to find a solution to the water wave system such that, for given $\eta_0\in (0,1)$ and $\eta_1>0$,
\begin{itemize}
    \item [(R1)] $\inf_{\alpha\in\mathbb{R}} A_1^{pv}(\alpha,0)>1-\eta_0$, 
    
    \item [(R2)] $\inf_{\alpha\in\mathbb{R}} A_1^{pv}(\alpha, T_0)<-\eta_1$, for some $T_0>0$,
    
    \item [(R3)] $\sup_{\alpha\in \mathbb{R}}|A_1^{int}(\alpha,t)|\ll  \Big|\inf_{\alpha\in\mathbb{R}}A_1^{pv}(\alpha,t)\Big|$, \quad for $t=0, ~T_0$.
\end{itemize}
(R1) and (R2) can be guaranteed by analyzing the motion of a pair of counter-rotating point vortices. Indeed, we have  the following observation.
\vspace*{1ex}

\noindent \emph{\textbf{The Taylor sign of the 2d water waves with a vortex pair} }

\vspace*{1ex}

\noindent \textbf{Idealized case: the motion of the water waves is completely given by the motion of the point vortices.} 
Suppose there exists a smooth solution to the water waves such that at time $t$, the free interface $\Sigma_t=\mathbb{R}$, and the velocity field is generated by a pair of counter-rotating point vortices only, that is, the initial vorticity $\omega:=\lambda\delta_{z_1(t)}-\lambda\delta_{z_2(t)}$, with $z_1(t)=-x(t)+iy(t)$ and $z_2(t)=x(t)+iy(t)$ symmetric about the vertical axis. In this case, $A^{pv}=A_1$. Then we have 
\begin{itemize}
    \item [(C1)] Fix $\lambda$ and $x(t)$. If $|y(t)|$ is sufficiently large, then $A_1=1+O(\frac{1}{|y(t)})$.
    
    \item [(C2)] Fix $x(t)$. Let $\frac{\lambda^2}{|y(t)|^3}\rightarrow \infty$ and require $|y(t)|\gg 1$, then $A_1\rightarrow -\infty$.
\end{itemize}
 (C1) and (C2) will be justified from the calculation in \S \ref{sectionfail} and the appendix.

\vspace*{1ex}

\noindent \emph{\textbf{Another observation:} If $\lambda>0$, then the vortex pair $z_1(t), z_2(t)$ travel toward the free interface (so $y(t)$ becomes smaller). }

\vspace*{1ex}

\noindent \emph{\textbf{A natural idea:} } Construct water waves with a symmetric pair of point vortices far away from the free interface initially and traveling toward the free interface rapidly. Then we expect $A_1$ to satisfy (R1)-(R2)-(R3).

\vspace*{2ex}

To guarantee (R3), the main difficulty is to justify that the water waves really remain a small perturbation of the motion of the vortex pair (for a period of time that allows the vortex pair to travel sufficiently close to the free interface such that the Taylor-sign fails), which can be guaranteed if we can control the growth of $U$ and $W$. This forces us to prove the local existence in some reasonable spaces of such water waves. Nevertheless, even in the irrotational case, all the existing local wellposedness results for the water wave system with finite smoothness require the strong \emph{Taylor-sign condition} to hold. As we mentioned earlier, if the \emph{Taylor-sign condition} fails, then the water waves can be subject to the \emph{Rayleigh-Taylor instability} and a local wellposedness is not expected, at least in spaces with finite smoothness. 

Roughly speaking, if the \emph{Taylor-sign condition} fails, the system (\ref{temporaly}) losses  $\frac{1}{2}$ derivatives.  In Sobolev spaces $H^s$, such a loss of derivative can be overcome if the strong \emph{Taylor-sign condition} holds. If the \emph{Taylor-sign condition} fails, then the Fourier mode can grow exponentially, which prevents a local wellposedness in Sobolev spaces.

Instead of seeking for water waves with finite regularity (e.g., Sobolev spaces), we perturb the vortex pair in a category with infinite smoothness. Such an idea is certainly not new. Even in the context of water waves, 
by expanding the solution as a power series in time,  Shinbrot \cite{shinbrot1976initial}, Kano and Nishida's \cite{kano1979ondes} obtained local existence in time for the general
initial value problem with real analytic data. Their tool is the  Cauchy-Kowalevski theorem. In \cite{alazardAnalytic2020}, Alazard, Burq, and Zuily proved the local wellposedness of water waves in real analytic spaces via the energy method. Their main analytical tool is the paradifferential calculus. One should be able to adapt their method to prove the local in time wellposedness of water waves with point vortices. In this paper, working in Riemann mapping variables, we prove the local wellposedness in \emph{Gevrey-2 space} (The \emph{Gevrey-2 space} is a space of infinite smoothness that extends the real-analytic space. See \emph{Definition} \ref{Gevrey} for the precise definition) by using the energy method directly. The key to prove \emph{Theorem} \ref{main} is the following.
\begin{theorem}\label{main2}
The water wave system (\ref{temporaly}) is locally wellposed in Gevrey-2 spaces.
\end{theorem}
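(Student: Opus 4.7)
The plan is to close an a priori energy estimate in a Fourier-based Gevrey-2 norm with a linearly shrinking radius, and then to convert that estimate into local existence, uniqueness, and continuous dependence by a standard regularization scheme. I would work with
\begin{equation*}
\|f\|_{G^2,\rho}^2 := \int_{\mathbb{R}} e^{2\rho|\xi|^{1/2}}|\hat{f}(\xi)|^2\,d\xi,
\qquad \rho(t)=\rho_0-Kt,
\end{equation*}
where $K>0$ is chosen large depending on the initial size of the solution. The exponent $|\xi|^{1/2}$ is exactly tuned to the $\tfrac12$-derivative loss caused by a possibly sign-changing $A$: the good term $-K\int|\xi|^{1/2}e^{2\rho|\xi|^{1/2}}|\hat{f}|^2\,d\xi$ produced by $\dot\rho<0$ exactly balances the bad contribution of the $A\Lambda W$-term at the symbol level.

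At leading order (\ref{temporaly}) is equivalent to the second-order equation $D_t^2 W + A\Lambda W = \textrm{l.o.t.}$, so I would take as the energy
\begin{equation*}
\mathcal{E}_\rho(t):=\tfrac12\|D_tW\|_{G^2,\rho}^2+\tfrac12\langle (|A|+1)\Lambda W,W\rangle_{G^2,\rho}+(\textrm{point-vortex terms}),
\end{equation*}
differentiate in time, and apply Plancherel. When $A$ is positive the usual integration-by-parts cancellation holds and no shrinking is needed; when $A$ dips below zero, the failed cancellation leaves a term of size $\|A\|_{L^\infty}\int|\xi|^{1/2}e^{2\rho|\xi|^{1/2}}|\widehat{D_tW}|^2\,d\xi$, which is absorbed by choosing $K\geq C\|A\|_{L^\infty}$ for a universal constant $C$. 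The remaining nonlinear terms are controlled through the tame product estimate
\begin{equation*}
\|fg\|_{G^2,\rho}\le C\bigl(\|f\|_{G^2,\rho}\|g\|_{L^\infty}+\|f\|_{L^\infty}\|g\|_{G^2,\rho}\bigr),
\end{equation*}
a direct consequence of the subadditivity $|\xi|^{1/2}\le|\xi-\eta|^{1/2}+|\eta|^{1/2}$, together with the fact that $\mathbb{H}$ and $\Lambda$ are Fourier multipliers that commute with the weight.

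The point-vortex inputs $Q=-\sum_j\frac{\lambda_j i}{2\pi}(Z-z_j)^{-1}$ and $D_tQ$ are treated separately. As long as the centers $z_j(t)$ remain a positive distance $\delta$ from the interface $\{Z(\alpha,t):\alpha\in\mathbb{R}\}$, each $(Z-z_j)^{-1}$ is real-analytic in $\alpha$, hence sits in $G^2$ with norm controlled by $\delta^{-1}$ and by $\|Z-\alpha\|_{G^2,\rho}=\|(I+\mathbb{H})W\|_{G^2,\rho}$. The ODE (\ref{pointvorticesmotion}) for $z_j$ is then Gevrey-2 Lipschitz in $(U,W)$, so the separation $\delta$ survives on a short time interval and feeds back consistently into the energy estimate.

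Once the a priori bound is in place, existence is obtained by truncating the initial data to frequencies $\{|\xi|\le N\}$, solving the truncated Sobolev-regular system on a short interval via the theory of \cite{su2018long}, and passing to the limit $N\to\infty$ using the uniform Gevrey-2 bound. Uniqueness and continuous dependence follow by running the same estimate on the difference of two solutions in a slightly smaller Gevrey radius. The main obstacle I anticipate is the quasi-linear bookkeeping: the transport coefficient $b$ and the Taylor-sign function $A$ are themselves nonlinear nonlocal functionals of $(U,W)$, and one must verify that the maps $(U,W)\mapsto(A,b,Q)$ are Gevrey-2 Lipschitz without introducing any derivative loss beyond the $\tfrac12$ order that the shrinking radius is designed to absorb.
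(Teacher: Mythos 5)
Your core strategy is the same as the paper's: work in Riemann mapping variables, use a Gevrey-2 norm with a linearly shrinking radius $\rho(t)=\rho_0-Kt$, and let the damping produced by $\dot\rho<0$ absorb the half-derivative loss from $A\Lambda W$ regardless of the sign of $A$. That is exactly the mechanism behind the paper's $\phi(t)=L_0-\delta_0 t$ and the energy estimate in Theorem~\ref{theoremlinear}. However, there are two substantive differences worth noting, one of which I think is a genuine weak point in your plan.

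First, your proposed energy $\mathcal{E}_\rho=\tfrac12\|D_tW\|^2_{G^2,\rho}+\tfrac12\langle(|A|+1)\Lambda W,W\rangle_{G^2,\rho}$ builds $|A|$ into the quadratic form, and you then argue by splitting the estimate into ``$A>0$: cancellation'' and ``$A<0$: absorb by damping''. This does not transplant cleanly to the Gevrey setting. If $A$ changes sign, $|A|$ is only Lipschitz, so $|A|\Lambda W$ will not stay in $G^2$: you lose the Gevrey class the moment you differentiate the energy in time or commute the weight past $|A|$. Likewise, the pointwise-in-$\alpha$ split of the symbol into sign regions of $A$ is incompatible with a nonlocal Fourier weight $e^{2\rho|\xi|^{1/2}}$. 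The paper avoids both issues by working with the unweighted energy $E_{L_0,\delta_0}(t)=\tfrac12\|(U,\partial_\alpha W)\|_{\dot Y_{\phi(t)}\times X_{\phi(t)}}^2$ (no $A$ and no second-order formulation) and absorbing the \emph{entire} $A\Lambda W$ contribution into the damping via Lemma~\ref{shifthalfderivative}, which produces exactly the factor $d_0(\|A\|_{L^\infty}+\|A\|_{\dot Y_{\phi(t)}})$ appearing in (\ref{controldamping}); the size of $\delta_0$ is then chosen a priori from the initial $A$ in (\ref{delta0equation}). Adopting that unweighted energy would eliminate the $|A|$ difficulty and make your plan closer to a proof.

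Second, you define the Gevrey norm by the exponential Fourier weight, whereas the paper deliberately uses the series $\sum_n\frac{\sigma^{2n}}{(n!)^4}\|\partial_\alpha^n f\|_{L^2}^2$ (Definition~\ref{Gevrey}) precisely because the hardest term to control in Gevrey is the composition $Q_j=(Z(\alpha,t)-z_j(t))^{-1}$ with $Z$ only Gevrey, and that is done by a quantitative Faa di Bruno estimate (Lemma~\ref{lemmaQj}) that lives naturally on the physical side. Your remark that $(Z-z_j)^{-1}$ ``hence sits in $G^2$'' is the correct conclusion but is where the real work is; on the Fourier side you would have to redo it by an iterated convolution argument, carefully tracking the dependence on $d_I(t)^{-1}$, which is essential for the long enough lifespan needed in Theorem~\ref{taylorsignfailtheorem2}. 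Finally, you construct solutions by frequency truncation while the paper uses Picard iteration on the quasilinear system in Riemann variables; both are viable, but the paper's iteration is structured so that $b_0$ (the rough part of the transport coefficient) can be estimated only in a weaker norm, which is why $b$ is split as $b=b_0+b_1$ in (\ref{quasi2})--(\ref{quasi3}). You should be prepared for an analogous splitting under the truncation scheme.
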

Such a local wellposedness is expected to be sharp, in the sense that if the initial data is in a space rougher than the Gevrey-2 space, then the system (\ref{temporaly}) is illposed. See \emph{Remark} \ref{remarklwp} for more quantitative discussions.

\vspace*{2ex}

To illustrate our strategy, we 
consider the following toy model:
\begin{equation}\label{toy}
\begin{cases}
\partial_t U=A\Lambda W,\\
\partial_t W=-U,\\
(U,W)=(U_0,W_0),
\end{cases}
\end{equation}
for some real-valued function $A$ that might depend on $u$ and $W$.

\subsubsection{The Gevrey framework}\label{thegevreyframework}

Assume that $A(\alpha_0,0)<0$ for some given $\alpha_0\in \mathbb{R}$. Heuristically, by considering solutions $U$ and $W$ concentrating near $\alpha_0$, we can then without loss of generality (although it is not trivial to rigorously justify this) assume that $A(\alpha,t)\equiv A(\alpha_0,0)<0$. In Fourier variables, we have
\begin{equation}
  \partial_t\begin{bmatrix}\hat{U}\\ \hat{W}\end{bmatrix} =\begin{bmatrix} 0 & A|\xi|\\
-1 & 0\end{bmatrix}\begin{bmatrix}\hat{U}\\\hat{W}\end{bmatrix}.
\end{equation}
The matrix $\begin{bmatrix} 0 & A|\xi|\\
-1 & 0\end{bmatrix}$ has real eigenvalues $\lambda_1=\sqrt{-A|\xi|}$ and $\lambda_2=-\sqrt{-A|\xi|}$, with eigenvectors $V_1=\begin{bmatrix} -\sqrt{-A|\xi|}\\ 1 \end{bmatrix}$ and $V_2=\begin{bmatrix} \sqrt{-A|\xi|}\\ 1 \end{bmatrix}$, respectively. Let $S=[V_1, V_2]$, and let $\mathcal{Y}=S^{-1}\begin{bmatrix}\hat{U}\\ \hat{W}\end{bmatrix}$, then 
\begin{equation}
    \partial_t\mathcal{Y}=\begin{bmatrix} \sqrt{-A|\xi|} & 0\\
    0 & -\sqrt{-A|\xi|}\end{bmatrix}\mathcal{Y}.
\end{equation}
So 
$$\mathcal{Y}(\xi,t)=\begin{bmatrix} e^{t\sqrt{-A|\xi|}} & 0\\
    0 & e^{-t\sqrt{-A|\xi|}}\end{bmatrix}\mathcal{Y}(\xi,0),$$
which implies that for any $t>0$, we have \footnote{The reason that we use the infinite series to define the Gevrey norm is as follows: we need to estimate the Gevrey norm of functions of the form $Q_j:=\frac{1}{Z(\alpha,t)-z_j(t)}$. It is easier to estimate $\sum \frac{\sigma^{2n}}{(n!)^4}\|\partial_{\alpha}^n Q_j\|_{L^2}^2$ than $\int_{\mathbb{R}}e^{2\sigma |\partial_{\alpha}|^{1/2}}|Q(\alpha,t)|^2 d\alpha$. }
\begin{equation}\label{sharpgevrey} \int_{\mathbb{R}} e^{\delta |\xi|} |\mathcal{Y}(\xi,t)|^2 d\xi=\infty \quad \quad \Leftrightarrow \quad \sum_{n=0}^{\infty}\frac{\delta^{2n}}{(n!)^4}\int_{\mathbb{R}}|\xi|^{2n}|\mathcal{Y}(\xi,t)|^2d\xi=\infty,\end{equation}
provided that $\mathcal{Y}(\xi,0)\geq Ce^{\delta |\xi|^{1/2}}$ for some $\delta>-\sqrt{-A}$. Therefore, it is natural to impose the assumption $|\mathcal{Y}(\xi,0)|\leq Ce^{-\sqrt{-A|\xi|}}$. Such $\mathcal{Y}(\cdot,0)$ lies in the Gevrey-2 spaces. See \emph{Definition} \ref{Gevrey} for the more precise definition. We denote 
$$\|f\|_{X_{L_0}}^2:= \sum_{n=0}^{\infty}\frac{L_0^{2n}}{(n!)^4}\int_{\mathbb{R}}|\xi|^{2n}|\hat{f}(\xi)|^2d\xi\quad(= \sum_{n=0}^{\infty}\frac{L_0^{2n}}{(n!)^4}\|\partial_{\alpha}^n f\|_{L^2}^2).$$
$L_0$ is called the \emph{radius of analyticity}. We seek for solution $(U, W)$ such that the initial data $(U_0, W_0)$ satisfies 
$$\| \Lambda^{1/2} U_0\|_{X_{L_0}}<\infty, \quad \quad \| \partial_{\alpha}W_0\|_{X_{L_0}}<\infty.$$

\vspace*{2ex}

Of course, in general, $A$ is not a constant. In particular, in the context of water waves, $A$ depends on $U$ and $W$. Moreover, in the first equation of (\ref{temporaly}), $W$ losses one half derivative, due to the term $A\Lambda W$. To compensate for this loss, we allow the \emph{radius of analyticity} of the solution to decay linearly in time, that is, we seek for solution $(U, W)$ to (\ref{toy}) with initial data $(U_0, W_0)$ such that 
\begin{equation}
    \| \Lambda^{1/2}U(\cdot,t)\|_{X_{L_0-\delta t}}<\infty, \quad \quad \|\partial_{\alpha}W\|_{X_{L_0-\delta t}}<\infty.
\end{equation}
Such a linear decay in the radius of analyticity has the same effect of introducing a viscosity term $\approx \Lambda^{1/2}\begin{bmatrix} U\\ W\end{bmatrix}$, that is, in the $X_{L_0-\delta t}$ norm, the toy model behaves like
\begin{equation}\label{toy2}
    \begin{cases}
    \partial_t U+c \Lambda^{1/2}U=A \Lambda W,\\
    \partial_t W+c \Lambda^{1/2}W=-U,
    \end{cases}
\end{equation}
for some positive constant $c>0$. Clearly, (\ref{toy2}) does not loss any derivative.
A more apparent way to explain the compensation of the loss of derivative is by energy estimates.  We obtain
\begin{align*}
   & \frac{d}{dt}(\|\Lambda^{1/2}U\|_{X_{L_0-\delta t}}^2+\|\partial_{\alpha}W\|_{X_{L_0-\delta t}}^2)\\
   \leq &  C(\|\Lambda^{1/2}U\|_{X_{L_0-\delta t}}^2+\|\partial_{\alpha}W\|_{X_{L_0-\delta t}}^2)\\
    &-(\frac{2\delta}{L_0-\delta t}\sum_{n=1}^{\infty}\frac{n(L_0-\delta t)^{2n}}{(n!)^4}\|\partial_{\alpha}^n\Lambda^{1/2}U(\cdot,t)\|_{L^2}^2+\frac{2\delta}{L_0-\delta t}\sum_{n=1}^{\infty}\frac{n(L_0-\delta t)^{2n}}{(n!)^4}\|\partial_{\alpha}^n\Lambda^{1/2}U(\cdot,t)\|_{L^2}^2)\\
    &+\sum_{n=0}^{\infty}\frac{(L_0-\delta t)^{2n}}{(n!)^4}\Big|\int_{\mathbb{R}} \partial_{\alpha}^n \partial_{\alpha}U(\alpha,t) \partial_{\alpha}^n W_{\alpha}(\alpha,t)d\alpha\Big|
\end{align*}
By choosing $\delta$ sufficiently large, one can bound the last summation by $$(\frac{2\delta}{L_0-\delta t}\sum_{n=1}^{\infty}\frac{n(L_0-\delta t)^{2n}}{(n!)^4}\|\partial_{\alpha}^n\Lambda^{1/2}U(\cdot,t)\|_{L^2}^2+\frac{2\delta}{L_0-\delta t}\sum_{n=1}^{\infty}\frac{n(L_0-\delta t)^{2n}}{(n!)^4}\|\partial_{\alpha}^n\Lambda^{1/2}U(\cdot,t)\|_{L^2}^2).$$ So we can obtain closed energy estimates.

\vspace*{2ex}

Using this idea, we are able to prove the local wellposedness of (\ref{temporaly}) in Gevrey-2 spaces, regardless of the sign of $A$.

\vspace*{2ex}

\noindent \textbf{The lifespan of the perturbation.} As we mentioned earlier, it is critical that the water waves remain a small perturbation of the vortex pair for sufficiently long time\footnote{By \emph{sufficiently long time}, we do not mean a long time existence for water waves. It merely means that the lifespan is sufficient for the \emph{Taylor sign coefficient} to transit from a positive number to an arbitrary large negative number.}. 

\begin{itemize}
    \item [(I1)] By (C1), in order that $A_1$ is sufficiently close to 1 at $t=0$, we need $|\lambda|^2/|y(0)|^3\ll 1$. So $|y(0)|\gg |\lambda|^{2/3}$. 
    
    \item [(I2)] By (C2), in order that $A_1(T_0)$ takes a large negative number, we need $\frac{\lambda^2}{|y(t)|^3}$ to be sufficiently large, which suggests that $|y(T_0)|\ll |y(0)|$. For simplicity, we take $y(T_0)\approx -|y(0)|^{9/10}$.
    
    \item [(I3)] In a perturbative regime,  the velocity of the point vortex pair is $\frac{\lambda}{4\pi x(0)}$, plus a negligible error term (the details will be provided in \S \ref{sectionfail}). So we have 
\begin{equation}
    T_0=\frac{|y(0)-y(T_0)|}{\frac{\lambda}{4\pi x(0)}}.
\end{equation}
In particular, $T_0\gg \frac{4\pi x(0)}{|y_0|^{1/2}}$.

\end{itemize}

It is not difficult to prove the local well-posedness of (\ref{temporaly}), yet to obtain a lifespan with lower bound at least $\frac{4\pi x(0)}{|y_0|^{1/2}}$ is another story, because if $\frac{\lambda^2}{|y(t)|^3}$  and $y(0)$ are large, then the quantity $-\Re\{D_tQ\}$ on the right hand side of the first equation of (\ref{temporaly}) is also large, of size roughly speaking $\frac{\lambda^2}{|y(t)|^2}$. Thus we need to analyze the interaction of the free interface and the point vortices to exclude the possibility of rapid growth of the water waves, from which we can conclude the proof of our main theorem.

\subsection{Outline of the paper}
In \S \ref{prelim}, we formulate the water wave system in Riemann variables, define the function spaces, and derive some estimates in Gevrey spaces. In \S \ref{maintheorems}, we state a quantitative version of the main results. In \S \ref{sectionlinear}, using the energy method, we prove the local wellposedness in Gevrey spaces of a specific quasilinear system.  In \S \ref{sectionestimates}, we derive a priori estimates. In \S \ref{sectionapproximate}, we use the picard iteration method to prove the existence and uniqueness of solutions of the system (\ref{quasi2}) and therefore conclude the proof of Theorem \ref{theorem1}. In \S \ref{sectionfail}, using Theorem \ref{theorem1}, we prove Theorem \ref{taylorsignfailtheorem2}. In the appendix, we provide the details of some calculations for the Taylor sign coefficient.

\section{Preliminaries}\label{prelim}
Throughout this paper, we denote the fluid region at time $t$ by $\Omega(t)$, with a nonself-intersect free surface $\Sigma(t)$. We identify a point $(x,y)\in \mathbb{R}$ as a point $x+iy\in \mathbb{C}$.  For a point $x+iy\in \Sigma(t)$, we assume $|y|\rightarrow 0$ as $|x|\rightarrow \infty$. That is, $\Sigma(t)$ approaches the real axis at $\pm\infty$. Define $\langle u, v\rangle:=\int_{\mathbb{R}}u(\alpha)\bar{v}(\alpha)$. Let $z=x+iy\in \mathbb{C}$, $\Re\{z\}:=x$, $\Im\{z\}:=y$.

\subsection{Governing equation for the free boundary}

The system (\ref{vortex_model})-(\ref{vorticitydistribution})-(\ref{pointvorticesmotion}) is completely determined by the free surface $\Sigma(t)$, the trace of the velocity $v$ along the free surface, and the position of the point vortices. We shall first formulate the system (\ref{vortex_model})-(\ref{vorticitydistribution})-(\ref{pointvorticesmotion}) in Lagrangian coordinates. Because of the moving boundary, it is convenient to use the Riemann mapping variables to study the Taylor sign and construct special solutions.

\subsubsection{Lagrangian formulation} We parametrize the free surface by Lagrangian coordinates, i.e., let $\alpha$ be such that 
\begin{equation}
z_t(\alpha,t)=v(z(\alpha,t),t).
\end{equation}
$P\Big|_{\Sigma(t)}\equiv 0$ implies that $\nabla P\Big|_{\Sigma(t)}$ is along the normal direction, so we can write $\nabla P$ as $-iaz_{\alpha}$, where $a=-\frac{\partial P}{\partial\vec{n}}\frac{1}{|z_{\alpha}|}$ is real valued. Here, $\vec{n}=i\frac{z_{\alpha}}{|z_{\alpha}|}$ is the unit outward normal to $\Omega(t)$. So the trace of the momentum equation $v_t+v\cdot\nabla v=-\nabla P-(0,1)$ can be written as
\begin{equation}
z_{tt}-iaz_{\alpha}=-i.
\end{equation}

\begin{definition}[Hilbert transform]
Assume that $z(\alpha)$ satisfies
\begin{equation}\label{chordchordarcarc}
    \beta_0|\alpha-\beta|\leq |z(\alpha)-z(\beta)|\leq \beta_1|\alpha-\beta|, \quad \quad \forall~~ \alpha,\beta\in \mathbb{R},
\end{equation}
where $0<\beta_0<\beta_1<\infty$ are constants.
The Hilbert transform associates to the curve $z(\alpha)$ is defined as 
\begin{equation}
\mathfrak{H}f(\alpha):=\frac{1}{\pi i}p.v.\int_{-\infty}^{\infty}\frac{z_{\beta}(\beta)}{z(\alpha)-z(\beta)}f(\beta)d\beta.
\end{equation}
The standard Hilbert transform is the Hilbert transform associated to $z(\alpha)=\alpha$, that is,
\begin{equation}
\mathbb{H}f(\alpha):=\frac{1}{\pi i}p.v.\int_{-\infty}^{\infty}\frac{1}{\alpha-\beta}f(\beta)d\beta.
\end{equation}
\end{definition}
\vspace*{1ex}

We can use the Hilbert transform to characterize the boundary value of holomorphic functions. Such characterization is classical, the reader can see for example Proposition 2.1 in \cite{wu2016wellposedness}.
\begin{lemma}\label{holomorphic}
Let $\Omega\subset \mathbb{C}$ be a domain with $C^1$ boundary $\Sigma: z=z(\alpha), \alpha\in \mathbb{R}$, oriented clockwise. Let $\mathfrak{H}$ be the Hilbert transform associated to $\Omega$.
\begin{itemize}
    \item [(a.)] Let $g\in L^p$ for some $1<p<\infty$. Then $g$ is the boundary value of a holomorphic function $G$ on $\Omega$ with $G(z)\rightarrow 0$ at infinity if and only if 
    \begin{equation}
        (I-\mathfrak{H})g=0.
    \end{equation}
    
    \item [(b.)] Let $f\in L^p$ for some $1<p<\infty$. Then $\frac{1}{2}(I+\mathfrak{H})f$ is the boundary value of a holomorphic function $\mathfrak{G}$ on $\Omega$, with $\mathfrak{G}(z)\rightarrow 0$ as $|z|\rightarrow \infty$.
    
    \item [(c.)] $\mathfrak{H}1=0$.
\end{itemize}
\end{lemma}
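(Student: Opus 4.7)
The plan is to reduce all three parts to one construction---a Cauchy integral of $g$ weighted by $z_\beta$---combined with the Sokhotski--Plemelj jump relation, which is the standard route to boundary-value characterizations of this kind. Concretely, I would introduce
\begin{equation*}
G(w) := -\frac{1}{2\pi i}\int_{\mathbb{R}}\frac{z_\beta(\beta)}{z(\beta)-w}\,g(\beta)\,d\beta,\qquad w\in\Omega,
\end{equation*}
which is holomorphic in $\Omega$ (differentiate in $w$ under the integral) and decays as $|w|\to\infty$ by the chord-arc condition (\ref{chordchordarcarc}) together with $g\in L^p$. The core analytic input is the Sokhotski--Plemelj formula: as $w\to z(\alpha)$ from inside $\Omega$,
\begin{equation*}
\lim_{w\to z(\alpha),\,w\in\Omega} G(w)=\tfrac{1}{2}(I+\mathfrak{H})g(\alpha).
\end{equation*}
This is a classical jump relation for Cauchy integrals on chord-arc curves, reducible to the model $z(\beta)=\beta$ where it follows from $1/(\beta-\alpha+i\varepsilon)\to \mathrm{p.v.}\,1/(\beta-\alpha)-i\pi\delta(\beta-\alpha)$, combined with $L^p$-boundedness of the Cauchy integral along the curve (Coifman--McIntosh--Meyer). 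The clockwise orientation of $\partial\Omega$ (because $\Omega$ lies below $\Sigma$) is what produces $\tfrac{1}{2}(I+\mathfrak{H})$ rather than $\tfrac{1}{2}(I-\mathfrak{H})$; I would sanity-check the sign on the half-plane model $\Omega=\{\Im w<0\}$, $z(\alpha)=\alpha$.

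With these two ingredients, (b) is immediate: replacing $g$ by $f$, the Cauchy integral $\mathfrak{G}$ is holomorphic in $\Omega$, vanishes at infinity, and has boundary value $\tfrac{1}{2}(I+\mathfrak{H})f$. Part (a) follows in both directions: if $(I-\mathfrak{H})g=0$ then $\tfrac{1}{2}(I+\mathfrak{H})g=g$ and $g$ is the trace of $G$; conversely, if $g$ is the trace of some holomorphic $G$ vanishing at infinity, Cauchy's theorem applied to $\Sigma$ closed by a large semicircular arc in $\Omega$ (whose contribution vanishes by the decay of $G$) identifies $G$ with the operator above, and the boundary limit then forces $g=\tfrac{1}{2}(I+\mathfrak{H})g$, i.e., $(I-\mathfrak{H})g=0$.

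For (c), I would compute the principal value directly via the antiderivative $-\log(z(\alpha)-z(\beta))$ of $z_\beta(\beta)/(z(\alpha)-z(\beta))$. Writing $F(\beta):=z(\alpha)-z(\beta)$, one gets
\begin{equation*}
\pi i\,\mathfrak{H}1(\alpha)=\lim_{R\to\infty,\,\varepsilon\to 0}\bigl[\log F(-R)+\log F(\alpha+\varepsilon)-\log F(R)-\log F(\alpha-\varepsilon)\bigr].
\end{equation*}
On a branch chosen to follow the antiderivative, the local contribution $\log F(\alpha+\varepsilon)-\log F(\alpha-\varepsilon)\to i\pi$ (from $F(\alpha\pm\varepsilon)\approx\mp\varepsilon z_\alpha(\alpha)$), while the asymptotic contribution $\log F(-R)-\log F(R)\to -i\pi$ (from the sign flip of $z(\alpha)-z(R)$ for large $R$); the two $i\pi$'s cancel and the real parts vanish because $|z(\alpha)-z(-R)|/|z(\alpha)-z(R)|\to 1$ under the asymptotic flatness of $\Sigma$, giving $\mathfrak{H}1=0$.

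The main obstacle I anticipate is the Sokhotski--Plemelj step in the $C^1$, chord-arc setting for $L^p$ data: boundary limits of Cauchy integrals on nonsmooth curves are not elementary and rest on Calder\'on commutator / Coifman--McIntosh--Meyer bounds, which I would invoke as a black box, citing Proposition~2.1 of \cite{wu2016wellposedness} as the authors suggest. Tracking the orientation and sign conventions is essential so that the jump formula consistently gives $\tfrac{1}{2}(I+\mathfrak{H})$ on the $\Omega$-side across all three parts.
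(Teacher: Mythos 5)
The paper does not supply a proof of this lemma; it remarks that the characterization is classical and refers the reader to Proposition 2.1 of \cite{wu2016wellposedness}. Your Cauchy-integral/Plemelj argument is precisely the standard proof behind that reference: the sign of the kernel (leading to $\tfrac12(I+\mathfrak H)$ on the $\Omega$-side), the decay estimate via H\"older and the chord-arc bound, the deduction of (a) from (b) plus a Cauchy-theorem reproducing formula, and the antiderivative/branch-tracking computation of $\mathfrak H 1=0$ are all correct, with the one technical input you correctly flag ($L^p$-boundedness of the Cauchy integral on $C^1$/chord-arc curves and existence of nontangential boundary limits) invoked as a black box.
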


We decompose $\bar{z}_t$ as $\bar{z}_t=f+p$, where $p=-\sum_{j=1}^n\frac{\lambda_j i}{2\pi}\dfrac{1}{z(\alpha,t)-z_j(t)}$.
Note that $f$ is holomorphic in $\Omega(t)$ with the value at the boundary $\Sigma(t)~$ given by $\bar{z}_t+\sum_{j=1}^N \frac{\lambda_j i}{2\pi (z(\alpha,t)-z_j(t))}$ . Let $f\in L^2(\mathbb{R})$, then $f$ is the boundary value of a holomorphic function in $\Omega(t)$ if and only if 
\begin{equation}
(I-\mathfrak{H})f=0,
\end{equation}
where $\mathfrak{H}$ is the Hilbert transform associated with the curve $z(\alpha,t)$, i.e.,
\begin{equation}
\mathfrak{H}f(\alpha):=\frac{1}{\pi i}p.v.\int_{-\infty}^{\infty}\frac{z_{\beta}}{z(\alpha,t)-z(\beta,t)}f(\beta)d\beta.
\end{equation}
Because of the singularity of the velocity at the point vortices, we don't have $(I-\mathfrak{H})\bar{z}_t=0$. However, the following lemma asserts that $\bar{z}_t$ is almost holomorphic, in the sense that $(I-\mathfrak{H})\bar{z}_t$ consists of lower order terms.
\begin{lemma}[Almost holomorphicity]\label{almost}
Assume that $z(\cdot,t)\in L^2(\mathbb{R})$ satisfies (\ref{chordchordarcarc}) and $\bar{z}_t$ is the boundary value of a velocity field $\bar{v}$ in $\Omega(t)$ such that $\bar{v}+\sum_{j=1}^N \frac{\lambda_j i}{2\pi(z-z_j(t)})$ is holomorphic in $\Omega(t)$. Then we have
\begin{equation}
\begin{split}
(I-\mathfrak{H})\bar{z}_t=&-\frac{i}{\pi}\sum_{j=1}^N \frac{\lambda_j}{z(\alpha,t)-z_j(t)}.
\end{split}
\end{equation}
\end{lemma}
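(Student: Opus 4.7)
The plan is to reduce the lemma to an algebraic computation by exploiting the hypothesis that $f := \bar v + \sum_{j=1}^N \frac{\lambda_j i}{2\pi(z - z_j(t))}$ is holomorphic on $\Omega(t)$ and applying Lemma \ref{holomorphic}(a). Since the velocity $v$ vanishes at infinity (the water is at rest far away) and each $\frac{1}{z - z_j(t)}$ decays like $1/|z|$, the function $f$ decays at infinity. Taking boundary values on $\Sigma(t)$, the trace of $f$ is $\bar z_t(\alpha,t) + \sum_j \frac{\lambda_j i}{2\pi(z(\alpha,t) - z_j(t))}$, and Lemma \ref{holomorphic}(a) gives
\begin{equation*}
(I - \mathfrak{H})\bar z_t = -\sum_{j=1}^N \frac{\lambda_j i}{2\pi}\,(I - \mathfrak{H})\frac{1}{z(\cdot,t) - z_j(t)}.
\end{equation*}
Thus the proof reduces to identifying $(I-\mathfrak{H})\frac{1}{z(\cdot,t) - z_j(t)}$ explicitly.

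For this, I would compute directly. Writing $A = z(\alpha,t)$, $B = z(\beta,t)$, $C = z_j(t)$, the partial-fraction decomposition in $B$ reads
\begin{equation*}
\frac{1}{(A-B)(B-C)} = \frac{1}{A-C}\Big[\frac{1}{A-B} + \frac{1}{B-C}\Big],
\end{equation*}
so the definition of $\mathfrak{H}$ yields
\begin{equation*}
\mathfrak{H}\tfrac{1}{z(\cdot,t) - z_j(t)}(\alpha) = \tfrac{1}{z(\alpha,t) - z_j(t)}\Big[\mathfrak{H}(1)(\alpha) + \tfrac{1}{\pi i}\int_{\Sigma(t)} \tfrac{d\zeta}{\zeta - z_j(t)}\Big].
\end{equation*}
Lemma \ref{holomorphic}(c) kills the first bracketed term. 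For the remaining Cauchy-type integral, since $z_j(t) \in \Omega(t)$, I would close $\Sigma(t)$ by a large semicircular arc lying inside $\Omega(t)$; the orientation of $\Sigma(t)$ (inherited from the increasing parametrization, placing $\Omega(t)$ on the right, i.e.\ clockwise around $\Omega(t)$) together with the residue theorem and the $R\to\infty$ limit of the arc contribution gives $\int_{\Sigma(t)}\frac{d\zeta}{\zeta - z_j(t)} = -i\pi$, hence $\mathfrak{H}\frac{1}{z - z_j(t)} = -\frac{1}{z - z_j(t)}$ and $(I - \mathfrak{H})\frac{1}{z - z_j(t)} = \frac{2}{z(\alpha,t) - z_j(t)}$. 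Conceptually this just records that $\frac{1}{w - z_j(t)}$ is holomorphic in $\mathbb{C}\setminus\overline{\Omega(t)}$ and decays at infinity, so its boundary trace lies in the kernel of the dual projection $\tfrac12(I + \mathfrak{H})$.

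Substituting back yields
\begin{equation*}
(I - \mathfrak{H})\bar z_t = -\sum_{j=1}^N \frac{\lambda_j i}{2\pi}\cdot\frac{2}{z(\alpha,t) - z_j(t)} = -\frac{i}{\pi}\sum_{j=1}^N \frac{\lambda_j}{z(\alpha,t) - z_j(t)},
\end{equation*}
which is the desired identity. The only delicate point is the sign in the Cauchy integral; it is pinned down by the orientation convention already used implicitly in defining $\mathfrak{H}$, i.e., the same convention that makes Lemma \ref{holomorphic}(c) true for the chosen parametrization of $\Sigma(t)$. Once $\mathfrak{H}(1)=0$ is accepted, every remaining step is a short, mechanical computation, so no genuine obstruction is expected.
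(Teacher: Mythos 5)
Your proof is correct and structurally the same as the paper's: you reduce via Lemma~\ref{holomorphic}(a) to computing $(I-\mathfrak{H})\frac{1}{z(\cdot,t)-z_j(t)}$, and you obtain the same identity $\frac{2}{z(\alpha,t)-z_j(t)}$, giving the stated formula. The only difference is presentational — you verify that identity by an explicit partial-fraction/residue computation (with the orientation correctly pinned down so that $\int_{\Sigma(t)}\frac{d\zeta}{\zeta-z_j(t)}=-i\pi$), whereas the paper invokes the exterior-holomorphicity characterization $(I+\mathfrak{H})g=0$ directly; you yourself note the two are conceptually the same.
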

\begin{proof}
Since $\bar{z}_t+\sum_{j=1}^N \frac{\lambda_j i}{2\pi(z(\alpha,t)-z_j(t))}$ is the boundary value of a holomorphic function in $\Omega(t)$, by lemma \ref{holomorphic},
$$(I-\mathfrak{H})(\bar{z}_t+\sum_{j=1}^N \frac{\lambda_j i}{2\pi(z(\alpha,t)-z_j(t))})=0,$$
we have 
\begin{equation}\label{holo1}
(I-\mathfrak{H})\bar{z}_t=-\sum_{j=1}^N (I-\mathfrak{H}) \frac{\lambda_j i}{2\pi(z(\alpha,t)-z_j(t))}.
\end{equation}
Since $\frac{1}{z(\alpha,t)-z_j(t)}$ is boundary value of the holomorphic function $\frac{1}{z-z_j(t)}$ in $\Omega(t)^c$, by lemma \ref{holomorphic} again,  we have 
\begin{equation}\label{holo2}
(I-\mathfrak{H})\frac{1}{z(\alpha,t)-z_j(t)}=\frac{2}{z(\alpha,t)-z_j(t)}.
\end{equation}
(\ref{holo1}) together with (\ref{holo2}) complete the proof of the lemma.
\end{proof}

So the system (\ref{vortex_model}) is reduced to a system of equations for the free boundary coupled with the dynamic equation for the motion of the point vortices:
\begin{equation}\label{vortex_boundary}
\begin{cases}
z_{tt}-iaz_{\alpha}=-i\\
\frac{d}{dt}z_j(t)=(v-\frac{\lambda_j i}{2\pi(\overline{z-z_j})})\Big |_{z=z_j}\\
(I-\mathfrak{H})f=0.
\end{cases}
\end{equation}
Note that $v$ can be recovered from (\ref{vortex_boundary}). Indeed, we have 
\begin{equation}\label{velocity}
\bar{v}(z,t)+\sum_{j=1}^N\frac{\lambda_j i}{2\pi(z-z_j(t))}=\frac{1}{2\pi i}\int \frac{z_{\beta}}{z-z(\beta)}\Big(\bar{z}_t(\beta,t)+\sum_{j=1}^N \frac{\lambda_j i}{2\pi(z(\beta,t)-z_j(t))}\Big)d\beta.
\end{equation}
So the system (\ref{vortex_model}) and the system (\ref{vortex_boundary}) are equivalent. 

\vspace*{2ex}

\noindent The quantity $a|z_{\alpha}|$ plays an important role in the study of water waves\footnote{Indeed, $a|z_{\alpha}|=-\frac{\partial P}{\partial \vec{n}}\Big|_{\Sigma(t)}$.}.
\begin{definition}
(The \emph{Taylor-sign condition} and the \emph{strong Taylor sign condition})
\begin{itemize}
\item [(1)] If $a|z_{\alpha}|\geq 0$ pointwisely, then we say the \emph{Taylor-sign condition} holds.

\item [(2)] If there is some positive constant $c_0$ such that $a|z_{\alpha}|\geq c_0>0$ pointwisely, then we say the \emph{strong Taylor sign condition} holds.
\end{itemize}
\end{definition}

In order to derive a useful formula for the \emph{Taylor sign coefficient} and use the iteration method to construct solutions to (\ref{vortex_boundary}),  we use the Riemann mapping formulation.

\subsubsection{The Riemann mapping formulation.}\label{Riemannvariable}  Let $\mathbb{P}_-=\{z\in \mathbb{C}: \Im\{z\}<0\}$.
Let $\Phi(\cdot,t):\Omega(t)\rightarrow \mathbb{P}_-$ be the Riemann mapping such that $\Phi_z\rightarrow 1$ as $z\rightarrow \infty$. 
Denote
\begin{equation}
\begin{cases}
h(\alpha,t):=\Phi(z(\alpha,t),t),\\
Z(\alpha,t):=z\circ h^{-1}(\alpha,t),\\
b=h_t\circ h^{-1},\\
D_t:=\partial_t+b\partial_{\alpha},\\
A:=(ah_{\alpha})\circ h^{-1},\\
F:=f\circ h^{-1}.
\end{cases}
\end{equation}

In Riemann mapping variables, the system (\ref{vortex_boundary}) becomes
\begin{equation}\label{vortex_model_Riemann}
\begin{cases}
(D_t^2-iA\partial_{\alpha})Z=-i\\
\frac{d}{dt}z_j(t)=(v-\dfrac{\lambda_j i}{2\pi(\overline{z-z_j})})\Big |_{z=z_j}\\
(I-\mathbb{H})F=0.
\end{cases}
\end{equation}
Denote 
\begin{equation}
A_1:= A|Z_{\alpha}|^2.
\end{equation}
Since $(a|z_{\alpha}|)\circ h^{-1}=A|Z_{\alpha}|=\frac{A_1}{|Z_{\alpha}|}$, it's clear that the \emph{Taylor-sign condition} holds if and only if
\begin{equation}\label{conditionA1}
\inf_{\alpha\in \mathbb{R}}\frac{A_1}{|Z_{\alpha}|}\geq 0,
\end{equation}
and the \emph{strong Taylor sign condition} holds if and only if
\begin{equation}\label{conditionA1}
\inf_{\alpha\in \mathbb{R}}\frac{A_1}{|Z_{\alpha}|}> 0.
\end{equation}
\begin{definition}
We call $A_1$ the \emph{Taylor sign coefficient} corresponding to the solution $(Z, F, \{z_j\})$ of the water waves. 
\end{definition}

\begin{remark}
Note that $A_1=-|Z_{\alpha}|\frac{\partial P}{\partial \Vec{n}}$. 
\end{remark}
\vspace*{2ex}

\noindent \textbf{Formula for $b$:}  Recall that $h(\alpha,t)=\Phi(z(\alpha,t),t)$, where $\Phi$ is the Riemann mapping. So we have 
\begin{equation}
h_t=\Phi_t+\Phi_z z_t,\quad \quad \Phi_z=\frac{h_{\alpha}}{z_{\alpha}}.
\end{equation}
Precomposite with $h^{-1}$ on both sides of the above,
\begin{align*}
b=& h_t\circ h^{-1}=\Phi_t\circ Z+\frac{D_tZ}{Z_{\alpha}}\\
=& \Phi_t\circ Z+D_tZ(\frac{1}{Z_{\alpha}}-1)+D_tZ\\
=& \Phi_t\circ Z+D_tZ(\frac{1}{Z_{\alpha}}-1)+\bar{Q}+\bar{F}.
\end{align*}
 Applying $I-\mathbb{H}$, using $(I-\mathbb{H})\Phi_t\circ Z=0$ and $(I-\mathbb{H})(\frac{1}{Z_{\alpha}}-1)=0$, $(I-\mathbb{H})\bar{F}=2F$, then taking real part, we obtain
 \begin{equation}\label{riemanformulab}
 b=\Re\{[D_tZ,\mathbb{H}](\frac{1}{Z_{\alpha}}-1)\}+2\Re \{D_tZ\}=\Re\{[D_tZ,\mathbb{H}](\frac{1}{Z_{\alpha}}-1)\}+2\Re \{(I-\mathbb{H})\bar{Q}\}+2\Re\{F\}.
 \end{equation}
Decomposing $b$ as $b=b_0+b_1$, where 
\begin{equation}\label{formulab0}
    b_0=2\Re\{F\}+\Re\{[\bar{F},\mathbb{H}](\frac{1}{Z_{\alpha}}-1)\},
\end{equation}
and
\begin{equation}\label{formulab1}
    b_1=\Re\{[\bar{Q},\mathbb{H}](\frac{1}{Z_{\alpha}}-1)\}+2\Re \{(I-\mathbb{H})\bar{Q}\}.
\end{equation}
Note that $b_1$ is more regular than $b_0$. 
\vspace*{2ex}

\noindent \textbf{Formula for $A_1$:} In \S 4 of \cite{su2018long}, the author derived a formula for $A_1$, which we record as follows:
\begin{equation}\label{riemann_formula_A1}
         A_1=1+\frac{1}{2\pi}\int \frac{|D_tZ(\alpha,t)-D_tZ(\beta,t)|^2}{(\alpha-\beta)^2}d\beta-\sum_{j=1}^N \frac{\lambda_j}{2\pi} Re\Big\{\Big((I-\mathbb{H})\frac{Z_{\alpha}}{(Z(\alpha,t)-z_j(t))^2}\Big)(D_tZ-\dot{z}_j(t))\Big\}.
 \end{equation}

\vspace*{1ex}

Let $u=D_t\bar{Z}=F+Q$, recall that $F(\alpha,t)=\mathcal{U}(Z(\alpha,t),t)$ for some holomorphic function $\mathcal{U}$ in $\mathbb{P}_-$, and
$Q$ is given by
\begin{equation}\label{formulaforQ}
    Q=-\sum_{j=1}^N\frac{\lambda_j i}{2\pi}\frac{1}{Z(\alpha,t)-z_j(t)},
\end{equation}

We have  
\begin{equation}\label{formulaforU}
    \mathcal{U}(z,t)=\frac{1}{2\pi i}\int_{-\infty}^{\infty}\frac{Z_{\beta}}{z-Z(\beta,t)}F(\beta,t)d\beta.
\end{equation}
So we obtain a system in Riemann variables:
\begin{equation}\label{quasione}
\begin{cases}
D_tF=-D_tQ-iA\bar{Z}_{\alpha}+i,\\
D_t(\bar{Z}-\alpha)=F+Q-b,\\
F, Z-\alpha\text{ holomorphic}.
\end{cases}
\end{equation}
(\ref{quasione}) is equivalent to (\ref{vortex_boundary}). Let $U=\Re\{F\}$, $W=\Re\{Z-\alpha\}$. Since $F$ and $Z-\alpha$ are holomorphic, $U$ and $W$ determine $F$ and $Z-\alpha$, respectively. Moreover, we have 
\begin{equation}
    -iA\bar{Z}_{\alpha}+i=A\Lambda (\bar{Z}-\alpha)-i(A-1),
\end{equation}
where $\Lambda=|\partial_{\alpha}|$.
Taking real parts on both sides of the first and the second equation of (\ref{quasione}), we obtain
\begin{equation}
    \begin{cases}
     D_tU=A\Lambda W-\Re\{D_tQ\}\\
     D_tW=U+\Re\{Q\}-b,\\
     Z-\alpha=(I-\mathbb{H})W,\\
     F=(I-\mathbb{H})U.
    \end{cases}
\end{equation}
Using (\ref{riemanformulab}), and decompose $b=b_0+b_1$ for $b_0$, $b_1$ given by (\ref{formulab0}) and (\ref{formulab1}), respectively, we obtain\footnote{We need to estimate $\|W_{\alpha}\|_{X_{\phi(t)}}$ (see \emph{Definition} \ref{Gevrey} for the definition of the Gevrey-2 norm $\|\cdot\|_{X_{\sigma}}$), so we need to estimate $\|b_{\alpha}\|_{X_{\phi(t)}}$. We can prove that $\|\partial_{\alpha}b_1\|_{X_{\phi(t)}}<\infty$. However, $\|\partial_{\alpha}b_0\|_{X_{\phi(t)}}$ is not necessarily finite. So we need to treat $\partial_{\alpha}b_0$ carefully. }
\begin{equation}\label{quasi2}
    \begin{cases}
     D_tU=-\Re\{D_tQ\}+A\Lambda W\\
     D_tW=-U+\Re\{Q\}-\Re\{[D_t\bar{F},\mathbb{H}](\frac{1}{Z_{\alpha}}-1)\}-b_1,\\
     Z-\alpha=(I+\mathbb{H})W,\\
     F=(I+\mathbb{H})U.
    \end{cases}
\end{equation}

The motion of $z_j(t)$ is given by
\begin{equation}\label{pointvortexevolution}
    \frac{d}{dt}z_j(t)=\overline{\mathcal{U}(z_j(t),t)}+\sum_{\substack{1\leq k\leq N\\ k\neq j}}\frac{\lambda_k i}{2\pi}\frac{1}{\overline{z_j(t)-z_k(t)}},
\end{equation}
where $\mathcal{U}$ is given by (\ref{formulaforU}). Now we obtain a system (\ref{quasi2})-(\ref{formulaforQ})-(\ref{pointvortexevolution}), which is equivalent to (\ref{vortex_model}). Let's denote
\begin{equation}
    G:=-\Re\{D_tQ\},
\end{equation}
\begin{equation}
    R:=\Re\{Q\}-b_1=\Re\{Q\}-2\Re \{(I-\mathbb{H})\bar{Q}\}-\Re\{[\bar{Q},\mathbb{H}](\frac{1}{Z_{\alpha}}-1)\}.
\end{equation}
(\ref{quasi2}) then becomes 
\begin{equation}\label{quasi3}
    \begin{cases}
     D_tU=A\Lambda W+G\\
     D_tW=-U-\Re\{[\bar{F},\mathbb{H}](\frac{1}{Z_{\alpha}}-1)\}+R,\\
     Z-\alpha=(I-\mathbb{H})W,\\
     F=(I-\mathbb{H})U.
    \end{cases}
\end{equation}
 Note that $G$ is determined by $Z, F$ and $\{z_j\}$ (and therefore determined by $W$, $U$, and $\{z_j\}$), so we can write $G$ as $G(W, U, \{z_j(t)\})$. Here, $\{z_j\}$ means $\{z_1(t),\cdots, z_N(t)\}$.  For the same reason, $D_tF$, $b_1$, $A$, $R$ are determined by $W, U$ and $\{z_j\}$. Therefore, we write
 \begin{align*}
     &G=G(W, U, \{z_j(t)\})\\
     &R=R(W, U, \{z_j(t)\})\\
     &b_1=b_1(W, U, \{z_j(t)\})\\
     &A=A(W, U, \{z_j(t)\})
 \end{align*}

\subsubsection{Some notations} For the reader's convenience, we list some notations as follows

\begin{align*}
\Sigma(t)\quad \quad  & \text{The free surface at time } t,\\
\Omega(t)\quad \quad  & \text{The fluid region at time } t, \Omega(t) \text{ is bounded above by } \Sigma(t),\\
\Phi(\cdot,t) \quad \quad & \text{The Riemann mapping from } \Omega(t) \text{ to } \mathbb{P}_-,\\
    Z(\alpha,t) \quad \quad  &\text{The free interface in Riemann variables},\\
    h(\alpha,t)\quad \quad & \text{The trace of } \Phi(\cdot,t), \text{ that is }, h(\alpha,t)=\Phi(Z(\alpha,t),t),\\
        b(\alpha,t)\quad \quad & b(\cdot,t)=h_t\circ h^{-1},\\
    D_t\quad \quad & D_t=\partial_t+b\partial_{\alpha},\\
    D_tZ\quad \quad & \text{The trace of the velocity field},\\
    A|Z_{\alpha}| \quad \quad &\text{The Taylor sign coefficient in Riemann mapping variable},\\
        z_{j}(t) \quad\quad  & \text{The coordinates of the $j$-th point vortex}, \\ &z_j(t)=x_j(t)+iy_j(t),\\
    \dot{z}_{j} \quad \quad & \text{The time derivative of the point vortex},\\
    d_I(t) \quad \quad & \text{The distance between the point vortices and the free interface at time } t, \text{that is, }\\ &d_I(t):=\inf_{\alpha\in\mathbb{R}}\min_{1\leq j\leq N}|Z(\alpha,t)-z_j(t)|,\\
    Q(\alpha,t)\quad\quad & \text{The conjugate of the velocity field generated by the point vortices},\\ &Q(\alpha,t)=-\sum_{j=1}^N\frac{\lambda_j i}{2\pi} \frac{1}{Z(\alpha,t)-z_j(t)}\\
        F(\alpha, t) \quad\quad   &\text{The wave part of the conjugate of the velocity field, that is}, F=D_t\bar{Z}-Q,\\
        \mathcal{U}(\cdot,t)\quad \quad &\text{The holomorphic extension of } F(\cdot,t), \text{that is }, F(\alpha,t)=\mathcal{U}(Z(\alpha,t),t),\\
    A_{1}(\alpha, t)\quad\quad & A_1(\alpha,t)=A|Z_{\alpha}|^2,\\
\    U \quad \quad & \text{The real part of the velocity field, that is }, U=\Re\{D_tZ\}, \\
    W(\alpha,t)\quad \quad & \text{The real part of } Z(\alpha,t)-\alpha:  W=\Re\{Z-\alpha\}.
\end{align*}
 We parametrize $\Sigma_0$ by Riemann mapping, that is, $h_0(\alpha)=\alpha$, so $z_0(\alpha)=Z_0(\alpha)$. Let $U_0$,  $W_0$, $z_{j,0}$, $Q_0$, $Q_1$ denote the initial value of $U$, $W$, $z_j$, $Q$, $D_tQ$, respectively.

 Using (\ref{formulaforQ}), we obtain
$$D_tQ=\sum_{j=1}^N\frac{\lambda_j i}{2\pi}\frac{D_tZ-\dot{z}_j(t)}{(Z(\alpha,t)-z_j(t))^2}.$$
Using (\ref{pointvortexevolution}), we have
\begin{equation}
    \dot{z}_{j,0}=\sum_{\substack{1\leq k\leq N\\ k\neq j}}\frac{\lambda_j i}{2\pi}\dfrac{1}{\overline{z_{k,0}-z_{j,0}}}+\frac{1}{2\pi i}\int_{-\infty}^{\infty}\frac{\partial_{\beta}Z_0(\beta)}{z_{j,0}-Z_0(\beta)}F_0(\beta)d\beta.
\end{equation}
\begin{equation}
    Q_1=\sum_{j=1}^N\frac{\lambda_j i}{2\pi}\frac{\bar{u}_0-\dot{z}_{j,0}}{(Z_0(\alpha)-z_{j,0})^2}.
\end{equation}
\begin{equation}\label{initialA1}
    A_{1,0}=1+\frac{1}{2\pi}\int \frac{|\bar{u}_0(\alpha)-\bar{u}_0(\beta)|^2}{(\alpha-\beta)^2}d\beta-\sum_{j=1}^N\frac{\lambda_j }{2\pi}\Re\Big\{\Big((I-\mathbb{H})\frac{\partial_{\alpha}Z_0(\alpha)}{(Z_0(\alpha)-z_j(t))^2}\Big)(\bar{u_0}(\alpha)-\dot{z}_{j,0})\Big\}.
\end{equation}
The initial interface $Z_0$ satisfies
\begin{equation}
    |Z_0(\alpha)-Z_0(\beta)|\geq C_0|\alpha-\beta|, \quad \quad \forall ~~\alpha,\beta\in \mathbb{R}.
\end{equation}

\subsection{Function spaces}
We define the Gevrey-2 spaces as follows.
\begin{definition}\label{Gevrey}
Let $\sigma>0$, we define
\begin{equation}
    \dot{X}_{\sigma}=\{f\in C^{\infty}(\mathbb{R}) ~\Big |\quad  \|f\|_{\dot{X}_{\sigma}}^2:=\sum_{j=1}^{\infty} \frac{\sigma^{2j}}{(j!)^{4}}\| \partial_{\alpha}^jf\|_{L^2}^2<\infty\}.
\end{equation}
\begin{equation}
    X_{\sigma}=\{f\in C^{\infty}(\mathbb{R}) ~\Big |\quad  \|f\|_{X_{\sigma}}^2:=\sum_{j=0}^{\infty} \frac{\sigma^{2j}}{(j!)^{4}}\| \partial_{\alpha}^jf\|_{L^2}^2<\infty\}.
\end{equation}
\begin{equation}
    \dot{Y}_{\sigma}=\{f\in C^{\infty}(\mathbb{R}) ~\Big |\quad  \|f\|_{\dot{Y}_{\sigma}}^2:=\sum_{j=1}^{\infty} \frac{j^2\sigma^{2j}}{(j!)^4}\| \partial_{\alpha}^jf\|_{L^2}^2<\infty\}.
\end{equation}

\begin{equation}
    Y_{\sigma}=\{f\in C^{\infty}(\mathbb{R}) ~\Big |\quad  \|f\|_{Y_{\sigma}}^2:=\|f\|_{L^2}^2+\sum_{j=0}^{\infty} \frac{j^2\sigma^{2j}}{(j!)^4}\| \partial_{\alpha}^jf\|_{L^2}^2<\infty\}.
\end{equation}
\end{definition}
It is not difficult to verify that $\dot{X}_{\sigma}, X_{\sigma}, \dot{Y}_{\sigma}$ and $Y_{\sigma}$ are Banach spaces (actually, Hilbert spaces.), we shall call these spaces the \emph{Gevrey spaces}, or \emph{Gevrey-2 spaces}, and call $\sigma$ the \emph{radius of convergence}.  Moreover, we have the Sobolev type embedding.
\begin{lemma}\label{sobolev}
\begin{itemize}
\item [1.] Let $f\in X_{\sigma}$ and $n\geq 0$ be an integer, then there is an absolute constant $C>0$ such that 
\begin{equation}\label{sobolev111}
    \|\partial_{\alpha}^nf\|_{L^{\infty}}\leq C\Big(\frac{((n+1)!)^2}{\sigma^{n+1}}+\frac{(n!)^2}{\sigma^n}\Big)\|f\|_{X_{\sigma}}.
\end{equation}\label{sobolev222}
\item [2.] Let $f\in Y_{\sigma}$ and $n\geq 1$ be an integer, then
\begin{equation}
    \|\partial_{\alpha}^nf\|_{L^{\infty}}\leq C\Big(\frac{((n+1)!)^2}{(n+1)\sigma^{n+1}}+\frac{(n!)^2}{n\sigma^n}\Big)\|f\|_{\dot{Y}_{\sigma}}.
\end{equation}
\end{itemize}
\end{lemma}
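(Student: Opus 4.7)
The plan is to reduce everything to the one-dimensional Sobolev embedding and then extract the relevant summands from the series defining the Gevrey norms. The only nontrivial input is the classical Gagliardo--Nirenberg inequality in one variable, which gives a constant $C>0$ such that for all $g\in H^1(\mathbb{R})$,
\begin{equation*}
\|g\|_{L^{\infty}}\leq C\,\|g\|_{L^2}^{1/2}\,\|g'\|_{L^2}^{1/2}\leq \tfrac{C}{2}\bigl(\|g\|_{L^2}+\|g'\|_{L^2}\bigr).
\end{equation*}
Applying this with $g=\partial_{\alpha}^n f$ yields
\begin{equation*}
\|\partial_{\alpha}^n f\|_{L^{\infty}}\leq \tfrac{C}{2}\bigl(\|\partial_{\alpha}^n f\|_{L^2}+\|\partial_{\alpha}^{n+1} f\|_{L^2}\bigr).
\end{equation*}

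For part (1), I would simply observe that by keeping only the $j=n$ and $j=n+1$ terms in the series defining $\|f\|_{X_{\sigma}}^2$, one has
\begin{equation*}
\|\partial_{\alpha}^n f\|_{L^2}\leq \frac{(n!)^2}{\sigma^n}\,\|f\|_{X_{\sigma}},\qquad \|\partial_{\alpha}^{n+1} f\|_{L^2}\leq \frac{((n+1)!)^2}{\sigma^{n+1}}\,\|f\|_{X_{\sigma}},
\end{equation*}
and substituting into the Sobolev inequality gives the claimed bound (\ref{sobolev111}), with a constant independent of $n$ and $\sigma$.

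For part (2), the same two terms of the $\dot{Y}_{\sigma}$ series give the sharper bounds
\begin{equation*}
\|\partial_{\alpha}^n f\|_{L^2}\leq \frac{(n!)^2}{n\sigma^n}\,\|f\|_{\dot{Y}_{\sigma}},\qquad \|\partial_{\alpha}^{n+1} f\|_{L^2}\leq \frac{((n+1)!)^2}{(n+1)\sigma^{n+1}}\,\|f\|_{\dot{Y}_{\sigma}},
\end{equation*}
valid for $n\geq 1$ since then both indices $n$ and $n+1$ are at least $1$ and the extra factor $j^2$ in the denominator can be exploited. Plugging these into Gagliardo--Nirenberg gives (\ref{sobolev222}).

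There is no real obstacle in this argument: the proof is a direct application of the 1D Sobolev embedding combined with the elementary observation that each summand of the Gevrey norm controls the corresponding $L^2$ norm of a derivative. The only small point to keep in mind is to use the two consecutive derivatives $n$ and $n+1$, which is what produces the two terms on the right-hand side of each inequality.
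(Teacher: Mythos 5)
Your proof is correct and follows essentially the same route as the paper, which simply invokes the Sobolev embedding $\|g\|_{L^\infty}\leq C\|g\|_{H^1}$ applied to $g=\partial_\alpha^n f$ and then picks off the $j=n$ and $j=n+1$ summands of the Gevrey series. Your extra intermediate step through the Gagliardo--Nirenberg form $\|g\|_{L^\infty}\leq C\|g\|_{L^2}^{1/2}\|g'\|_{L^2}^{1/2}$ is immediately relaxed and adds nothing, but nothing is wrong.
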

\begin{proof}
The proof follows from the standard Sobolev embedding $\|f\|_{L^{\infty}}\leq C\|f\|_{H^1}$.
\end{proof}

The Hilbert transform is a unitary operator on these spaces.
\begin{equation}
    \mathbb{H}f(\alpha):=\frac{1}{\pi i}\int_{\mathbb{R}}\frac{1}{\alpha-\beta}f(\beta)d\beta.
\end{equation}
\begin{lemma}\label{hilbert}
Let $\sigma>0$ and  $f\in X_{\sigma}$, we have
\begin{equation}\label{xxx}
    \|\mathbb{H}f\|_{X_{\sigma}}=\|f\|_{X_{\sigma}}, 
\end{equation}
\begin{equation}\label{xxx1111}
    \|\mathbb{H}f\|_{\dot{X}_{\sigma}}=\|f\|_{\dot{X}_{\sigma}}, 
\end{equation}

\begin{equation}\label{xxx12}
    \|\mathbb{H}f\|_{Y_{\sigma}}=\|f\|_{Y_{\sigma}}.
\end{equation}
and
\begin{equation}\label{xxx13}
    \|\mathbb{H}f\|_{\dot{Y}_{\sigma}}=\|f\|_{\dot{Y}_{\sigma}}.
\end{equation}

\end{lemma}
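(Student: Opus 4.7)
The plan is to exploit the fact that the Hilbert transform $\mathbb{H}$ is a Fourier multiplier with symbol of modulus one, hence an $L^2$-isometry, and that it commutes with every $\alpha$-derivative. With this in hand, each of the four identities reduces to matching the defining series for $f$ and for $\mathbb{H}f$ term by term.

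First I would recall that on the Fourier side $\widehat{\mathbb{H}f}(\xi)=-i\,\mathrm{sgn}(\xi)\hat f(\xi)$, so $|\widehat{\mathbb{H}f}(\xi)|=|\hat f(\xi)|$ almost everywhere. By Plancherel, $\|\mathbb{H}g\|_{L^2}=\|g\|_{L^2}$ for every $g\in L^2(\mathbb R)$. Next, since $\partial_\alpha$ is the Fourier multiplier with symbol $i\xi$ and multipliers commute, one has $\partial_\alpha^{\,n}\mathbb{H}f=\mathbb{H}\partial_\alpha^{\,n}f$ for every integer $n\ge 0$ (for Schwartz $f$; the general case follows by density within the Gevrey class). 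Combining these two remarks yields the key pointwise identity
\[
\|\partial_\alpha^{\,n}\mathbb{H}f\|_{L^2}=\|\mathbb{H}\partial_\alpha^{\,n}f\|_{L^2}=\|\partial_\alpha^{\,n}f\|_{L^2}, \qquad n\ge 0.
\]

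Finally, I would substitute this identity into each of the four weighted sums defining $X_\sigma$, $\dot X_\sigma$, $Y_\sigma$, and $\dot Y_\sigma$. Since in each case the norm is a positively weighted sum (with weights $\sigma^{2n}/(n!)^4$ or $n^2\sigma^{2n}/(n!)^4$) of the quantities $\|\partial_\alpha^{\,n}f\|_{L^2}^2$, and possibly the unweighted $\|f\|_{L^2}^2$ term in the $Y_\sigma$ norm, which is also preserved by $\mathbb{H}$, the equalities \eqref{xxx}, \eqref{xxx1111}, \eqref{xxx12}, and \eqref{xxx13} follow immediately. There is no real obstacle in this lemma: it is essentially a direct consequence of the spectral definition of $\mathbb{H}$ together with the commutation $[\mathbb{H},\partial_\alpha]=0$, and the only very minor point to verify is that the density of Schwartz functions in each of these Gevrey spaces legitimizes the termwise identity used above.
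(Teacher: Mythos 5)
Your proof is correct and follows essentially the same route as the paper: both rest on the identity $\|\partial_\alpha^{\,n}\mathbb{H}f\|_{L^2}=\|\mathbb{H}\partial_\alpha^{\,n}f\|_{L^2}=\|\partial_\alpha^{\,n}f\|_{L^2}$ and then substitute termwise into the defining weighted series. The only difference is that you spell out the Fourier-multiplier justification and the density remark, which the paper leaves implicit.
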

\begin{proof}
We prove (\ref{xxx}) only. (\ref{xxx12}) and (\ref{xxx13}) are proved similarly.
Using the fact that $\|\partial_{\alpha}^n\mathbb{H}f\|_{L^2}=\|\mathbb{H}\partial_{\alpha}^nf\|_{L^2}=\|\partial_{\alpha}^nf\|_{L^2}$, we have 
\begin{align*}
    \|\mathbb{H}f\|_{X_{\sigma}}^2=&\sum_{n\geq 0}\frac{\sigma^{2n}}{(n!)^4}\|\partial_{\alpha}^n\mathbb{H}f\|_{L^2}^2=\sum_{n\geq 0}\frac{\sigma^{2n}}{(n!)^4}\|\partial_{\alpha}^nf\|_{L^2}^2=\|f\|_{X_{\sigma}}^2.
\end{align*}
\end{proof}

\begin{lemma}[Product estimates]\label{lemmaproduct}
Let $\sigma\geq \sigma_0> 0$. We have 
\begin{equation}\label{product1}
    \|fg\|_{X_{\sigma}}\leq C\|f\|_{X_{\sigma}}\|g\|_{X_{\sigma}},
\end{equation}
\begin{equation}\label{product2}
    \|fg\|_{Y_{\sigma}}\leq C\|f\|_{Y_{\sigma}}\|g\|_{Y_{\sigma}},
\end{equation}
\begin{equation}\label{producthomo1}
    \|fg\|_{\dot{X}_{\sigma}}\leq C\|f\|_{\dot{X}_{\sigma}}\|g\|_{X_{\sigma}}+C\|f\|_{X_{\sigma}}\|g\|_{\dot{X}_{\sigma}},
\end{equation}
\begin{equation}\label{producthomo2}
    \|fg\|_{\dot{Y}_{\sigma}}\leq C\|f\|_{\dot{Y}_{\sigma}}\|g\|_{Y_{\sigma}}+C\|f\|_{Y_{\sigma}}\|g\|_{\dot{Y}_{\sigma}},
\end{equation}
for some constant $C>0$ depends linearly on $1+\frac{1}{\sigma_0}$.
\end{lemma}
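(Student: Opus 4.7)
The plan is to reduce the four product estimates to a single discrete convolution inequality in $\ell^2$, exploiting the $(j!)^{-4}$ weight in the Gevrey-2 norms to generate a summable kernel. I focus on (\ref{product1}); the remaining three inequalities are variants of the same computation.

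First I would expand by Leibniz and, in each term, place in $L^\infty$ the factor carrying fewer derivatives:
$$\|\partial_\alpha^n(fg)\|_{L^2}\le\sum_{k=0}^{\lfloor n/2\rfloor}\binom{n}{k}\|\partial_\alpha^k f\|_{L^\infty}\|\partial_\alpha^{n-k}g\|_{L^2}+\sum_{k=\lfloor n/2\rfloor+1}^n\binom{n}{k}\|\partial_\alpha^k f\|_{L^2}\|\partial_\alpha^{n-k}g\|_{L^\infty}.$$
For the $L^\infty$ factors I would use the sharper one-dimensional interpolation $\|u\|_{L^\infty}^2\le 2\|u\|_{L^2}\|u'\|_{L^2}$ rather than Lemma \ref{sobolev}, which distributes one derivative across the two factors in a way that matches the Gevrey weight.

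Next I would normalize by setting $a_j:=\frac{\sigma^j}{(j!)^2}\|\partial_\alpha^j f\|_{L^2}$, $b_j:=\frac{\sigma^j}{(j!)^2}\|\partial_\alpha^j g\|_{L^2}$, $c_j:=\frac{\sigma^j}{(j!)^2}\|\partial_\alpha^j(fg)\|_{L^2}$, so that $\|f\|_{X_\sigma}=\|a\|_{\ell^2}$ and similarly for $g$ and $fg$. The crucial algebraic identity
$$\binom{n}{k}\cdot\frac{k!\,(k+1)!\,((n-k)!)^2}{(n!)^2}=\frac{k+1}{\binom{n}{k}}$$
is precisely matched to the $(j!)^{-4}$ weight: after substituting the Sobolev estimate above, the Leibniz bound collapses to
$$c_n\le\frac{C}{\sqrt{\sigma}}\sum_{k=0}^n M(n,k)\,\xi_k\,\eta_{n-k},$$
where $M(n,k)$ equals $(k+1)/\binom{n}{k}$ for $k\le n/2$ and $(n-k+1)/\binom{n}{k}$ otherwise, and $\xi_k,\eta_k$ are sequences built from $a$ and $b$ (e.g.\ $\xi_k=(a_k a_{k+1})^{1/2}$). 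A Cauchy-Schwarz in $k$ combined with the elementary estimate $\sup_{k\ge 1}\sum_{n\ge 2k}M(n,k)^2<\infty$, together with separate treatment of the $k=0$ endpoint (where the contribution $M(n,0)\,b_n$ is directly $\ell^2$-summable in $n$), then yields $\sum_n c_n^2\le\frac{C}{\sigma}\|f\|_{X_\sigma}^2\|g\|_{X_\sigma}^2$, which is (\ref{product1}) with the linear dependence on $1/\sigma_0$ claimed in the lemma.

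The remaining three estimates follow the same scheme. The homogeneous versions (\ref{producthomo1}) and (\ref{producthomo2}) are obtained by tracking whether the top derivative in the Leibniz sum lands on $f$ (producing $\|f\|_{\dot X_\sigma}\|g\|_{X_\sigma}$) or on $g$ (producing $\|f\|_{X_\sigma}\|g\|_{\dot X_\sigma}$), while the $Y_\sigma$ and $\dot Y_\sigma$ versions absorb the extra $j^2$ weight via $n\le(k+1)+(n-k+1)$, distributing a factor to either side of the product. The main obstacle throughout is bookkeeping rather than analysis: one must carefully align the $(j!)^{-4}$ weight with the binomial coefficient from Leibniz so that the combinatorial factors collapse to the kernel $M(n,k)$, since a coarser Gevrey scale would leave an unbounded residue. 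Once that cancellation is identified the rest is a routine Cauchy-Schwarz and Fubini argument, uniform in $\sigma\ge\sigma_0>0$.
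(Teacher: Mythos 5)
Your proposal is correct, and while it shares the overall skeleton of the paper's argument -- both prove only~(\ref{product1}) in detail via Leibniz, split the sum at $\lfloor n/2\rfloor$, place the less-differentiated factor in $L^\infty$, and exploit the $(j!)^{-4}$ weight to kill the binomial coefficient -- the bookkeeping is genuinely different. The paper bounds $\|\partial_\alpha^k f\|_{L^\infty}$ by Lemma~\ref{sobolev} (hence by $\|f\|_{X_\sigma}$ wholesale, discarding the $\ell^2$ structure in $k$) and then closes with the ad hoc factorial bound $\binom{n}{k}\gtrsim n^4$ for $5\le k\le n/2$, $n>10$, handling small $n$ and small $k$ separately. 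You instead interpolate via Agmon's inequality $\|\partial_\alpha^k f\|_{L^\infty}^2\le 2\|\partial_\alpha^k f\|_{L^2}\|\partial_\alpha^{k+1}f\|_{L^2}$, which retains the $\ell^2$ structure; after normalizing to $a_j=\sigma^j(j!)^{-2}\|\partial_\alpha^j f\|_{L^2}$ your identity $\binom{n}{k}\cdot\frac{k!\,(k+1)!\,((n-k)!)^2}{(n!)^2}=\frac{k+1}{\binom{n}{k}}$ (which checks out) produces the clean kernel $M(n,k)=(k+1)/\binom{n}{k}$, and the result becomes a Young/Cauchy--Schwarz bound, with $\sup_{k\ge 1}\sum_{n\ge 2k}M(n,k)^2<\infty$ immediate from $\binom{n}{k}\ge 2^{k-1}(n-k+1)$ for $n\ge 2k$. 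Two things your version buys: it avoids the paper's piecewise case-splitting, and it treats the $k=0$ endpoint correctly. Indeed, the paper's displayed intermediate claim $\frac{\sigma^n}{(n!)^2}I_1\le Cn^{-2}\|f\|_{X_\sigma}\|g\|_{X_\sigma}$ is not literally valid at $k=0$ -- that term contributes $O(\|f\|_{L^\infty}\,b_n)$, which is $\ell^2$-summable in $n$ but not pointwise $O(n^{-2})$ -- and your explicit observation that ``$M(n,0)\,b_n$ is directly $\ell^2$-summable'' is precisely the correct repair. The trade-off is that your reduction is slightly more abstract, whereas the paper's is more elementary at the cost of this imprecision.
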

\begin{proof}
We prove (\ref{product1}) only. 

\vspace*{1ex}

\noindent First,  note  that 
\begin{equation}
    \Big(\sum_{n=0}^{10} \frac{\sigma^{2n}}{(n!)^4}\|\partial_{\alpha}^n(fg)\|_{L^2}^2\Big)^{1/2}\leq C\|f\|_{X_{\sigma}}\|g\|_{X_{\sigma}}.
\end{equation}

\vspace*{1ex}

\noindent Second, for $n>10$, we estimate $\|\partial_{\alpha}^n(fg)\|_{L^2}$ by
\begin{align}
    \|\partial_{\alpha}^n(fg)\|_{L^2}\leq & \sum_{k=0}^n \frac{n!}{k!(n-k)!}\|f^{(k)}g^{(n-k)}\|_{L^2}\\
    \leq &\sum_{k=0}^{[n/2]}\frac{n!}{k!(n-k)!}\|f^{(k)}g^{(n-k)}\|_{L^2}+\sum_{k=[n/2]+1}^{n}\frac{n!}{k!(n-k)!}\|f^{(k)}g^{(n-k)}\|_{L^2}\\
    :=& I+II.
\end{align}
Here, $f^{(k)},  g^{(n-k)}$ represent the $k$-th and $(n-k)$-th derivative of $f$ and $g$, respectively. For $I$, we estimate $f^{(k)}$ in $L^{\infty}$ and estimate $g^{(n-k)}$ in $L^2$, and we have 
\begin{equation}\label{goodfaccc}
\begin{split}
    \|f^{(k)}g^{(n-k)}\|_{L^2}\leq &\|f^{(k)}\|_{L^{\infty}}\|g^{(k)}\|_{L^2}\leq C(\frac{(k!)^2}{\sigma^k}+\frac{((k+1)!)^2}{\sigma^{k+1}})\|f\|_{X_{\sigma}}\frac{((n-k)!)^2}{\sigma^{n-k}}\|g\|_{X_{\sigma}}\\
    =& C(1+\frac{(k+1)^2}{\sigma})\frac{(k!)^2((n-k)!)^2}{\sigma^n} \|f\|_{X_{\sigma}}\|g\|_{X_{\sigma}}.
\end{split}
\end{equation}
Moreover, we further decompose $I$ as 
\begin{align}
    I=\sum_{k=0}^{4}\frac{n!}{k!(n-k)!}\|f^{(k)}g^{(n-k)}\|_{L^2}+\sum_{k=5}^{[n/2]}\frac{n!}{k!(n-k)!}\|f^{(k)}g^{(n-k)}\|_{L^2}:=I_1+I_2.
\end{align}
For $n>10$ and $0\leq k\leq 4$, we have
\begin{equation}
    \frac{\sigma^n}{(n!)^2}I_1\leq Cn^{-2}\|f\|_{X_{\sigma}}\|g\|_{X_{\sigma}}.
\end{equation}
For $k\geq 5$ and $n> 10$, we have 
\begin{equation}\label{factorials}
    \frac{n!}{k!(n-k)!}=\frac{(n-k)!}{(n-k)!}\frac{\prod_{j=0}^{k-1}(n-j)}{k!}\geq Cn^4.
\end{equation}
Using (\ref{goodfaccc}) and (\ref{factorials}), we obtain
\begin{align*}
    \frac{\sigma^n}{(n!)^2}I_2\leq & C\sum_{k=5}^{[n/2]}\frac{k!(n-k)!}{n!}\frac{1}{(k!)^2((n-k)!)^2}(1+\frac{k^2}{\sigma})\frac{(k!)^2((n-k)!)^2}{\sigma^n} \|f\|_{X_{\sigma}}\|g\|_{X_{\sigma}}\\
    \leq & C\sum_{k=5}^{[n/2]}\frac{1}{n^4}(1+\frac{k^2}{\sigma})\|f\|_{X_{\sigma}}\|g\|_{X_{\sigma}}\\
    \leq & Cn^{-1}\|f\|_{X_{\sigma}}\|g\|_{X_{\sigma}}.
\end{align*}
So we obtain
\begin{align*}
    \frac{\sigma^n}{(n!)^2}I\leq Cn^{-1}\|f\|_{X_{\sigma}}\|g\|_{X_{\sigma}}.
\end{align*}
Similarly,
\begin{align*}
    \frac{\sigma^n}{(n!)^2}II\leq Cn^{-1}\|f\|_{X_{\sigma}}\|g\|_{X_{\sigma}}.
\end{align*}
Then we obtain
\begin{align*}
    \|fg\|_{X_{\sigma}}^2=&\sum_{n=0}^{\infty}\frac{\sigma^{2n}}{(n!)^4}\|\partial_{\alpha}^n (fg)\|_{L^2}^2\\
    \leq &\sum_{n=0}^{\infty}(1+n^2)^{-1}\|f\|_{X_{\sigma}}^2\|g\|_{X_{\sigma}}^2\\
    \leq & C\|f\|_{X_{\sigma}}^2\|g\|_{X_{\sigma}}^2.
\end{align*}

\end{proof}

We have also the following estimates.
\begin{lemma}\label{producttrilinear}
Let $\sigma\geq \sigma_0>0$ and $f, g\in X_{\sigma}$. If in addition $$\sum_{n=1}^{\infty}\frac{n^3\sigma^{2n}}{(n!)^4}\|\partial_{\alpha}^nf\|_{L^2}^2+\sum_{n=1}^{\infty}\frac{n\sigma^{2n}}{(n!)^4}\|\partial_{\alpha}^n g\|_{L^2}^2<\infty,$$
then 
\begin{equation}
\begin{split}
   \Big| \sum_{n=1}^{\infty}\frac{\sigma^{2n}}{(n!)^4}\langle \partial_{\alpha}^{n+1}(fg), \partial_{\alpha}^n g\rangle \Big|\leq & d_1\|f\|_{\dot{X}_{\sigma}}\|g\|_{\dot{X}_{\sigma}}^2+d_1\|g\|_{X_{\sigma}}\Big(\sum_{n=1}^{\infty}\frac{n^3\sigma^{2n}}{(n!)^4}\|\partial_{\alpha}^n f\|_{L^2}^2\Big)^{1/2}\Big(\sum_{n=1}^{\infty}\frac{n\sigma^{2n}}{(n!)^4}\|\partial_{\alpha}^n g\|_{L^2}^2\Big)^{1/2},
   \end{split}
\end{equation}
where $d_1=d(\sigma_0^{-1}+1)$ for some absolute constant $d>0$.  In particular, if $\sigma_0\geq 1$, then $d_1$ is an absolute constant.
\end{lemma}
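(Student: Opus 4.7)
The plan is to expand $\partial_\alpha^{n+1}(fg)$ by the Leibniz rule, isolate the top-order piece where a half-derivative of $g$ must be recovered by integration by parts, and control the rest with Hölder's inequality, the Sobolev embedding of Lemma~\ref{sobolev}, and a weighted Cauchy--Schwarz.

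Writing
\[
\partial_\alpha^{n+1}(fg) \;=\; \sum_{k=0}^{n+1}\binom{n+1}{k}f^{(k)}g^{(n+1-k)},
\]
the $k=0$ piece is the only one that loses a full derivative of $g$ under a naive estimate. For this piece I would invoke the integration-by-parts identity
\[
\langle f g^{(n+1)},\,g^{(n)}\rangle \;=\; -\tfrac12\int f'\,|g^{(n)}|^2\,d\alpha,
\]
valid for the real-valued $f,g$ which arise in the water-waves applications of the lemma (for complex data one decomposes into real and imaginary parts before applying it). Bounding by $\tfrac12\|f'\|_{L^\infty}\|g^{(n)}\|_{L^2}^2$, summing against $\sigma^{2n}/(n!)^4$, and applying the homogeneous version of Lemma~\ref{sobolev} in the form $\|f'\|_{L^\infty}\le C(1+\sigma_0^{-1})\|f\|_{\dot X_\sigma}$ (coming from $\|f'\|_{L^\infty}\lesssim \|f'\|_{L^2}+\|f''\|_{L^2}$, both of which are controlled by $\|f\|_{\dot X_\sigma}$) produces precisely the first term $d_1\|f\|_{\dot X_\sigma}\|g\|_{\dot X_\sigma}^2$.

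For the remaining terms $1\le k\le n+1$ I would apply Hölder in the pattern $L^\infty\cdot L^2\cdot L^2$, putting the factor with the fewest derivatives in $L^\infty$: $f^{(k)}$ when $k\le\lfloor n/2\rfloor$ and $g^{(n+1-k)}$ (or $g$ itself when $k=n+1$) when $k>\lfloor n/2\rfloor$. In both ranges Lemma~\ref{sobolev} reduces the $L^\infty$ factor to a factorial-polynomial in $n,k$ times $\|g\|_{X_\sigma}$ (respectively $\|f\|_{X_\sigma}$ in the small-$k$ range, which one then absorbs into the $n^3$-weighted $f$-sum using the same factorial distribution); the two remaining $L^2$ factors are $\|f^{(k)}\|_{L^2}$ and $\|g^{(n)}\|_{L^2}$. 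One distributes the weight $\sigma^{2n}/(n!)^4$ and the binomial coefficient $\binom{n+1}{k}$ across the three factors exactly as in the proof of Lemma~\ref{lemmaproduct}, using the identities
\[
\binom{n+1}{k}\frac{k!(n+1-k)!}{n!}=n+1, \qquad \frac{(n+1)\,k!(n+1-k)!}{n!}\le Cn^{-4}\binom{n+1}{k}^{-1} \text{ for } k\ge 5,
\]
to extract a summable residue in $n$. A final Cauchy--Schwarz in $n$ (after reindexing $k$ or $n+1-k$) converts the product $\|f^{(k)}\|_{L^2}\|g^{(n)}\|_{L^2}$ into the two weighted sums appearing on the right-hand side; the asymmetric weights $n^3$ and $n$ arise because the Leibniz reindexing shifts the $\sigma$-exponent by at most two, leaving a residual factor $n^2$ that Cauchy--Schwarz splits as $n^{3/2}\cdot n^{1/2}$.

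The main obstacle is the $k=0$ term: without the integration-by-parts identity, a direct Hölder estimate produces $\|g^{(n+1)}\|_{L^2}\|g^{(n)}\|_{L^2}$, which after reindexing would demand an $n^4$-weighted norm of $g$ on the right-hand side -- strictly stronger than the $n$-weighted norm actually appearing in the statement. A secondary difficulty is the combinatorial bookkeeping for intermediate $k$: the split at $\lfloor n/2\rfloor$ is essential so that the factorial factor $((\lceil n/2\rceil+1)!)^2/\sigma^{\lceil n/2\rceil+1}$ produced by Lemma~\ref{sobolev} on the $L^\infty$ factor is exactly absorbed by the corresponding ratio in $\binom{n+1}{k}/(n!)^4$, yielding a summable tail in $n$ after Cauchy--Schwarz and the claimed constant $d_1=d(\sigma_0^{-1}+1)$.
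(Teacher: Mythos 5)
Your proposal follows essentially the same route as the paper: isolate the two top-order Leibniz terms, handle $f\partial_\alpha^{n+1}g$ by the integration-by-parts identity $\langle f\partial_\alpha^{n+1}g,\partial_\alpha^n g\rangle = -\tfrac12\int f'|\partial_\alpha^n g|^2$ to yield $d_1\|f\|_{\dot X_\sigma}\|g\|_{\dot X_\sigma}^2$, handle $g\partial_\alpha^{n+1}f$ by placing $g$ in $L^\infty$ and splitting the weight as $n^{3/2}\cdot n^{1/2}$ in a Cauchy--Schwarz after reindexing $n\mapsto n+1$, and reduce the intermediate Leibniz terms $1\le k\le n$ to the mechanics of Lemma~\ref{lemmaproduct}. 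One small slip: after you place $f^{(k)}$ in $L^\infty$, the remaining $L^2$ pair is $\|g^{(n+1-k)}\|_{L^2}\|g^{(n)}\|_{L^2}$, not $\|f^{(k)}\|_{L^2}\|g^{(n)}\|_{L^2}$; consequently those intermediate terms contribute only to the first term $d_1\|f\|_{\dot X_\sigma}\|g\|_{\dot X_\sigma}^2$, and the second term on the right-hand side comes solely from the $k=n+1$ piece, as your final sentence about the residual $n^2$ factor in fact indicates.
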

\begin{proof}
We have 
\begin{align*}
    \partial_{\alpha}^{n+1}(fg)=g\partial_{\alpha}^{n+1}f+f\partial_{\alpha}^{n+1}g+\sum_{k=1}^{n+1}\frac{(n+1)!}{k!(n+1-k)!}\partial_{\alpha}^kf \partial_{\alpha}^{n+1-k}g.
\end{align*}
Using the same proof as in Lemma \ref{lemmaproduct}, we obtain
\begin{align*}
    \Big| \sum_{n=1}^{\infty}\frac{\sigma^{2n}}{(n!)^4}\langle \Big(\sum_{k=1}^{n+1}\frac{(n+1)!}{k!(n+1-k)!}\partial_{\alpha}^kf \partial_{\alpha}^{n+1-k}g\Big), \partial_{\alpha}^n g\rangle \Big|\leq d_1\|f\|_{\dot{X}_{\sigma}}\|g\|_{\dot{X}_{\sigma}}^2.
\end{align*}
By Cauchy-Schwarz inequality, we have  
\begin{align*}
    \Big| \sum_{n=1}^{\infty}\frac{\sigma^{2n}}{(n!)^4}\langle g\partial_{\alpha}^{n+1}f, \partial_{\alpha}^n g\rangle \Big|\leq & \Big(\sum_{n=1}^{\infty}\frac{\sigma^{2n}}{n(n!)^4}\|g\partial_{\alpha}^{n+1}f\|_{L^2}^2\Big)^{1/2}\Big(\sum_{n=1}^{\infty}\frac{n\sigma^{2n}}{(n!)^4}\|\partial_{\alpha}^n g\|_{L^2}^2\Big)^{1/2}\\
    \leq &\|g\|_{L^{\infty}}\Big(\sum_{n=1}^{\infty}\frac{\sigma^{2n}}{n(n!)^4}\|\partial_{\alpha}^{n+1}f\|_{L^2}^2\Big)^{1/2}\Big(\sum_{n=1}^{\infty}\frac{n\sigma^{2n}}{(n!)^4}\|\partial_{\alpha}^n g\|_{L^2}^2\Big)^{1/2}\\
    =&\|g\|_{L^{\infty}}\Big(\sum_{n=1}^{\infty}\frac{\sigma^{2(n+1)}}{((n+1)!)^4}\frac{((n+1)!)^4}{\sigma^2n(n!)^4}\|\partial_{\alpha}^{n+1}f\|_{L^2}^2\Big)^{1/2}\Big(\sum_{n=1}^{\infty}\frac{n\sigma^{2n}}{(n!)^4}\|\partial_{\alpha}^n g\|_{L^2}^2\Big)^{1/2}\\
    \leq & d_1\|g\|_{L^{\infty}}\Big(\sum_{n=1}^{\infty}\frac{n^3\sigma^{2n}}{(n!)^4}\|\partial_{\alpha}^n f\|_{L^2}^2\Big)^{1/2}\Big(\sum_{n=1}^{\infty}\frac{n\sigma^{2n}}{(n!)^4}\|\partial_{\alpha}^n g\|_{L^2}^2\Big)^{1/2},
\end{align*}
where we can take $d_1=2\sigma^{-1}+K$, for some absolute constant $K>0$.
Integration by parts,  we obtain
\begin{align*}
    \Big| \sum_{n=1}^{\infty}\frac{\sigma^{2n}}{(n!)^4}\langle f\partial_{\alpha}^{n+1}g, \partial_{\alpha}^n g\rangle \Big|=\Big| \sum_{n=1}^{\infty}\frac{\sigma^{2n}}{(n!)^4}\langle f_{\alpha}\partial_{\alpha}^{n}g, \partial_{\alpha}^n g\rangle \Big|\leq \|f_{\alpha}\|_{L^{\infty}}\|g\|_{\dot{X}_{\sigma}}^2\leq d_1\|f\|_{\dot{X}_{\sigma}}\|g\|_{\dot{X}_{\sigma}}^2.
\end{align*}
Using $\|g\|_{L^{\infty}}\leq 2(1+\sigma^{-1})\|g\|_{X_{\sigma}}$, we conclude the proof of the lemma.
\end{proof}

\subsection{Commutator estimates in Gevrey spaces}
Let $f,g\in X_{\sigma}$. Define
\begin{equation}
    S_1(g,f)(\alpha,t):=\frac{1}{\pi}p.v.\int_{\mathbb{R}}\frac{g(\alpha,t)-g(\beta,t)}{(\alpha-\beta)^2}f(\beta,t)d\beta.
\end{equation}
\begin{equation}
    S_2(g,f)h(\alpha,t):=\frac{1}{\pi}p.v.\int_{\mathbb{R}}\frac{(g(\alpha,t)-g(\beta,t))(f(\alpha,t)-f(\beta,t))}{(\alpha-\beta)^2}\frac{h_{\beta}}{Z_{\beta}}d\beta.
\end{equation}
The following commutator estimates are the Gevrey version of the Sobolev counterpart.
\begin{lemma}\label{commutator1}
\begin{itemize}
\item [1.] Let $f\in Y_{\sigma}$, $g\in X_{\sigma}$, then
\begin{equation}\label{commutator111}
    \|[f,\mathbb{H}]g_{\alpha}\|_{X_{\sigma}}\leq C\|f\|_{\dot{X}_{\sigma}}\|g\|_{X_{\sigma}}.
\end{equation}
\begin{equation}\label{commutator112}
    \|[f,\mathbb{H}]g_{\alpha}\|_{Y_{\sigma}}\leq C\|f\|_{\dot{Y}_{\sigma}}\|g\|_{X_{\sigma}}.
\end{equation}

\item [2.] Let $f, g, h\in X_{\sigma}$, $Z-\alpha\in \dot{Y}_{\sigma}$. There holds
\begin{equation}\label{commutator113}
    \|S_2(g,f)h\|_{X_{\sigma}}\leq C\|f\|_{X_{\sigma}}\|g\|_{X_{\sigma}}\|h\|_{X_{\sigma}}(1+\|Z-\alpha\|_{\dot{Y}_{\sigma}}).
\end{equation}

\item [(3.)]  If in addition $\sum_{n=1}^{\infty}\frac{n^3\sigma^{2n}}{(n!)^4}\|\partial_{\alpha}^n  f\|_{L^2}^2<\infty$, and $\sum_{n=1}^{\infty}\frac{n\sigma^{2n}}{(n!)^4}\|\partial_{\alpha}^n g\|_{L^2}^2<\infty$, then
\begin{equation}\label{commutator114}
    \|[f_{\alpha}, \mathbb{H}]g\|_{X_{\sigma}}\leq C\Big(\sum_{n=1}^{\infty}\frac{n^3\sigma^{2n}}{(n!)^4}\|\partial_{\alpha}^n  f\|_{L^2}^2+\sum_{n=1}^{\infty}\frac{n\sigma^{2n}}{(n!)^4}\|\partial_{\alpha}^n g\|_{L^2}^2\Big)+C\|g\|_{L^2}^2.
\end{equation}
\end{itemize}
\end{lemma}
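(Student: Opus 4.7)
The plan is to reduce each estimate to its classical $L^2$-based Sobolev counterpart, applied termwise to a Leibniz-type expansion, and then to perform the combinatorial summation against the Gevrey-2 weights $\sigma^{2n}/(n!)^4$. Starting from the identity $\partial_\alpha[f,\mathbb{H}] = [\partial_\alpha f,\mathbb{H}]+[f,\mathbb{H}]\partial_\alpha$, iteration yields
$$\partial_\alpha^n\big([f,\mathbb{H}]g_\alpha\big) = \sum_{k=0}^n\binom{n}{k}[\partial_\alpha^k f,\mathbb{H}]\partial_\alpha^{n-k+1}g.$$
I would apply the classical commutator bound $\|[\phi,\mathbb{H}]\psi_\alpha\|_{L^2}\le C\|\phi_\alpha\|_{L^\infty}\|\psi\|_{L^2}$ when $k\le n/2$ and its dual form $\|[\phi,\mathbb{H}]\psi_\alpha\|_{L^2}\le C\|\phi\|_{L^2}\|\psi_\alpha\|_{L^\infty}$ when $k>n/2$. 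On each term, Lemma \ref{sobolev} converts the $L^\infty$ factor into $\|f\|_{\dot X_\sigma}$ or $\|g\|_{X_\sigma}$ multiplied by an explicit factorial weight.

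The core combinatorial step is that, after multiplying by $\sigma^{2n}/(n!)^4$ and applying Cauchy-Schwarz in $k$, the double sum $\sum_n\sum_k$ factors into a product of two Gevrey norms. This works because the binomial $\binom{n}{k}$, squared and divided by the ambient weight $(n!)^4$, leaves behind $(k!)^{-2}((n-k)!)^{-2}$, which matches exactly the pair of factorial losses from the two Sobolev embeddings applied to $\partial_\alpha^kf$ and $\partial_\alpha^{n-k}g$; the residual polynomial factors (such as $(k+1)^2$) become summable after splitting a weight of the form $1 = n^{-\varepsilon}\cdot n^{\varepsilon}$. Estimate (\ref{commutator112}) is obtained identically with one extra $n^2$ weight absorbed into $\|f\|_{\dot Y_\sigma}$. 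For (\ref{commutator113}), I would rewrite $S_2(g,f)h$ as a nested double commutator of $\mathbb{H}$ with $f$ and $g$ acting on $h_\alpha/Z_\alpha$, reducing to two applications of the Leibniz expansion; the auxiliary factor $1/Z_\alpha$ is controlled by the Neumann series $1/Z_\alpha = \sum_{j\ge 0}(-1)^j(Z_\alpha-1)^j$ together with Lemma \ref{lemmaproduct}, with convergence ensured by $\|Z-\alpha\|_{\dot Y_\sigma}<\infty$. Estimate (\ref{commutator114}) uses the same expansion with $f_\alpha$ sitting in the commutator slot; the stronger weighted hypotheses are precisely calibrated so that Cauchy-Schwarz with the splits $n^{-3/2}\cdot n^{3/2}$ on the $f$ side and $n^{-1/2}\cdot n^{1/2}$ on the $g$ side recovers summability, with the $\|g\|_{L^2}^2$ term absorbing the $n=0$ contribution.

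The main obstacle will be the boundary indices $k=0$ and $k=n$ in the Leibniz sum, where a direct application of the Sobolev commutator estimate demands one derivative more than is available (either $\|\partial_\alpha^{n+1}f\|_{L^\infty}$ at $k=n$, or $g$ differentiated $n+1$ times in $L^\infty$ at $k=0$). The remedy is to switch which side carries the $L^\infty$ norm at the crossover near $k\approx n/2$; the two dual commutator estimates make this rearrangement bound-preserving, and the Gevrey-2 weight $(n!)^{-4}$ is precisely tuned to absorb the resulting double factorial loss $(k!)^2((n-k)!)^2$. Once this endpoint bookkeeping is set up, the remainder of the argument is routine, and all four inequalities come out uniformly in $\sigma$ for $\sigma\ge\sigma_0>0$.
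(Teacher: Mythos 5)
Your stated ``dual'' commutator bound $\|[\phi,\mathbb{H}]\psi_\alpha\|_{L^2}\le C\|\phi\|_{L^2}\|\psi_\alpha\|_{L^\infty}$ is false: expanding $[\phi,\mathbb{H}]\psi_\alpha=\phi\,\mathbb{H}\psi_\alpha-\mathbb{H}(\phi\psi_\alpha)$, the second term is controlled as you expect, but the first requires $\|\mathbb{H}\psi_\alpha\|_{L^\infty}$, which is \emph{not} bounded by $\|\psi_\alpha\|_{L^\infty}$ since the Hilbert transform is unbounded on $L^\infty$; there is also no hidden cancellation, as the Fourier representation shows that the symbol $|\eta|\,|\mathrm{sgn}(\xi)-\mathrm{sgn}(\eta)|$ gives at best $\|\phi\|_{L^2}\|\widehat{\psi_\alpha}\|_{L^1}$, a Wiener norm, not an $L^\infty$ norm. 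The estimate the paper actually relies on is $\|[\phi,\mathbb{H}]\psi_\alpha\|_{L^2}\le C\|\phi\|_{L^2}\|\psi\|_{H^1}$, and if you substitute this for your dual form the endpoint bookkeeping near $k\approx n$ has to be redone (now two derivatives of $\psi$ land in $L^2$ rather than one in $L^\infty$), but the binomial combinatorics from Lemma \ref{lemmaproduct} still close, so the approach is repairable.

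Beyond this gap, your route is genuinely different from, and more complicated than, what the paper does. The paper does not Leibniz-expand the commutator at all. Instead it uses the single Fourier observation $|\xi|^n\,|\mathrm{sgn}(\xi)-\mathrm{sgn}(\eta)|\le 2\,|\xi-\eta|^n$ (valid because the symbol vanishes unless $\xi,\eta$ have opposite signs, in which case $|\xi|\le|\xi-\eta|$), which transfers all $n$ derivatives onto $f$ at once and gives
$$
\bigl|\widehat{\partial_\alpha^n[f,\mathbb{H}]g_\alpha}(\xi)\bigr|
\lesssim \bigl(|\widehat{\partial_\alpha^n f}|*|\widehat{\partial_\alpha g}|\bigr)(\xi),
\qquad
\|\partial_\alpha^n[f,\mathbb{H}]g_\alpha\|_{L^2}\lesssim \|\partial_\alpha^n f\|_{L^2}\,\|\widehat{g_\alpha}\|_{L^1}.
$$
The factor $\|\widehat{g_\alpha}\|_{L^1}$ is independent of $n$ and is absorbed into $\|g\|_{X_\sigma}$ by splitting $\int_{|\xi|\le1}+\int_{|\xi|\ge1}$, so the sum over $n\ge1$ against $\sigma^{2n}/(n!)^4$ produces $\|f\|_{\dot X_\sigma}^2$ immediately, with the $n=0$ term handled separately by the Calderón bound $\|f'\|_{L^\infty}\|g\|_{L^2}$. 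This one-step argument avoids the entire crossover structure you set up, and in particular never forces a commutator bound with $\psi$ measured in $L^\infty$. I would recommend replacing your Leibniz-and-crossover mechanism with this direct Fourier-multiplier argument; it is shorter, it sidesteps the false estimate, and the same symbol observation underlies (\ref{commutator112}) and (\ref{commutator114}) with the weighted Cauchy--Schwarz splits you already describe.
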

\begin{proof}
We prove (\ref{commutator111}) only. It suffices to notice that 
\begin{equation}\label{easyhelp}
    \|[f,\mathbb{H}]g_{\alpha}\|_{L^2}\leq C\min\{ \|f\|_{L^{2}}\|g\|_{H^1},  \|f'\|_{L^{\infty}}\|g\|_{L^2}\}.
    \end{equation}
(\ref{easyhelp}) follows easily from Fourier analysis, we omit the proof. Using (\ref{easyhelp}), we obtain 
\begin{itemize}
\item [(1)] For $n=0$:
\begin{equation}
    \|[f,\mathbb{H}]g_{\alpha}\|_{L^2}\leq C\|f'\|_{L^{\infty}}\|g\|_{L^2}\leq C\|f\|_{\dot{X}_{\sigma}}\|g\|_{X_{\sigma}}.
\end{equation}

\item [(2)] For $n\geq 1$, we have 
\begin{equation}\label{help2}
    \|\partial_{\alpha}^n[f,\mathbb{H}]g_{\alpha}\|_{L^2}\leq C(\|\partial_{\alpha}^n\|_{L^2}\|g\|_{X_{\sigma}}.
\end{equation}
\end{itemize}
Indeed, let $n\geq 1$, using $|\xi|^n |sgn(\xi)-sgn(\eta)|\leq |\xi-\eta|^n$, we have 
\begin{align*}
    \Big|\widehat{\partial_{\alpha}^n [f,\mathbb{H}]g_{\alpha}}(\xi)\Big|\leq & |\xi|^n \int |sgn(\eta)-sgn(\xi)||\eta| \hat{f}(\xi-\eta)\hat{g}(\eta)|d\eta\\
    \leq & \int |\widehat{\partial_{\alpha}^n f}(\xi-\eta) \widehat{\partial_{\alpha}g}(\eta)| d\eta\\
    =& |\widehat{\partial_{\alpha}^n f}|\ast |\widehat{\partial_{\alpha}g}|(\xi)
\end{align*}
So 
\begin{align*}
    \|\partial_{\alpha}^n [f, \mathbb{H}]g_{\alpha}\|_{L^2}\leq & \||\widehat{\partial_{\alpha}^n f}|\ast |\widehat{\partial_{\alpha}g}|\|_{L^2}\leq \|\widehat{\partial_{\alpha}^n f}\|_{L^2}\|\widehat{\partial_{\alpha}g}\|_{L^1}.
\end{align*}
By Plancherel Theorem,  $\||\widehat{\partial_{\alpha}^n f}|\ast |\widehat{\partial_{\alpha}g}|\|_{L^2}=\|\partial_{\alpha}^n f\|_{L^2}$. Using
\begin{align*}
\widehat{g_{\alpha}}(\xi)=\frac{1}{|\xi|}|\xi|\widehat{g_{\alpha}}(\xi)=\frac{1}{|\xi|}\widehat{\partial_{\alpha}^2 g},
\end{align*}
we have
\begin{align*}
    \|\widehat{\partial_{\alpha}g}\|_{L^1}\leq \int_{|\xi|\leq 1}|\widehat{\partial_{\alpha}g}(\xi)|d\xi+\int_{|\xi|\geq 1}\frac{1}{|\xi|}|\widehat{\partial_{\alpha}^2 g}(\xi)|d\xi\leq \|g_{\alpha}\|_{H^1}\leq C\|g\|_{X_{\sigma}}.
\end{align*}
So we obtain (\ref{help2}), and therefore conclude the proof of the lemma.
\end{proof}

\begin{lemma}\label{reciprocal}
Let $H\in X_{\sigma}$ be such that $1+H\geq c_0$ for some constant $c_0>0$. Then

\begin{itemize}
\item [(1)] $\frac{1}{H+1}-1\in X_{\sigma}$, and 
\begin{equation}
    \norm{\frac{1}{H+1}-1}_{X_{\sigma}}\leq C(c_0)\|H\|_{X_{\sigma}}.
\end{equation}

\item [(2)] If in addition $\sum_{n=1}^{\infty}\frac{n\sigma^{2n}}{(n!)^4}\|\partial_{\alpha}^n H\|_{L^2}^2<\infty$, then 
\begin{equation}
    \sum_{n=1}^{\infty}\frac{n\sigma^{2n}}{(n!)^4}\norm{\partial_{\alpha}^n \Big(\frac{1}{H+1}-1\Big)}_{L^2}^2\leq C(c_0)\sum_{n=1}^{\infty}\frac{n\sigma^{2n}}{(n!)^4}\|\partial_{\alpha}^n H\|_{L^2}^2.
\end{equation}
\end{itemize}
Here $C(c_0)$ is a constant depending on $c_0$.
\end{lemma}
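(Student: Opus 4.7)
The function $G := 1/(1+H)-1$ satisfies the pointwise algebraic identity $(1+H)\,G = -H$. By Lemma \ref{sobolev}, $H \in X_\sigma \hookrightarrow L^\infty$, so combined with $1+H \geq c_0 > 0$ we obtain $\|G\|_{L^\infty} \leq 1 + 1/c_0$ and the base case $\|G\|_{L^2} \leq c_0^{-1}\|H\|_{L^2}$, which takes care of the $n=0$ contribution in the $X_\sigma$ sum.

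For $n \geq 1$, I would differentiate the identity $(1+H)G = -H$ via the Leibniz rule and solve for $\partial_\alpha^n G$:
\begin{equation*}
\partial_\alpha^n G \,=\, -\frac{\partial_\alpha^n H}{1+H} \,-\, \frac{1}{1+H}\sum_{k=1}^{n} \binom{n}{k}\,\partial_\alpha^k H\, \partial_\alpha^{n-k}G.
\end{equation*}
Taking $L^2$ norms, using the pointwise bound $1/(1+H) \leq 1/c_0$, and multiplying by the Gevrey weight $\sigma^n/(n!)^2$ reduces the proof to estimating the weighted $\ell^2$ norm of the Leibniz cross terms. Following the strategy of Lemma \ref{lemmaproduct}, I would split the sum at $k = n/2$: for $1 \leq k \leq n/2$, estimate $\partial_\alpha^k H$ in $L^\infty$ via Lemma \ref{sobolev}, which — after the key identity $\sigma^n/(n!)^2 \cdot \binom{n}{k}\,(k!)^2\sigma^{-k} = (1/\binom{n}{k}) \cdot \sigma^{n-k}/((n-k)!)^2$ — produces a factor $1/\binom{n}{k}$ multiplying the coefficient of $\|G\|_{X_\sigma}$; for $n/2 < k < n$, swap the roles and put $\partial_\alpha^{n-k}G$ in $L^\infty$; finally, the exceptional endpoint $k=n$ is handled by the pointwise bound $\|G\|_{L^\infty} \leq 1 + 1/c_0$. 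Cauchy-Schwarz together with $\sum_{k}\binom{n}{k}^{-2} \lesssim n^{-2}$ then delivers a closed inequality of the schematic form $\|G\|_{X_\sigma} \leq C(c_0)\,\|H\|_{X_\sigma}\,(1 + \|G\|_{X_\sigma})$.

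The main obstacle is closing this functional inequality: the direct bootstrap works immediately when $\|H\|_{X_\sigma}$ is small, but for general $H$ one must absorb $\|G\|_{X_\sigma}$ on the right-hand side by exploiting the extra inverse-factorial factors coming from Lemma \ref{sobolev}, and allow the final constant to depend on $c_0$ (and implicitly on the $L^\infty$ bound for $H$ provided by the embedding $X_\sigma \hookrightarrow L^\infty$). Part (2) follows from the identical scheme applied to the weight $n\sigma^{2n}/(n!)^4$ in place of $\sigma^{2n}/(n!)^4$: the extra factor $n$ is apportioned throughout the Leibniz expansion to the term carrying the highest-order derivative (which is precisely the factor estimated in $L^2$), so the combinatorial estimates go through without qualitative change and yield the desired bound of $\sum_n n\sigma^{2n}/(n!)^4\|\partial_\alpha^n G\|_{L^2}^2$ in terms of the corresponding weighted seminorm of $H$.
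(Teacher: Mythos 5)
Your algebraic starting point $(1+H)G = -H$ and the Leibniz-rule expansion $(1+H)\partial_\alpha^n G = -\partial_\alpha^n H - \sum_{k=1}^n \binom{n}{k}\partial_\alpha^k H\,\partial_\alpha^{n-k} G$ (observe the $k=0$ term is absorbed into the left side, which you implicitly do by dividing by $1+H$) is the natural route and is exactly the "similar to Lemma~\ref{lemmaproduct}" template the paper invokes; the $L^2$ base case, the $k\le n/2$ vs.\ $k>n/2$ split with Lemma~\ref{sobolev}, and the combinatorial identity you quote are all in order.

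The genuine gap is precisely where you flag it, but your proposed fix does not actually close it. After the factorial accounting, the $k=1$ Leibniz term in the sum for $\partial_\alpha^n G$ contributes, after multiplication by $\sigma^n/(n!)^2$, a quantity of size $\frac{\sigma\|\partial_\alpha H\|_{L^\infty}}{n}\,a_{n-1}$ where $a_m := \frac{\sigma^m}{(m!)^2}\|\partial_\alpha^m G\|_{L^2}$, and more generally the $k\le n/2$ block yields a coefficient of order $\|H\|_{X_\sigma}/n$ in front of $a_{n-k}$. Squaring and summing, the homogeneous part contributes roughly $C(c_0)^2\,\|H\|_{X_\sigma}^2\,\epsilon\,\|G\|_{X_\sigma}^2$ with $\epsilon$ small but absolute. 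Absorbing this into the left side requires $C(c_0)^2\|H\|_{X_\sigma}^2\epsilon < 1$, which fails for large $\|H\|_{X_\sigma}$; the inverse-factorial factors provide the $\epsilon$, not the missing control on $\|H\|$. Your parenthetical ``allow the final constant to depend \dots implicitly on the $L^\infty$ bound for $H$'' would, if taken at face value, change the statement of the lemma (whose constant is claimed to depend on $c_0$ alone), and in any case the bound you'd need is really on $\|\partial_\alpha H\|_{L^\infty}$, not $\|H\|_{L^\infty}$, so the embedding argument you cite does not supply it. To close for arbitrary $H\in X_\sigma$ one needs an extra ingredient: for instance an a priori estimate combined with a continuity argument in the radius $\sigma'\uparrow\sigma$ (where $\|G\|_{X_{\sigma'}}$ is finite for small $\sigma'$), or an explicit induction on the truncated sum $\sum_{n\le N}a_n^2$ with coefficient tracking that exploits the super-polynomial decay of $1/\binom{n}{k}$ rather than merely the $1/n$ gain. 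None of these appears in your sketch. It is worth noting that the paper also omits the proof — and that in every application in the paper, $\|H\|_{X_\sigma}$ is at most $O(M_{\lambda,h})\le 1$, so the small-data bootstrap you describe \emph{would} suffice for the paper's purposes even if the lemma as stated is more general.

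Your treatment of part (2) inherits the same closure issue; the reweighting by the extra factor $n$ and apportionment to the top-order factor is otherwise sound.
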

The proof of Lemma \ref{reciprocal} is similar to that of Lemma \ref{lemmaproduct}, so we omit the proof.

\begin{lemma}\label{nice}
Let $w\in \mathbb{P}_-$ and define $h(\alpha)=\frac{1}{(\alpha-\omega)^2}$. Then $h\in X_{\sigma}$ for any $\sigma\in \mathbb{R}$, and 
\begin{equation}
    \|h\|_{X_{\sigma}}\leq C\frac{1}{\sqrt{|Im\{w\}|}}e^{\frac{|\sigma|}{4\pi |Im\{w\}|}}
\end{equation}
\end{lemma}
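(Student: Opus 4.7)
The plan is to reduce to a one-variable computation, evaluate the $L^2$ norms of the derivatives explicitly, and then sum the resulting series. By the translation invariance of $\|\cdot\|_{X_\sigma}$ we may assume $\operatorname{Re}(w)=0$, so $w=-iy_0$ with $y_0=|\operatorname{Im}(w)|>0$. Then a direct calculation gives $\partial_\alpha^n h(\alpha)=(-1)^n(n+1)!/(\alpha-w)^{n+2}$, so that
\[
\|\partial_\alpha^n h\|_{L^2}^2=((n+1)!)^2\int_{\mathbb R}\frac{d\alpha}{(\alpha^2+y_0^2)^{n+2}}.
\]
This integral can be evaluated either by residue calculus at the pole $\alpha=iy_0$, or equivalently by Plancherel using the contour-integration identity
\[
\widehat{h}(\xi)=-2\pi\,\xi\, e^{-i\xi w}\,\mathbf{1}_{\xi>0},\qquad |\widehat{h}(\xi)|^2=4\pi^2\xi^2 e^{-2\xi y_0}\mathbf{1}_{\xi>0}.
\]
Either route yields the closed form $\|\partial_\alpha^n h\|_{L^2}^2=\pi(2n+2)!/\bigl(y_0^{2n+3}\,2^{2n+2}\bigr)$.

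Next I would rewrite the defining series for $\|h\|_{X_\sigma}^2$ by Plancherel and Fubini, turning the sum into a single weighted integral on the Fourier side:
\[
\|h\|_{X_\sigma}^2=2\pi\int_0^\infty \xi^2\, e^{-2\xi y_0}\,\Phi(\sigma\xi)\,d\xi,\qquad \Phi(t):=\sum_{n=0}^\infty \frac{t^{2n}}{(n!)^4}.
\]
The key analytic step is to bound $\Phi(t)$ by a Gevrey-type exponential: using $(n!)^2\geq (2n)!/4^n$ one gets $\Phi(t)\leq \sum_{m\geq 0}(4t)^{2m}/((2m)!)^2 \leq CI_0(4\sqrt{t})$, and hence $\Phi(t)\leq C\, e^{4\sqrt{t}}$ (with a polynomial gain one can refine this to $\Phi(t)\lesssim e^{4\sqrt{t}}/t^{3/4}$ for $t\gtrsim 1$). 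Substituting this bound back, the problem reduces to the elementary Laplace-type integral
\[
\int_0^\infty \xi^{2}\, e^{-2\xi y_0+c\sqrt{\sigma\xi}}\,d\xi,
\]
which after the substitution $u=\sqrt{\xi}$ and completion of the square $-2y_0 u^2+cu\sqrt{\sigma}=-2y_0(u-\tfrac{c\sqrt{\sigma}}{4y_0})^2+\tfrac{c^2\sigma}{8y_0}$ is a shifted Gaussian of width $1/\sqrt{y_0}$ centered at $u^*=\tfrac{c\sqrt{\sigma}}{4y_0}$.

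The stated bound then follows by evaluating the Gaussian integral and collecting powers of $y_0$: the Laplace method produces the factor $e^{c^2\sigma/(8y_0)}$ (which, with the sharp constant from a careful saddle-point estimate of $\Phi$, will agree with the $e^{|\sigma|/(4\pi|y_0|)}$ claimed), while the polynomial in $\sigma,y_0$ from expanding the prefactor at $u^*$ against the Gaussian width combines with the $|y_0|^{-3}$ coming from $\hat h$ to yield the single $|y_0|^{-1/2}$ prefactor. I expect the main obstacle to be tracking the sharp constants: the exponential rate in the lemma requires a \emph{quantitatively} sharp bound on the inner series $\Phi(t)$, which in turn comes from tight Stirling-type asymptotics for $(n!)^4$ near the saddle $n\sim \sqrt{\sigma\xi}$. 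Once those constants are in hand, the remaining integration is routine.
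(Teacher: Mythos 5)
Your proposal is essentially the paper's argument: compute the Fourier transform, use Plancherel and the Gamma integral to obtain a closed form for $\|\partial_\alpha^n h\|_{L^2}^2$, and then sum the resulting Gevrey-weighted series. Your Fubini repackaging (pull the sum inside to form the entire function $\Phi$, bound it, then do a Laplace-type integral) is a clean rendering of the summation the paper performs more tersely by estimating factorial quotients; the two are the same computation in different notation. One concrete caution: you write that a sharper saddle-point bound on $\Phi$ will reproduce the stated exponent $e^{|\sigma|/(4\pi|\Im\{w\}|)}$, but it will not. From your exact formula $\|\partial_\alpha^n h\|_{L^2}^2=\pi(2n+2)!/(2^{2n+2}y_0^{2n+3})$ and the Stirling estimate $(2n+2)!\asymp 4^n n^{3/2}(n!)^2$, the series
\[
\|h\|_{X_\sigma}^2=\frac{\pi}{4y_0^3}\sum_{n\ge0}\frac{(2n+2)!}{(n!)^4}\Bigl(\frac{\sigma}{2y_0}\Bigr)^{2n}
\]
is, up to polynomial factors, $y_0^{-3}\sum_n(\sigma/y_0)^{2n}/(n!)^2=y_0^{-3}I_0(2|\sigma|/y_0)\sim y_0^{-3}e^{2|\sigma|/y_0}$, giving $\|h\|_{X_\sigma}\lesssim y_0^{-3/2}\cdot(\text{poly})\cdot e^{|\sigma|/y_0}$ --- a different exponential rate and a smaller power of $y_0$ than the lemma asserts. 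The paper's own displayed derivation does not produce its stated constants either (its Gamma-integral line and the implicit bound $(2n+1)!\le 2^n(n!)^2$ are both off). What is actually needed downstream is only the structural form $\|h\|_{X_\sigma}\le C|\Im\{w\}|^{-a}e^{c|\sigma|/|\Im\{w\}|}$ with $|\Im\{w\}|$ large, which your argument delivers; do not spend effort chasing the $4\pi$.
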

\begin{proof}
The Fourier transform of $h$ is 
\begin{equation}
    \hat{h}(\xi)=-2\pi i (i\xi) e^{-w|\xi|}.
\end{equation}
For $\sigma\in \mathbb{R}$,
\begin{align*}
    \|h\|_{X_{\sigma}}^2=&\sum_{n=0}^{\infty}\frac{\sigma^{2n}}{(n!)^4}\|\partial_{\alpha}^n h\|_{L^2}^2\\
=&\sum_{n=0}^{\infty}\frac{\sigma^{2n}}{(n!)^4}\||\xi|^n \hat{h}(\xi)\|_{L^2}^2\\
=& \sum_{n=0}^{\infty}\frac{\sigma^{2n}}{(n!)^4} \int |\xi|^{2n+2} e^{2\pi Im\{w\}|\xi|}d\xi\\
=& 2\sum_{n=0}^{\infty}\frac{\sigma^{2n}}{(n!)^4}\frac{1}{|2\pi Im\{w\}|^{2n+1}}\int_0^{\infty} |\xi|^{2n} e^{-|\xi|}d\xi\\
=& 2\sum_{n=0}^{\infty}\frac{\sigma^{2n}}{(n!)^4}\frac{1}{|2\pi Im\{w\}|^{2n+1}} (2n+1)!\\
\leq & \frac{1}{2\pi |Im\{w\}|}\sum_{n=0}^{\infty} \Big(\frac{\sigma}{2\pi |Im\{w\}|}\Big)^{2n}\frac{2^n}{(n!)^2}\\
\leq & C \frac{1}{|Im\{w\}|}e^{\frac{|\sigma|}{2\pi |Im\{w\}|}},
\end{align*}
for some absolute constant $C>0$.
\end{proof}

\begin{lemma}\label{shifthalfderivative}
Let $\sigma>0$ and let $f\in X_{\sigma}$, $g, h\in Y_{\sigma}$. Then 
\begin{itemize}
\item [(1)]
\begin{equation}\label{trilinear1}
    \sum_{n=0}^{\infty}\frac{\sigma^{2n}}{(n!)^4}\Big|\langle\partial_{\alpha}^n (f\Lambda g), \partial_{\alpha}^n h\rangle \Big|\leq d_0\|f\|_{X_{\sigma}}\|g\|_{Y_{\sigma}}\|h\|_{Y_{\sigma}},
\end{equation}

\begin{equation}\label{trilinear2}
    \sum_{n=0}^{\infty}\frac{\sigma^{2n}}{(n!)^4}\Big|\langle\partial_{\alpha}^n (f\partial_{\alpha} g), \partial_{\alpha}^n h\rangle \Big|\leq d_0\|f\|_{X_{\sigma}}\|g\|_{Y_{\sigma}}\|h\|_{Y_{\sigma}},
\end{equation}

\begin{equation}\label{trilinear3}
    \sum_{n=0}^{\infty}\frac{\sigma^{2n}}{(n!)^4}\Big|\langle\partial_{\alpha}^n (f\partial_{\alpha} g), \partial_{\alpha}^n g\rangle \Big|\leq d_0\|f\|_{\dot{X}_{\sigma}}\|g\|_{X_{\sigma}}^2,
\end{equation}
where $d_0=K(1+\sigma^{-1})$ for some absolute constant $K>0$. In particular, if $\sigma\geq 1$, then $d_0$ is an absolute constant.
 
 \item [(2)] If in addition $\sum_{n=1}^{\infty}\frac{n^3\sigma^{2n}}{(n!)^4}\|\partial_{\alpha}^ng\|_{L^2}^2<\infty$, and $\sum_{n=1}^{\infty}\frac{n\sigma^{2n}}{(n!)^4}\|\partial_{\alpha}^n h\|_{L^2}^2<\infty$, then
 \begin{equation}\label{trilinear4}
    \sum_{n=1}^{\infty}\frac{\sigma^{2n}}{(n!)^4}\Big|\langle\partial_{\alpha}^n (f\Lambda g), \partial_{\alpha}^n h\rangle \Big|\leq d_0\|f\|_{X_{\sigma}}\Big(\sum_{n=1}^{\infty}\frac{n^3\sigma^{2n}}{(n!)^4}\|\partial_{\alpha}^ng\|_{L^2}^2\Big)^{1/2}\Big(\|h\|_{L^2}^2+\sum_{n=1}^{\infty}\frac{n\sigma^{2n}}{(n!)^4}\|\partial_{\alpha}^nh\|_{L^2}^2\Big)^{1/2}
\end{equation}
\end{itemize}
\end{lemma}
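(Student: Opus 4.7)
The four trilinear estimates in this lemma encode the key algebraic identity in the Gevrey-2 framework: the $Y_\sigma$ norm gains exactly a half-derivative of regularity over $X_\sigma$ (via the extra $j^2$ weight in its definition), which compensates for the loss from the $\Lambda$ or $\partial_\alpha$ factor acting on the middle term. The plan is to prove all four estimates in a unified way by combining the Leibniz rule with the self-adjointness of $\Lambda$, the Sobolev-type embedding of Lemma \ref{sobolev}, and the combinatorial identity $\binom{n}{k}(k!)^2((n-k)!)^2 = (n!)^2/\binom{n}{k}$ that makes the double sum in $n$ and $k$ collapse.

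For (\ref{trilinear1}), I first expand $\partial_\alpha^n(f\Lambda g) = \sum_{k=0}^n \binom{n}{k}(\partial_\alpha^k f)(\Lambda \partial_\alpha^{n-k} g)$ using the commutation $[\partial_\alpha,\Lambda]=0$. Pairing with $\partial_\alpha^n h$ and using $\Lambda = \Lambda^{1/2}\cdot\Lambda^{1/2}$ together with the self-adjointness of $\Lambda^{1/2}$, one rewrites each term as $\langle\Lambda^{1/2}\partial_\alpha^{n-k}g,\ \Lambda^{1/2}((\partial_\alpha^k f)(\partial_\alpha^n h))\rangle$, then expands the second factor via a Kato--Ponce type fractional Leibniz so that $\Lambda^{1/2}$ lands on either $\partial_\alpha^k f$ or $\partial_\alpha^n h$. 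The resulting pieces are of the form ``one factor in $L^\infty$, the other two in $L^2$ carrying a $\Lambda^{1/2}$''. Lemma \ref{sobolev} handles the $L^\infty$ estimate, and the $L^2$ factors with $\Lambda^{1/2}$ are controlled precisely by the $Y_\sigma$ norm through $\|\partial_\alpha^m h\|_{L^2} \leq \frac{(m!)^2}{m\sigma^m}\|h\|_{\dot Y_\sigma}$. Splitting the inner sum into $k\leq n/2$ and $k>n/2$ (putting $f$ in $L^\infty$ in the first range, and $g$ or $h$ via the adjoint symmetry in the second), the combinatorial identity above forces convergence of the double sum with a constant depending linearly on $1+\sigma^{-1}$.

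Estimate (\ref{trilinear2}) follows by the same strategy but is strictly easier since $\partial_\alpha$ is local and one need not invoke $\Lambda^{1/2}$. Estimate (\ref{trilinear3}) has the same $g$ on both sides and requires only the homogeneous $\dot X_\sigma$ norm on $f$; the trick is to integrate by parts at each level. Writing $\partial_\alpha^n(f\partial_\alpha g)$ by Leibniz, the single ``leading'' term $f\,\partial_\alpha^{n+1} g$ paired with $\partial_\alpha^n g$ yields
\[
\int f(\partial_\alpha^{n+1}g)(\partial_\alpha^n g)\, d\alpha = -\tfrac{1}{2}\int f_\alpha (\partial_\alpha^n g)^2\, d\alpha,
\]
which costs only one derivative on $f$. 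All remaining Leibniz terms already carry at least one derivative on $f$ and are controlled by Sobolev embedding together with the product estimate of Lemma \ref{lemmaproduct}, giving the clean bound $\|f\|_{\dot X_\sigma}\|g\|_{X_\sigma}^2$.

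Estimate (\ref{trilinear4}) is a refinement of (\ref{trilinear1}) that trades the symmetric $Y_\sigma\!\cdot\!Y_\sigma$ split for an asymmetric one: the stronger weight $\sum \frac{n^3\sigma^{2n}}{(n!)^4}\|\partial^n g\|_{L^2}^2$ provides a full extra derivative on $g$, at the cost of only the $n$-weight on $h$. The plan is to perform the same Leibniz expansion as for (\ref{trilinear1}), but apply Cauchy--Schwarz with the $n^3$-weighted sum on the $g$-factor and the $n$-weighted sum on the $h$-factor, exactly as was done in the proof of Lemma \ref{producttrilinear}. The main obstacle throughout the whole lemma is the combinatorial bookkeeping: after Leibniz, one has sums indexed by both $n$ and $k$ with factorial weights that must be controlled uniformly in both variables. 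This is where the precise form of the Gevrey-2 denominator $(n!)^4$ (rather than $(n!)^2$ as in the real-analytic setting) is essential, since it exactly cancels the factorial products arising from Leibniz and leaves a summable residue.
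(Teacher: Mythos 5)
Your proposal is correct and takes essentially the same route as the paper. In both cases the key move for (\ref{trilinear1}) and (\ref{trilinear4}) is to isolate the ``leading'' term $f\,\partial_\alpha^n\Lambda g$ (the $k=0$ piece of the Leibniz expansion), distribute the $\Lambda$ as $\Lambda^{1/2}\cdot\Lambda^{1/2}$ across the pairing by self-adjointness, and absorb the leftover via a commutator (the paper uses $[f,\Lambda^{1/2}]$; you phrase the same step as a Kato--Ponce fractional Leibniz for $\Lambda^{1/2}\bigl((\partial_\alpha^k f)(\partial_\alpha^n h)\bigr)$ — these are equivalent here). The lower-order Leibniz terms are in both treatments controlled exactly as in Lemma~\ref{lemmaproduct}, and for (\ref{trilinear3}) both you and the paper integrate by parts in the $k=0$ term to land one derivative on $f$, yielding the $\|f\|_{\dot X_\sigma}$ bound; the paper's written version of this step contains a notational slip (it writes $\Lambda f$ where your $-\tfrac12\int f_\alpha(\partial_\alpha^n g)^2$ is the correct identity for $\partial_\alpha$), so your formulation is actually the cleaner one. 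For (\ref{trilinear4}) both proposals keep $f$ in $L^\infty$, place the full derivative on $g$, and close by the asymmetric Cauchy--Schwarz weighting $n^3$ on $g$ versus $n$ on $h$, exactly as in Lemma~\ref{producttrilinear}. The only organizational difference is that you carry out the full Leibniz expansion and treat each $k$ directly, whereas the paper separates $k=0$ from the remainder and appeals to Lemma~\ref{lemmaproduct} for the rest; this is a presentational choice and not a mathematical one.
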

\begin{proof}
Let $n\geq 1$, we have 
\begin{align*}
    \Big|\langle\partial_{\alpha}^n (f\Lambda g), \partial_{\alpha}^n h\rangle \Big|\leq &\Big|\langle f\partial_{\alpha}^n\Lambda g, \partial_{\alpha}^n h\rangle \Big|+ \Big|\langle \partial_{\alpha}^n(f\Lambda g)-f\partial_{\alpha}^n\Lambda g), \partial_{\alpha}^n h\rangle \Big|\\
    :=& I_1+I_2.
\end{align*}
For $I_1$, we have 
\begin{align*}
    |I_1|=& \Big|\int f(\partial_{\alpha}^n\Lambda g)\overline{\partial_{\alpha}^n h} \Big|\\
    \leq & \int \Big|f(\partial_{\alpha}^n\Lambda^{1/2} g)\overline{\Lambda^{1/2}\partial_{\alpha}^n h}\Big|+\int \Big|g[f, \Lambda^{1/2}] \overline{\partial_{\alpha}^n h}\Big|\\
    \leq & d_0\|f\|_{X_{\sigma}} \|\Lambda^{1/2}\partial_{\alpha}^n g\|_{L^2}\|\Lambda^{1/2}\partial_{\alpha}^n h\|_{L^2}.
\end{align*}
So we obtain
\begin{align*}
    \sum_{n=0}^{\infty}\frac{\sigma^{2n}}{(n!)^4} I_1\leq &C\|f\|_{X_{\sigma}} \sum_{n=0}^{\infty}\frac{\sigma^{2n}}{(n!)^4}\|\Lambda^{1/2}\partial_{\alpha}^n g\|_{L^2} \|\Lambda^{1/2}\partial_{\alpha}^n h\|_{L^2}\leq d_0\|f\|_{X_{\sigma}}\|g\|_{Y_{\sigma}}\|h\|_{Y_{\sigma}}.
\end{align*}
The term $I_2$ can be handled in the same way as in the proof of Lemma \ref{lemmaproduct}, we omit the details. We have 
\begin{align*}
    \sum_{n=0}^{\infty}\frac{\sigma^{2n}}{(n!)^4} I_1\leq d_0\|f\|_{X_{\sigma}}\|g\|_{Y_{\sigma}}\|h\|_{Y_{\sigma}}.
\end{align*}
So we complete the proof of (\ref{trilinear1}). The proof of (\ref{trilinear2}) is the same as that of (\ref{trilinear1}). 

The proof of (\ref{trilinear3}) is similar:
\begin{align*}
    \Big|\langle\partial_{\alpha}^n (f\Lambda g), \partial_{\alpha}^n g\rangle \Big|\leq &\Big|\langle f\partial_{\alpha}^n\Lambda g, \partial_{\alpha}^n g\rangle \Big|+ \Big|\langle \partial_{\alpha}^n(f\Lambda g)-f\partial_{\alpha}^n\Lambda g), \partial_{\alpha}^n g\rangle \Big|\\
    :=& \it{II}_1+\it{II}_2.
\end{align*}
The treatment of $\it{II}_2$ is the same as that for $I_2$. For $\it{II}_1$, we have 
\begin{align*}
    \it{II}_1=& \Big|Re \int  f\partial_{\alpha}^n\Lambda g  \overline{ \partial_{\alpha}^n g} \Big|
    = \Big| Re \int |\partial_{\alpha}^n g|^2\Lambda f \Big|\\
    \leq & d_0\|\Lambda f\|_{L^{\infty}}\|\partial_{\alpha}^n g\|_{L^2}^2\\
    \leq & d_0\|f\|_{\dot{X}_{\sigma}}\|\partial_{\alpha}^n g\|_{L^2}^2.
\end{align*}
So $\sum_{n=0}^{\infty}\frac{\sigma^{2n}}{(n!)^4} I_1\leq C\|f\|_{\dot{X}_{\sigma}}\|g\|_{X_{\sigma}}^2 $. So we obtain (\ref{trilinear3}). 

For the proof of (\ref{trilinear4}), it suffices to notice that 
\begin{align*}
    \Big|\int f(\partial_{\alpha}^n\Lambda g)\overline{\partial_{\alpha}^n h} \Big|   \leq & \int \Big|f(\partial_{\alpha}^n\Lambda^{3/4} g)\overline{\Lambda^{1/4}\partial_{\alpha}^n h}\Big|+\int \Big|g[f, \Lambda^{1/4}] \overline{\partial_{\alpha}^n h}\Big|\\
    \leq & d_0\|f\|_{X_{\sigma}}\|\Lambda \partial_{\alpha}^n g\|_{L^2}\|(1+\Lambda^{1/4})\partial_{\alpha}^n h\|_{L^2}.
\end{align*}
\end{proof}

\section{The main results: quantitative statements}\label{maintheorems}
Throughout the rest of this paper, for brevity, we consider a pair of symmetric and counter-rotating point vortices embedded in the water waves, the general situation can actually be treated similarly. We assume 
\begin{equation}
    \omega(\cdot,t)=\lambda \delta_{z_1(t)}-\lambda \delta_{z_2(t)}, 
\end{equation}
where $\lambda\in \mathbb{R}$, and $z_1(t)=-x(t)+iy(t)$, $z_2(t)=x(t)+iy(t)$, with $x(t)>0$ and $z_1(t), z_2(t)\in \Omega(t)$. We assume also that $\Omega(t)$ is symmetric about the vertical axis \footnote{Such symmetry assumption can be removed, we assume it for convenience.}.  Without loss of generality, we assume $x(0)= 1$. Moreover, we assume $|y(0)|\gg 1$. It should be clear from the proof that regarding the local wellposedness in Gevrey spaces, these assumptions are unnecessary.

\subsection{The initial data}
Let the initial fluid region be given by $\Omega_0$, with a nonself-intersect smooth free surface $\Sigma_0$. We parametrize $\Sigma_0$ by Riemann variable, that is, we choose $\alpha$ such that $h_0(\alpha)=\alpha$.

Let $Z_0(\alpha):=Z(\alpha,0)$. $U_0=\Re\{F_0\}$, and $W_0=\Re\{Z_0-\alpha\}$. Recall that 
$$d_I(t)=\inf_{\alpha\in\mathbb{R}}\min_{1\leq j\leq 2}|Z(\alpha,t)-z_j(t)|,$$
and $d_{I,0}:=d_{I}(0)$,   $x_0:=x(0)=1$. Without loss of generality, we assume $$d_{I,0}=\inf_{\alpha\in \mathbb{R}}\min_{j=1,2}|\Im\{Z(\alpha,0)-z_j(0)\}|.$$ 
Let $x_0, y_0$, $\lambda$, and $L_0\geq 4$ be given constants. Denote $\phi(t)=L_0-\delta_0 t$. We assume the following:
\begin{itemize}
    \item [(H1)] $(W_0, U_0)$ is given such that $(\partial_{\alpha}W_0, U_0)\in X_{L_0}\times Y_{L_0}$. Without loss of generality, we assume for simplicity that\footnote{In application, we will take $ \|(U_0, \partial_{\alpha}W_0)\|_{Y_{L_0}\times X_{L_0}}$ to be small.
}
\begin{equation}
    \|(U_0, \partial_{\alpha}W_0)\|_{Y_{L_0}\times X_{L_0}}\leq 1/2.
\end{equation}

\vspace*{1ex}
    
    \item [(H2)]  $\delta_0$ is chosen such that 
\begin{equation}\label{delta0equation}
 \frac{\delta_0}{L_0}-4-2d_0(\|A(\cdot,0)\|_{L^{\infty}}+\|A(\cdot,0)\|_{\dot{Y}_{\phi(t)}})-4d_1\|(U_0, \partial_{\alpha}W_0\|_{Y_{\phi(0)}\times X_{\phi(0)}}\geq 0.
\end{equation}
Here, $A(\cdot,0)=\frac{A_{1,0}}{|\partial_{\alpha}Z_0|^2}$, with $A_{1,0}$ given by (\ref{initialA1}), and $d_0$ and $d_1$ are the constants given in Lemma \ref{shifthalfderivative} and Lemma \ref{producttrilinear}, respectively.

\vspace*{1ex}

\item [(H3)] $z_{1,0}=-x_0+iy_0$, $z_{2,0}=x_0+iy_0$, where $x_0$ and $y_0$ are constants, satisfying $x_0=1$, $y_0<0$, and $|y_0|\gg 1$. 

\vspace*{1ex}

\item [(H4)] $|Z_0(\alpha)-Z_0(\beta)|\geq C_0|\alpha-\beta|$, for some absolute constant $C_0>0$.

\vspace*{1ex}

\item [(H5)] $Z_0(\alpha):=\alpha+(I+\mathbb{H})W_0$, $F_0=(I+\mathbb{H})U_0$.

\item [(H6)] $\Omega_0$ is symmetric about the $y$-axis, $W_0$ is an odd function. $U_0$ is also an odd function.
\end{itemize}

\vspace*{1ex}

\noindent For $0\leq t\leq \frac{L_0}{2\delta_0}$, we have $\phi(t)\geq \frac{1}{2}L_0$. 
Denote
\begin{equation}\label{equationtau0}
    \tau_0:= \frac{1}{4+2d_0(\|A(\cdot,0)\|_{L^{\infty}}+\|A(\cdot,0)\|_{\dot{Y}_{\phi(t)}})+4d_1\|(U_0, \partial_{\alpha}W_0\|_{Y_{\phi(0)}\times X_{\phi(0)}}}.
\end{equation}
In particular, $\delta_0\geq 4L_0\geq 16$.

\begin{remark}
The assumption that $|y_0|\gg 1$ is not essential in terms of the local wellposedness in Gevrey-2 spaces for the system (\ref{quasi3}). We can certainly remove it. Nevertheless, for the purpose of proving Theorem \ref{main}, we do need to assume $|y_0|\gg 1$. Therefore, for convenience, we make such an assumption. 
\end{remark}

\subsection{The main theorems}

\begin{theorem}\label{theorem1}
Let $W_0$, $U_0$, $\{z_{j,0}\}$, $L_0$, $\delta_0$, $Z_0$, and $F_0$ be given such that (H1)-(H6) hold. Then there exists a constant $T>0$, such that the system (\ref{quasi3}) with initial data $(W_0, U_0, \{z_{j,0}\})$ admits a unique solution $(W, U,  \{z_j\})$ satisfying
\begin{itemize}
\item [(a)] $(W_{\alpha}, U)\in C([0,T]; X_{\phi(t)}\times Y_{\phi(t)}))$, and
$z_j\in C^1([0, T]; \Omega(t)), \quad j=1, 2$. Moreover,
    $$\sup_{0\leq t\leq T}\Big(\|U(\cdot,t)\|_{\dot{Y}_{\phi(t)}}^2+\|W_{\alpha}\|_{X_{\phi(t)}}^2\Big)\leq  \begin{cases} \|(U_0, \partial_{\alpha}W_0)\|_{\dot{Y}_{\phi(t)}\times X_{\phi(t)}}^2+C|\lambda|(d_{I,0}-\frac{|\lambda|}{4\pi x(0)}T)^{-5/2}, \quad &\lambda>0, \\
    \|(U_0, \partial_{\alpha}W_0)\|_{\dot{Y}_{\phi(t)}\times X_{\phi(t)}}^2+C|\lambda|d_{I,0}^{-5/2}, \quad \quad & \lambda<0
    \end{cases}
$$
and
$$\sup_{0\leq t\leq T}\|U(\cdot,t)\|_{L^2}^2\leq 1, \quad \quad \sup_{0\leq t\leq T}\|U(\cdot,t)\|_{L^{\infty}}^2\leq Cd_{I,0}^{-1/3}.$$
Here, the constant $C$ depends on $C_0$  and $\frac{1}{L_0}$.

\item [(b)] 
$|Z(\alpha,t)-Z(\beta,t)|\geq \frac{1}{2}C_0|\alpha-\beta|,\quad \alpha, \beta\in \mathbb{R},\quad t\in [0,T].$

\item [(c)]
$ d_{I}(t)\geq \frac{1}{2}|y_0|^{9/10}, \quad \quad \frac{1}{2}x(0)\leq x(t)\leq 2x(0), \quad \quad t\in [0,T].$

\item [(d)] $\Omega(t)$ is symmetric about the $y$-axis. For each fixed $t\in [0,T]$, $W(\cdot,t)$ and $U(\cdot,t)$ are odd functions. $\Re\{z_1(t)\}=-\Re\{z_2(t)\}$, $\Im\{z_1(t)\}=\Im\{z_2(t)\}$.
\end{itemize}
Here \footnote{The power $\frac{9}{10}$ for $|y_0|^{9/10}$ is not optimal. We can use any $|y_0|^{1-\epsilon}$, where $\epsilon\in (0,1)$.}, $$T=\begin{cases}\min\{T_1(C_0, \|(U_0, \partial_{\alpha}W_0)\|_{\dot{Y}_{L_0}\times X_{L_0}}), \frac{L_0}{2\delta_0}, \frac{4\pi x(0)(|y_0|-|y_0|^{9/10})}{|\lambda|}\}, \quad \quad &\lambda>0,\\
\min\{T_2(C_0, \|(U_0, \partial_{\alpha}W_0)\|_{\dot{Y}_{L_0}\times X_{L_0}}), \frac{L_0}{2\delta_0}\}\quad \quad \quad &\lambda<0,\end{cases}$$
with $T_1, T_2$ depend continuously on its parameters.
In particular, if $|y_0|$ is sufficiently large, then if $\lambda>0$, we can take $T=\frac{4\pi x(0)(|y_0|-|y_0|^{9/10})}{|\lambda|}$; if $\lambda<0$, we can take $T=O(1)$.

\end{theorem}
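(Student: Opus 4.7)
I plan to construct the solution by a Picard iteration in the decaying-radius Gevrey scale $X_{\phi(t)}, Y_{\phi(t)}$ with $\phi(t) = L_0 - \delta_0 t$, and simultaneously propagate the chord-arc bound (b), the vortex-interface separation (c), and the odd symmetry (d) by an a priori bootstrap. Define the energy
\begin{equation*}
E(t) \;:=\; \|U(\cdot,t)\|_{Y_{\phi(t)}}^{2} + \|W_{\alpha}(\cdot,t)\|_{X_{\phi(t)}}^{2}.
\end{equation*}
Differentiating in time and keeping track of the radius derivative produces three groups of terms: (i) the favorable dissipation
\begin{equation*}
-\frac{2\delta_{0}}{\phi(t)}\Bigl(\sum_{n\ge 1}\tfrac{n\,\phi(t)^{2n}}{(n!)^{4}}\|\partial_{\alpha}^{n}U\|_{L^{2}}^{2} + \sum_{n\ge 1}\tfrac{n\,\phi(t)^{2n}}{(n!)^{4}}\|\partial_{\alpha}^{n}W_{\alpha}\|_{L^{2}}^{2} + (\tfrac{1}{2}\text{-derivative variants})\Bigr),
\end{equation*}
coming from $\frac{d}{dt}\phi(t)^{2n}$; (ii) the potentially bad half-derivative term $\sum_n\tfrac{\phi^{2n}}{(n!)^{4}}\langle\partial_{\alpha}^{n}(A\Lambda W),\partial_{\alpha}^{n}U\rangle$, which by Lemma \ref{shifthalfderivative}(\ref{trilinear4}) is bounded by $d_{0}\|A\|_{X_{\phi}}\|W_{\alpha}\|_{Y_{\phi}}\|U\|_{Y_{\phi}}$; (iii) transport terms $b\partial_\alpha$ (handled by integration by parts against $b_\alpha\in L^\infty$) and the nonlinear lower-order forcing $G$, $R$, and $\Re\{[\bar F,\mathbb H](\tfrac{1}{Z_\alpha}-1)\}$, all controllable via Lemmas \ref{lemmaproduct}, \ref{commutator1}, \ref{reciprocal}, \ref{producttrilinear}. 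The choice of $\delta_0$ in (H2) is precisely the one that makes the dissipation in (i) absorb the half-derivative loss in (ii) at time $0$; by the continuity of the solution map, this absorption persists on a short time interval.

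Next I handle the vortex-valued quantities. The conjugate velocity $Q = -\sum \tfrac{\lambda_j i}{2\pi}\tfrac{1}{Z-z_j}$ and its derivatives are estimated in $X_{\phi(t)}$ by a composition argument built on Lemma \ref{nice}: if $d_I(t) \gtrsim |y_0|^{9/10}$ and $\phi(t)\leq L_0 \ll d_I(t)$, then $\|\partial_\alpha Q\|_{X_{\phi(t)}}$, $\|D_tQ\|_{X_{\phi(t)}}$ are controlled by powers of $|\lambda|/d_I(t)$, producing the $|\lambda| d_{I,0}^{-5/2}$ contribution in the stated bound. For the vortex trajectories I use \eqref{pointvortexevolution}: the leading vertical speed is $\lambda/(4\pi x(t))$ and the correction $\overline{\mathcal U(z_j,t)}$ is $O(\|F\|/d_I)$, while the horizontal component is $O(1/d_I^{2})$; consequently $x(t)\in[\tfrac12 x_0,2x_0]$ and, for $\lambda>0$, $y(t)$ decreases at rate close to $\lambda/(4\pi x_0)$ towards the interface. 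The lifespan restriction $T\le 4\pi x_0(|y_0|-|y_0|^{9/10})/|\lambda|$ is exactly what keeps $d_I(t)\ge \tfrac12|y_0|^{9/10}$; when $\lambda<0$ the vortices travel away from the interface and this constraint is vacuous, giving $T=O(1)$.

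With the energy and vortex estimates in hand, I set up a Picard iteration $(W^{(k)},U^{(k)},z_j^{(k)})$ in which the linear equation at each step is the one analyzed in \S\ref{sectionlinear}. Uniform $E$-bounds follow from the differential inequality above, the chord-arc property $|Z(\alpha,t)-Z(\beta,t)|\ge \tfrac12 C_0|\alpha-\beta|$ is propagated via Lemma \ref{sobolev} by controlling $\|W_\alpha\|_{L^\infty}$ through $\|W_\alpha\|_{X_{\phi(t)}}$ on a short time interval, and the distance $d_I(t)$ and the values $x(t)$ are propagated by the ODE estimates above. Odd symmetry is preserved because the Hilbert transform, the nonlocal operators $b$ and $A$, and the vortex interaction law all send odd data to odd output, and uniqueness (which follows from the same energy estimates applied to differences of two solutions, in the slightly smaller radius $\phi(t)-\varepsilon$) forces the solution to coincide with its reflection. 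Convergence of the Picard sequence is shown in $X_{\phi(t)-\varepsilon}\times Y_{\phi(t)-\varepsilon}$ for some $\varepsilon>0$, where the derivative loss on differences is again absorbed by a (slightly larger) shrinkage rate, and the limit solves \eqref{quasi3}; existence time $T$ is the minimum of the energy-closure time $T_i(C_0,\|(U_0,W_{0,\alpha})\|)$, the Gevrey-radius time $L_0/(2\delta_0)$, and the vortex time above.

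The main obstacle is coupling the energy estimate, the chord-arc bound, and the separation $d_I$ into a single bootstrap. The Gevrey norm of $Q$ (and hence of $A$ through the formula \eqref{riemann_formula_A1}) blows up as $d_I\to 0$ at a rate quantified by $e^{\phi/d_I}$ in Lemma \ref{nice}, so any deterioration of $d_I$ amplifies the source terms in the $E$-equation, which in turn could speed up the deterioration of $d_I$ through $U$. The condition $|y_0|\gg 1$ is used here to make all these quantities small perturbations of the pure vortex-pair motion on the time scale dictated by (c), allowing a standard continuation argument to close the bootstrap.
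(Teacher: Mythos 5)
Your proposal follows essentially the same route as the paper: energy estimates in the shrinking-radius Gevrey scale with $\delta_0$ chosen via (H2) so that the dissipation from $\frac{d}{dt}\phi(t)^{2n}$ absorbs the half-derivative loss of $A\Lambda W$, a priori bounds on $Q$, $D_tQ$, $b_1$, $A$, $G$, $R$ and the vortex trajectories under the bootstrap assumptions (AS1)--(AS5), and a Picard iteration on the quasilinear system of \S\ref{sectionlinear}. The only cosmetic difference is that you contract the iterates in a slightly smaller Gevrey radius $\phi(t)-\varepsilon$, whereas the paper establishes convergence in the weak norm $C([0,T];H^4\times H^4\times\mathbb{C}^2)$ and recovers Gevrey regularity of the limit from the uniform bounds; both are standard and equivalent here.
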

\begin{remark}\label{remarklwp}
(1) Using the same proof as in \cite{su2018long}, we can prove that the solutions to the water waves preserve the symmetries in (H6). We shall omit the proof of (d) and refer the readers to Theorem 5 in \cite{su2018long}.

(2) From the discussion in \S \ref{thegevreyframework}, in particular, the equation (\ref{sharpgevrey}), our wellposedness result is sharp in the Gevrey spaces. That is, if we define 
$$X_{\sigma, k}:=\{f\in C^{\infty}:\quad \|f\|_{X_{\sigma, k}}^2=\sum_{n=0}^{\infty}\frac{\sigma^{2n}}{(n!)^{2k}}\|\partial_{\alpha}^n f\|_{L^2}^2\},$$
and $\dot{X}_{\sigma, k}, Y_{\sigma, k}, \dot{Y}_{\sigma,k}$ similarly. Then (\ref{quasi3}) is illposedness in space $(U, W_{\alpha})\in Y_{\phi(t), k}\times X_{\phi(t), k}$ for $k>2$. We shall prove this in a separate paper.

\end{remark}

Using \emph{Theorem} \ref{theorem1}, we are able to prove the following.
\begin{theorem}\label{taylorsignfailtheorem2}
Let $(W, U, \{z_1(t), z_2(t)\})$ be the unique solution to (\ref{quasi3}) on $[0,T]$ constructed in Theorem \ref{theorem1} with initial data $(W_0, U_0, \{z_1(0), z_2(0)\})$. For any given constants $0<\eta_0<1$, $\eta_1>0$, $\gamma>0$ and $0<\epsilon_0\ll 1$,
there exist constants $N_0\gg 1$, $\delta_0, \delta_1, \delta_2\ll 1$ such that for all 
\begin{align*}
   & y(0)\leq  -N_0\quad \quad  \lambda=\gamma |y_0|^{3/2-\epsilon_0},\quad \quad |x(0)-1|\leq \epsilon_1,\quad \quad \|(U_0, \partial_{\alpha}W_0)\|_{Y_{L_0}\times X_{L_0}}\leq  \epsilon_2,
\end{align*}
there holds,
 $$\inf_{\alpha\in \mathbb{R}}A_1(\alpha,0)\geq 1-\eta_0, \quad\quad  \inf_{\alpha\in \mathbb{R}}A_1(\alpha,T)<-\eta_1,$$
For example, we can take $\epsilon_1=\frac{1}{N_0}$, $\epsilon_2=\frac{1}{N_0}$, $\epsilon_0=\frac{1}{10}$, $L_0=10$, $\delta_0=1000$.
 
\end{theorem}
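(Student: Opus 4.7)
The plan is to exploit the decomposition $A_1=A_1^{pv}+A_1^{int}$ sketched in (R1)--(R3) of the introduction, invoking \emph{Theorem} \ref{theorem1} in the $\lambda>0$ regime to obtain the solution on the full interval $[0,T]$ with $T=\frac{4\pi x(0)(|y_0|-|y_0|^{9/10})}{\lambda}$, and then estimating $A_1$ quantitatively at both endpoints via formula (\ref{riemann_formula_A1}). First, at $t=0$, I would use (\ref{initialA1}) and split it according to whether $(W_0,U_0)=0$ or not. The unperturbed piece $A_{1,0}^{pv}$ is an explicit integral against the vortex-pair velocity $\dot z_{j,0}=\mp\frac{\lambda i}{4\pi x_0}$, whose size can be computed as in the appendix: it equals $1-C\lambda^2/|y_0|^3+\text{l.o.t.}=1-C\gamma^2|y_0|^{-2\epsilon_0}+\text{l.o.t.}$, which is at least $1-\eta_0$ once $|y_0|\geq N_0$ is large. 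The correction $A_{1,0}^{int}$ is a bounded multilinear functional of $W_0,U_0$ that vanishes when these are zero, hence is $O(\epsilon_2)$ by the product estimates \emph{Lemma} \ref{lemmaproduct} and \emph{Lemma} \ref{nice}; taking $\epsilon_2=1/N_0$ makes it negligible. This settles (1).

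Second, to reach $t=T$, I would use the energy bound from \emph{Theorem} \ref{theorem1}(a),
$$\sup_{0\le t\le T}\bigl(\|U\|_{\dot Y_{\phi(t)}}^2+\|W_\alpha\|_{X_{\phi(t)}}^2\bigr)\le \epsilon_2^2+C|\lambda|\bigl(d_{I,0}-\tfrac{\lambda}{4\pi x(0)}T\bigr)^{-5/2},$$
combined with the geometric consequences \emph{Theorem} \ref{theorem1}(b)--(c). Since $\tfrac{\lambda}{4\pi x(0)}T=|y_0|-|y_0|^{9/10}$ and $d_{I,0}\gtrsim|y_0|$, the second term is bounded by $C\gamma\,|y_0|^{3/2-\epsilon_0-9/4}=C\gamma\,|y_0|^{-3/4-\epsilon_0}$, which is small. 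From Sobolev embedding \emph{Lemma} \ref{sobolev} the $L^\infty$ norms of $U,W_\alpha$ are likewise small, and the vortex ODE (\ref{pointvortexevolution}) then forces $z_2(t)=x(0)+i(y(0)-\tfrac{\lambda}{4\pi x(0)}t)+O(\text{small})$, so that $y(T)\approx -|y_0|^{9/10}$ while $x(T)\approx x(0)$.

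Third, at $t=T$ I split $A_1(T)=A_1^{pv}(T)+A_1^{int}(T)$ again via (\ref{riemann_formula_A1}). The pv piece, computed against the vortex pair at position $(\pm x(T),y(T))$ with $y(T)\approx -|y_0|^{9/10}$, yields by the same appendix computation a dominant contribution
$$A_1^{pv}(T)\;\approx\;-C\,\frac{\lambda^2}{|y(T)|^3}\;\approx\;-C\gamma^2|y_0|^{3/10-2\epsilon_0}\;=\;-C\gamma^2|y_0|^{1/10}$$
when $\epsilon_0=1/10$, which is below $-\eta_1$ once $N_0$ is large enough. The interaction piece $A_1^{int}(T)$ is, by inspection of (\ref{riemann_formula_A1}), a sum of trilinear expressions containing the factors $(Z_\alpha-1)$ or $F$ (small in Gevrey norm by Step 2), the kernel $Z_\alpha/(Z-z_j)^2$ estimated in $X_{\phi(T)}$ by \emph{Lemma} \ref{nice} with bound $\lesssim d_I(T)^{-3/2}\lesssim |y_0|^{-27/20}$, and either a wave trace or $D_tZ-\dot z_j$ factor of size $\lesssim \lambda/|y_0|^{9/10}$. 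Multiplying, one gets $|A_1^{int}(T)|\lesssim \gamma|y_0|^{3/2-\epsilon_0}\cdot|y_0|^{-27/20}\cdot o(1)\ll \gamma^2|y_0|^{1/10}$, verifying (R3).

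The main obstacle is this last bookkeeping: the two polynomial powers of $|y_0|$ that measure the pv singularity ($\lambda^2/|y(T)|^3$) and the size of the interaction correction must be separated by the small perturbation bound of \emph{Theorem} \ref{theorem1}, and the constant $C$ in that bound depends on $C_0,L_0$ and on $\|A(\cdot,0)\|_{L^\infty}+\|A(\cdot,0)\|_{\dot Y_{L_0}}$, which themselves implicitly depend on $\lambda,y_0$ through formula (\ref{initialA1}). One must therefore first verify that this $L^\infty/\dot Y$ bound on $A(\cdot,0)$ is itself controlled uniformly in $|y_0|$ (using the smallness $|y_0|^{-2\epsilon_0}$ of $\lambda^2/|y_0|^3$ at $t=0$), feed that bound into the choice of $\delta_0$ via (\ref{delta0equation}), and only then conclude that all implicit constants stay uniformly bounded as $N_0\to\infty$. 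Once the scaling $\epsilon_0=1/10$ (or any $\epsilon_0<3/20$) is in place, the chain of estimates closes and the theorem follows. The time-reversal remark of the introduction then furnishes the symmetric statement.
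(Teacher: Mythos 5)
Your proposal is correct in broad outline but takes a genuinely different route from the paper. The paper never estimates $A_1(\cdot,0)$ directly; instead it runs \emph{Theorem}~\ref{theorem1} with $\lambda<0$ (so the vortex pair travels downward) for an $O(1)$ time $T_{0,-}$, reaching a configuration at depth $\approx|\lambda|\sim|y_0|^{3/2}$ where the Taylor sign is trivially close to $1$, then time-reverses and translates so that this far-away state becomes the new ``time~$0$''; evolving forward through the original data and on to $T_0$ then produces the sign change. That detour is forced by the scaling chosen in \S\ref{caseupward}, where the paper effectively takes $\epsilon_0=0$ (i.e.\ $\lambda=\gamma^{1/2}\pi|y_0|^{3/2}$), so that $\lambda^2/|y_0|^3$ is of order one at $t=0$ and $A_1(\cdot,0)$ need not be near $1$. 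You instead exploit the theorem statement's scaling $\lambda=\gamma|y_0|^{3/2-\epsilon_0}$ with $\epsilon_0>0$: then $\lambda^2/|y_0|^3\sim|y_0|^{-2\epsilon_0}\to 0$ already at time $0$, so the direct estimate of $A_1(\cdot,0)$ via (\ref{initialA1}) and the Riemann-mapping corrections (\ref{riemannalmost}) gives $A_1\ge 1-\eta_0$ without any time reversal. Since $\epsilon_0<3/20$ still makes $\lambda^2/|y(T)|^3\sim|y_0|^{3/10-2\epsilon_0}\to\infty$, the sign can still flip. Your route is therefore shorter --- it avoids the $\lambda<0$ branch of \emph{Theorem}~\ref{theorem1} and the time-translation --- and is arguably more faithful to the literal wording of the theorem, which asserts the inequality at $t=0$ for the given initial data (the paper's translated solution has different initial data). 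What the paper's argument buys is robustness to the choice of $\epsilon_0$: it works even at $\epsilon_0=0$, since the backward evolution manufactures the needed smallness rather than assuming it.

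Two points in your step at $t=T$ are imprecise, though they do not break the argument. First, the factor you describe as ``$D_tZ-\dot z_j$ of size $\lesssim\lambda/|y_0|^{9/10}$'' is in fact $O(\lambda)$ (the dominant term is $\dot z_j\approx\lambda i/(4\pi x)$), not small; the correct cancellation, carried out in the paper's Lemma~\ref{lemmaparttwo}, comes from splitting $D_tZ-\dot z_j=(\bar F-\bar{\mathcal U}(z_j))+(\bar Q-\tfrac{\lambda i}{4\pi x})$: the first piece is controlled by the $L^\infty$ bound $M_\infty\lesssim d_{I,0}^{-1/6}$ from \emph{Theorem}~\ref{theorem1}(a), and the second piece \emph{is} the pv contribution $G_1$. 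Second, the kernel $(I-\mathbb H)\tfrac{Z_\alpha}{(Z-z_j)^2}$ is not handled by \emph{Lemma}~\ref{nice} directly; one must first pass to $\tfrac{2}{c_0^j(\alpha-\omega_0^j)^2}$ via \emph{Corollary}~\ref{corollaryA2} and then control $c_0^j-1$ and $\omega_0^j-z_j$ as in (\ref{riemannalmost}). With those fixes, and a sign correction in the vortex trajectory ($y(t)=y_0+\tfrac{\lambda}{4\pi x_0}t+\text{error}$ for $\lambda>0$, not minus), your argument closes.
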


\begin{remark}
The choice of the parameters $\epsilon_0$, $\epsilon_1$, $\epsilon_2$ , etc are certainly not optimal. It is not our primary goal in this paper to obtain those sharp bounds.
\end{remark}

\section{A quasilinear system}\label{sectionlinear}
Given $b_1$, $G$, $R$, $A$, $U_0$ and $W_0$, we consider the following quasilinear system (with $G$ and $R$ as external forces):
\begin{equation}\label{linearsystem}
    \begin{cases}
D_tU=A\Lambda W+G,\\
D_tW=-U-\Re\{[\bar{F},\mathbb{H}](\frac{1}{Z_{\alpha}}-1)\}+R,\\
(U, W)(\cdot,0)=(U_0, W_0).
    \end{cases}
\end{equation}
Here, $Z_{\alpha}=(I+\mathbb{H})W_{\alpha}+1$. $D_t=\partial_t+b\partial_{\alpha}$, with 
$$b=b_0+b_1,  \quad \quad b_0=2U+\Re\{[(I-\mathbb{H})U,\mathbb{H}](\frac{1}{Z_{\alpha}}-1)\},$$
where $b_1$ is an a priori given real valued function. Using $\partial_{\alpha}[f, \mathbb{H}]g=[f_{\alpha}, \mathbb{H}]g+[f, \mathbb{H}]g_{\alpha}$, and by (\ref{commutator111}) and (\ref{commutator114}), we have 
\begin{equation}\label{b0estimate}
\begin{split}
   & \|\partial_{\alpha}\Re\{[(I-\mathbb{H})U,\mathbb{H}](\frac{1}{Z_{\alpha}}-1)\}\|_{X_{\phi(t)}}\\
   \leq & C(\|U\|_{\dot{X}_{\phi(t)}}^2+\|\frac{1}{Z_{\alpha}}-1\|_{X_{\phi(t)}}^2)+C\Big(\sum_{n=1}^{\infty}\frac{n^3\phi(
   t)^{2n}}{(n!)^4}\|\partial_{\alpha}^n  U\|_{L^2}^2+\sum_{n=1}^{\infty}\frac{n\phi(t)^{2n}}{(n!)^4}\|\partial_{\alpha}^n (\frac{1}{Z_{\alpha}}-1)\|_{L^2}^2\Big),
\end{split}
\end{equation}
for some constant $C>0$ depending on $\frac{1}{\phi(t)}$.

Denote
$$H:=(I+\mathbb{H})W_{\alpha}.$$
Let $\phi(t)=\phi_{\delta_0, L_0}(t):=L_0-\delta_0t$. Let 
\begin{equation}
    E_{L_0, \delta_0}(t):=\frac{1}{2}\|(U(\cdot,t), \partial_{\alpha}W(\cdot,t))\|_{\dot{Y}_{\phi(t)}\times X_{\phi(t)}}^2.
\end{equation}
By Lemma \ref{reciprocal}, if $\inf_{\alpha\in \mathbb{R}}(H+1)\geq c_0/2$, then 
\begin{equation}\label{reciprocalone}
    \norm{\frac{1}{H+1}-1}_{X_{\phi(t)}}\leq C(c_0)\|H\|_{X_{\phi(t)}}\leq C(c_0)\|W_{\alpha}\|_{X_{\phi(t)}},
\end{equation}
and
\begin{equation}\label{reciprocaltwo}
    \sum_{n=1}^{\infty}\frac{n\sigma^{2n}}{(n!)^4}\norm{\partial_{\alpha}^n \Big(\frac{1}{H+1}-1\Big)}_{L^2}^2\leq C(c_0)\sum_{n=1}^{\infty}\frac{n\sigma^{2n}}{(n!)^4}\|\partial_{\alpha}^n H\|_{L^2}^2\leq C(c_0)\sum_{n=1}^{\infty}\frac{n\sigma^{2n}}{(n!)^4}\|\partial_{\alpha}^n W_{\alpha}\|_{L^2}^2.
\end{equation}

\begin{theorem}\label{theoremlinear}
Let $c_0>0$ and $L_0\geq 4$ be given constants. Assume that $b_1\in C([0,T]; Y_{\phi(t)})$, $A-1, R_{\alpha}\in C([0,T]; X_{\phi(t)})$, $G\in C([0,T]; Y_{\phi(t)})$ and $(U_0, \partial_{\alpha}W_0)\in Y_{\phi(0)}\times X_{\phi(0)}$. Assume further that $\inf_{\alpha\in\mathbb{R}}(1+H(\alpha,0))\geq c_0$. Let $d_0$ and $d_1$ be the constant given in Lemma \ref{shifthalfderivative} and Lemma \ref{producttrilinear}, respectively. Assume that 
\begin{equation}\label{controldamping}
    \sup_{0\leq t\leq T}\Big(\frac{\delta_0}{L_0}-4-2d_0(\|A\|_{L^{\infty}}+\|A\|_{\dot{Y}_{\phi(t)}})-4d_1\|(U_0, \partial_{\alpha}W_0\|_{Y_{\phi(0)}\times X_{\phi(0)}}\Big)\geq 0.
\end{equation}

Then there exists $0<T_0\leq T$ such that (\ref{linearsystem}) admits a unique solution $(U, \partial_{\alpha}W)\in C^0([0,T]; Y_{\phi(t)}\times X_{\phi(t)})$. Moreover,
\begin{equation}
    E_{L_0, \delta_0}(t)\leq E_{L_0, \delta_0}(0)e^{B_{T_0}t}+\int_0^t e^{B_{T_0}\tau}\mathcal{N}(\tau)d\tau,
\end{equation}
\begin{equation}
    \|U(\cdot,t)\|_{L^2}^2\leq U(\cdot,0)\|_{L^2}^2 e^{\gamma(t)}+\int_0^t  e^{\gamma(t)-\gamma(s)}(\|G(s)\|_{L^2}^2+\|A(s)\|_{L^{\infty}}E_{L_0,\delta_0}(s))ds,
\end{equation}
where 
\begin{equation}
    B_{T_0}:=C(1+\|b_1\|_{C(;0,T_0];Y_{\phi(t))}}),
\end{equation}
and
\begin{equation}\label{mathcalN_0}
    \mathcal{N}(t):=\|R_{\alpha}\|_{X_{\phi(t)}}^2+\|G\|_{\dot{Y}_{\phi(t)}}^2+C\|b_1\|_{\dot{Y}_{\phi(t)}}^2,
\end{equation}
$$\gamma(t):= \int_0^t C(E_{L_0,\delta_0}^{1/2}(\tau)+\|b_1(\cdot,\tau)\|_{\dot{Y}_{\phi(\tau)}}+\|A(\cdot,t)\|_{L^{\infty}})d\tau.$$

Here, $T_0=\min\{T, \frac{L_0}{2\delta_0}\}$, and $C$ is a constant depending on $c_0$ and $\frac{1}{L_0}$.

\end{theorem}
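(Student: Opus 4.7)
The strategy is to derive a closed energy estimate for $E_{L_0,\delta_0}(t)$ in the time-dependent space $\dot Y_{\phi(t)}\times X_{\phi(t)}$, and then produce the solution by a Picard iteration on the linear system. The crucial analytic point is that the linearly decaying radius $\phi(t)=L_0-\delta_0 t$ generates a built-in ``damping'' term $\mathcal D(t)$ in the time derivative of $E$, and under \eqref{controldamping} this damping dominates the half-derivative loss coming from $A\Lambda W$ and from $b_{0,\alpha}\notin X_{\phi(t)}$. The hypothesis \eqref{controldamping} is exactly the quantitative threshold at which the absorption closes.

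Differentiating $E_{L_0,\delta_0}(t)$ and using $\frac{d}{dt}\phi(t)^{2n}=-\frac{2n\delta_0}{\phi(t)}\phi(t)^{2n}$, I would identify the damping
\begin{equation*}
\mathcal D(t):=\frac{2\delta_0}{\phi(t)}\left(\sum_{n\geq 1}\frac{n^3\phi(t)^{2n}}{(n!)^4}\|\partial_\alpha^n U\|_{L^2}^2+\sum_{n\geq 1}\frac{n\phi(t)^{2n}}{(n!)^4}\|\partial_\alpha^{n+1}W\|_{L^2}^2\right),
\end{equation*}
which appears with a negative sign in $\frac{d}{dt}E_{L_0,\delta_0}(t)$. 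Writing $\partial_t=D_t-b\partial_\alpha$, integrating by parts to symmetrize the transport (producing a $b_\alpha$-factor and commutators $[\partial_\alpha^n,b\partial_\alpha]$) and substituting \eqref{linearsystem}, the remaining top-order contributions are $\sum_{n\geq 1}\frac{n^2\phi^{2n}}{(n!)^4}\langle A\Lambda\partial_\alpha^n W,\partial_\alpha^n U\rangle$, its mirror $\sum_{n\geq 0}\frac{\phi^{2n}}{(n!)^4}\langle-\partial_\alpha^{n+1}U,\partial_\alpha^{n+1}W\rangle$ from the second equation, commutators of $\partial_\alpha^n$ with $A\Lambda$, the $[\bar F,\mathbb H](\frac{1}{Z_\alpha}-1)$-term, and the sources $G,R$.

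For the principal cross term, splitting $n^2=n\cdot n$ inside Cauchy--Schwarz and using $\|\Lambda\partial_\alpha^n W\|_{L^2}=\|\partial_\alpha^{n+1}W\|_{L^2}$ gives
\begin{equation*}
\|A\|_{L^\infty}\left(\sum_n\frac{n\phi^{2n}}{(n!)^4}\|\partial_\alpha^{n+1}W\|_{L^2}^2\right)^{1/2}\left(\sum_n\frac{n^3\phi^{2n}}{(n!)^4}\|\partial_\alpha^n U\|_{L^2}^2\right)^{1/2}\lesssim\frac{\|A\|_{L^\infty}\phi(t)}{\delta_0}\mathcal D(t),
\end{equation*}
which is absorbed into $\tfrac{1}{2}\mathcal D(t)$ because $\phi\leq L_0$ and \eqref{controldamping} forces $\|A\|_{L^\infty}L_0/\delta_0$ to be sufficiently small. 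The same Cauchy--Schwarz handles the mirror term, and the $A\Lambda$-commutators are absorbed via Lemma \ref{shifthalfderivative} (especially \eqref{trilinear4}), producing the $\|A\|_{\dot Y_{\phi(t)}}$-coefficient that appears in \eqref{controldamping}. The $[\bar F,\mathbb H](\frac{1}{Z_\alpha}-1)$-contribution and the forcing are routed into $\mathcal N(t)$ via Lemmas \ref{lemmaproduct}--\ref{commutator1}, the reciprocal estimate \eqref{reciprocaltwo} applied to $\frac{1}{1+H}-1$, and Cauchy--Schwarz in the Gevrey sum.

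The most delicate piece is the transport. Splitting $b=b_0+b_1$, the $b_1$-contribution enters $B_{T_0}$ and $\mathcal N(t)$ through $\|b_1\|_{\dot Y_{\phi(t)}}$; but $\partial_\alpha b_0$ is \emph{not} in $X_{\phi(t)}$. As the footnote after \eqref{quasi2} signals, the obstruction in $\partial_\alpha b_0$ is exactly a half-derivative of $U$, quantitatively the $\sum\frac{n^3\phi^{2n}}{(n!)^4}\|\partial_\alpha^n U\|_{L^2}^2$ piece of $\mathcal D$, so by \eqref{b0estimate} its contribution is absorbed by a Cauchy--Schwarz identical in spirit to the principal-term argument, with smallness encoded in the $4d_1\|(U_0,\partial_\alpha W_0)\|_{Y_{\phi(0)}\times X_{\phi(0)}}$ factor in \eqref{controldamping}. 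Collecting everything yields $\frac{d}{dt}E\leq B_{T_0}E+\mathcal N(t)$ and Gronwall gives the stated bound; the $L^2$ bound for $U$ follows directly by testing the first equation against $U$. Existence and uniqueness are then produced by a Picard iteration: solve the linear system with coefficients frozen at the previous iterate, use the uniform a priori estimate on $[0,T_0]$ with $T_0\leq L_0/(2\delta_0)$ so that $\phi\geq L_0/2$, and obtain contraction by running the same energy estimate for successive differences in a slightly smaller Gevrey radius. The hardest step, in my view, is precisely the $b_0$-treatment: naively handling $b_{0,\alpha}$ as an $L^\infty$ coefficient loses a derivative, and the whole Gevrey scheme collapses unless the smallness factor in \eqref{controldamping} is exploited through exactly the damping mechanism above.
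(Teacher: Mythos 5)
Your proposal is correct and follows essentially the same route as the paper's own proof: the energy $E_{L_0,\delta_0}$ in $\dot Y_{\phi(t)}\times X_{\phi(t)}$ with $\phi(t)=L_0-\delta_0 t$, the damping $\mathcal D(t)$ from $\tfrac{d}{dt}\phi^{2n}$, absorption of the $A\Lambda W$ cross term and its commutators via Lemma \ref{shifthalfderivative} (contributing $d_0(\|A\|_{L^\infty}+\|A\|_{\dot Y_{\phi(t)}})$ to \eqref{controldamping}), absorption of the $b_0$-loss via Lemma \ref{producttrilinear} with the $d_1 E^{1/2}$ coefficient controlled by the initial size through a continuity/bootstrap argument, Gronwall, and the straightforward $L^2$ estimate for $U$. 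The only difference is that the paper explicitly restricts itself to the closed a priori estimate for this theorem, whereas you also sketch the Picard iteration needed for existence and uniqueness; that outline is consistent with the paper's general scheme and correctly completes the argument.
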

We prove \emph{Theorem} \ref{theoremlinear} by the energy method. We provide closed a priori energy estimates only. Note that $T_0\leq \frac{L_0}{2\delta_0}$ implies that $\phi(t)\geq \frac{L_0}{2}\geq 2$ for $t\in [0,T_0]$, so $d_0$ and $d_1$ are absolute constants. By Lemma \ref{commutator1} and (\ref{reciprocaltwo}), under the assumption of \emph{Theorem} \ref{theoremlinear}, we have the a priori estimate
\begin{equation}\label{estimateb0b0}
\begin{split}
    \|b_0\|_{\dot{Y}_{\phi(t)}}^2=&\sum_{n=1}^{\infty}\frac{n^2\phi(t)^{2n}}{(n!)^4}\norm{\partial_{\alpha}^n \Big(2U+\Re\{[(I-\mathbb{H})U,\mathbb{H}](\frac{1}{Z_{\alpha}}-1)\}\Big)}_{L^2}^2\\
    \leq & C\|U\|_{\dot{Y}_{\phi(t)}}^2+C\|U\|_{\dot{Y}_{\phi(t)}}\|\partial_{\alpha}W\|_{X_{\phi(t)}}\\
    \leq & CE_{L_0,\delta_0},
\end{split}
\end{equation}
for some constant $C>0$ depending on $c_0$ and $\frac{1}{L_0}$.

\begin{proof}
Applying $\partial_{\alpha}$ on both sides of $D_tW=-U-\Re\{[\bar{F},\mathbb{H}](\frac{1}{Z_{\alpha}}-1)\}+R$, we obtain
\begin{equation}\label{Dtwalpha}
    D_t W_{\alpha}=-U_{\alpha}+R_{\alpha}-b_{\alpha}W_{\alpha}-\partial_{\alpha}\Re\{\bar{F},\mathbb{H}](\frac{1}{Z_{\alpha}}-1)\}.
\end{equation}

Using (\ref{Dtwalpha}) and $U_t=A\Lambda W-bU_{\alpha}+G$, we have 
\begin{align*}
    \frac{d}{dt}E_{L_0, \delta_0}(t)=& \frac{1}{2}\frac{d}{dt}\Big(\sum_{n=1}^{\infty}\frac{n^2\phi(t)^{2n}}{(n!)^4}\|\partial_{\alpha}^n U(\cdot,t)\|_{L^2}^2\Big)+\frac{1}{2}\frac{d}{dt}\sum_{n=0}^{\infty}\frac{\phi(t)^{2n}}{(n!)^4}\|\partial_{\alpha}^n W_{\alpha}(\cdot,t)\|_{L^2}^2\\
    =& -\delta_0\sum_{n=1}^{\infty}\frac{n^3 \phi(t)^{2n-1}}{(n!)^4}\|\partial_{\alpha}^n U(\cdot,t)\|_{L^2}^2+\Re\sum_{n=1}^{\infty}\frac{n^2\phi(t)^{2n}}{(n!)^4}\langle \partial_{\alpha}^n U_t, \partial_{\alpha}^n U\rangle\\
    &-\delta_0\sum_{n=1}^{\infty}\frac{n \phi(t)^{2n-1}}{(n!)^4}\|\partial_{\alpha}^n W_{\alpha}(\cdot,t)\|_{L^2}^2+\Re\sum_{n=0}^{\infty}\frac{\phi(t)^{2n}}{(n!)^4}\langle \partial_{\alpha}^n \partial_{\alpha}W_t, \partial_{\alpha}^n W_{\alpha}\rangle\\
    =& -\delta_0\sum_{n=1}^{\infty}\frac{n^3 \phi(t)^{2n-1}}{(n!)^4}\|\partial_{\alpha}^n U(\cdot,t)\|_{L^2}^2\\
    &+\Re\sum_{n=1}^{\infty}\frac{n^2\phi(t)^{2n}}{(n!)^4}\langle \partial_{\alpha}^n (A\Lambda W-bU_{\alpha}+G), \partial_{\alpha}^n U\rangle\\
    &-\delta_0\sum_{n=1}^{\infty}\frac{n\phi(t)^{2n-1}}{(n!)^4}\|\partial_{\alpha}^n W_{\alpha}(\cdot,t)\|_{L^2}^2\\
    &+\Re\sum_{n=0}^{\infty}\frac{\phi(t)^{2n}}{(n!)^4}\langle \partial_{\alpha}^n (-U_{\alpha}+R_{\alpha}-b_{\alpha}W_{\alpha}-\partial_{\alpha}\Re\{[\bar{F},\mathbb{H}](\frac{1}{Z_{\alpha}}-1)\}-b\partial_{\alpha}W_{\alpha}), \partial_{\alpha}^n W_{\alpha}\rangle
\end{align*}
Using Lemma \ref{shifthalfderivative}, we have \footnote{It turns out that if we choose $|y_0|$ large and $|\lambda|\sim |y_0|^{3/2}$, then $\|A-1\|_{L^{\infty}}\sim 1$, while $\|A-1\|_{L^{2}}\sim |y_0|^{1/2}$, which is large. So we avoid to bound $A-1$ in $L^2$.}
\begin{align*}
    &\Big|\sum_{n=1}^{\infty}\frac{n^2\phi(t)^{2n}}{(n!)^4}\langle \partial_{\alpha}^n(A\Lambda W), \partial_{\alpha}^n U\rangle \Big|\\
    \leq & d_0(\|A\|_{L^{\infty}}+\|A\|_{\dot{X}_{\phi(t)}})(\sum_{n=1}^{\infty}\frac{n\phi(t)^{2n}}{(n!)^4}\|\partial_{\alpha}^n \Lambda W\|_{L^2}^2+\sum_{n=1}^{\infty}\frac{n^3\phi(t)^{2n}}{(n!)^4}\|\partial_{\alpha}^n U\|_{L^2}^2),
\end{align*}
for some absolute constant $d_0>0$.
Using Lemma \ref{shifthalfderivative} again, we obtain 
\begin{align*}
    \Big| \Re\sum_{n=1}^{\infty} & \frac{n^2\phi(t)^{2n}}{(n!)^4}\langle \partial_{\alpha}^n (A\Lambda W-bU_{\alpha}+G), \partial_{\alpha}^n U\rangle\Big|\leq  C((\|b\|_{L^{\infty}}+\|b\|_{\dot{Y}_{\phi(t)}})\|U\|_{\dot{Y}_{\phi(t)}}^2+\|U\|_{\dot{Y}_{\phi(t)}}^2+\|G\|_{\dot{Y}_{\phi(t)}}^2\\
    &+d_0(\|A\|_{L^{\infty}}+\|A\|_{\dot{Y}_{\phi(t)}})(\sum_{n=1}^{\infty}\frac{n\phi(t)^{2n}}{(n!)^4}\|\partial_{\alpha}^n \Lambda W\|_{L^2}^2+\sum_{n=1}^{\infty}\frac{n^3\phi(t)^{2n}}{(n!)^4}\|\partial_{\alpha}^n U\|_{L^2}^2)\\
    \leq & C(1+\|b\|_{L^{\infty}}+\|b\|_{\dot{Y}_{\phi(t)}})E_{L_0, \delta_0}(t)+\|G\|_{\dot{Y}_{\phi(t)}}^2\\
    &+d_0(\|A\|_{L^{\infty}}+\|A\|_{\dot{X}_{\phi(t)}})(\sum_{n=1}^{\infty}\frac{n\phi(t)^{2n}}{(n!)^4}\|\partial_{\alpha}^n \Lambda W\|_{L^2}^2+\sum_{n=1}^{\infty}\frac{n^3\phi(t)^{2n}}{(n!)^4}\|\partial_{\alpha}^n U\|_{L^2}^2).
\end{align*}
Cauchy-Schwarz implies
\begin{align*}
    &\Big| \Re\sum_{n=1}^{\infty}\frac{n^2\phi(t)^{2n}}{(n!)^4}\langle \partial_{\alpha}^n U_{\alpha}, \partial_{\alpha}^n W_{\alpha}\rangle\Big|\leq \sum_{n=1}^{\infty}\frac{n^3\phi(t)^{2n}}{(n!)^4}\|\partial_{\alpha}^n U\|_{L^2}^2+\sum_{n=1}^{\infty}\frac{n\phi(t)^{2n}}{(n!)^4}\|\partial_{\alpha}^n W_{\alpha}\|_{L^2}^2.
\end{align*}
Decomposing $b=b_0+b_1$, we obtain 
\begin{align*}
     &\Big| \Re\sum_{n=1}^{\infty}\frac{\phi(t)^{2n}}{(n!)^4}\langle \partial_{\alpha}^n (b\partial_{\alpha}W_{\alpha}+b_{\alpha}W_{\alpha}), \partial_{\alpha}^n W_{\alpha}\rangle\Big|=\Big|\sum_{n=1}^{\infty}\frac{\phi(t)^{2n}}{(n!)^4}\langle \partial_{\alpha}^{n+1} (bW_{\alpha}), \partial_{\alpha}^n W_{\alpha}\rangle\Big|\\
     \leq & \Big|\sum_{n=1}^{\infty}\frac{\phi(t)^{2n}}{(n!)^4}\langle \partial_{\alpha}^{n+1} (b_0W_{\alpha}), \partial_{\alpha}^n W_{\alpha}\rangle\Big|+\Big|\sum_{n=1}^{\infty}\frac{\phi(t)^{2n}}{(n!)^4}\langle \partial_{\alpha}^{n+1} (b_1W_{\alpha}), \partial_{\alpha}^n W_{\alpha}\rangle\Big|.
     \end{align*}
Since we can estimate $\partial_{\alpha}b_1$ in $X_{\phi(t)}$, expressing $\partial_{\alpha}^{n+1}(b_1W_{\alpha})=\partial_{\alpha}^n (\partial_{\alpha}b_1W_{\alpha}+b_1\partial_{\alpha}W_{\alpha})$ and using (\ref{trilinear3}), we obtain
\begin{align*}
    &\Big|\sum_{n=1}^{\infty}\frac{\phi(t)^{2n}}{(n!)^4}\langle \partial_{\alpha}^{n+1} (b_1W_{\alpha}), \partial_{\alpha}^n W_{\alpha}\rangle\Big|\leq  \|\partial_{\alpha}b_1\|_{X_{\phi(t)}}\|\partial_{\alpha}W\|_{X_{\phi(t)}}^2.
\end{align*}
The formula for $b_0$ implies
\begin{align*}
    &\Big|\sum_{n=1}^{\infty}\frac{\phi(t)^{2n}}{(n!)^4}\langle \partial_{\alpha}^{n+1} (b_0W_{\alpha}), \partial_{\alpha}^n W_{\alpha}\rangle\Big|\\
    \leq & \Big|\sum_{n=1}^{\infty}\frac{\phi(t)^{2n}}{(n!)^4}\langle \partial_{\alpha}^{n+1} (2U W_{\alpha}), \partial_{\alpha}^n W_{\alpha}\rangle\Big|+\Big|\sum_{n=1}^{\infty}\frac{\phi(t)^{2n}}{(n!)^4}\langle \partial_{\alpha}^{n+1} (W_{\alpha}\Re[(I-\mathbb{H})U,\mathbb{H}](\frac{1}{Z_{\alpha}}-1)), \partial_{\alpha}^n W_{\alpha}\rangle\Big|\\
:=& I+\it{II}.
\end{align*}
Using Lemma \ref{producttrilinear}, we have 
\begin{align*}
    I\leq &  d_1\|U\|_{\dot{X}_{\sigma}}\|W_{\alpha}\|_{\dot{X}_{\sigma}}^2+d_1\|W_{\alpha}\|_{X_{\sigma}}\Big(\sum_{n=1}^{\infty}\frac{n^3\sigma^{2n}}{(n!)^4}\|\partial_{\alpha}^n U\|_{L^2}^2\Big)^{1/2}\Big(\sum_{n=1}^{\infty}\frac{n\sigma^{2n}}{(n!)^4}\|\partial_{\alpha}^n W_{\alpha}\|_{L^2}^2\Big)^{1/2}\\
    \leq & d_1 E_{L_0, \delta_0}^{3/2}+d_1E_{L_0, \delta_0}^{1/2}\Big(\sum_{n=1}^{\infty}\frac{n^3\sigma^{2n}}{(n!)^4}\|\partial_{\alpha}^n U\|_{L^2}^2+\sum_{n=1}^{\infty}\frac{n\sigma^{2n}}{(n!)^4}\|\partial_{\alpha}^n W_{\alpha}\|_{L^2}^2\Big),
\end{align*}
where $d_1$ is the constant in Lemma \ref{producttrilinear}.
Similarly, using Lemma \ref{producttrilinear}, (\ref{reciprocalone}) and (\ref{reciprocaltwo}), and $\partial_{\alpha}[f, \mathbb{H}]g=[f_{\alpha}, \mathbb{H}]g+[f, \mathbb{H}]g_{\alpha}$, we obtain
\begin{align*}
    \it{II}\leq d_1E_{L_0, \delta_0}^{3/2}+d_1E_{L_0, \delta_0}^{1/2}\Big(\sum_{n=1}^{\infty}\frac{n^3\sigma^{2n}}{(n!)^4}\|\partial_{\alpha}^n U\|_{L^2}^2+\sum_{n=1}^{\infty}\frac{n\sigma^{2n}}{(n!)^4}\|\partial_{\alpha}^n W_{\alpha}\|_{L^2}^2\Big).
\end{align*}
By Cauchy-Schwarz,
\begin{align*}
    \Big| \Re\sum_{n=0}^{\infty}\frac{\phi(t)^{2n}}{(n!)^4}\langle \partial_{\alpha}^n R_{\alpha}, \partial_{\alpha}^n W_{\alpha}\rangle\Big|\leq \|R_{\alpha}\|_{X_{\phi(t)}}^2+\|W_{\alpha}\|_{X_{\phi(t)}}^2.
\end{align*}
 Combining the above estimates together with (\ref{estimateb0b0}), bouding $\|b_1\|_{L^{\infty}}$ by $C\|b_1\|_{Y_{\phi(t)}}$, we obtain
\begin{align*}
   & \frac{d}{dt}E_{L_0, \delta_0}(t)\\
   &+\Big(\frac{\delta_0}{\phi(t)}-2-d_0(\|A\|_{L^{\infty}}+\|A\|_{\dot{X}_{\phi(t)}})-d_1 E_{L_0, \delta_0}^{1/2})\Big)\Big(\sum_{n=1}^{\infty}\frac{n^3\phi(t)^{2n}}{(n!)^4}\|\partial_{\alpha}^n U\|_{L^2}^2+\sum_{n=1}^{\infty}\frac{n\phi(t)^{2n}}{(n!)^4}\|\partial_{\alpha}^n W_{\alpha}\|_{L^2}^2\Big)\\
   \leq &  C(1+\|b\|_{L^{\infty}}+\|b\|_{\dot{Y}_{\phi(t)}})E_{L_0, \delta_0}(t)+\|R_{\alpha}\|_{X_{\phi(t)}}^2+d_1E_{L_0, \delta_0}^{3/2}+\|G\|_{\dot{Y}_{\phi(t)}}^2+C\|b\|_{\dot{Y}_{\phi(t)}}^2\\
   \leq & C(1+\|b_1\|_{Y_{\phi(t)}}+E_{L_0,\delta_0}(t)^{1/2})E_{L_0, \delta_0}(t)+\|R_{\alpha}\|_{X_{\phi(t)}}^2+d_1E_{L_0, \delta_0}^{3/2}+\|G\|_{\dot{Y}_{\phi(t)}}^2+C\|b_1\|_{\dot{Y}_{\phi(t)}}^2+CE_{L_0, \delta_0}(t).
\end{align*}
We choose $T_0=\min\{T, \frac{L_0}{2\delta_0}, \tau_0\}$. With this choice, $\phi(t)\geq \frac{L_0}{2}\geq 2$. Using (\ref{controldamping}), by a bootstrap argument, 
$$\frac{\delta_0}{\phi(t)}-2-d_0(\|A\|_{L^{\infty}}+\|A\|_{\dot{X}_{\phi(t)}})-d_1 E_{L_0, \delta_0}^{1/2})\geq 0, \quad \quad \forall~ t\in [0, T_0].$$
By choosing $d_1$ larger if necessary, we obtain
\begin{equation}
    \frac{d}{dt}E_{L_0, \delta_0}(t)\leq C(1+\|b_1\|_{Y_{\phi(t)}})E_{L_0, \delta_0}(t)+d_1E_{L_0, \delta_0}^{3/2}+\|R_{\alpha}\|_{X_{\phi(t)}}^2+\|G\|_{\dot{Y}_{\phi(t)}}^2+C\|b_1\|_{\dot{Y}_{\phi(t)}}^2.
\end{equation}
By the method of continuity, we obtain
\begin{equation}
    E_{L_0, \delta_0}(t)\leq E_{L_0, \delta_0}(0)e^{B_{T_0}t}+\int_0^t e^{B_{T_0}\tau}\mathcal{N}(\tau)d\tau,
\end{equation}
where $\mathcal{N}$ is defined in (\ref{mathcalN_0}).

\vspace*{2ex}
To estimate $\|U\|_{L^2}$, using energy estimates, we have 
\begin{align*}
    \frac{d}{dt}\|U(\cdot,t)\|_{L^2}^2=& 2\langle U_t, U\rangle \\
    =& 2\langle -b\partial_{\alpha}U+G+A\Lambda W, U\rangle\\
    \leq & 2\|b_{\alpha}\|_{L^{\infty}}\|U\|_{L^2}^2+\|G\|_{L^2}^2+\|U\|_{L^2}^2+\|A\|_{L^{\infty}}\|\Lambda W\|_{L^2}^2+\|A\|_{L^{\infty}}\|U\|_{L^2}^2\\
    \leq & C(E_{L_0,\delta_0}^{1/2}+\|b_1\|_{\dot{Y}_{\phi(t)}})\|U\|_{L^2}^2+\|G\|_{L^2}^2+\|U\|_{L^2}^2+\|A\|_{L^{\infty}}^2E_{L_0, \delta_0}(t)+\|A\|_{L^{\infty}}\|U\|_{L^2}^2.
\end{align*}
So we obtain
\begin{equation}
    \|U(\cdot,t)\|_{L^2}^2\leq U(\cdot,0)\|_{L^2}^2 e^{\gamma(t)}+\int_0^t  e^{\gamma(t)-\gamma(s)}(\|G(s)\|_{L^2}^2+\|A(s)\|_{L^{\infty}}E_{L_0,\delta_0}(s))ds,
\end{equation}
where
$$\gamma(t):= \int_0^t C(E_{L_0,\delta_0}^{1/2}(\tau)+\|b_1(\cdot,\tau)\|_{\dot{Y}_{\phi(\tau)}}+\|A(\cdot,t)\|_{L^{\infty}})d\tau.$$
\end{proof}

\section{Estimates}\label{sectionestimates}
In \S \ref{sectionapproximate}, we use the Picard iteration to prove \emph{Theorem} \ref{theorem1}. For each iteration, we need to solve the quasilinear system (\ref{linearsystem}) with $G, R, b_1, A$ constructed in the previous iteration. Hence we need to derive estimates for these quantities.
Most of the estimates in this section hold for the general case: without symmetry, and applies to an arbitrary number of point vortices. The only place that the symmetry plays a role is the estimates for the velocity of the point vortices, which is in Lemma \ref{velocitypointvortices}.
\subsection{ A priori assumptions}\label{apriorilocal}
\noindent We'll derive estimates under the following a priori assumptions. These assumptions are verified during the process of the Picard iteration in the next section. 

\vspace*{2ex}

\noindent Without loss of generality, we assume $0<C_0<C_1$. Denote
\begin{equation}\label{theconstantM0}
   \frac{1}{4}m_0^2:=\|U_0\|_{Y_{\phi(0)}}^2+\|\partial_{\alpha}W_0\|_{X_{\phi(0)}}^2. 
\end{equation}
Let's denote \footnote{In the definition of $M_{\lambda, h}$, we will choose $d_{I,0}$ large. We do not claim that $|d_{I,0}|^{3/2}$ is the optimal choice. The letter \emph{h} in $M_{\lambda, h}$ refers to \emph{homogeneous}. }
\begin{equation}
M_{\lambda, h}^2:=
\begin{cases}
  m_0^2+C|\lambda| (|y_0|-\frac{|\lambda|}{4\pi x(0)}T)^{-5/2}, \quad\quad &\lambda>0;\\
    m_0^2+|\lambda| d_{I,0}^{-5/2}, \quad\quad &\lambda<0.
 \end{cases}
\end{equation}
\begin{equation}
    M_{\infty}=(d_{I,0}+C|\lambda| (|y_0|-\frac{|\lambda|}{4\pi x(0)}T)^{-5/2})^{-1/2}.
\end{equation}
Here, $C>0$ is a constant depending on $C_0$ and $\frac{1}{L_0}$ only\footnote{Since we choose $L_0\geq 4$, the constant can be chosen such that it depends on $C_0$ only. At the moment let's keep track of its dependence on $\frac{1}{L_0}$.}. We will choose $T=\frac{4\pi x(0) (|y_0|-|y_0|^{9/10})}{|\lambda|}$ if $\lambda>0$ and $T=O(1)$ if $\lambda<0$. So we can take 
\begin{equation}\label{boundMlambdah}
    M_{\lambda, h}^2=\begin{cases} m_0^2+ C|y_0|^{-3/4}, \quad \quad &\lambda>0 \\ m_0^2+C|y_0|^{-1} \quad \quad \quad &\lambda<0. \end{cases}
\end{equation}

Without loss of generality, we assume 
\begin{equation}\label{boundsmallm}
\begin{cases}
  m_0^2\leq |y_0|^{-3/4}, \quad\quad &\lambda>0;\\
    m_0^2\leq |y_0|^{-1}, \quad\quad\quad &\lambda<0.
 \end{cases}
\end{equation}

\begin{definition}\label{aprioriassumptions4}
Given $W, U, z_j(t)$, we say that $(U, W, \{z_j(t)\})$ satisfies AS, if the following hold:
\begin{itemize}
    \item [(AS1)] $W\in C([0,T]; X_{\phi(t)})$, $U\in C([0,T];Y_{\phi(t)})$, $z_j(t)\in C^1([0,T];\Omega(t))$.

    \vspace*{1ex}
    
    \item [(AS2)] $\sup_{0\leq t\leq T}\Big(\|U(\cdot,t)\|_{\dot{Y}_{\phi(t)}}^2+\|W_{\alpha}\|_{X_{\phi(t)}}^2\Big)\leq M_{\lambda, h}^2\leq 1$, and $$\sup_{0\leq t\leq T}\|U(\cdot,t)\|_{L^2}^2\leq 1, \quad \quad \sup_{0\leq t\leq T}\|U(\cdot,t)\|_{L^{\infty}}\leq M_{\infty}.$$

    \vspace*{1ex}
    
    \item [(AS3)] $\frac{1}{2}C_0|\alpha-\beta|\leq |Z(\alpha,t)-Z(\beta,t)|$, \quad $\forall~t\in [0,T]$.
    
    \vspace*{1ex}
    
    \item [(AS4)]
$d_{I}(t)\geq \frac{1}{2}d_{I,0}^{9/10}\geq 1,  \quad x(t)\geq \frac{1}{2}x(0),  \quad t\in [0,T]$.

\vspace*{1ex}

\item [(AS5)] $\sup_{t\in [0,T]}\phi(t)\geq \frac{1}{2}L_0$.
\end{itemize}
\end{definition}
\noindent We derive a priori estimates under the a priori assumptions (AS1)-(AS5). Note that (AS2) implies
\begin{equation}
    \sup_{0\leq t\leq T}(\|F\|_{Y_{\phi(t)}}^2+\|Z_{\alpha}-1\|_{X_{\phi(t)}}^2)\leq 4M_{\lambda, h}^2.
\end{equation}

\vspace*{2ex}

\noindent \textbf{Convention:} If not specified, in this section, a constant $C$ will depend on $C_0$, $C_1$,  $M_{\lambda, h}$, $d_I(t)^{-1}$, and $\frac{1}{L_0}$.

\subsection{Estimates}

\begin{lemma}\label{integral}
Assume that $(W, U, \{z_j(t)\})$ satisfies AS. Then any $k\geq 2$, there holds
\begin{equation}
\int_{-\infty}^{\infty}\frac{1}{|Z(\beta,t)-z_j(t)|^k}d\beta\leq \frac{8}{C_0}d_I(t)^{-k+1}+\frac{8}{(k-1)C_0}\Big(\frac{4C_1}{d_I(t)C_0})^{k-1}.
\end{equation}
In particular, we have 
\begin{equation}
    \int_{-\infty}^{\infty}\frac{1}{|Z(\beta,t)-z_j(t)|^k}d\beta\leq \frac{16}{C_0}\Big(\frac{4C_1}{d_I(t)C_0})^{k-1}.
\end{equation}
\end{lemma}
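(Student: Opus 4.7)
The plan is to decompose the integral via a near/far split around a parameter $\alpha_j$ where $Z(\alpha_j, t)$ realizes the distance from $z_j(t)$ to the interface. Concretely, fix $j$ and choose $\alpha_j$ so that
$$|Z(\alpha_j,t)-z_j(t)|=d_j:=\inf_{\alpha\in\mathbb{R}}|Z(\alpha,t)-z_j(t)|;$$
by definition of $d_I(t)$ we have $d_j\geq d_I(t)$, and existence of such $\alpha_j$ follows from $\|Z-\alpha\|_{L^\infty}<\infty$ (a consequence of (AS2) and the Sobolev embedding of Lemma \ref{sobolev}) together with continuity of $Z(\cdot,t)$. For a splitting radius $R>0$ to be chosen, I write
$$\int_{-\infty}^{\infty}\frac{d\beta}{|Z(\beta,t)-z_j(t)|^k}=\int_{|\beta-\alpha_j|\leq R}\frac{d\beta}{|Z(\beta,t)-z_j(t)|^k}+\int_{|\beta-\alpha_j|>R}\frac{d\beta}{|Z(\beta,t)-z_j(t)|^k}.$$

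For the near-region integral I use the trivial lower bound $|Z(\beta,t)-z_j(t)|\geq d_I(t)$, which yields a contribution at most $2R/d_I(t)^k$. For the far region I combine the chord-arc lower bound (AS3), $|Z(\beta,t)-Z(\alpha_j,t)|\geq\tfrac{C_0}{2}|\beta-\alpha_j|$, with the triangle inequality
$$|Z(\beta,t)-z_j(t)|\geq |Z(\beta,t)-Z(\alpha_j,t)|-d_j.$$
Choosing $R$ large enough that $\tfrac{C_0 R}{2}\geq 2d_j$, i.e.\ $R\geq 4d_j/C_0$, the chord-arc term dominates and I obtain $|Z(\beta,t)-z_j(t)|\geq\tfrac{C_0}{4}|\beta-\alpha_j|$ throughout the far region. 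A direct one-dimensional integration then produces a far contribution bounded by $\tfrac{2\cdot 4^k}{C_0^k(k-1)R^{k-1}}$.

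Taking $R=4d_j/C_0$ and summing gives a bound of the form $\tfrac{8d_j}{C_0 d_I(t)^k}+\tfrac{8}{(k-1)C_0 d_j^{k-1}}$; upon noting that the sharper pointwise bound $|Z(\beta,t)-z_j(t)|\geq d_j$ also holds in the near region, the two summands can be presented in the claimed form in terms of $d_I(t)$, $C_0$, and the upper chord-arc constant $C_1$, using $d_j\geq d_I(t)$ to pass from $d_j$ to $d_I(t)$. The ``in particular'' consolidated bound is then immediate from the hypothesis $C_0<C_1$, which forces $(4C_1/C_0)^{k-1}\geq 1$ so that the first summand is absorbed by the second. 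The only mildly subtle point is that the threshold $R\geq 4d_j/C_0$ is governed by $d_j$ rather than $d_I(t)$ (so when $z_j(t)$ is deep inside $\Omega(t)$ one cannot pick $R$ of pure order $d_I(t)$); this is harmless because $|Z(\beta,t)-z_j(t)|\geq d_j$ is a \emph{global} pointwise bound, so replacing $d_I(t)$ by $d_j$ in the near estimate only improves things and the monotonicity $d_j\geq d_I(t)$ concludes the argument.
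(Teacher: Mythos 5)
Your proof is correct and rests on the same underlying idea as the paper's: split the integral into a near region, where the trivial pointwise lower bound on $|Z(\beta,t)-z_j(t)|$ is used, and a far region, where the chord-arc condition (AS3) plus the triangle inequality dominate. The concrete split differs slightly: the paper works with the image set $\{\beta:|Z(0,t)-Z(\beta,t)|\leq 2d_I(t)\}$ (after tacitly placing the minimizer at $\beta=0$), and must invoke the upper chord-arc constant $C_1$ to translate the far-region condition $|Z(\beta,t)-Z(0,t)|\geq 2d_I(t)$ into a lower bound $|\beta|\geq d_I(t)/C_1$, which is where the $(4C_1/(C_0 d_I(t)))^{k-1}$ factor in the stated bound comes from. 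You instead split directly in the parameter domain at $|\beta-\alpha_j|=R$ with $R=4d_j/C_0$, which makes the far-region integration elementary and gives the sharper bound $8/((k-1)C_0 d_j^{k-1})$ with no $C_1$ dependence at all; since $4C_1/C_0>1$, this is dominated by the lemma's claimed second term, so nothing is lost in presenting it in the stated form. You are also more careful than the paper in distinguishing $d_j:=\operatorname{dist}(z_j(t),\Sigma(t))$ from $d_I(t)=\min_j d_j$: the paper's phrase ``we may assume $d_I(t)=d(z_j(t),z(0,t))$'' only literally applies to the closest vortex, whereas your observation that $|Z(\beta,t)-z_j(t)|\geq d_j\geq d_I(t)$ globally handles all $j$ uniformly and shows the bound only improves when $z_j$ is farther away.
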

\begin{proof}
We may assume that $d_I(t)=d(z_j(t), z(0,t))$. 
\begin{align*}
&\int_{-\infty}^{\infty}\frac{1}{|z_j(t)-Z(\beta,t)|^k}d\beta\\
=& \int_{|Z(0,t)-Z(\beta,t)|\leq 2d_I(t)}\frac{1}{|z_j(t)-Z(\beta,t)|^k}d\beta+\int_{|z(0,t)-Z(\beta,t)|\geq 2d_I(t)}\frac{1}{|z_j(t)-Z(\beta,t)|^k}d\beta\\
:=&  I+\it{II}.
\end{align*}
Denote $$E:=\{\beta: |Z(0,t)-Z(\beta,t)|\leq 2d_I(t)\}.$$
Since 
$$\frac{1}{2}C_0|\beta-0|\leq |Z(\beta,t)-Z(0,t)|,$$
we have for $\beta\in E$,
$$|\beta-0|\leq \frac{4}{C_0}d_I(t).$$
Therefore
\begin{align*}
I\leq & 8C_0^{-1} d_I(t)^{-k+1}.
\end{align*}
For $\beta\in E^c$, we have 
\begin{equation}
|Z(\beta,t)-Z(0,t)-d_I(t)|\geq |Z(\beta,t)-Z(0,t)|-d_I(t)\geq \frac{1}{2}|Z(\beta,t)-Z(0,t)|\geq \frac{C_0}{4}|\beta-0|.
\end{equation}
Also, we have 
\begin{equation}
2C_1|\beta-0|\geq |Z(\beta,t)-Z(0,t)|\geq 2d_I(t).
\end{equation}
So 
\begin{equation}
|\beta|\geq \frac{1}{C_1}d_I(t)
\end{equation}
Therefore, for $\it{II}$, we have 
\begin{align*}
\it{II}\leq & \frac{4^k}{C_0^k}\int_{|\beta|\geq \frac{1}{C_1}d_I(t) }|\beta|^{-k} d\beta=2\frac{4^k}{(k-1)C_0^k}C_1^{k-1}d_I(t)^{-k+1}=\frac{8}{(k-1)C_0}\Big(\frac{4C_1}{d_I(t)C_0}\Big)^{k-1}.
\end{align*}
\end{proof}

\begin{lemma}\label{lemmaQj}
Denote $\tilde{Q}_j:=\frac{1}{Z(\alpha,t)-z_j(t)}$. Assume that $(W, U, \{z_j(t)\})$ satisfies AS. Then
\begin{equation}
    \|\partial_{\alpha}\tilde{Q}_j\|_{X_{\phi(t)}}\leq C d_I(t)^{-3/2},
\end{equation}
for some constant $C>0$ depending on $M_{\lambda, h}, C_0$,  $C_1$, and $\frac{1}{L_0}$. 
\end{lemma}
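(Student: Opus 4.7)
The plan is to use the identity $\partial_\alpha \tilde Q_j = -Z_\alpha (Z-z_j)^{-2}$ together with the product estimate (Lemma \ref{lemmaproduct}) to reduce to controlling $\|(Z-z_j)^{-2}\|_{X_{\phi(t)}}$. Since $Z_\alpha = 1 + (I+\mathbb{H})W_\alpha$ and $\mathbb{H}$ is an isometry on $X_{\phi(t)}$ (Lemma \ref{hilbert}), (AS2) gives $\|Z_\alpha\|_{X_{\phi(t)}} \leq 1 + 2M_{\lambda,h} = O(1)$, so it suffices to establish
\[
\|(Z-z_j)^{-2}\|_{X_{\phi(t)}} \leq C\, d_I(t)^{-3/2}.
\]
The $n=0$ piece of this bound is immediate from Lemma \ref{integral} applied with $k=4$, which gives $\|(Z-z_j)^{-2}\|_{L^2}^2 \leq C d_I(t)^{-3}$, exactly the target scaling.

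For $n\geq 1$, I would expand $\partial_\alpha^n[(Z-z_j)^{-2}]$ via the Fa\`a di Bruno formula for the composition of $f(w)=w^{-2}$ with $g(\alpha)=Z(\alpha,t)-z_j(t)$:
\[
\partial_\alpha^n[(Z-z_j)^{-2}] = \sum_{\pi \in \mathcal{P}(n)} \frac{(-1)^{|\pi|}(|\pi|+1)!}{(Z-z_j)^{|\pi|+2}} \prod_{B \in \pi} \partial_\alpha^{|B|} Z,
\]
where $\mathcal{P}(n)$ is the set of set partitions of $\{1,\ldots,n\}$. I would place the singular factor $(Z-z_j)^{-(|\pi|+2)}$ in $L^2$ via Lemma \ref{integral} (yielding $\leq C d_I^{-|\pi|-3/2}$), and each derivative factor $\partial_\alpha^{|B|}Z$ in $L^\infty$ via the Gevrey Sobolev embedding (Lemma \ref{sobolev}) using the a priori bound on $\|Z_\alpha\|_{X_{\phi(t)}}$, yielding $\leq C(|B|)!^2/\phi(t)^{|B|-1}$. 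Collecting these bounds and summing over partitions, using the multinomial inequality $\prod_{B\in\pi} |B|! \leq n!$ to handle the combinatorics, the intended per-derivative estimate is
\[
\|\partial_\alpha^n[(Z-z_j)^{-2}]\|_{L^2} \leq C\, (n!)^2\, (M/\phi(t))^n\, d_I(t)^{-n-3/2}
\]
for some constant $M$ depending on $M_{\lambda,h}, C_0, C_1, L_0^{-1}$.

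Inserting this into the definition of the Gevrey norm, the $(n!)^4$ denominator exactly cancels the $(n!)^4$ coming from squaring the above, and the Gevrey series collapses to
\[
\|(Z-z_j)^{-2}\|_{X_{\phi(t)}}^2 \leq C\, d_I(t)^{-3} \sum_{n \geq 0} \left(\frac{M}{d_I(t)}\right)^{2n},
\]
which is a geometric series converging under (AS4), since $d_I(t) \geq \tfrac{1}{2}d_{I,0}^{9/10}$ can be arranged to dominate $M$. This gives the advertised bound $C d_I(t)^{-3/2}$.

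The principal technical obstacle is the combinatorial bookkeeping in the Fa\`a di Bruno expansion: the number of set partitions grows like the Bell number $B_n$, and each term carries a factor $(|B|)!^2$ from the Gevrey Sobolev embedding (the square being characteristic of the Gevrey-$2$ scale). One must verify carefully that the combined partition count, the Fa\`a di Bruno coefficient $(|\pi|+1)!$, and the product $\prod_B (|B|)!^2$ are bounded by $C^n (n!)^2$ so that the $(n!)^4$ denominator in the $X_{\phi(t)}$ norm suffices to close the series. It is precisely the Gevrey-$2$ scaling in Lemma \ref{sobolev}, rather than a cruder Sobolev bound, that makes this combinatorial cancellation work.
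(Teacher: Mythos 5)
Your plan follows the paper's overall framework (Fa\`a di Bruno, placing a singular factor against derivative factors, controlling the combinatorics), but the uniform bounding scheme you propose does not close. The gap is the single-block ($|\pi|=1$, i.e.\ $k=1$) term, and it is fatal.

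Concretely, the $k=1$ contribution to $\partial_\alpha^n\bigl[(Z-z_j)^{-2}\bigr]$ is $-2(Z-z_j)^{-3}\,\partial_\alpha^n Z$. Under your scheme (singular factor in $L^2$, derivative factor in $L^\infty$) you get
\[
\bigl\|(Z-z_j)^{-3}\bigr\|_{L^2}\,\bigl\|\partial_\alpha^{\,n-1}(Z_\alpha-1)\bigr\|_{L^\infty}
\;\lesssim\; d_I(t)^{-5/2}\cdot \frac{(n!)^2}{\phi(t)^n}.
\]
There is no $d_I^{-n}$ coming out of this term; the singular factor contributes only $d_I^{-5/2}$, independently of $n$. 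When you insert this into $\sum_n \frac{\phi^{2n}}{(n!)^4}\|\partial_\alpha^n Q_j\|_{L^2}^2$, the factor $\phi^{2n}/(n!)^4$ exactly cancels $(n!)^4/\phi^{2n}$ and each $n$ contributes $\sim d_I^{-5}$: the Gevrey series does not converge. Your advertised per-derivative estimate $\|\partial_\alpha^n[(Z-z_j)^{-2}]\|_{L^2}\lesssim (n!)^2(M/\phi)^n\,d_I^{-n-3/2}$ is therefore false; it would require the singular factor to scale like $d_I^{-|\pi|-\,\cdots}$ with $|\pi|=n$, but the dominant partition has $|\pi|=1$. The ``geometric series in $M/d_I$'' you claim at the end never appears.

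The paper's proof works because it does not place the top-order derivative of $Z$ in $L^\infty$. It splits the Fa\`a di Bruno sum by the number of blocks $k$ and, for the low-$k$ terms (paper's term $II_3$, $k=1,2$), it reverses the roles: the singular factor $(Z-z_j)^{-3}$ goes into $L^\infty$ (giving $d_I^{-3}$) and $\partial_\alpha^{\,n-1}(Z_\alpha-1)$ goes into $L^2$. The resulting summand is
\[
\frac{\phi^{2n}}{(n!)^4}\,d_I^{-6}\,\|\partial_\alpha^{\,n-1}(Z_\alpha-1)\|_{L^2}^2
\;=\;d_I^{-6}\,\frac{\phi^2}{n^4}\cdot\frac{\phi^{2(n-1)}}{((n-1)!)^4}\|\partial_\alpha^{\,n-1}(Z_\alpha-1)\|_{L^2}^2,
\]
and summing over $n$ gives $\lesssim d_I^{-6}\,\|Z_\alpha-1\|_{X_{\phi(t)}}^2$, which is finite by (AS2). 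That switch --- using the finiteness of the Gevrey norm of $Z_\alpha-1$ to pay for the top derivative, rather than Sobolev embedding --- is the idea your proposal is missing. (The intermediate regime $3\le k\le[n/2]$ is then handled by the multinomial generating-function identity $\sum_k\sum_{\Lambda_{n,k}}\frac{k!}{\prod k_j!}R^k = R(1+R)^{n-1}$, which tames the partition count more cleanly than a crude Bell-number estimate; your worry about the combinatorics is legitimate but is not where the real obstacle lies.) Two smaller remarks: the decomposition $\partial_\alpha\tilde Q_j = -(Z_\alpha-1)(Z-z_j)^{-2} - (Z-z_j)^{-2}$ is preferable to writing $-Z_\alpha (Z-z_j)^{-2}$ and invoking $\|Z_\alpha\|_{X_{\phi(t)}}$, since the constant function $1$ is not in $X_{\phi(t)}$; and your Sobolev bound $\|\partial_\alpha^{|B|}Z\|_{L^\infty}\lesssim (|B|!)^2/\phi^{|B|-1}$ has one power of $\phi$ too many compared to Lemma~\ref{sobolev}, though this does not change the conclusion.
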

\begin{proof}
We have $\partial_{\alpha}\tilde{Q}_j=-\frac{Z_{\alpha}-1}{(Z(\alpha,t)-z_j(t))^2}-\frac{1}{(Z(\alpha,t)-z_j(t))^2}$. Since $$\norm{\frac{Z_{\alpha}-1}{(Z(\alpha,t)-z_j(t))^2}}_{X_{\phi(t)}}\leq C\|Z_{\alpha}-1\|_{X_{\phi(t)}}\norm{\frac{1}{(Z(\alpha,t)-z_j(t))^2}}_{X_{\phi(t)}},$$ it suffices to prove $\norm{\frac{1}{(Z(\alpha,t)-z_j(t))^2}}_{X_{\phi(t)}}\leq Cd_I(t)^{-3/2}$.

Denote $Q_j:=\frac{1}{(Z(\alpha,t)-z_j(t))^2}$, and $g(\alpha):=\frac{1}{(\alpha-z_j(t))^2}$. The Faa di Bruno formua implies
\begin{align*}
    \partial_{\alpha}^n Q_j(\alpha,t)=\sum_{k=1}^n\sum_{\Lambda_{n,k}} \frac{n!}{k_1!k_2!\cdots k_n!}g^{(k)}(Z(\alpha,t),t) \prod_{j=1}^n \Big(\frac{\partial_{\alpha}^j Z(\alpha,t)}{j!}\Big)^{k_j},
\end{align*}
where 
$$\Lambda_{n,k}:=\{(k_1,...,k_n)\in (\mathbb{N}\cup\{0\})^n: \sum_{j=1}^n k_j=k, ~\sum_{j=1}^n jk_j=n\}.$$
Using (AS2) and (AS5), by Sobolev embedding, we estimate $Z_{\alpha}$ by
\begin{align*}
    |Z_{\alpha}(\alpha,t)|\leq & 1+|Z_{\alpha}-1|\leq 1+\|Z_{\alpha}-1\|_{H^1}\\
    \leq &1+(1+\frac{2}{\phi(t)})2M_{\lambda, h}\\
    \leq & 1+6 M_{\lambda, h}\\
    :=& K_0.
\end{align*}
For $2\leq j\leq n-3$, we estimate $\partial_{\alpha}^jZ(\alpha,t)$ by
\begin{equation}
\begin{split}
    |\partial_{\alpha}^j Z(\alpha,t)|\leq & \|\partial_{\alpha}^{j}(Z-\alpha)\|_{H^1}\\
    \leq & \frac{(j!)^2}{\phi^{j}(t)}(1+\frac{(j+1)^2}{\phi(t)})\|Z-\alpha\|_{\dot{Y}_{\phi(t)}}\\
    \leq & 4\frac{((j+1)!)^2}{\phi(t)^{j}}M_{\lambda, h}\leq \frac{K_0((j+1)!)^2}{\phi(t)^{j}}.
\end{split}
\end{equation}
So we obtain 
\begin{align*}
   \prod_{j=1}^{n-1} \Big(\frac{\partial_{\alpha}^j Z(\alpha,t)}{j!}\Big)^{k_j}\leq & \prod_{j=1}^n \Big(\frac{K_0 j!}{\phi(t)^{j-1}}\Big)^{k_j}\\
   =& K_0^{\sum_{j=1}^n k_j} \phi(t)^{-(\sum_{j=1}^n jk_j-\sum_{j=1}^n k_j)} \prod_{j=1}^n (j!)^{k_j}\\
   =& K_0^k \phi(t)^{-n+k}\prod_{j=1}^n (j!)^{k_j}.
\end{align*}

For $n-2\leq j\leq n$, we use 
\begin{equation}
\begin{split}
    \|\partial_{\alpha}^j Z(\alpha,t)\|_{L^2}=&\|\partial_{\alpha}^{j-1}(Z_{\alpha}-1)\|_{L^2}\leq 2\frac{((j-1)!)^2}{\phi(t)^{j-1}}M_{\lambda, h}.
\end{split}
\end{equation}
For $n\leq 5$,  there holds
\begin{equation}
    \sum_{n=0}^5 \frac{\phi(t)^{2n}}{(n!)^4}\|\partial_{\alpha}^n Q_j\|_{L^2}^2\leq  Cd_I(t)^{-3/2},
\end{equation}
for some constant $C$ depending on $C_0, C_1, M_{\lambda, h}$, and $\frac{1}{L_0}$.

Note that for $n\geq 6$ and $k\geq [n/2]$, we must have $j\leq n-3$. So for $n\geq 6$, 
\begin{align*}
    \|\partial_{\alpha}^n Q_j(\alpha,t)\|_{L^2}\leq & \sum_{k=3}^{[n/2]} \sum_{\Lambda_{n,k}}\frac{n! k!}{k_1!\cdots k_n!}K_0^k \phi(t)^{-n+k}\prod_{j=1}^n (j!)^{k_j}\|\frac{1}{|Z(\alpha,t)-z_j(t)|^{k+1}}\|_{L^2}\\
    &+\sum_{k=[n/2]+1}^{n} \sum_{\Lambda_{n,k}}\frac{n! k!}{k_1!\cdots k_n!}K_0^k \phi(t)^{-n+k}\prod_{j=1}^n (j!)^{k_j}\|\frac{1}{|Z(\alpha,t)-z_j(t)|^{k+1}}\|_{L^2}\\
    &+\Big\Vert\sum_{k=1}^2\sum_{\Lambda_{n,k}} \frac{n!}{k_1!k_2!\cdots k_n!}g^{(k)}(Z(\alpha,t),t) \prod_{j=1}^n \Big(\frac{\partial_{\alpha}^j Z(\alpha,t)}{j!}\Big)^{k_j}\Big\Vert_{L^2}\\
    :=& II_1+II_2+II_3.
\end{align*}
Assume that $n\geq 6$ and $k\geq 3$. Let $(k_1,...,k_n)\in \Lambda_{n,k}$. Note that 
\begin{equation}
    \frac{\prod_{j=2}^n (j!)^{k_j}}{n!}= (n!)^{-1}\prod_{\substack{2\leq j\leq n\\ k_j\neq 0}} (j!)^{k_j}\leq \frac{C}{n^2\prod_{j=0}^{k-1} (n-j)},
\end{equation}
for some absolute constant $C>0$. For example, we can take $C=100$.
Note that $n\geq 6$ and $1\leq j\leq [n/2]$ imply $n-j\geq \frac{1}{2}n$.
Therefore, for $3\leq k\leq [n/2]$, 
\begin{equation}\label{conconcon1}
    \prod_{j=0}^{k-1}(n-j)\geq 2^{-k}n^k.
\end{equation}
For $k\geq [n/2]+1$, we have 
\begin{equation}\label{concon2}
    \prod_{j=0}^{k-1}(n-j)\geq 2^{-[n/2]}n^{[n/2]}.
\end{equation}
Lemma \ref{integral} and (\ref{conconcon1}) imply
\begin{align*}
    II_1\leq &  C\frac{(n!)^2}{n^2\phi(t)^n}\sum_{k=3}^{[n/2]} \sum_{\Lambda_{n,k}}\frac{ k!}{k_1!\cdots k_n!}(\frac{4K_0C_1 \phi(t)}{n C_0d_I(t)})^{k}\\
    \leq & C\frac{(n!)^2}{n^2\phi(t)^n}\sum_{k=1}^{n} \sum_{\Lambda_{n,k}}\frac{ k!}{k_1!\cdots k_n!}(\frac{4K_0C_1\phi(t)}{nC_0d_I(t)})^{k}\\
       =& C\frac{(n!)^2}{n^2\phi(t)^n} \frac{4K_0C_1 \phi(t)}{nC_0d_I(t)}\Big(1+\frac{4K_0C_1\phi(t)}{nC_0d_I(t)}\Big)^n\\
   \leq & C d_I(t)^{-1}\frac{(n!)^2}{n^2\phi(t)^n},
\end{align*}
for some constant $C>0$ depending on $C_0, C_1, M_{\lambda, h}, \frac{1}{L_0}$.
Here we've used the identity
\begin{equation}
    \sum_{k=1}^n\sum_{\Lambda_{n,k}}\frac{k!}{(k_1)!\cdots (k_n)!}R^k=R(1+R)^{n-1},
\end{equation}
and the estimate
\begin{equation}
    \sup_{n\geq 1}\Big(1+\frac{4K_0C_1\phi(t)}{nC_0d_I(t)}\Big)^n\leq e^{\frac{4K_0C_1\phi(t)}{C_0d_I(t)}}\leq e^{\frac{4K_0C_1 L_0}{C_0d_I(t)}}.
\end{equation}
So we have 
\begin{equation}
    \sum_{n=0}^{\infty}\frac{(1+n^2) (\phi(t))^{2n}}{(n!)^4}II_1^2\leq Cd_I(t)^{-1},
\end{equation}
for some constant $C>0$ depending on $K_0, C_0, C_1, M_{\lambda, h},  \frac{1}{L_0}$.

For $II_2$,  using Lemma \ref{integral} and (\ref{concon2}), we obtain
\begin{align*}
    II_2\leq &  C\frac{(n!)^2}{\phi(t)^n}\sum_{k=[n/2]+1}^{n} \sum_{\Lambda_{n,k}}\frac{ k!}{k_1!\cdots k_n!}\dfrac{(\frac{2K_0C_1\phi(t)}{C_0d_I(t)})^{k}}{\prod_{j=0}^{k-1} (n-j)}\\
    \leq & C\frac{(n!)^2}{\phi(t)^n}\sum_{k=[n/2]+1}^{n} \sum_{\Lambda_{n,k}}\frac{ k!}{k_1!\cdots k_n!}\dfrac{(\frac{2K_0C_1\phi(t)}{C_0d_I(t)})^{k}}{(n/2)^{n/2}}\\
    \leq & Cd_I(t)^{-1}\frac{(n!)^2}{\phi(t)^n}\dfrac{(1+\frac{2K_0C_1\phi(t)}{C_0d_I(t)})^n}{(n/2)^{n/2}}\\
    \leq & Cd_I(t)^{-1}\frac{(n!)^2}{n^2\phi(t)^n},
\end{align*}
for some constant $C>0$ depending on $K_0, C_0, C_1, M_{\lambda, h}, \frac{1}{L_0}$. Here,  we've used the identity
\begin{equation}
    \sum_{k=1}^n\sum_{\Lambda_{n,k}}\frac{k!}{(k_1)!\cdots (k_n)!}R^k=R(1+R)^{n-1},
\end{equation}
and the estimate
\begin{equation}
    \sup_{n\geq 1}\frac{1}{n^{[n/2]-2}}\Big(1+\frac{2K_0C_1\phi(t)}{C_0d_I(t)}\Big)^n\leq C,
\end{equation}
for some constant $C>0$ depending on $K_0, C_0, C_1, M_{\lambda, h}$ and $\frac{1}{L_0}$.

So we have 
\begin{equation}
\begin{split}
 \sum_{n=1}^{\infty}\frac{(\phi(t))^{2n}}{(n!)^4}II_2^2\leq & Cd_I(t)^{-1}\sum_{n=0}^{\infty}\frac{ (\phi(t))^{2n}}{(n!)^4}\frac{(n!)^4}{\phi(t)^{2n}}\frac{1}{n^4}\leq Cd_I(t)^{-3/2}.
 \end{split}
\end{equation}

\vspace*{2ex}

For $II_3$, when $k=1$, we must have $\Lambda_{n,1}=(0,...,0,1)$. In this case,
\begin{align*}
   &\sum_{n=0}^{\infty}\frac{\phi(t)^{2n}}{(n!)^4}\Big\Vert\sum_{\Lambda_{n,1}} \frac{n!}{k_1!k_2!\cdots k_n!}g^{(k)}(Z(\alpha,t),t) \prod_{j=1}^n \Big(\frac{\partial_{\alpha}^j Z(\alpha,t)}{j!}\Big)^{k_j}\Big\Vert_{L^2}^2\\
   =& \sum_{n=0}^{\infty}\frac{\phi(t)^{2n}}{(n!)^4}\Big\Vert \frac{\partial_{\alpha}^{n-1}(Z_{\alpha}-1)}{|Z(\alpha,t)-z_j(t)|^2}\Big\Vert_{L^2}^2\\
   \leq & \sum_{n=0}^{\infty}\frac{\phi(t)^{2n}}{(n!)^4}\frac{1}{d_I(t)^2}\frac{((n-1)!)^4}{\phi(t)^{2(n-1)}}\Big(\frac{\phi(t)^{2(n-1)}}{((n-1)!)^4}\|\partial_{\alpha}^{n-1}(Z_{\alpha}-1)\|_{L^2}^2\Big)\\
   \leq & Cd_I(t)^{-2}.
\end{align*}
for some constant $C>0$ depending on $C_0, C_1, M_{\lambda, h}$, and $\frac{1}{L_0}$.

The same argument applies to $k=2$. We obtain
\begin{equation}
   \sum_{n=0}^{\infty}\frac{\phi(t)^{2n}}{(n!)^4}II_3^2\leq Cd_I(t)^{-2}.
\end{equation}
This concludes the proof of the lemma.
\end{proof}

Similarly, we can prove 
\begin{lemma}\label{estimateQ_jY}
Assume that $(W, U, \{z_j(t)\})$ satisfies AS, then 
\begin{equation}
    \Big\Vert\frac{1}{(Z(\alpha,t)-z_j(t))^m}\Big\Vert_{Y_{\phi(t)}}\leq Cd_I(t)^{-m+1/2},
\end{equation}
\begin{equation}
    \Big\Vert\frac{1}{Z(\alpha,t)-z_1(t)}-\frac{1}{Z(\alpha,t)-z_2(t)}\Big\Vert_{Y_{\phi(t)}}\leq C\frac{|z_1(t)-z_2(t)|}{d_I(t)^{3/2}},
\end{equation}
\begin{equation}
    \Big\Vert\frac{1}{Z(\alpha,t)-z_1(t)}-\frac{1}{Z(\alpha,t)-z_2(t)}\Big\Vert_{\dot{Y}_{\phi(t)}}\leq C\frac{|z_1(t)-z_2(t)|}{d_I(t)^{5/2}},
\end{equation}
for some constant $C>0$ depending on $C_0, C_1, M_{\lambda, h}$, and $\frac{1}{L_0}$. 
\end{lemma}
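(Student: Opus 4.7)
The three bounds are proved in sequence, each building on the Faà di Bruno machinery already developed in the proof of Lemma \ref{lemmaQj}.

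\textbf{First estimate.} The plan is to apply Faà di Bruno to the composition of $g(z)=(z-z_j(t))^{-m}$ with $Z(\alpha,t)$, just as Lemma \ref{lemmaQj} did for $m=2$. Since $g^{(k)}(z)=(-1)^k\,m(m+1)\cdots(m+k-1)(z-z_j)^{-(m+k)}$, the $L^2$ norm of each factor is controlled by Lemma \ref{integral} as $\|(Z-z_j)^{-(m+k)}\|_{L^2}\leq C\,d_I^{-(m+k)+1/2}$. I will then split the resulting sum over $k$ into the three regimes $k\leq 2$, $3\leq k\leq [n/2]$, and $k>[n/2]$ exactly as in Lemma \ref{lemmaQj}; the combinatorial bookkeeping, the identity $\sum_k\sum_{\Lambda_{n,k}}\tfrac{k!}{k_1!\cdots k_n!}R^k=R(1+R)^{n-1}$, and the exponential bound $(1+R)^n\leq e^{Rn}$ proceed verbatim, with an extra prefactor of $d_I^{-(m-2)}$ pulled out of every term. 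Because the individual estimates in Lemma \ref{lemmaQj} already carry a factor $(n!)^2/(n^2\phi(t)^n)$, the extra $n^2$ weight in the $Y_{\phi(t)}$ norm (as opposed to the $\dot{X}_{\phi(t)}$ norm there) is absorbed for free, so the $Y$ estimate closes with final bound $C\,d_I^{-m+1/2}$.

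\textbf{Second and third estimates.} The plan is to use the telescoping identity
\[
\frac{1}{Z-z_1(t)}-\frac{1}{Z-z_2(t)} \;=\; (z_1(t)-z_2(t))\int_0^1 \frac{1}{(Z(\alpha,t)-w_\tau)^2}\,d\tau, \qquad w_\tau=(1-\tau)z_1(t)+\tau z_2(t),
\]
and then apply the first estimate with $m=2$ (respectively, its homogeneous variant) pointwise in $\tau$, pulling the $L^1_\tau$ norm outside the Gevrey norm. Since the two point vortices are symmetric with $z_j(t)=\mp x(t)+iy(t)$ by assertion (d) of Theorem \ref{theorem1}, the segment $\{w_\tau\}_{\tau\in[0,1]}$ is horizontal, lies in $\Omega(t)$, and has the same vertical coordinate as $z_1(t)$ and $z_2(t)$, so a direct comparison gives $d(w_\tau,\Sigma(t))\geq c\,d_I(t)$ uniformly in $\tau$. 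Integrating then yields $C|z_1-z_2|d_I^{-3/2}$ for the $Y_{\phi(t)}$ bound. For the $\dot{Y}_{\phi(t)}$ bound one proves in parallel that $\|(Z-w)^{-2}\|_{\dot Y_{\phi(t)}}\leq Cd_I^{-5/2}$; this follows from the same Faà di Bruno expansion (the $\dot Y$ seminorm drops the $n=0$ term, and differentiating $(Z-w)^{-2}$ costs exactly one extra factor of $d_I^{-1}$ in each $L^2$ estimate), giving the $d_I^{-5/2}$ power and hence the claimed bound $C|z_1-z_2|d_I^{-5/2}$.

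\textbf{Main obstacle.} The principal technical nuisance is confirming that the segment $\{w_\tau\}$ satisfies $d(w_\tau,\Sigma(t))\gtrsim d_I(t)$; this is where the symmetry and the geometric setup of the vortex pair (in particular the fact that $z_1$ and $z_2$ share an imaginary part, and that $y(t)$ is sufficiently negative so that $\Sigma(t)$ lies well above the horizontal segment) enter decisively. All remaining analytic work is a line-by-line repetition of the Faà di Bruno argument already executed in Lemma \ref{lemmaQj}, with the correct power of $m$ tracked through the computation and the additional $n^2$ weight absorbed by the slack left over from the previous proof.
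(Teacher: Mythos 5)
Your proposal is correct. The first estimate is exactly the paper's intended route: the paper omits the proof with ``similarly, we can prove,'' meaning a rerun of the Fa\`a di Bruno expansion of Lemma \ref{lemmaQj} with $g(z)=(z-z_j)^{-m}$, and your observation that the bounds there carry an extra factor $n^{-2}$ (indeed $n^{-4}$) of slack, so that the $n^2$ weight in $Y_{\phi(t)}$ is absorbed, is the right accounting; the leading contribution is the $L^2$ term, which Lemma \ref{integral} bounds by $d_I(t)^{-m+1/2}$.

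For the difference estimates you take a mildly different route from what the paper does elsewhere in analogous situations (e.g.\ the term $I_{31}$ in Lemma \ref{corollaryQ}), namely combining the two fractions algebraically into $\frac{z_1-z_2}{(Z-z_1)(Z-z_2)}$ and estimating the two-pole product directly, which reads off $d_I^{-3/2}$ (resp.\ $d_I^{-5/2}$ after one derivative) from Lemma \ref{integral} with $k=4$ (resp.\ $k=6$). Your telescoping representation $(z_1-z_2)\int_0^1(Z-w_\tau)^{-2}\,d\tau$ instead reduces everything to the single-pole case $m=2$ at an auxiliary point, at the cost of checking $d(w_\tau,\Sigma(t))\gtrsim d_I(t)$ and of extending Lemma \ref{integral} to arbitrary interior points (its proof uses only the chord-arc property, so this is immediate). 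Both close; note also that the geometric obstacle you flag is even softer than you suggest, since the triangle inequality $d(w_\tau,\Sigma(t))\ge d_I(t)-|z_1-z_2|\ge \tfrac12 d_I(t)$ already suffices because $|z_1-z_2|=2x(t)\le 4\ll d_I(t)$, without invoking the horizontality of the segment.
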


As a consequence,
\begin{corollary}\label{sharp}
Assume that $(W, U, \{z_j(t)\})$ satisfies AS.  Let $g\in X_{\phi(t)}$, then \footnote{We will construct solutions with $d_I(t)$ large.  Hence larger $k$ gives smaller quantity $d_I(t)^{-k}$.  } 
\begin{itemize}
\item [1.] For arbitrary $N$, let $\lambda_0:=N\max_{1\leq j\leq N}|\lambda_i|$, we have 
\begin{equation}\label{productdecay}
    \norm{\sum_{j=1}^N\frac{\lambda_j g}{(Z(\alpha,t)-z_j(t))^m}}_{X_{\phi(t)}}\leq C\lambda_0 d_I(t)^{-m}\|g\|_{X_{\phi(t)}},
\end{equation}
\item [2.] Assume $N=2$ and $\lambda_1=-\lambda_2=\lambda$, then $Q:=\frac{\lambda i}{2\pi}\frac{1}{Z(\alpha,t)-z_1(t)}-\frac{\lambda i}{2\pi}\frac{1}{Z(\alpha,t)-z_2(t)}$ satisfies
\begin{equation}\label{productdecay2}
    \|Q\|_{Y_{\phi(t)}}\leq C|\lambda||z_1(t)-z_2(t)|d_I(t)^{-3/2},
\end{equation}
\begin{equation}\label{productdecay3}
    \|Q\|_{\dot{Y}_{\phi(t)}}\leq C|\lambda| |z_1(t)-z_2(t)|d_I(t)^{-5/2},
\end{equation}
\begin{equation}\label{productdecay4}
    \|gQ\|_{X_{\phi(t)}}\leq C|\lambda| |z_1(t)-z_2(t)| d_I(t)^{-2}\|g\|_{X_{\phi(t)}},
\end{equation}
\begin{equation}\label{productdecay5}
    \|(I+\mathbb{H})Q\|_{Y_{\phi(t)}}\leq C|\lambda||z_1(t)-z_2(t)|d_I(t)^{-5/2},
\end{equation}
\end{itemize}
for some constant $C>0$ depending on $C_0, C_1, M_{\lambda, h}$, and $\frac{1}{L_0}$. 

%\begin{equation}
%    \|\frac{g}{(Z(\alpha,t)-z_j(t))^m}\|_{Y_{2,\sigma}}\leq Cd_I(t)^{-m-1/2}\|g\|_{Y_{2,\sigma}}.
%\end{equation}
\end{corollary}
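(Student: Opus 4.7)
The plan is to combine the product and commutator estimates (Lemmas \ref{lemmaproduct}, \ref{commutator1}) with the Gevrey-space bounds on $1/(Z-z_j)^m$ from Lemmas \ref{lemmaQj} and \ref{estimateQ_jY}, while exploiting two cancellations: (i) for the symmetric pair, $\frac{1}{Z-z_1} - \frac{1}{Z-z_2} = \frac{z_2-z_1}{(Z-z_1)(Z-z_2)}$, extracting a factor $|z_1-z_2|$; and (ii) since $z_j\in\mathbb{P}_-$, $(I+\mathbb{H})\frac{1}{\alpha-z_j}=0$, because $\frac{1}{\alpha-z_j}$ is the boundary value of a function holomorphic in the upper half plane. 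Estimates (\ref{productdecay2}) and (\ref{productdecay3}) then follow immediately from Lemma \ref{estimateQ_jY} applied to the difference $\frac{1}{Z-z_1} - \frac{1}{Z-z_2}$, after pulling out the constant $\lambda/(2\pi)$.

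For (\ref{productdecay}), I would expand $\partial_\alpha^n[g/(Z-z_j)^m]$ using the Leibniz rule and bound each term $\binom{n}{k}\partial_\alpha^k g \cdot \partial_\alpha^{n-k}[1/(Z-z_j)^m]$ by placing $\partial_\alpha^k g$ in $L^2$ and the companion factor in $L^\infty$. The $L^\infty$ bounds on the derivatives of $1/(Z-z_j)^m$ follow from Sobolev embedding (Lemma \ref{sobolev}) combined with the Faà di Bruno argument used in the proof of Lemma \ref{lemmaQj}, iterated for higher powers of $(Z-z_j)^{-1}$. The crucial point is that the worst case for the $d_I$-scaling (no derivative falling on the denominator) gives $\|1/(Z-z_j)^m\|_{L^\infty}\leq Cd_I(t)^{-m}$, while each extra derivative on the denominator raises the power of $1/(Z-z_j)$ and hence produces faster decay in $d_I(t)$; summing over the Gevrey weights then yields $\|g/(Z-z_j)^m\|_{X_{\phi(t)}}\leq Cd_I(t)^{-m}\|g\|_{X_{\phi(t)}}$. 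For (\ref{productdecay4}), I would rewrite $gQ = \frac{\lambda i(z_2-z_1)}{2\pi}\cdot\frac{g}{(Z-z_1)(Z-z_2)}$ and apply the same argument with a product of two denominators, yielding the power $d_I(t)^{-2}$.

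The key new estimate is (\ref{productdecay5}), where cancellation (ii) is essential. Since $z_1,z_2\in\mathbb{P}_-$, we have $(I+\mathbb{H})\frac{1}{\alpha-z_j}=0$ for $j=1,2$, so
$$
(I+\mathbb{H})Q = -\frac{\lambda i}{2\pi}(I+\mathbb{H})\left[\left(\frac{1}{Z-z_1}-\frac{1}{\alpha-z_1}\right) - \left(\frac{1}{Z-z_2}-\frac{1}{\alpha-z_2}\right)\right].
$$
By Lemma \ref{hilbert}, $\mathbb{H}$ is bounded on $Y_{\phi(t)}$, so it suffices to bound the bracketed argument. Using $\frac{1}{Z-z_j}-\frac{1}{\alpha-z_j} = \frac{\alpha-Z}{(Z-z_j)(\alpha-z_j)}$ and recombining as a double difference, a direct computation (using $z_1+z_2 = 2iy$ from the symmetry) yields
$$
(z_1-z_2)\cdot\frac{(\alpha-Z)\bigl[(Z+\alpha)-(z_1+z_2)\bigr]}{(Z-z_1)(Z-z_2)(\alpha-z_1)(\alpha-z_2)}.
$$
The factor $|z_1-z_2|$ is explicit, and the improvement of $d_I(t)^{-1}$ relative to $\|Q\|_{Y_{\phi(t)}}$ comes from the additional denominators $\frac{1}{\alpha-z_j}$, whose $L^2$-norms scale as $d_I(t)^{-1/2}$, together with the observation that $[(Z+\alpha)-(z_1+z_2)]/[(\alpha-z_1)(\alpha-z_2)]$ decays like $1/|\alpha|$ at infinity and is therefore $L^2$-integrable. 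A careful Gevrey product estimate analogous to Lemma \ref{lemmaproduct} then yields the desired bound $C|\lambda||z_1-z_2|d_I(t)^{-5/2}$.

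The main obstacle is the sharp $d_I(t)^{-5/2}$ scaling in (\ref{productdecay5}). The naive bound $\|(I+\mathbb{H})Q\|_{Y_{\phi(t)}}\leq 2\|Q\|_{Y_{\phi(t)}}$ only gives $d_I(t)^{-3/2}$, which would be insufficient for the closing of the a priori estimates downstream. The gain of $d_I(t)^{-1}$ is essential and requires the precise identification of the kernel of $I+\mathbb{H}$ together with careful book-keeping of the $L^2$-integrable factor $1/(\alpha-z_j)$ in the Leibniz expansion. All constants can be tracked through Lemmas \ref{lemmaproduct} and \ref{estimateQ_jY} and depend only on $C_0$, $C_1$, $M_{\lambda,h}$, and $1/L_0$, as claimed.
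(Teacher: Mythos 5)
Your proposal is broadly sound and in fact goes beyond what the paper records. The paper's own proof of this corollary handles~\eqref{productdecay} by splitting the $X_{\phi(t)}$ norm into its $L^2$ and $\dot X_{\phi(t)}$ pieces (pointwise bound from (AS4) for the former, Lemma~\ref{lemmaproduct} plus Lemma~\ref{estimateQ_jY} for the latter, then using $d_I(t)\geq 1$ to absorb the extra $d_I^{-1/2}$), and then simply states that~\eqref{productdecay2}--\eqref{productdecay4} ``follow from~\eqref{productdecay} and Lemma~\ref{estimateQ_jY}''; it says nothing at all about~\eqref{productdecay5}. Your Leibniz/Faà di Bruno expansion for~\eqref{productdecay} is a valid and more explicit route to the same bound, and your treatment of~\eqref{productdecay2}--\eqref{productdecay4} matches the paper's intent.

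The genuinely valuable part of your proposal is~\eqref{productdecay5}, which the paper glosses over entirely. You correctly observe that the naive bound $\|(I+\mathbb H)Q\|_{Y_{\phi(t)}}\leq 2\|Q\|_{Y_{\phi(t)}}$ from Lemma~\ref{hilbert} and~\eqref{productdecay2} only yields $d_I^{-3/2}$, and that the claimed $d_I^{-5/2}$ requires exploiting $(I+\mathbb H)\frac{1}{\alpha-z_j}=0$; your double-difference identity $\frac{1}{Z-z_1}-\frac{1}{\alpha-z_1}-\frac{1}{Z-z_2}+\frac{1}{\alpha-z_2}=(z_1-z_2)(\alpha-Z)\frac{(\alpha+Z)-(z_1+z_2)}{(Z-z_1)(Z-z_2)(\alpha-z_1)(\alpha-z_2)}$ is the right algebra. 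The paper's appendix suggests the ``canonical'' version of this cancellation would instead subtract $\frac{1}{c_1^j(\alpha-w_0^j)}$ with $w_0^j=\Phi(z_j)$, $c_1^j=(\Phi^{-1})_z(w_0^j)$, i.e.\ use the exact Plemelj projection $(I-\mathbb H)\frac{1}{Z-z_j}=\frac{2}{c_1^j(\alpha-w_0^j)}$ rather than the surrogate $\frac{1}{\alpha-z_j}$; your choice is more elementary and works just as well once one notes $z_j\in\mathbb P_-$.

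One technical caveat you should address head on: your argument produces the explicit factor $\alpha-Z$, and the a~priori assumptions (AS1)--(AS5) only control $W_\alpha$ (hence $Z_\alpha-1$), not $Z-\alpha$ itself in $L^2$ or $L^\infty$; the constant $C$ in~\eqref{productdecay5} as you have derived it therefore implicitly depends on $\|Z-\alpha\|_{L^\infty}$, which is not among the allowed constants ($C_0,C_1,M_{\lambda,h},1/L_0$). You can sidestep most of this, however, by observing that the cancellation is only needed for the $L^2$ component of the $Y_{\phi(t)}$ norm: for the homogeneous piece, Lemma~\ref{hilbert} together with~\eqref{productdecay3} already gives $\|(I+\mathbb H)Q\|_{\dot Y_{\phi(t)}}\leq 2\|Q\|_{\dot Y_{\phi(t)}}\leq C|\lambda||z_1-z_2|d_I^{-5/2}$ without any cancellation. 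Thus the $\alpha-Z$ factor appears only in an $L^2$ estimate, and there one still needs some quantitative decay of $Z-\alpha$. This is a gap the paper itself never fills (its proof of the corollary never mentions~\eqref{productdecay5}), so you have located a genuine lacuna rather than introduced one.
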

\begin{proof}
 For (\ref{productdecay}), we estimate $\norm{\sum_{j=1}^N\frac{\lambda_j g}{(Z(\alpha,t)-z_j(t))^m}}_{L^2}$ by 
\begin{align*}
    \norm{\sum_{j=1}^N\frac{\lambda_j g}{(Z(\alpha,t)-z_j(t))^m}}_{L^2}\leq C\|g\|_{L^2}\norm{\sum_{j=1}^N\frac{\lambda_j g}{(Z(\alpha,t)-z_j(t))^m}}_{L^{\infty}}\leq Cd_I(t)^{-m}\|g\|_{X_{\phi(t)}}.
\end{align*}
By Lemma \ref{lemmaproduct} and Lemma \ref{estimateQ_jY},
\begin{align*}
   \norm{\sum_{j=1}^N\frac{\lambda_j g}{(Z(\alpha,t)-z_j(t))^m}}_{\dot{X}_{\phi(t)}}\leq C\lambda_0 d_I(t)^{-m-1/2}\|g\|_{X_{\phi(t)}}.
\end{align*}
So 
\begin{align*}
     \norm{\sum_{j=1}^N\frac{\lambda_j g}{(Z(\alpha,t)-z_j(t))^m}}_{X_{\phi(t)}}\leq C\lambda_0 d_I(t)^{-m-1/2}\|g\|_{X_{\phi(t)}}+Cd_I(t)^{-m}\|g\|_{X_{\phi(t)}}\leq Cd_I(t)^{-m}\|g\|_{X_{\phi(t)}}.
\end{align*}
Here, we use the assumption that $d_I(t)\geq 1$. So we obtain (\ref{productdecay}). (\ref{productdecay2})-(\ref{productdecay4}) follow from (\ref{productdecay}) and Lemma \ref{estimateQ_jY}.
\end{proof}

Using the same proof as for Lemma \ref{lemmaQj}, we obtain the following.
\begin{lemma}\label{lemmareciprocalZ}
Assume that $(W, U, \{z_j(t)\})$ satisfies AS. Then 
\begin{equation}
    \norm{\frac{1}{Z_{\alpha}}-1}_{X_{\phi(t)}}\leq CM_{\lambda, h},
\end{equation}
for some constant $C>0$ depending on $C_0$ and $M_{\lambda, h}$.
\end{lemma}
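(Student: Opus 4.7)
The plan is to follow the Faà di Bruno strategy of Lemma~\ref{lemmaQj}, now applied to $g(w) = 1/w$ composed with $Z_{\alpha}$. First I would record two basic facts. Because $Z-\alpha = (I+\mathbb{H})W$ by (\ref{quasi3}), differentiating gives $Z_{\alpha}-1 = (I+\mathbb{H})W_{\alpha}$; Lemma~\ref{hilbert} and (AS2) then yield
$$\|Z_{\alpha}-1\|_{X_{\phi(t)}} \le 2\|W_{\alpha}\|_{X_{\phi(t)}} \le 2M_{\lambda,h},$$
while the chord-arc bound (AS3), passed to the limit $\beta\to\alpha$, yields $|Z_{\alpha}(\alpha,t)|\ge C_0/2$ pointwise. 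In particular $1/Z_{\alpha}$ is well defined and bounded.

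Second, I would apply the Faà di Bruno formula with $g(w)=1/w$:
$$\partial_{\alpha}^n\Big(\tfrac{1}{Z_{\alpha}}-1\Big) \;=\; \sum_{k=1}^{n}\sum_{\Lambda_{n,k}}\frac{n!}{k_1!\cdots k_n!}\,g^{(k)}(Z_{\alpha})\prod_{j=1}^n\Big(\frac{\partial_{\alpha}^j(Z_{\alpha}-1)}{j!}\Big)^{k_j},$$
with $|g^{(k)}(Z_{\alpha})| \le k!\,(C_0/2)^{-(k+1)}$. The derivatives of $Z_{\alpha}-1$ are controlled through $\|Z_{\alpha}-1\|_{X_{\phi(t)}}$ together with Lemma~\ref{sobolev}, giving $\|\partial_{\alpha}^j(Z_{\alpha}-1)\|_{L^2} \le 2M_{\lambda,h}(j!)^2/\phi(t)^j$ and $\|\partial_{\alpha}^j(Z_{\alpha}-1)\|_{L^{\infty}} \lesssim M_{\lambda,h}\bigl(((j+1)!)^2/\phi(t)^{j+1}+(j!)^2/\phi(t)^j\bigr)$. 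Putting one of the $\partial_{\alpha}^j(Z_{\alpha}-1)$ factors in $L^2$ and the others in $L^{\infty}$, the same combinatorial bookkeeping used for Lemma~\ref{lemmaQj} (splitting $k\le [n/2]$ versus $k>[n/2]$, using $\sum_k\sum_{\Lambda_{n,k}}\frac{k!}{k_1!\cdots k_n!}R^k = R(1+R)^{n-1}$, and observing that $\phi(t)\ge L_0/2\ge 2$ by (AS5) so the product constants are absolute) yields $\|\partial_{\alpha}^n(Z_{\alpha}^{-1}-1)\|_{L^2} \le C\,(n!)^2\,n^{-2}\phi(t)^{-n}\,M_{\lambda,h}$. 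Summing against the Gevrey weights $\phi(t)^{2n}/(n!)^4$ produces a convergent series bounded by $C M_{\lambda,h}$, as claimed.

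A technically lighter alternative, which I would fall back on if the combinatorics become cumbersome, is to expand
$$\frac{1}{Z_{\alpha}}-1 \;=\; \sum_{k=1}^{\infty}(-1)^k(Z_{\alpha}-1)^k,$$
and apply the product estimate (\ref{product1}) iteratively to obtain $\|(Z_{\alpha}-1)^k\|_{X_{\phi(t)}} \le C_p^{k-1}\|Z_{\alpha}-1\|_{X_{\phi(t)}}^k \le C_p^{k-1}(2M_{\lambda,h})^k$. For $\phi(t)\ge 2$ the constant $C_p$ is absolute, so once $M_{\lambda,h}$ is small enough that $2C_p M_{\lambda,h} < 1$ (which is consistent with the regime (\ref{boundMlambdah})--(\ref{boundsmallm}) in which this lemma is eventually used), the series converges in $X_{\phi(t)}$ and its sum is bounded by $2M_{\lambda,h}/(1-2C_pM_{\lambda,h}) \le CM_{\lambda,h}$.

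The main obstacle in the Faà di Bruno route is that, in contrast to Lemma~\ref{lemmaQj} where the integrability of $|Z(\alpha,t)-z_j(t)|^{-(k+1)}$ produced a genuine decay factor $d_I(t)^{-k+1/2}$ that tamed the growing $k!$ in $g^{(k)}$, here the pointwise bound $|g^{(k)}(Z_{\alpha})| \le k!(C_0/2)^{-(k+1)}$ carries no decay in $k$. The absolute constant loss is absorbed only because $\|Z_{\alpha}-1\|_{X_{\phi(t)}}$ is of order $M_{\lambda,h}$, so each partition term carries $k$ copies of a small factor; making this quantitative (and uniform in $n$) is the one nontrivial bookkeeping step and the place where the smallness hypothesis on $M_{\lambda,h}$ enters explicitly.
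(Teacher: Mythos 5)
Your primary (Fa\`a di Bruno) route is exactly the paper's own proof, which is given by the single sentence ``Using the same proof as for Lemma \ref{lemmaQj}, we obtain the following.'' The substitutions you identify are right: the $L^2$ weight now falls on one of the $\partial_\alpha^j(Z_\alpha-1)$ factors rather than on $g^{(k)}$, the chord-arc bound (AS3) replaces the $d_I$-distance to give $|Z_\alpha|\ge C_0/2$ pointwise, and $\|Z_\alpha-1\|_{X_{\phi(t)}}\le 2M_{\lambda,h}$ replaces the $d_I$-decay of $1/(Z-z_j)$. However, your closing diagnosis of ``the main obstacle'' misattributes how the $k!$ from $g^{(k)}$ is controlled. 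In Lemma \ref{lemmaQj} it is not the decay $d_I(t)^{-(k+1)}$ that tames $k!$ --- that decay is what produces the final $d_I(t)^{-3/2}$ in the conclusion --- rather, $k!$ is absorbed by the multinomial generating identity $\sum_k\sum_{\Lambda_{n,k}}\frac{k!}{k_1!\cdots k_n!}R^k=R(1+R)^{n-1}$ together with $\prod_{j=0}^{k-1}(n-j)\ge 2^{-k}n^k$, which yields $R(1+R)^{n-1}\le\frac{a}{n}e^a$ with $a$ finite. In the present lemma $a\approx (C_0/2)^{-1}\phi(t)\cdot 2M_{\lambda,h}\lesssim L_0/C_0$, which is $O(1)$ under (AS2) and (AS5); no smallness of $M_{\lambda,h}$ is needed, and the $e^a$ factor is simply folded into the constant $C(C_0,M_{\lambda,h})$. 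Your Neumann-series fallback is correct but strictly weaker: it requires $2C_pM_{\lambda,h}<1$, which (AS2) alone does not guarantee (it only gives $M_{\lambda,h}\le 1$), whereas the Fa\`a di Bruno route handles the full range permitted by AS. Stick with the first route and drop the claim that smallness of $M_{\lambda,h}$ is where the bookkeeping becomes essential.
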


\begin{lemma}\label{holomorphic_decay}
Assume that $(W, U, \{z_j(t)\})$ satisfies AS, then
\begin{equation}\label{holodecay1}
    |\mathcal{U}(z,t)|\leq  C\min\{M_{\infty},      (1+d(z, \Sigma(t)))^{-1/2}\},
\end{equation}
\begin{equation}\label{holodecay2}
    |\mathcal{U}_z(z,t)|\leq C\min\{M_{\lambda, h}, (1+d(z, \Sigma(t)))^{-3/2}\},
\end{equation}
for some constant $C>0$ depending on $C_0, C_1$. Here, $d(z, \Sigma(t)):=\inf_{\beta\in \mathbb{R}}|z-Z(\beta,t)|$.

\end{lemma}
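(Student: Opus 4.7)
The plan is to combine the Cauchy-type integral representation of $\mathcal{U}$, namely
\begin{equation*}
    \mathcal{U}(z,t)=\frac{1}{2\pi i}\int_{-\infty}^{\infty}\frac{Z_{\beta}(\beta,t)F(\beta,t)}{z-Z(\beta,t)}d\beta,\qquad \mathcal{U}_z(z,t)=-\frac{1}{2\pi i}\int_{-\infty}^{\infty}\frac{Z_{\beta}(\beta,t)F(\beta,t)}{(z-Z(\beta,t))^2}d\beta,
\end{equation*}
with two complementary estimates: a Cauchy--Schwarz bound in the $L^2$-based kernel, which delivers the decay factors $(1+d(z,\Sigma(t)))^{-1/2}$ and $(1+d(z,\Sigma(t)))^{-3/2}$ when $z$ is far from $\Sigma(t)$; and the maximum modulus principle applied to $\mathcal{U}$ and $\mathcal{U}_z$ (both holomorphic in $\Omega(t)$ and vanishing at infinity), which reduces the uniform bound to an estimate on the boundary values $F$ and $F_\alpha/Z_\alpha$.

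For the decay half, Cauchy--Schwarz applied to the two integrals yields
\begin{equation*}
    |\mathcal{U}(z,t)|\leq C\|F\|_{L^2}\Big(\int_{\mathbb{R}}\frac{|Z_\beta|^2}{|z-Z(\beta,t)|^2}d\beta\Big)^{1/2},\quad |\mathcal{U}_z(z,t)|\leq C\|F\|_{L^2}\Big(\int_{\mathbb{R}}\frac{|Z_\beta|^2}{|z-Z(\beta,t)|^4}d\beta\Big)^{1/2}.
\end{equation*}
Under (AS2) and Sobolev embedding one has $|Z_\beta|\leq K_0$, and the chord-arc bound (AS3) allows the same splitting as in the proof of Lemma \ref{integral}: separate the region $\{\beta:|Z(\beta,t)-z_0|\leq 2d(z,\Sigma(t))\}$ (with $z_0\in\Sigma(t)$ the nearest point to $z$) from its complement. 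On the first region the direct bound $|z-Z(\beta,t)|\geq d(z,\Sigma(t))$ applies, and on the complement the bi-Lipschitz constant forces $|\beta|\gtrsim d(z,\Sigma(t))/C_1$, giving $\int |z-Z(\beta,t)|^{-2k}d\beta\leq C\,d(z,\Sigma(t))^{-2k+1}$ for $k=1,2$. Combined with $\|F\|_{L^2}\leq 2\|U\|_{L^2}\leq 2$ from (AS2) and the $L^2$-boundedness of $\mathbb{H}$, this produces the two claimed decay rates; the shift $1+d$ in the statement is absorbed harmlessly once we splice in the uniform estimate from the next step.

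For the uniform half, the maximum principle gives $\|\mathcal{U}\|_{L^\infty(\overline{\Omega(t)})}\leq \|F\|_{L^\infty(\Sigma(t))}$ and, using $\mathcal{U}_z|_{\Sigma(t)}=F_\alpha/Z_\alpha$ (obtained by differentiating $F(\alpha,t)=\mathcal{U}(Z(\alpha,t),t)$ in $\alpha$), $\|\mathcal{U}_z\|_{L^\infty(\overline{\Omega(t)})}\leq \|F_\alpha/Z_\alpha\|_{L^\infty(\Sigma(t))}$. For $\mathcal{U}_z$, Lemma \ref{sobolev} and (AS2) yield $\|F_\alpha\|_{L^\infty}\leq C\|F\|_{Y_{\phi(t)}}\lesssim M_{\lambda,h}$, while $|Z_\alpha|\geq c_0>0$ by (AS2) together with Lemma \ref{lemmareciprocalZ}, producing the $M_{\lambda,h}$ bound. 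For $\mathcal{U}$, I would write $F=U+\mathbb{H}U$ with $\mathbb{H}U$ purely imaginary: (AS2) directly controls $\|U\|_{L^\infty}\leq M_\infty$, and $\|\mathbb{H}U\|_{L^\infty}$ is handled by the Gagliardo--Nirenberg-type interpolation $\|\mathbb{H}U\|_{L^\infty}\lesssim \|U\|_{L^2}^{1/2}\|\partial_\alpha U\|_{L^2}^{1/2}\leq CM_{\lambda,h}^{1/2}$, which under the parameter smallness built into (\ref{boundMlambdah})--(\ref{boundsmallm}) is dominated by $M_\infty$ once $|y_0|$ is taken sufficiently large.

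The main obstacle I foresee is precisely this last estimate: because $\mathbb{H}$ is not bounded on $L^\infty$, a direct $L^\infty\to L^\infty$ argument is unavailable, and one has to interpolate between $L^2$ and higher-regularity Gevrey norms to absorb $\|\mathbb{H}U\|_{L^\infty}$ into the sharper constant $M_\infty$ rather than the weaker $M_{\lambda,h}$. This is where the quantitative smallness built into (AS2) and the largeness of $d_{I,0}$ and $|y_0|$ are used in an essential way; if any of these were relaxed one would only be able to close the first inequality with $M_{\lambda,h}$ in place of $M_\infty$.
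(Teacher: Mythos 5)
Your decay bounds and the $\mathcal{U}_z$ uniform bound match the paper's approach (Cauchy--Schwarz on the Cauchy integral plus the Lemma \ref{integral}-style splitting, and maximum modulus applied to $\mathcal{U}_z$ with boundary value $F_\alpha/Z_\alpha$). The problem is the last step of the $\mathcal{U}$ uniform bound: the claim that $M_{\lambda,h}^{1/2}$ is dominated by $M_\infty$ for $|y_0|$ large is false, and in fact the inequality goes the wrong way. From (\ref{boundMlambdah})--(\ref{boundsmallm}) one has $M_{\lambda,h}\approx|y_0|^{-3/8}$ (for $\lambda>0$; $\approx|y_0|^{-1/2}$ for $\lambda<0$), so $M_{\lambda,h}^{1/2}\approx|y_0|^{-3/16}$ (resp.\ $|y_0|^{-1/4}$), whereas $M_\infty\approx d_{I,0}^{-1/2}\approx|y_0|^{-1/2}$. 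Thus $M_{\lambda,h}^{1/2}/M_\infty\approx|y_0|^{5/16}\to\infty$, and the interpolation bound $\|\mathbb{H}U\|_{L^\infty}\lesssim\|U\|_{L^2}^{1/2}\|\partial_\alpha U\|_{L^2}^{1/2}\lesssim M_{\lambda,h}^{1/2}$ is far too weak to establish $\|F\|_{L^\infty}\lesssim M_\infty$; no interpolation between $L^2$ and $\dot Y_{\phi(t)}$ data alone can recover the factor $d_{I,0}^{-1/2}$.

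You have, however, correctly identified a genuine subtlety. The paper's own proof simply writes ``by (AS2), $|\mathcal{U}(z,t)|\leq\|\mathcal{U}\|_{L^\infty(\Omega(t))}\leq M_\infty$'' without addressing the fact that (AS2) controls only $\|U\|_{L^\infty}=\|\Re F\|_{L^\infty}$, not $\|F\|_{L^\infty}=\|\mathcal{U}\|_{L^\infty(\Omega(t))}$; since $\mathbb{H}$ is unbounded on $L^\infty$ this step does need justification. So you located a real gap in the argument, but the fix proposed here does not close it -- one would need either an (AS2)-type hypothesis directly on $\|F\|_{L^\infty}$, or a different mechanism (e.g.\ explicit $L^\infty$ control on $\mathbb{H}U$ coming from the structure of $U$ near the point vortices) to get the sharp constant $M_\infty$ rather than $M_{\lambda,h}^{1/2}$.
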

\begin{proof}
Recall that $\mathcal{U}$ is holomorphic in $\Omega(t)$ with boundary value $F$. So we have for $z\in \Omega(t)$,
\begin{align*}
    \mathcal{U}(z,t)=\frac{1}{2\pi i}\int_{-\infty}^{\infty}\frac{Z_{\beta}(\beta,t)}{z-Z(\beta,t)}F(\beta,t)d\beta.
\end{align*}
If $d(z,\Sigma(t))\leq 1$, then  by (AS2)
$$ |\mathcal{U}(z,t)|\leq \|\mathcal{U}(\cdot,t)\|_{L^{\infty}(\Omega(t))}\leq M_{\infty}.$$
If $d(z,\Sigma(t))\geq 1$, we have  
\begin{align*}
    |\mathcal{U}(z,t)|\leq \frac{1}{2\pi}\Big(\int_{-\infty}^{\infty}\frac{1}{|z-Z(\beta,t)|^2}d\beta\Big)^{1/2}\|Z_{\alpha}\|_{L^{\infty}}\|F\|_{L^2}\leq CM_{\lambda, h} z^{-1/2}.
\end{align*}
So we obtain (\ref{holodecay1}). Notice that $\mathcal{U}_z$  has boundary values $\frac{F_{\alpha}}{Z_{\alpha}}$. Using the same proof as for (\ref{holodecay1}), we obtain (\ref{holodecay2}).
\end{proof}

\begin{lemma}\label{velocitypointvortices}
Assume that $(W, U, \{z_j(t)\})$ satisfies AS, then 
\begin{equation}
|\dot{z}_j(t)|\leq C(d_I(t)^{-1/2}+\frac{|\lambda|}{x(t)}), \quad j=1,2.
\end{equation}

\begin{equation}\label{realvelocity}
    |\Re\{\dot{z}_j(t)\}|\leq Cd_I(t)^{-3/2}.
\end{equation}

\begin{equation}\label{velocitydifference}
    |\dot{z}_1(t)-\dot{z}_2(t)|\leq C d_I(t)^{-3/2} x(t),
\end{equation}

\end{lemma}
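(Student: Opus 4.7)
The starting point will be the evolution identity (\ref{pointvortexevolution}), which under the symmetric configuration $z_1(t)=-x(t)+iy(t)$, $z_2(t)=x(t)+iy(t)$ with $\lambda_1=-\lambda_2=\lambda$ reads
\[
\dot z_1 = \overline{\mathcal U(z_1,t)} + \frac{\lambda i}{4\pi x(t)}, \qquad \dot z_2 = \overline{\mathcal U(z_2,t)} + \frac{\lambda i}{4\pi x(t)},
\]
since $\overline{z_1-z_2}=-2x(t)$ is real. The first inequality then follows at once: Lemma~\ref{holomorphic_decay} together with (AS4) gives $|\mathcal U(z_j,t)|\leq C(1+d_I(t))^{-1/2}\leq Cd_I(t)^{-1/2}$, while the inter-vortex term contributes $\tfrac{|\lambda|}{4\pi x(t)}$.

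For the velocity difference (\ref{velocitydifference}), the key observation is that the two inter-vortex contributions are \emph{identical} and therefore cancel under subtraction, leaving
\[
\dot z_1 - \dot z_2 \;=\; \overline{\mathcal U(z_1,t)} - \overline{\mathcal U(z_2,t)}.
\]
Because $\mathcal U(\cdot,t)$ is holomorphic in $\Omega(t)$, I would join $z_1$ to $z_2$ by the horizontal segment $\gamma=\{s+iy(t): -x(t)\leq s\leq x(t)\}$, which under (AS3)--(AS4) lies inside $\Omega(t)$ at distance at least $d_I(t)-2x(t)\geq d_I(t)/2$ from $\Sigma(t)$ (using $x(t)\leq 2x(0)=2$ and $d_I(t)\geq \tfrac12 d_{I,0}^{9/10}\gg 1$). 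On $\gamma$, Lemma~\ref{holomorphic_decay} yields $|\mathcal U_z|\leq Cd_I(t)^{-3/2}$, so by the fundamental theorem of calculus $|\dot z_1-\dot z_2|\leq |z_1-z_2|\sup_\gamma|\mathcal U_z|\leq 2x(t)\cdot Cd_I(t)^{-3/2}$.

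Finally, (\ref{realvelocity}) I would deduce from (\ref{velocitydifference}) together with the symmetry that the Picard iteration preserves (cf.\ Theorem~\ref{theorem1}(d)): $\Re\{z_1(t)\}=-\Re\{z_2(t)\}$, hence $\Re\{\dot z_1\}=-\Re\{\dot z_2\}$, so $\Re\{\dot z_1-\dot z_2\}=2\Re\{\dot z_1\}$. Combining,
\[
|\Re\{\dot z_j(t)\}| \;=\; \tfrac12 |\Re\{\dot z_1-\dot z_2\}| \;\leq\; \tfrac12|\dot z_1-\dot z_2| \;\leq\; Cx(t)d_I(t)^{-3/2} \;\leq\; Cd_I(t)^{-3/2},
\]
using $x(t)\leq 2$ from (AS4).

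The only subtle point is the geometric claim in the middle step, namely that the horizontal segment $\gamma$ actually lies within $\Omega(t)$ at distance comparable to $d_I(t)$ from $\Sigma(t)$. This is what forces the scaling in (H3) and (AS4): the horizontal width $2x(t)\leq 4$ is dominated by the vortex depth $|y(t)|\sim d_I(t)$, and the interface oscillations permitted by (AS2)--(AS3) are uniformly bounded in $L^\infty$ (via Sobolev embedding into $X_{\phi(t)}$), so they cannot reach the segment. Once this geometric fact is in place, the rest of the lemma is a routine combination of Lemma~\ref{holomorphic_decay} with the algebraic cancellation of the inter-vortex terms.
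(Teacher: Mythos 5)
Your proof is correct and follows essentially the same strategy as the paper: write $\dot z_j$ via (\ref{pointvortexevolution}), observe the inter-vortex contribution equals $\frac{\lambda i}{4\pi x(t)}$ for both $j$, control $\mathcal{U}$ and $\mathcal{U}_z$ by Lemma~\ref{holomorphic_decay}, and pass $\mathcal{U}(z_1)-\mathcal{U}(z_2)$ through the derivative along the joining segment. For (\ref{realvelocity}), however, you take a slightly different route than the paper. You use the symmetry $\Re\{z_1\}=-\Re\{z_2\}$ to get $\Re\{\dot z_1\}=-\Re\{\dot z_2\}$ and then deduce $|\Re\{\dot z_j\}|\le\tfrac12|\dot z_1-\dot z_2|$, recycling (\ref{velocitydifference}); the paper instead uses the same reflection symmetry to note $\Re\{\mathcal U(iy(t),t)\}=0$ and applies a mean value bound $|\Re\{\mathcal U(z_j)\}|\le|\mathcal U_z(\tilde x+iy,t)|\,x(t)$ on the horizontal segment from $iy(t)$ to $z_j(t)$. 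The two are interchangeable, but yours avoids re-running the mean value argument. You are also more explicit than the paper on the geometric step behind (\ref{velocitydifference}): the paper writes $\|\mathcal U_z\|_{L^\infty(\Omega(t))}|z_1-z_2|\le Cd_I(t)^{-3/2}x(t)$, which as literally stated would only give the $M_{\lambda,h}$ bound from (\ref{holodecay2}); what is meant — and what you correctly spell out — is that the sup is taken only over the segment joining $z_1$ to $z_2$, which stays at distance $\gtrsim d_I(t)$ from $\Sigma(t)$ because $2x(t)\ll d_I(t)$. Two minor cautions worth being aware of: (AS4) as stated gives only a lower bound $x(t)\ge\tfrac12 x(0)$, so the inequality $x(t)\le 2$ you invoke is not literally part of AS (though it is used implicitly by the paper too); and the symmetry $\Re\{z_1\}=-\Re\{z_2\}$ is likewise not listed in (AS1)--(AS5) but is a standing assumption in this section of the paper, which itself acknowledges that this lemma is the one place the symmetry is needed.
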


\begin{proof}
The main tool is the maximum principle for holomorphic functions.  Recall that for $j=1, 2$, 
$$\dot{z}_j(t)=\sum_{1\leq k\leq 2, k\neq j}\frac{\lambda_k i}{2\pi(\overline{z_j(t)-z_k(t)})}+\bar{\mathcal{U}}(z_j(t), t),$$
where $\mathcal{U}(Z,t)=\frac{1}{2\pi i}\int_{-\infty}^{\infty}\frac{Z_{\beta}(\beta,t)}{Z-Z(\beta,t)}F(\beta,t)d\beta$ is holomorphic in $\Omega(t)$ with boundary value $F$ on $\Sigma(t)$.

\vspace*{2ex}

\noindent \underline{\textbf{Estimate $\dot{z}_j$:  }} 
Clearly,
\begin{align*}
    \frac{\lambda_2}{2\pi(\overline{z_1(t)-z_2(t)})}\leq &  \frac{|\lambda|}{4\pi x(t)}.
\end{align*}
By (\ref{holodecay1}) and the assumption that $d_I(t)\geq 1$, we have 
\begin{align*}
|\bar{\mathcal{U}}(z_j(t), t)|\leq C(1+d(z_j(t), \Sigma(t)))^{-1/2}=Cd_I(t)^{-1/2}.
\end{align*}
So we obtain
\begin{equation}
    |\dot{z}_j(t)|\leq C(d_I(t)^{-1/2}+\frac{|\lambda|}{x(t)}).
\end{equation}

\vspace*{2ex}
\noindent \textbf{Estimate $|\dot{z}_1(t)-\dot{z}_2(t)|$:} By (\ref{holodecay2}) of Lemma \ref{holomorphic_decay} we have
\begin{align*}
    |\dot{z}_1(t)-\dot{z}_2(t)|=|\mathcal{U}(z_1(t),t)-\mathcal{U}(z_2(t),t)|\leq \|\mathcal{U}_z\|_{L^{\infty}(\Omega(t))}|z_1(t)-z_2(t)|\leq Cd_I(t)^{-3/2} x(t).
\end{align*}
(\ref{realvelocity}) follows immediately from 
$$\Re\{\dot{z}_j(t)\}=\Re\{\bar{\mathcal{U}}(z_j(t),t)\}\leq |\mathcal{U}_z(\tilde{x}(t)+iy(t),t)|x(t)\leq Cd_I(t)^{-3/2}.$$
\end{proof}
The following lemma follows by direct calculation.
\begin{lemma}\label{vortexvelocityxydirection}
Assume that $(W, U, \{z_j(t)\})$ satisfies AS, then for $t\in [0,T]$,
\begin{equation}
   |\frac{d}{dt}x(t)|\leq C d_I(t)^{-3/2}x(t),
\end{equation}
\begin{itemize}
\item [1.] If $\lambda<0$, then
\begin{equation}
    \frac{d}{dt}y(t)\leq -\frac{|\lambda|}{8\pi x(0)},  
\end{equation}
\item [2.] If $\lambda>0$, then 
\begin{equation}
    \frac{d}{dt}y(t)\geq \frac{|\lambda|}{8\pi x(0)}, 
\end{equation}
\end{itemize}
\end{lemma}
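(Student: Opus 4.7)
The plan is to read off $\dot{z}_2(t)$ directly from the ODE (\ref{pointvortexevolution}), decompose it into real and imaginary parts, then combine the symmetry property (d) of Theorem \ref{theorem1} with the decay estimates from Lemma \ref{holomorphic_decay} for the holomorphic velocity $\mathcal{U}$. In the symmetric configuration with $z_1(t) = -x(t) + iy(t)$, $z_2(t) = x(t) + iy(t)$ and $\lambda_1 = -\lambda_2 = \lambda$, (\ref{pointvortexevolution}) simplifies to
\[
\dot{z}_2(t) = \frac{\lambda i}{2\pi\,\overline{(z_2(t) - z_1(t))}} + \overline{\mathcal{U}(z_2(t), t)} = \frac{\lambda i}{4\pi x(t)} + \overline{\mathcal{U}(z_2(t), t)},
\]
so $\dot{x}(t) = \Re \mathcal{U}(z_2(t), t)$ and $\dot{y}(t) = \frac{\lambda}{4\pi x(t)} - \Im \mathcal{U}(z_2(t), t)$.

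The crucial preliminary step is the identity $\Re \mathcal{U}(iy, t) = 0$ whenever $iy \in \Omega(t)$. Since $U$ and $W$ are odd in $\alpha$ by Theorem \ref{theorem1}(d), one checks $Z(-\alpha, t) = -\overline{Z(\alpha, t)}$, $Z_\beta(-\beta, t) = \overline{Z_\beta(\beta, t)}$, and $F(-\beta, t) = -\overline{F(\beta, t)}$. Substituting $\beta \to -\beta$ in the Cauchy integral (\ref{formulaforU}) for $\mathcal{U}(iy, t)$ and using $\overline{iy - Z(\beta, t)} = -(iy + \overline{Z(\beta, t)})$, the integral turns out to equal its own complex conjugate, hence is real; dividing by $2\pi i$ makes $\mathcal{U}(iy, t)$ purely imaginary.

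The bound on $\dot{x}(t)$ now follows by writing
\[
\dot{x}(t) = \Re\bigl(\mathcal{U}(z_2(t), t) - \mathcal{U}(iy(t), t)\bigr) = \Re \int_0^{x(t)} \mathcal{U}_z(s + iy(t), t)\, ds.
\]
On the segment $\{s + iy(t) : 0 \leq s \leq x(t)\}$, AS4 ensures $d_I(t)$ is large compared with $x(t)$, so the triangle inequality yields $d(s + iy(t), \Sigma(t)) \geq d_I(t) - x(t) \geq \tfrac{1}{2} d_I(t)$; estimate (\ref{holodecay2}) of Lemma \ref{holomorphic_decay} then gives $|\mathcal{U}_z| \leq C d_I(t)^{-3/2}$ on the segment, yielding $|\dot{x}(t)| \leq C x(t) d_I(t)^{-3/2}$. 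For $\dot{y}(t)$, the error $|\Im \mathcal{U}(z_2(t), t)| \leq C d_I(t)^{-1/2}$ comes directly from (\ref{holodecay1}) of Lemma \ref{holomorphic_decay}. Integrating the already-established $\dot{x}$ bound over $[0, t]$ gives $\ln(x(t)/x(0)) \leq C T d_I(t)^{-3/2}$, which is $o(1)$ in the regime $|y_0| \gg 1$ of Theorem \ref{taylorsignfailtheorem2}; hence $\frac{|\lambda|}{4\pi x(t)}$ stays close to $\frac{|\lambda|}{4\pi x(0)}$ and dominates $\frac{|\lambda|}{8\pi x(0)}$ by a margin much larger than the error, since $|\lambda|/x(0) \sim |y_0|^{3/2 - \epsilon_0}$ dwarfs $d_I(t)^{-1/2} \lesssim |y_0|^{-9/20}$. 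The cases $\lambda > 0$ and $\lambda < 0$ then yield the claimed upper and lower bounds, respectively.

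The main obstacle is verifying the symmetry identity $\Re \mathcal{U}(iy, t) = 0$ cleanly, which requires careful bookkeeping of conjugation and sign flips in the Cauchy kernel under the reflection $\beta \to -\beta$. A secondary technical point is the bootstrap $x(t) \approx x(0)$ needed to turn $\frac{1}{4\pi x(t)}$ into $\frac{1}{8\pi x(0)}$ with the claimed slack factor of $1/2$; this relies on the already-proved bound on $|\dot{x}|$ together with the smallness of $T d_I(t)^{-3/2}$ guaranteed by AS4 and the time interval specified in Theorem \ref{theorem1}.
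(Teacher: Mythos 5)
Your proof is correct and follows essentially the route the paper intends (the paper gives no proof, saying only ``follows by direct calculation,'' but the ingredients are already present in Lemma \ref{velocitypointvortices} and are re-derived in \S\ref{caseupward}, eqs.\ (\ref{z1tz2t})--(\ref{goodvortexcontrol})). You correctly write $\dot{z}_2 = \frac{\lambda i}{4\pi x(t)} + \overline{\mathcal{U}(z_2,t)}$, split into $\dot{x} = \Re\mathcal{U}(z_2,t)$ and $\dot{y} = \frac{\lambda}{4\pi x(t)} - \Im\mathcal{U}(z_2,t)$, and use the reflection symmetry $\mathcal{U}(z) = -\overline{\mathcal{U}(-\bar z)}$ to get $\Re\mathcal{U}(iy,t)=0$, then (\ref{holodecay2}) along the segment to $z_2$ for the $\dot{x}$ bound and (\ref{holodecay1}) for the $\dot{y}$ error.

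One point worth highlighting: your observation that one needs an \emph{upper} bound $x(t)\lesssim x(0)$ to convert $\frac{\lambda}{4\pi x(t)}$ into $\frac{|\lambda|}{8\pi x(0)}$ is a genuine gap in the a priori assumptions as stated — (AS4) only provides $x(t)\geq\frac12 x(0)$. Your bootstrap $\lvert\ln(x(t)/x(0))\rvert\leq C\int_0^t d_I(s)^{-3/2}\,ds$ closes this: it uses only the just-established $\dot{x}$ bound, which does not rely on the $\dot{y}$ bound, so there is no circularity. The smallness of $\int_0^T d_I^{-3/2}$ is then guaranteed by (AS4)+(AS5) together with the choice of $T$ and $\delta_0$ in the regime $|y_0|\gg 1$, and indeed the paper establishes $|x(t)-x(0)|\lesssim |y_0|^{-1}$ later, at (\ref{xnplusoneone}), via the same integration. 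Your verification of $\Re\mathcal{U}(iy,t)=0$ via the substitution $\beta\mapsto-\beta$ in (\ref{formulaforU}) is correct; an equivalent route is to note that $\tilde{\mathcal{U}}(z):=-\overline{\mathcal{U}(-\bar z,t)}$ is holomorphic on the symmetric $\Omega(t)$ with the same boundary trace $F$, hence equals $\mathcal{U}$, and evaluate at the fixed point $z=iy$ of the reflection.
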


\begin{lemma}\label{corollaryformulaA1}
Assume that $(W, U, \{z_j(t)\})$ satisfies AS, then 
\begin{equation}\label{Aminus1inhomo}
    \|A_1-1\|_{X_{\phi(t)}}\leq C(1+|\lambda|^2 d_I(t)^{-5/2}).
\end{equation}
\begin{equation}\label{Aminusonehomo}
    \|A_1-1\|_{\dot{X}_{\phi(t)}}\leq C(M_{\lambda, h}^2+|\lambda|^2 d_I(t)^{-7/2}),
\end{equation}
\begin{equation}\label{Aminusoneinfty}
    \|A_1-1\|_{L^{\infty}}\leq C(1+\lambda^2d_I(t)^{-3})).
\end{equation}
\end{lemma}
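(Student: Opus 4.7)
The plan is to apply the closed formula (\ref{riemann_formula_A1}) for $A_{1}$, writing $A_{1}-1 = I_{1}-S$ with
\[
I_{1}(\alpha,t):=\frac{1}{2\pi}\int_{\mathbb R}\frac{|D_{t}Z(\alpha,t)-D_{t}Z(\beta,t)|^{2}}{(\alpha-\beta)^{2}}\,d\beta,
\]
and $S$ the $(I-\mathbb H)$-sum over $j$ in (\ref{riemann_formula_A1}). Setting $u:=D_{t}Z=F+Q$, I will estimate $I_{1}$ and $S$ separately in each of the norms $X_{\phi(t)}$, $\dot X_{\phi(t)}$, and $L^{\infty}$, using the a priori bound $\|F\|_{Y_{\phi(t)}}\lesssim M_{\lambda,h}$ together with the $Q$-bounds from Lemma \ref{lemmaQj}, Lemma \ref{estimateQ_jY} and Corollary \ref{sharp}.

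The key reduction for $I_{1}$ is the pointwise identity
\[
I_{1}=\tfrac{1}{2}\bigl(\bar u\,\Lambda u+u\,\Lambda\bar u-\Lambda(|u|^{2})\bigr),
\]
derived from $\Lambda f(\alpha)=\pi^{-1}\,\mathrm{p.v.}\!\int(f(\alpha)-f(\beta))/(\alpha-\beta)^{2}\,d\beta$ and the algebraic expansion $|u(\alpha)-u(\beta)|^{2}=2\mathrm{Re}\{\bar u(\alpha)(u(\alpha)-u(\beta))\}-(|u(\alpha)|^{2}-|u(\beta)|^{2})$. Expanding $u=F+Q$ into the $FF$, $FQ$, $QF$, $QQ$ blocks, each block is estimated in $X_{\phi(t)}$ by the trilinear/product estimates of Lemma \ref{shifthalfderivative} and Lemma \ref{lemmaproduct}, possibly combined with the commutator bounds of Lemma \ref{commutator1}. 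The $FF$ block contributes $\lesssim \|F\|_{Y_{\phi(t)}}^{2}\lesssim M_{\lambda,h}^{2}$, absorbed into $C$; the $QQ$ block uses $\|Q\|_{\dot Y_{\phi(t)}}\lesssim |\lambda|d_{I}(t)^{-5/2}$ and $\|Q\|_{Y_{\phi(t)}}\lesssim |\lambda|d_{I}(t)^{-3/2}$ to produce $|\lambda|^{2}d_{I}(t)^{-5/2}$; the $FQ$ and $QF$ cross terms, carrying only one factor of $\lambda$, are dominated by these. This yields (\ref{Aminus1inhomo}). For the homogeneous bound (\ref{Aminusonehomo}) the homogeneous product estimate (\ref{producthomo1}) picks up an extra $M_{\lambda,h}$ on the $F$-side or an extra $d_{I}(t)^{-1}$ on the $Q$-side, producing $M_{\lambda,h}^{2}+|\lambda|^{2}d_{I}(t)^{-7/2}$.

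For the sum $S$, each summand is $\tfrac{\lambda_{j}}{2\pi}\mathrm{Re}\{(I-\mathbb H)[-\partial_{\alpha}(1/(Z-z_{j}))]\cdot(\dot z_{j}-u)\}$ up to sign. The kernel is bounded in $X_{\phi(t)}$ by $d_{I}(t)^{-3/2}$ (Lemma \ref{lemmaQj}), and Lemma \ref{hilbert} transfers this under $I-\mathbb H$. The remaining factor $u-\dot z_{j}$ is bounded in $X_{\phi(t)}$ by $M_{\lambda,h}+|\lambda|d_{I}(t)^{-3/2}+|\dot z_{j}|$, with $|\dot z_{j}|\lesssim d_{I}(t)^{-1/2}+|\lambda|/x(t)$ from Lemma \ref{velocitypointvortices}. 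The product estimate (\ref{product1}) then delivers a bound compatible with (\ref{Aminus1inhomo}); for $\dot X_{\phi(t)}$ the kernel costs one more $d_{I}(t)^{-1}$. The $L^{\infty}$ estimate (\ref{Aminusoneinfty}) for $I_{1}$ is obtained directly by splitting its defining integral at $|\alpha-\beta|\sim 1$: the short-range part is dominated by $\|\partial_{\alpha}u\|_{L^{\infty}}^{2}$, the long-range by $\|u\|_{L^{\infty}}^{2}$, and $\|Q\|_{L^{\infty}}\lesssim |\lambda|d_{I}(t)^{-1}$, $\|\partial_{\alpha}Q\|_{L^{\infty}}\lesssim |\lambda|d_{I}(t)^{-2}$ produce the $\lambda^{2}d_{I}(t)^{-3}$ term; the $L^{\infty}$ bound for $S$ is handled pointwise using the same two factors.

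The main technical obstacle is the bookkeeping: every piece carries a combination of $M_{\lambda,h}$ (small), $|\lambda|$ (possibly large) and powers of $d_{I}(t)^{-1}$, and to match the precise exponents in the statement one must pair factors optimally—in particular, the sharp $d_{I}(t)^{-5/2}$ appearing in (\ref{Aminus1inhomo}) is attained only when one pairs a $\dot Y_{\phi(t)}$ factor of $Q$ with a $Y_{\phi(t)}$ factor of $Q$ in the $QQ$ block of $I_{1}$, which the algebraic identity above arranges so as to avoid a wasteful half-derivative loss.
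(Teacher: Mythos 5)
Your decomposition $A_1-1=I_1-S$ and your algebraic identity for $I_1$ are correct, and your treatment of the integral term $I_1$ via the $FF$, $FQ$, $QF$, $QQ$ blocks is essentially the paper's argument (the paper simply bounds the integral by $C(\|F\|_{X_{\phi(t)}}^2+\|Q\|_{X_{\phi(t)}}^2)\le C(1+M_{\lambda,h}^2+\lambda^2 d_I(t)^{-3})$, which is already better than $d_I(t)^{-5/2}$, so the extra half-derivative bookkeeping you flag is unnecessary). However, your treatment of the sum $S$ has a genuine gap.

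For the $\dot z_j$-part of $S$, you bound the kernel $(I-\mathbb H)\frac{Z_\alpha}{(Z-z_j)^2}$ in $X_{\phi(t)}$ by $d_I(t)^{-3/2}$ and the scalar $\dot z_j$ by $|\dot z_j|\lesssim d_I(t)^{-1/2}+|\lambda|/x(t)\approx|\lambda|$, so each summand costs $\lesssim |\lambda|\cdot d_I(t)^{-3/2}\cdot|\lambda|=|\lambda|^2 d_I(t)^{-3/2}$. Since $d_I(t)\ge 1$, this is \emph{larger} than the claimed $|\lambda|^2 d_I(t)^{-5/2}$, and under the scaling $|\lambda|\sim|y_0|^{3/2}$, $d_I(t)\lesssim|y_0|$ it is not $\lesssim 1$ either, so your term-by-term estimate does not yield (\ref{Aminus1inhomo}). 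The same gap propagates to (\ref{Aminusonehomo}) and (\ref{Aminusoneinfty}): naively you would get $|\lambda|^2 d_I(t)^{-5/2}$ and $|\lambda|^2 d_I(t)^{-2}$ instead of $|\lambda|^2 d_I(t)^{-7/2}$ and $|\lambda|^2 d_I(t)^{-3}$.

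The missing idea is a cancellation that uses the structure of a counter-rotating pair: since $\lambda_1=-\lambda_2=\lambda$ and, from (\ref{pointvortexevolution}), $\dot z_1=\dot z_2=\frac{\lambda i}{4\pi x(t)}+\bar{\mathcal U}(z_j(t),t)$, the dangerous part of $S$ has the common scalar factor $\frac{\lambda i}{4\pi x(t)}$, which factors out of the $j$-sum. One is then left with
\[
\sum_{j=1}^2\lambda_j\frac{Z_\alpha}{(Z-z_j)^2}=\lambda\, Z_\alpha\Big(\frac{1}{(Z-z_1)^2}-\frac{1}{(Z-z_2)^2}\Big)
=\lambda\, Z_\alpha(z_1-z_2)\Big(\frac{1}{(Z-z_1)^2(Z-z_2)}+\frac{1}{(Z-z_1)(Z-z_2)^2}\Big),
\]
with $|z_1-z_2|=2x(t)=O(1)$, so the kernel now has three factors of $(Z-z_j)^{-1}$ and is bounded in $X_{\phi(t)}$ by $d_I(t)^{-5/2}$ rather than $d_I(t)^{-3/2}$. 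This, combined with the benign contributions from $D_tZ$ and from $\bar{\mathcal U}(z_j(t),t)$ (the latter bounded using (\ref{holodecay1}) by $d_I(t)^{-1/2}$, not by $|\lambda|$), is exactly how the paper obtains the $d_I(t)^{-5/2}$, $d_I(t)^{-7/2}$ and $d_I(t)^{-3}$ exponents. Without this cancellation the lemma is simply not provable by a term-by-term product estimate.
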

\begin{proof}
We prove (\ref{Aminus1inhomo}) only. 
We have 
$$
         A_1=1+\frac{1}{2\pi}\int \frac{|D_tZ(\alpha,t)-D_tZ(\beta,t)|^2}{(\alpha-\beta)^2}d\beta-\sum_{j=1}^2 \frac{\lambda_j}{2\pi} Re\Big\{\Big((I-\mathbb{H})\frac{Z_{\alpha}}{(Z(\alpha,t)-z_j(t))^2}\Big)(D_tZ-\dot{z}_j(t))\Big\}.
$$
Splitting $D_tZ=\bar{F}+\bar{Q}$. Using (AS2), (\ref{holodecay1}), Lemma \ref{lemmaproduct}, Lemma \ref{commutator1} and (\ref{productdecay2}), we have 
\begin{align*}
    \norm{\int \frac{|D_tZ(\alpha,t)-D_tZ(\beta,t)|^2}{(\alpha-\beta)^2}d\beta}_{X_{\phi(t)}}\leq &\norm{\int \frac{|F(\alpha,t)-F(\beta,t)|^2}{(\alpha-\beta)^2}d\beta}_{X_{\phi(t)}}+\norm{\int \frac{|Q(\alpha,t)-Q(\beta,t)|^2}{(\alpha-\beta)^2}d\beta}_{X_{\phi(t)}}\\
    \leq &C(\|F\|_{X_{\phi(t)}}^2+\|Q\|_{X_{\phi(t)}}^2)\\
    \leq & C(1+M_{\lambda, h}^2+|\lambda|^2 x(t)^2 d_I(t)^{-3}).
\end{align*}
Similarly, we obtain
\begin{align*}
    \norm{\sum_{j=1}^2 \frac{\lambda_j i}{2\pi}\Re \Big\{\Big((I-\mathbb{H})\frac{Z_{\alpha}}{(Z(\alpha,t)-z_j(t))^2}\Big)D_tZ\Big)}_{X_{\phi(t)}}\leq C(1+M_{\lambda, h}^2+|\lambda|^2 x(t)^2 d_I(t)^{-3}).
\end{align*}
For $\norm{\sum_{j=1}^2 \frac{\lambda_j i}{2\pi}\Re \Big\{\Big((I-\mathbb{H})\frac{Z_{\alpha}}{(Z(\alpha,t)-z_j(t))^2}\Big)\dot{z}_j(t)\Big)}_{X_{\phi(t)}}$, we use $\dot{z}_j(t)=\frac{\lambda i}{4\pi x(t)}+\bar{\mathcal{U}}(z_j(t), t)$. We have 
\begin{align*}
    &\sum_{j=1}^2 \frac{\lambda_j i}{2\pi}\Re \Big\{\Big((I-\mathbb{H})\frac{Z_{\alpha}}{(Z(\alpha,t)-z_j(t))^2}\Big)\frac{\lambda i}{4\pi x(t)}\\
    =&-\frac{\lambda^2}{4\pi^2}\Re\Big\{(I-\mathbb{H})\Big(\frac{Z_{\alpha}}{(Z(\alpha,t)-z_1(t))^2(Z(\alpha,t)-z_2(t))}+\frac{Z_{\alpha}}{(Z(\alpha,t)-z_1(t))^2(Z(\alpha,t)-z_2(t))}\Big)   \Big\}
\end{align*}
Using Lemma \ref{estimateQ_jY} (and similar proofs, if necessary), we obtain

\begin{align*}
    \norm{\sum_{j=1}^2 \frac{\lambda_j i}{2\pi}\Re \Big\{\Big((I-\mathbb{H})\frac{Z_{\alpha}}{(Z(\alpha,t)-z_j(t))^2}\Big)\frac{\lambda i}{4\pi x(t)}}_{X_{\phi(t)}}\leq C|\lambda|^2 d_I(t)^{-5/2}, 
\end{align*}

\begin{align*}
    \norm{\sum_{j=1}^2 \frac{\lambda_j i}{2\pi}\Re \Big\{\Big((I-\mathbb{H})\frac{Z_{\alpha}}{(Z(\alpha,t)-z_j(t))^2}\Big)\frac{\lambda i}{4\pi x(t)}}_{L^{\infty}}\leq C|\lambda|^2 d_I(t)^{-3}, 
\end{align*}
and 
\begin{align*}
    \norm{\sum_{j=1}^2 \frac{\lambda_j i}{2\pi}\Re \Big\{\Big((I-\mathbb{H})\frac{Z_{\alpha}}{(Z(\alpha,t)-z_j(t))^2}\Big)\frac{\lambda i}{4\pi x(t)}}_{\dot{X}_{\phi(t)}}\leq C|\lambda|^2 d_I(t)^{-7/2}, 
\end{align*}
Using (\ref{holodecay1}), we obtain
\begin{align*}
    \norm{\sum_{j=1}^2 \frac{\lambda_j i}{2\pi}\Re \Big\{\Big((I-\mathbb{H})\frac{Z_{\alpha}}{(Z(\alpha,t)-z_j(t))^2}\Big)\bar{\mathcal{U}}(z_j(t),t)}_{X_{\phi(t)}}\leq C|\lambda|d_I(t)^{-2}. 
\end{align*}
So we conclude the proof of the lemma.
\end{proof}

Since $A-1=\frac{A_1}{|Z_{\alpha}|^2}-1=\frac{A_1-1}{|Z_{\alpha}|^2}-\frac{|Z_{\alpha}|^2-1}{|Z_{\alpha}|^2}$, a direct consequence of Lemma \ref{corollaryformulaA1} and Lemma \ref{reciprocal} yields the following estimates for $A-1$.
\begin{corollary}\label{estimateAminus1}
Assume that $(W, U, \{z_j(t)\})$ satisfies AS, then 
\begin{equation}
    \|A-1\|_{L^{\infty}}\leq C(1+M_{\lambda, h}^2+|\lambda|^2 d_I(t)^{-3}).
\end{equation}
\begin{equation}\label{AMINUSONEINHOMO222}
    \|A-1\|_{X_{\phi(t)}}\leq C(1+M_{\lambda, h}^2+|\lambda|^2 d_I(t)^{-5/2}).
\end{equation}
\begin{equation}
    \|A-1\|_{\dot{X}_{\phi(t)}}\leq C(M_{\lambda, h}^2+|\lambda|^2 d_I(t)^{-4}).
\end{equation}
In particular, if $m_0\leq C|y_0|^{-1/2}$, then 
\begin{equation}
    \sup_{0\leq t\leq T}\|A-1\|_{L^{\infty}}\leq  \begin{cases} C(1+|y_0|^{\frac{3}{10}}), \quad \quad &\lambda>0,\\
    C, \quad \quad &\lambda<0.
    \end{cases}
\end{equation}
and 
\begin{equation}
    \sup_{0\leq t\leq T}\|A-1\|_{\dot{X}_{\phi(t)}}\leq  \begin{cases} C|y_0|^{-3/8}, \quad \quad &\lambda>0,\\
    C|y_0|^{-1/2}, \quad \quad &\lambda<0.
    \end{cases}
\end{equation}
\end{corollary}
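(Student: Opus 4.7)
The plan is to exploit the identity $A-1=\tfrac{A_1-1}{|Z_\alpha|^2}-\tfrac{|Z_\alpha|^2-1}{|Z_\alpha|^2}$ recorded just above the statement, and to read off each of the three estimates by combining the corresponding bound on $A_1-1$ from Lemma \ref{corollaryformulaA1} with the Banach-algebra inequality (Lemma \ref{lemmaproduct}) and the reciprocal estimate (Lemma \ref{reciprocal}) applied to $H:=|Z_\alpha|^2-1$. The AS2 hypothesis gives $\|Z_\alpha-1\|_{X_{\phi(t)}}\le 2M_{\lambda,h}\le 1$, and via the Sobolev-type embedding (Lemma \ref{sobolev}) one sees that $|Z_\alpha|^2$ is bounded pointwise away from $0$, so Lemma \ref{reciprocal} applies with an absolute constant $c_0>0$.

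First I would record the auxiliary bounds
\begin{equation*}
\||Z_\alpha|^2-1\|_{L^\infty}+\||Z_\alpha|^2-1\|_{X_{\phi(t)}}+\||Z_\alpha|^2-1\|_{\dot X_{\phi(t)}}\le CM_{\lambda,h},
\end{equation*}
obtained from the factorization $|Z_\alpha|^2-1=(Z_\alpha-1)\bar Z_\alpha+(\bar Z_\alpha-1)$ together with (\ref{product1}) and (\ref{producthomo1}), and
\begin{equation*}
\Big\|\tfrac{1}{|Z_\alpha|^2}-1\Big\|_{X_{\phi(t)}}+\Big\|\tfrac{1}{|Z_\alpha|^2}-1\Big\|_{\dot X_{\phi(t)}}\le CM_{\lambda,h},\qquad \Big\|\tfrac{1}{|Z_\alpha|^2}\Big\|_{L^\infty}\le C,
\end{equation*}
which follow from Lemma \ref{reciprocal}, the homogeneous $\dot X$-statement being the obvious analogue of Lemma \ref{reciprocal}(2), proved by the same argument and already used implicitly in (\ref{reciprocalone})--(\ref{reciprocaltwo}).

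For the $L^\infty$ bound I combine (\ref{Aminusoneinfty}) with the pointwise estimate on $\tfrac{1}{|Z_\alpha|^2}$, and absorb the remaining contribution $\|(|Z_\alpha|^2-1)/|Z_\alpha|^2\|_{L^\infty}\lesssim M_{\lambda,h}\lesssim 1+M_{\lambda,h}^2$. For the $X_{\phi(t)}$ bound I apply the product estimate (\ref{product1}) to split
\begin{equation*}
\|A-1\|_{X_{\phi(t)}}\le C\|A_1-1\|_{X_{\phi(t)}}\Big\|\tfrac{1}{|Z_\alpha|^2}\Big\|_{X_{\phi(t)}}+C\Big\|\tfrac{|Z_\alpha|^2-1}{|Z_\alpha|^2}\Big\|_{X_{\phi(t)}},
\end{equation*}
and invoke (\ref{Aminus1inhomo}) together with the auxiliary bounds to reach $C(1+M_{\lambda,h}^2+|\lambda|^2d_I(t)^{-5/2})$. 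The $\dot X_{\phi(t)}$ bound uses the homogeneous Leibniz-type inequality (\ref{producthomo1}): the principal term $\|A_1-1\|_{\dot X_{\phi(t)}}\|\tfrac{1}{|Z_\alpha|^2}\|_{X_{\phi(t)}}$ is controlled via (\ref{Aminusonehomo}) by $C(M_{\lambda,h}^2+|\lambda|^2d_I(t)^{-7/2})$, while the mixed and tail terms contribute either $CM_{\lambda,h}^2$ from the $\|\tfrac{1}{|Z_\alpha|^2}-1\|_{\dot X_{\phi(t)}}$ factor or, after using $d_I(t)\ge 1$ from (AS4) to absorb lower powers of $d_I(t)^{-1}$, at worst $C|\lambda|^2 d_I(t)^{-4}$.

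Finally, the specialization under $m_0\le C|y_0|^{-1/2}$ is purely arithmetic: substitute (\ref{boundMlambdah}) for $M_{\lambda,h}^2$, the AS4 bound $d_I(t)\ge\tfrac12|y_0|^{9/10}$, and the scaling $|\lambda|\sim|y_0|^{3/2-\epsilon_0}$ from the setup of Theorem \ref{taylorsignfailtheorem2} into the three estimates just derived. No serious obstacle appears; the only mild bookkeeping concern is matching the exponents of $d_I(t)^{-1}$ in each norm, which is already dictated by the source estimates in Lemma \ref{corollaryformulaA1}.
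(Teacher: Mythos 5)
Your overall route---the identity $A-1=\tfrac{A_1-1}{|Z_\alpha|^2}-\tfrac{|Z_\alpha|^2-1}{|Z_\alpha|^2}$ fed into Lemma \ref{corollaryformulaA1}, Lemma \ref{reciprocal} and the product estimates of Lemma \ref{lemmaproduct}---is exactly what the paper intends; indeed the paper states the corollary with no proof, as ``a direct consequence'' of those lemmas. Two points of bookkeeping deserve more care, however.

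First, a notational slip: $\|\tfrac{1}{|Z_\alpha|^2}\|_{X_{\phi(t)}}$ is infinite (the constant $1$ is not in $L^2(\mathbb{R})$), so the factorization must be carried out through $\tfrac{1}{|Z_\alpha|^2}=1+\big(\tfrac{1}{|Z_\alpha|^2}-1\big)$ before (\ref{product1}) or (\ref{producthomo1}) can be applied; you do this implicitly in your auxiliary bounds, but the displayed splitting $\|A-1\|_{X_{\phi(t)}}\le C\|A_1-1\|_{X_{\phi(t)}}\|\tfrac{1}{|Z_\alpha|^2}\|_{X_{\phi(t)}}+\cdots$ is not literally usable.

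Second, and more substantively, your claim that the mixed terms in the $\dot X_{\phi(t)}$ estimate ``contribute \dots at worst $C|\lambda|^2 d_I(t)^{-4}$'' is not what (\ref{producthomo1}) actually yields. That inequality produces, among others, the term $\|A_1-1\|_{X_{\phi(t)}}\big\|\tfrac{1}{|Z_\alpha|^2}-1\big\|_{\dot X_{\phi(t)}}\lesssim (1+|\lambda|^2 d_I(t)^{-5/2})\,M_{\lambda,h}$. Already the constant piece gives $M_{\lambda,h}$ (not $M_{\lambda,h}^2$), and the other piece is $M_{\lambda,h}\,|\lambda|^2 d_I(t)^{-5/2}$, which is bounded by $|\lambda|^2 d_I(t)^{-4}$ only if $M_{\lambda,h}\lesssim d_I(t)^{-3/2}$. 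Under the scalings used later ($M_{\lambda,h}\lesssim |y_0|^{-3/8}$, $|\lambda|\sim|y_0|^{3/2}$, $d_I(t)\gtrsim|y_0|^{9/10}$) one finds $M_{\lambda,h}\sim|y_0|^{-3/8}\gg|y_0|^{-27/20}\sim d_I(t)^{-3/2}$ and the cross term is of order $|y_0|^{3/8}$, which is larger than the stated right-hand side $C(M_{\lambda,h}^2+|\lambda|^2 d_I(t)^{-4})\sim|y_0|^{-3/5}$. This imprecision is in fact present in the paper's own (unproved) statement, and it is harmless downstream because $\delta_0\sim|y_0|^{7/16}$ still dominates $|y_0|^{3/8}$ in verifying (\ref{controldamping}); but you should not assert ``no serious obstacle appears'' when the naive application of (\ref{producthomo1}) does not close the claimed exponent. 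To genuinely recover the stated $\dot X$ bound one would need a variant product estimate of the form $\|fg\|_{\dot X_\sigma}\lesssim\|f\|_{L^\infty}\|g\|_{\dot X_\sigma}+\|f\|_{\dot X_\sigma}\|g\|_{X_\sigma}$ so as to use $\|A_1-1\|_{L^\infty}$ rather than the much larger $\|A_1-1\|_{X_{\phi(t)}}$; such an estimate is plausible from the proof of Lemma \ref{lemmaproduct} but is not stated in the paper.
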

Note that $\|A-1\|_{L^2}\leq C|d_I(t)^{3/4}$, which is significantly larger than $\|A-1\|_{L^{\infty}}$.

\begin{lemma}\label{corollaryQ}
Assume that $(W, U, \{z_j(t)\})$ satisfies AS, then 
\begin{equation}\label{DTQY}
   \|D_tQ\|_{Y_{\phi(t)}}\leq C \frac{|\lambda|}{d_I(t)^2}+C\frac{\lambda^2}{d_I(t)^{5/2}},
\end{equation}
\begin{equation}\label{DTQY2}
   \|D_tQ\|_{\dot{Y}_{\phi(t)}}\leq CM_{\lambda, h} \frac{|\lambda|}{d_I(t)^2}+C\frac{\lambda^2}{d_I(t)^{7/2}},
\end{equation}
\begin{equation}\label{DTQY3}
    \|D_tQ\|_{L^{\infty}}\leq CM_{\lambda, \infty}\frac{|\lambda|}{d_I(t)^2}+C\frac{\lambda^2}{d_I(t)^3},
\end{equation}
for some constant $C>0$ depending on $C_0, C_1, M_{\lambda, h}, \frac{1}{L_0}$.
\end{lemma}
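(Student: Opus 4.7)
The plan is to differentiate $Q$ along $D_t$, expand $D_tZ$ into its holomorphic/pointvortex pieces, and then estimate each resulting summand by combining the product estimates of Corollary \ref{sharp} and Lemma \ref{estimateQ_jY} with the pointwise bounds on $F$, $Q$, $\dot z_j$, and $\mathcal U$ that were established earlier in this section.

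First, using $Q = -\sum_{j} (\lambda_j i/(2\pi))(Z-z_j(t))^{-1}$ and $D_t(Z-z_j(t)) = D_tZ - \dot z_j(t)$, a direct computation gives
\begin{equation*}
D_tQ \;=\; \sum_{j=1}^{N}\frac{\lambda_j i}{2\pi}\,\frac{D_tZ-\dot z_j(t)}{(Z(\alpha,t)-z_j(t))^{2}}.
\end{equation*}
Substituting $D_tZ=\bar F+\bar Q$ I split $D_tQ=\mathcal E_F+\mathcal E_Q+\mathcal E_{\dot z}$ in the obvious way. Each summand is then handled by Corollary \ref{sharp}: estimate (\ref{productdecay}) with $m=2$ yields $\|\mathcal E_F\|_{X_{\phi(t)}}\leq C|\lambda|d_I(t)^{-2}\|F\|_{X_{\phi(t)}}$, and the $Y_{\phi(t)}$/$\dot Y_{\phi(t)}$ versions follow by combining the bound $\|(Z-z_j(t))^{-2}\|_{Y_{\phi(t)}}\leq Cd_I(t)^{-3/2}$ from Lemma \ref{estimateQ_jY} with the $L^\infty$ control $\|F\|_{L^\infty}\leq M_\infty$ from (AS2). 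The $L^\infty$ bound of $D_tQ$ is read off pointwise from the uniform estimates $|(Z-z_j)^{-2}|\leq d_I(t)^{-2}$, $\|F\|_{L^\infty}\leq M_\infty$, $\|Q\|_{L^\infty}\leq C|\lambda|d_I(t)^{-1}$, and $|\dot z_j|\leq C(d_I^{-1/2}+|\lambda|/x(t))$ from Lemma \ref{velocitypointvortices}.

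For $\mathcal E_Q$ I plug in $\bar Q$ and apply the same product estimates, but now the decay improves by an extra factor of $|\lambda|/d_I$ because $\|Q\|_{L^\infty}\lesssim |\lambda|/d_I(t)$ (and the homogeneous analog from (\ref{productdecay3})); this is what produces the $\lambda^2 d_I^{-5/2}$ term in the stated bound. For $\mathcal E_{\dot z}$ I insert the explicit formula $\dot z_j(t)=\bar{\mathcal U}(z_j(t),t)+\sum_{k\ne j}(\lambda_k i/(2\pi))/\overline{z_j-z_k}$ from (\ref{pointvortexevolution}). The first term is controlled by $|\bar{\mathcal U}(z_j,t)|\leq C d_I(t)^{-1/2}$ (Lemma \ref{holomorphic_decay}) multiplied by the $Y_{\phi(t)}$-norm of $\sum_j c_j(Z-z_j)^{-2}$, yielding $C|\lambda|d_I(t)^{-2}$. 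The vortex--vortex self--interaction piece carries an explicit $\lambda^2/x(t)$ and, after summing over the symmetric pair and using the difference structure $(Z-z_1)^{-2}-(Z-z_2)^{-2}$ together with (\ref{productdecay5}), it contributes $C\lambda^2 d_I(t)^{-5/2}$.

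The main obstacle will be squeezing the extra $d_I(t)^{-1/2}$ out of the naive $\|fg\|_{Y_\sigma}\lesssim \|f\|_{Y_\sigma}\|g\|_{Y_\sigma}$ bound: a direct application of Lemma \ref{lemmaproduct} only produces $|\lambda|M_{\lambda,h}d_I^{-3/2}$, whereas the lemma asks for $|\lambda|d_I^{-2}$. The improvement must come from interpolating between an $L^\infty$ bound on one factor and an $L^2$ bound on the other: for $F$ one uses $\|F\|_{L^\infty}\le M_\infty\lesssim d_{I,0}^{-1/2}$ from Lemma \ref{holomorphic_decay} rather than its $X_{\phi(t)}$ norm, and for the high-derivative terms one tracks the Faà di Bruno counting exactly as in the proof of Lemma \ref{lemmaQj} to preserve the $d_I^{-2}$ rate at every level $n$. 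Checking this combinatorial accounting carefully—in both the inhomogeneous and homogeneous Gevrey norms—is where the bulk of the technical work lies.
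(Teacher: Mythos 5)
Your proposal follows essentially the same route as the paper's proof: the same expression for $D_tQ$, the same split $D_tZ=\bar F+\bar Q$ into three pieces, the same further decomposition $\dot z_j(t)=\tfrac{\lambda i}{4\pi x(t)}+\bar{\mathcal U}(z_j(t),t)$, and the same exploitation of the $(Z-z_1)^{-2}-(Z-z_2)^{-2}$ cancellation for the vortex--vortex self-interaction term. Two small remarks: the paper attributes the dominant $\lambda^2 d_I^{-5/2}$ contribution to the $\tfrac{\lambda i}{4\pi x}$ piece of $\dot z_j$ alone (obtaining only $\lambda^2 d_I^{-7/2}$ for the $\bar Q$ piece, whereas you over-estimate it at $\lambda^2 d_I^{-5/2}$ by using the cruder $\|Q\|_{L^\infty}\lesssim|\lambda|/d_I$ instead of the cancellation-improved $|\lambda||z_1-z_2|/d_I^2$, which is harmless since $d_I\geq 1$); and your observation that a direct product estimate only yields $|\lambda|M_{\lambda,h}d_I^{-3/2}$ is genuine, but note this quantity is in fact absorbed by the stated $\lambda^2 d_I^{-5/2}$ term in the parameter regime of interest, so the issue resolves without a full Faà di Bruno recount.
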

\begin{proof}
To prove (\ref{DTQY}), note that
\begin{equation}
    D_tQ=\sum_{j=1}^2 \frac{\lambda_j i}{2\pi}\frac{D_tZ-\dot{z}_j(t)}{(Z(\alpha,t)-z_j(t))^2}.
\end{equation}
Decomposing $D_tZ=\bar{F}+\bar{Q}$,  we estimate $D_tQ$ by 
\begin{align*}
    \|D_tQ\|_{Y_{\phi(t)}}\leq & \sum_{j=1}^2 \frac{|\lambda_j|}{2\pi}\Big\{\Big\Vert \frac{\bar{F}}{(Z(\alpha,t)-z_j(t))^2}\Big\Vert_{Y_{\phi(t)}}+\Big\Vert\frac{\bar{Q}}{(Z(\alpha,t)-z_j(t))^2}\Big\Vert_{Y_{\phi(t)}}\Big\}+\norm{\sum_{j=1}^2 \frac{\lambda_j}{2\pi}\frac{\dot{z}_j(t)}{(Z(\alpha,t)-z_j(t))^2}}_{Y_{\phi(t)}}\\
   :=& I_1+I_2+I_3.
\end{align*}
Note that Lemma \ref{estimateQ_jY},  Lemma \ref{velocitypointvortices} and (AS2) imply that
\begin{align*}
    I_1+I_2\leq C \frac{|\lambda|}{d_I(t)^2}+C\frac{\lambda^2}{d_I(t)^{7/2}}.
\end{align*}
For $I_3$, using $\dot{z}_j(t)=\frac{\lambda i}{4\pi x(t)}+\bar{\mathcal{U}}(z_j(t), t)$, we obtain 
\begin{align*}
    I_3\leq & \norm{\sum_{j=1}^2 \frac{\lambda_j}{2\pi}\frac{\frac{\lambda i}{4\pi x(t)}}{(Z(\alpha,t)-z_j(t))^2}}_{Y_{\phi(t)}}+\norm{\sum_{j=1}^2 \frac{\lambda_j}{2\pi}\frac{\bar{\mathcal{U}}(z_j(t), t)}{(Z(\alpha,t)-z_j(t))^2}}_{Y_{\phi(t)}}\\
    :=& I_{31}+I_{32}.
\end{align*}
By exploring the cancellations, we have 
\begin{align*}
    I_{31}=& \frac{\lambda^2}{4\pi^2}\norm{\frac{1}{(Z(\alpha,t)-z_1(t))^2(Z(\alpha,t)-z_2(t))}+\frac{1}{(Z(\alpha,t)-z_1(t))^2(Z(\alpha,t)-z_2(t))}}_{Y_{\phi(t)}}\\
    \leq & C\lambda^2 d_I(t)^{-5/2},
\end{align*}
and
\begin{equation}
    I_{32}\leq C |\lambda|^2 d_I(t)^{-3}.
\end{equation}
We obtain (\ref{DTQY}) by combining the estimates for $I_1, I_2, I_{31}, I_{32}$. Similarly,
\begin{align*}
    \|D_tQ\|_{L^{\infty}}\leq & \sum_{j=1}^2 \frac{|\lambda_j|}{2\pi}\Big\{\Big\Vert \frac{\bar{F}}{(Z(\alpha,t)-z_j(t))^2}\Big\Vert_{L^{\infty}}+\Big\Vert\frac{\bar{Q}}{(Z(\alpha,t)-z_j(t))^2}\Big\Vert_{L^{\infty}}\Big\}+\norm{\sum_{j=1}^2 \frac{\lambda_j}{2\pi}\frac{\dot{z}_j(t)}{(Z(\alpha,t)-z_j(t))^2}}_{L^{\infty}}\\
    \leq & CM_{\lambda, \infty}\frac{|\lambda|}{d_I(t)^2}+C\frac{\lambda^2}{d_I(t)^3}.
\end{align*}

To prove (\ref{DTQY2}), it suffices to notice that 
$$\norm{\sum_{j=1}^2 \frac{\lambda_j}{2\pi}\frac{\dot{z}_j(t)}{(Z(\alpha,t)-z_j(t))^2}}_{\dot{Y}_{\phi(t)}}\leq C\frac{\lambda^2}{d_I(t)^{7/2}}.$$
So we obtain (\ref{DTQY2}). 
\end{proof}

\begin{lemma}\label{estimateforb}
Assume that $(W, U, \{z_j(t)\})$ satisfies AS, then 
\begin{equation}\label{bbb111}
    \|b_1\|_{Y_{\phi(t)}}\leq C(1+M_{\lambda, h})|\lambda|d_{I}(t)^{-5/2}),
\end{equation}
\begin{equation}\label{bbb222}
    \|b_1\|_{\dot{Y}_{\phi(t)}}\leq CM_{\lambda, h}|\lambda|d_I(t)^{-5/2},
\end{equation}
\begin{equation}
    \|b_1\|_{L^{\infty}}\leq C(1+M_{\lambda, \infty})|\lambda| d_I(t)^{-3},
\end{equation}
for some constant $C>0$ depending on $C_0, C_1, M_{\lambda,h}$, and $\frac{1}{L_0}$.
\end{lemma}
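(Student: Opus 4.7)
The plan is to decompose $b_1 = b_1^{(a)} + b_1^{(b)}$ according to its definition, with
\[
b_1^{(a)} := \Re\{[\bar{Q},\mathbb{H}](\tfrac{1}{Z_{\alpha}}-1)\}, \qquad b_1^{(b)} := 2\Re\{(I-\mathbb{H})\bar{Q}\},
\]
and estimate each piece separately in the three norms.

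For $b_1^{(b)}$ I would first observe the conjugation identity $(I-\mathbb{H})\bar{Q}=\overline{(I+\mathbb{H})Q}$, which follows from $\overline{\mathbb{H}f}=-\mathbb{H}\bar{f}$. Since all three norms ($Y_{\phi(t)}$, $\dot Y_{\phi(t)}$, $L^\infty$) are invariant under complex conjugation, applying (\ref{productdecay5}) of Corollary \ref{sharp} gives $\|b_1^{(b)}\|_{Y_{\phi(t)}} \leq C|\lambda||z_1(t)-z_2(t)|d_I(t)^{-5/2}\leq C|\lambda| d_I(t)^{-5/2}$, where the second inequality uses $|z_1(t)-z_2(t)|=2x(t)\leq 4x(0)=4$ from (AS4). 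The corresponding $\dot Y_{\phi(t)}$ bound follows because $\|\cdot\|_{\dot Y_{\phi(t)}}\leq \|\cdot\|_{Y_{\phi(t)}}$, while the $L^\infty$ bound is obtained by the Sobolev embedding of Lemma \ref{sobolev} (part 1, applied with $n=0$).

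For $b_1^{(a)}$ the key structural observation is that $h := \tfrac{1}{Z_{\alpha}}-1$ is the boundary value of a function holomorphic in $\mathbb{P}_-$: indeed $Z-\alpha=(I+\mathbb{H})W$ is holomorphic in $\mathbb{P}_-$, hence so is $Z_\alpha$, and being nonvanishing (by (AS3)), so is $1/Z_\alpha$. Therefore $\mathbb{H}h=h$ and the commutator collapses to
\[
[\bar{Q},\mathbb{H}]h = \bar{Q}\,\mathbb{H}h - \mathbb{H}(\bar{Q}h) = (I-\mathbb{H})(\bar{Q}h).
\]
I would then bound this via the product rules of Lemma \ref{lemmaproduct} (specifically (\ref{producthomo2})) together with the $\mathbb{H}$-boundedness of Lemma \ref{hilbert}, pairing $\|Q\|_{\dot Y_{\phi(t)}} \leq C|\lambda|d_I(t)^{-5/2}$ from (\ref{productdecay3}) with $\|\tfrac{1}{Z_\alpha}-1\|_{X_{\phi(t)}}\leq CM_{\lambda,h}$ from Lemma \ref{lemmareciprocalZ}, and $\|Q\|_{Y_{\phi(t)}}\leq C|\lambda|d_I(t)^{-3/2}$ from (\ref{productdecay2}). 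This produces the required contribution of size $CM_{\lambda,h}|\lambda|d_I(t)^{-5/2}$ in both $Y_{\phi(t)}$ and $\dot Y_{\phi(t)}$. For the $L^\infty$ bound, I would use the direct pointwise estimate $|Q(\alpha,t)|\leq C|\lambda||z_1-z_2|d_I^{-2}$ (from the symmetric-pair cancellation in the defining formula for $Q$) together with $\|\tfrac{1}{Z_\alpha}-1\|_{L^\infty}\leq CM_{\lambda,\infty}$ via Sobolev embedding.

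Summing the two pieces yields $\|b_1\|_{Y_{\phi(t)}}\leq C(1+M_{\lambda,h})|\lambda|d_I(t)^{-5/2}$ and the analogous $\dot Y_{\phi(t)}$ and $L^\infty$ bounds stated. The main obstacle I anticipate is the function-space mismatch: we have only $X_{\phi(t)}$-control on $\tfrac{1}{Z_\alpha}-1$ (since the a priori assumption gives $W_\alpha \in X_{\phi(t)}$ but not $Y_{\phi(t)}$), whereas the target norm has the stronger $j^2$-weight of $Y_{\phi(t)}$. This is precisely where the identity $\mathbb{H}h=h$ becomes critical: it lets us transfer the loss onto $\bar Q$ (which is controlled in $\dot Y_{\phi(t)}$ with the sharp $d_I^{-5/2}$ decay), and the hybrid product rule (\ref{producthomo2}) then distributes the derivative weight so that each factor is measured in a norm we control.
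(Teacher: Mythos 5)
Your decomposition, the treatment of $b_1^{(b)}=2\Re\{(I-\mathbb{H})\bar{Q}\}$ via (\ref{productdecay5}), and the algebraic identity $[\bar{Q},\mathbb{H}]h=(I-\mathbb{H})(\bar{Q}h)$ for $h=\tfrac{1}{Z_\alpha}-1$ (valid since $\mathbb{H}h=h$, as the paper itself uses in deriving the formula for $b$) are all correct. The gap is in the step after the collapse: you propose to control $(I-\mathbb{H})(\bar{Q}h)$ by the $\mathbb{H}$-boundedness of Lemma \ref{hilbert} followed by a product rule from Lemma \ref{lemmaproduct}, but every product estimate there, including (\ref{producthomo2}) which you cite, requires $Y_\sigma$ or $\dot{Y}_\sigma$ control on \emph{both} factors; none of them lets you pair $\|Q\|_{\dot{Y}_{\phi(t)}}$ against only $\|h\|_{X_{\phi(t)}}$. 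You already identified this mismatch as the central obstacle, but the identity $\mathbb{H}h=h$ does not resolve it the way you claim: $(I-\mathbb{H})(\bar{Q}h)$ is literally the same function as the commutator, and the crude step $\|(I-\mathbb{H})u\|_{Y_\sigma}\le 2\|u\|_{Y_\sigma}$ throws away precisely the cancellation that makes this object better than a generic product.

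What is needed is a bound that places all the derivatives on $\bar{Q}$ and charges $h$ only for low regularity, and that is exactly estimate (\ref{commutator112}) of Lemma \ref{commutator1}, which is what the paper's proof invokes. Its mechanism is the symbol inequality $|\xi|^n\,|\operatorname{sgn}\xi-\operatorname{sgn}\eta|\le|\xi-\eta|^n$, so that every derivative falling on the commutator lands on the first entry, yielding $\|[\bar{Q},\mathbb{H}]h\|_{Y_{\phi(t)}}\le C\|Q\|_{\dot{Y}_{\phi(t)}}\|h\|_{X_{\phi(t)}}$ — exactly the pairing you want. Your holomorphicity observation is a correct explanation of \emph{why} this transfer is available (the projection $(I-\mathbb{H})$ applied to the product of an anti-holomorphic-type factor $\bar{Q}$ with the $\mathbb{P}_-$-holomorphic $h$ concentrates frequency on $\bar Q$), but to exploit it you must use a commutator estimate or an explicit Fourier-support/paraproduct argument; the Leibniz-type estimates of Lemma \ref{lemmaproduct} simply cannot see it. A secondary point: Sobolev embedding applied to $\|b_1^{(b)}\|_{Y_{\phi(t)}}$ gives only $|\lambda|d_I(t)^{-5/2}$, not the $d_I(t)^{-3}$ decay asserted for $\|b_1\|_{L^\infty}$; that sharper bound requires a direct pointwise argument on $(I-\mathbb{H})\bar{Q}$ rather than embedding the Gevrey bound you have already proved.
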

\begin{proof}
Recall that 
\begin{equation}
    b_1=\Re\{[\bar{Q},\mathbb{H}](\frac{1}{Z_{\alpha}}-1)\}+2\Re \{(I-\mathbb{H})\bar{Q}\}.
\end{equation}
We prove (\ref{bbb111}) only. The proof of (\ref{bbb222}) is similar.
By Lemma \ref{commutator1}, Corollary \ref{sharp}, Lemma \ref{lemmareciprocalZ}, we have 
\begin{align*}
    \|b_1\|_{Y_{\phi(t)}}=&\Big\Vert \Re\{[\bar{Q},\mathbb{H}](\frac{1}{Z_{\alpha}}-1)\}+2\Re \{(I-\mathbb{H})\bar{Q}\}\Big\Vert_{Y_{\phi(t)}}\\
     \leq & \Big\Vert [\bar{Q},\mathbb{H}](\frac{1}{Z_{\alpha}}-1)\}\Big\Vert_{Y_{\phi(t)}}+2\|(I+\mathbb{H})Q\|_{Y_{\phi(t)}}\\
    \leq & C\|Q\|_{\dot{Y}_{\phi(t)}}\norm{\frac{1}{Z_{\alpha}}-1}_{X_{\phi(t)}}+2\|(I+\mathbb{H})Q\|_{Y_{\phi(t)}}\\\\
    \leq &C(1+M_{\lambda, h})|\lambda|d_{I}(t)^{-5/2},
\end{align*}
for some constant $C>0$ depending on $C_0, C_1, M_{\lambda, h}, \frac{1}{L_0}$. 
\end{proof}

The following two lemmas are consequence of the previous estimates. 
\begin{lemma}\label{estimate_Riemann_G}
Assume that $(W, U, \{z_j(t)\})$ satisfies AS, then 
\begin{itemize}
    \item [(a)] $G=G(W, U, \{z_j\})\in C([0,T]; Y_{\phi(t)})$,  and 
    \begin{equation}
        \|G(W, U, \{z_j\})\|_{\dot{Y}_{\phi(t)}}\leq CM_{\lambda, h} \frac{|\lambda|}{d_I(t)^2}+C\frac{\lambda^2}{d_I(t)^{7/2}},
    \end{equation}
    \begin{equation}
        \|G(W, U, \{z_j\})\|_{Y_{\phi(t)}}\leq C\frac{|\lambda|}{d_{I,t}^2}+C\frac{\lambda^2}{d_I(t)^{5/2}},
    \end{equation}
    \item [(b)] $R=R(W, U, \{z_j\})\in C([0,T]; X_{\phi(t)})$, and 
        \begin{equation}\label{estimateforR}
        \|\partial_{\alpha}R(W, U, \{z_j\})\|_{X_{\phi(t)}}\leq C(1+M_{\lambda, h})|\lambda| d_I(t)^{-5/2}.
    \end{equation}

\end{itemize}
\end{lemma}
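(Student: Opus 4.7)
The plan is to derive (a) directly from Lemma \ref{corollaryQ} and to treat (b) by decomposing $R = \Re Q - b_1$, bounding each summand via the commutator and product estimates already developed. For (a), since $G = -\Re\{D_tQ\}$, taking the real part cannot increase the $Y_{\phi(t)}$ or $\dot{Y}_{\phi(t)}$ norms, so (\ref{DTQY})--(\ref{DTQY2}) deliver the two displayed inequalities immediately; the continuity $G \in C([0,T]; Y_{\phi(t)})$ then follows by applying the same bounds to differences and using the Lipschitz dependence of $D_tQ$ on $(W, U, z_j, \dot z_j)$.

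For (b) I will write $\partial_\alpha R = \Re(Q_\alpha) - \partial_\alpha b_1$ and bound each term. The essential fact that produces the rate $d_I(t)^{-5/2}$ (as opposed to the single-vortex rate $d_I(t)^{-3/2}$) is the symmetric-pair cancellation: using $\lambda_1 = -\lambda_2 = \lambda$,
\begin{equation*}
Q = -\frac{\lambda i (z_1 - z_2)}{2\pi (Z - z_1)(Z - z_2)}, \qquad Q_\alpha = -\frac{\lambda i (z_1 - z_2) Z_\alpha}{2\pi}\Big[\frac{1}{(Z - z_1)(Z - z_2)^2} + \frac{1}{(Z - z_1)^2(Z - z_2)}\Big].
\end{equation*}
I will then invoke Lemma \ref{lemmaproduct} together with an iteration of the Fa\`a di Bruno argument from the proof of Lemma \ref{lemmaQj}, now applied to the mixed products $\frac{1}{(Z - z_1)(Z - z_2)^2}$, to obtain $\|\frac{1}{(Z - z_1)(Z - z_2)^2}\|_{X_{\phi(t)}} \leq C d_I(t)^{-3/2}$. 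Combined with $|z_1 - z_2| = 2x(t) \leq 2 x(0)$ from (AS4) and $\|Z_\alpha\|_{L^\infty} \leq C$ from (AS2), this yields $\|Q_\alpha\|_{X_{\phi(t)}} \leq C |\lambda| d_I(t)^{-5/2}$.

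For $\partial_\alpha b_1$, I will differentiate the formula (\ref{formulab1}) to get
\begin{equation*}
\partial_\alpha b_1 = \Re\bigl\{[\bar Q_\alpha, \mathbb H]\bigl(\tfrac{1}{Z_\alpha} - 1\bigr)\bigr\} + \Re\bigl\{[\bar Q, \mathbb H]\partial_\alpha\bigl(\tfrac{1}{Z_\alpha} - 1\bigr)\bigr\} + 2\Re\bigl\{(I - \mathbb H)\bar Q_\alpha\bigr\}.
\end{equation*}
To the first commutator I will apply (\ref{commutator114}) of Lemma \ref{commutator1}, using the bound $\sum_{n \geq 1} n^3 \phi(t)^{2n}/(n!)^4 \|\partial_\alpha^n \bar Q\|_{L^2}^2 \leq C |\lambda|^2 d_I(t)^{-5}$ (which comes from the same Fa\`a di Bruno argument with the pair cancellation preserved) together with Lemma \ref{lemmareciprocalZ}; the second commutator will be bounded by (\ref{commutator111}) together with the $Q$ estimate from Corollary \ref{sharp} and Lemma \ref{lemmareciprocalZ}; the last term will be bounded by $2\|\bar Q_\alpha\|_{X_{\phi(t)}}$ via Lemma \ref{hilbert}, closing the loop with the $Q_\alpha$ estimate from the previous paragraph. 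Assembling these three pieces gives $\|\partial_\alpha b_1\|_{X_{\phi(t)}} \leq C(1 + M_{\lambda, h}) |\lambda| d_I(t)^{-5/2}$, and (\ref{estimateforR}) follows. The continuity $R \in C([0,T]; X_{\phi(t)})$ is by the same bounds applied to differences.

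The main obstacle I anticipate is the bookkeeping needed to retain the $|z_1 - z_2|$ cancellation uniformly across all summands of $\partial_\alpha b_1$: estimating $(I - \mathbb H)\bar Q_\alpha$ or $[\bar Q_\alpha, \mathbb H](\cdot)$ vortex-by-vortex, as in the proof of Lemma \ref{estimateforb}, would produce only $|\lambda| d_I(t)^{-3/2}$ and lose the extra $d_I(t)^{-1}$ factor that is indispensable for the a priori estimate in (\ref{estimateforR}). Working with the difference $\frac{1}{Z - z_1} - \frac{1}{Z - z_2}$ (and its derivatives) as the fundamental object, rather than each reciprocal separately, is what preserves the sharp rate.
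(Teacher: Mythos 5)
Your plan follows the paper's strategy: (a) is read off directly from Lemma \ref{corollaryQ}, and (b) is handled by differentiating $R = \Re\{Q\} - b_1$ and re-using the pair-cancellation machinery behind Lemma \ref{estimateforb} and Lemma \ref{estimateQ_jY}. The paper's proof of (b) is a bare citation; your explicit decomposition $\partial_\alpha b_1 = [\bar Q_\alpha, \mathbb H](\tfrac{1}{Z_\alpha}-1) + [\bar Q, \mathbb H]\partial_\alpha(\tfrac{1}{Z_\alpha}-1) + 2(I-\mathbb H)\bar Q_\alpha$ is the intended reading, and the insistence on keeping $\tfrac{1}{Z-z_1}-\tfrac{1}{Z-z_2}$ intact is indeed the key point.

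There is, however, a concrete error in the $Q_\alpha$ paragraph. You assert $\|\tfrac{1}{(Z-z_1)(Z-z_2)^2}\|_{X_{\phi(t)}}\leq Cd_I(t)^{-3/2}$, but this is a power-three reciprocal: already the $n=0$ contribution satisfies, by Lemma \ref{integral},
\begin{equation*}
\left\|\frac{1}{(Z-z_1)(Z-z_2)^2}\right\|_{L^2}^2\leq d_I(t)^{-2}\int_{\mathbb{R}}\frac{d\beta}{|Z(\beta,t)-z_2(t)|^{4}}\leq Cd_I(t)^{-5},
\end{equation*}
i.e. $\|\cdot\|_{L^2}\leq Cd_I(t)^{-5/2}$; compare Lemma \ref{estimateQ_jY}, where $\|(Z-z_j)^{-m}\|_{Y_{\phi(t)}}\leq Cd_I^{-m+1/2}$ gives the exponent $-5/2$ at $m=3$ and only $-3/2$ at $m=2$. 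With the bound as you wrote it and $|z_1-z_2|=O(1)$, that paragraph would yield $\|Q_\alpha\|_{X_{\phi(t)}}\leq C|\lambda|d_I^{-3/2}$, not the $C|\lambda|d_I^{-5/2}$ you state as its conclusion, so the paragraph is internally inconsistent. Replacing $d_I^{-3/2}$ by $d_I^{-5/2}$ in the intermediate claim repairs the argument. Also note that (AS4) provides only the lower bound $x(t)\geq\frac{1}{2}x(0)$; the upper bound $|z_1-z_2|\leq 2x(0)$ you attribute to (AS4) is in fact secured by the a posteriori control on $x(t)$ in the iteration, cf.\ (\ref{xnplusoneone}), and should be cited from there.
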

\begin{proof}
Recall that $G=-\Re\{D_tQ\}$. Invoking Lemma \ref{corollaryQ},  we obtain
\begin{align*}
    \|G\|_{Y_{\phi(t)}}\leq  C \frac{|\lambda|}{d_I(t)^2}+C\frac{\lambda^2}{d_I(t)^{7/2}}.
\end{align*}
Recall that $R=\Re\{Q\}-2\Re \{(I-\mathbb{H})\bar{Q}\}-\Re\{[\bar{Q},\mathbb{H}](\frac{1}{Z_{\alpha}}-1)\}$. Then (\ref{estimateforR}) follows immediately from Lemma \ref{commutator1}, Lemma \ref{estimateforb} and Corollary \ref{estimateQ_jY}.

\end{proof}

\begin{lemma}\label{refinedcontrol}
Assume that $(W, U, \{z_j(t)\})$ satisfies AS.
\begin{itemize}
\item [1.] If $\lambda<0$, then
\begin{equation}
    d_I(t)\geq d_{I,0}+\frac{|\lambda|}{8\pi }t.
\end{equation}
\item [2.] If $\lambda>0$, then 
\begin{equation}
    d_I(t)\geq d_{I,0}-|\lambda| t.
\end{equation}
\end{itemize}
\end{lemma}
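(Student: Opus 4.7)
The plan is to bound $d_I(t)$ below by the vertical distance $\inf_\alpha \Im Z(\alpha,t)-y(t)$ and control its two pieces separately, using the evolution of $\Im Z$ for the interface and Lemma \ref{vortexvelocityxydirection} for the vortex height $y(t)$.

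First, since $\Sigma(t)$ bounds $\Omega(t)$ from above and $z_j(t)\in \Omega(t)$, one has $\Im Z(\alpha,t)-y(t)>0$ for every $\alpha$. Using $|Z-z_j|\ge |\Im(Z-z_j)|$ together with $\Im z_1(t)=\Im z_2(t)=y(t)$,
\[
 d_I(t)\;\ge\;\inf_{\alpha\in\R}\Im Z(\alpha,t)\;-\;y(t),
\]
and the paper has normalized the initial data so that $d_{I,0}=\inf_\alpha \Im Z(\alpha,0)-y(0)$. Thus it suffices to track how much $\inf_\alpha \Im Z$ can decrease and how much $y(t)$ moves.

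For the interface piece, $\partial_t Z=D_tZ-b Z_\alpha=\bar F+\bar Q-b Z_\alpha$ yields
\[
 \partial_t \Im Z(\alpha,t)=-\Im F(\alpha,t)-\Im Q(\alpha,t)-b(\alpha,t)\,\Im Z_\alpha(\alpha,t).
\]
Under AS each term is small in $L^\infty_\alpha$: $\|F\|_{L^\infty}\le CM_\infty$ from Lemma \ref{holomorphic_decay} applied on $\Sigma(t)$; $\|Q\|_{L^\infty}\le C|\lambda|x(t)/d_I(t)^2$ from the antisymmetric cancellation $Q=-\tfrac{\lambda i}{2\pi}\bigl(\tfrac{1}{Z-z_1}-\tfrac{1}{Z-z_2}\bigr)$ and $|z_1(t)-z_2(t)|=2x(t)$; and $\|b\|_{L^\infty}\|Z_\alpha\|_{L^\infty}$ is controlled through the decomposition $b=b_0+b_1$, Sobolev embedding, Lemma \ref{estimateforb}, and the bound $\|Z_\alpha-1\|_{L^\infty}\le C M_{\lambda,h}$. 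Under AS4 and in the regime of Theorem \ref{taylorsignfailtheorem2}, each such $L^\infty$ norm is $o(|\lambda|)$ uniformly on $[0,T]$.

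For $\lambda<0$, Lemma \ref{vortexvelocityxydirection} provides $-\dot y(t)\ge |\lambda|/(8\pi x(0))=|\lambda|/(8\pi)$; integrating the pointwise inequality
\[
 \partial_t\bigl(\Im Z(\alpha,t)-y(t)\bigr)\;\ge\;\frac{|\lambda|}{8\pi}-o(|\lambda|)
\]
and taking $\inf_\alpha$ gives $d_I(t)\ge d_{I,0}+|\lambda|t/(8\pi)$ once the $o(|\lambda|)$ errors are absorbed into the slack present in Lemma \ref{vortexvelocityxydirection} (whose true leading order is $|\lambda|/(4\pi x(t))$, strictly smaller than the stated upper bound $-|\lambda|/(8\pi x(0))$ by AS4). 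For $\lambda>0$, Lemma \ref{velocitypointvortices} together with $x(t)\ge x(0)/2$ and $d_I(t)^{-1/2}=o(|\lambda|)$ yields $|\dot y(t)|\le |\lambda|+o(|\lambda|)$, so $|y(t)-y(0)|\le |\lambda|t$; combined with the same $L^\infty$ bound on $\partial_t \Im Z$ this gives $d_I(t)\ge d_{I,0}-|\lambda|t$. The main technical point is verifying that all interface-motion and subleading vortex-velocity corrections are absorbable into the stated constants, which uses the scaling $|\lambda|\sim |y_0|^{3/2-\epsilon_0}$ combined with AS4 to make every correction of order $|y_0|^{-\delta}|\lambda|$ for some $\delta>0$.
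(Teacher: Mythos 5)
Your proof is correct and follows the same route the paper takes: lower-bound $d_I(t)$ by the vertical separation $\inf_\alpha\Im Z(\alpha,t)-y(t)$, drive the vortex-height piece with Lemma \ref{vortexvelocityxydirection}, and absorb the slower interface motion and the subleading $\mathcal U(z_j)$ contribution into the slack between $|\lambda|/(4\pi x(t))$ and the stated constants. You are in fact somewhat more explicit than the paper's own write-up, which folds the interface contribution into a single fundamental-theorem-of-calculus integral without separately bounding $\partial_t\Im Z$; making that step visible, as you do, is the honest way to see why the $o(|\lambda|)$ corrections are harmless.
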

\begin{proof}
If $\lambda<0$, then we have 
\begin{align*}
    \Im\{Z(\alpha,t)-z_j(t)\}=&-|\Im\{Z(\alpha,0)-z_j(0)\}|-\int_0^t |\Im\{\frac{d}{d\tau}(Z(\alpha,\tau)-z_j(\tau))\}|d\tau\\
    \leq & -|\Im\{Z(\alpha,0)-z_j(0)\}|-\frac{|\lambda|}{8\pi x(0)}t.
\end{align*}
Therefore \footnote{Recall that we assume $d_{I,0}=\inf_{\alpha\in \mathbb{R}}\min_{j=1,2}|\Im\{Z(\alpha,0)-z_j(0)\}|$.}
\begin{align*}
   d_I(t)\geq |\inf_{\alpha\in \mathbb{R}}\Im\{Z(\alpha,t)-z_1(t)\}|\geq d_{I,0}+\frac{|\lambda|}{2\pi x(0)}
\end{align*}

If $\lambda>0$, then we have 
\begin{align*}
    \Im\{Z(\alpha,t)-z_j(t)\}=&|\Im\{Z(\alpha,0)-z_j(0)\}|-\int_0^t |\Im\{\frac{d}{d\tau}(Z(\alpha,\tau)-z_j(\tau))\}|d\tau\\
    \geq & d_{I,0}-\frac{|\lambda|}{x(0)}t.
\end{align*}
Then we have 
$$d_{I}(t)\geq d_{I,0}-\frac{|\lambda|}{x(0)}t.$$
Recall that we assume $x(0)=1$. So we conclude the proof of the lemma.
\end{proof}

\section{Proof of Theorem \ref{theorem1}}\label{sectionapproximate}

\subsection{The wellposedness of the quasilinear system (\ref{quasi2})}\label{subsectionquasi}

In this subsection we prove Theroem \ref{theorem1}.
Without loss of generality, we assume $d_{I,0}\geq 4$.
\begin{proof}[The proof of Theorem \ref{theorem1}]
Let $T>0$ to be determined, define
\begin{equation*}
S_T=\Bigg\{ \Big(W_{\alpha}, U, \{z_j\}\Big)\Bigg| \text{(AS1)-(AS5)  hold for } (W, U, \{z_j\})\Bigg\}.
\end{equation*}
So 
$$S_T\subset C([0,T];X_{\phi(t)})\times C([0,T];Y_{\phi(t)})\times \{C^1([0,T];\mathbb{C})\}.$$
We denote $D_t^{(n)}$ by $\partial_t+b^n\partial_{\alpha}$, where $b^n$ is the $n$-th approximation of $b$, which will be constructed shortly.

\vspace*{2ex}

\noindent 
\textbf{The zero-th approximation.}
We take $U^0=\Re\{F_0\}$, $\{z_j^0\}=\{z_{j,0}\}$, $Z^0=Z_0$, $W^0:=\Re\{Z_0-\alpha\}$,  $G^0:=G(W^0, U^0, \{z_j^0\})$, $R^0:=R(W^0, U^0, \{z_j^0\})$, $b_1^0:=b_1(W^0, U^0, \{z_j^0\})$, $A^0:=A(W^0, U^0, \{z_j^0\})$. For arbitrary $T>0$, $$(W^0, U^0,  \{z_j^0\})\in S_T.$$

\vspace*{2ex}

\noindent 
\textbf{The $n$-th approximation. }
Assume we have constructed $(W^n, U^n, \{z_j^n\})$  such that $$(W^{n}, U^n, \{z_j^n\})\in S_T.$$
Define $Z^{n}$ and $F^n$ by
\begin{equation}
    Z^n(\alpha,t)=\alpha+(I+\mathbb{H})W^n, \quad \quad F^n=(I+\mathbb{H})U^n.
\end{equation}

\vspace*{2ex}

\noindent \textbf{The $(n+1)$-th approximation.}
Let's construct $(W^{n+1}, U^{n+1},  \{z_j^{n+1}\})$ as follows.

\vspace*{2ex}

\noindent \textbf{Step 1.} Define 
\begin{equation}
    \begin{cases}
    G^{n}:=G(W^n, U^n, \{z_j^n(t)\}),\\
    R^{n}:=R(W^n, U^n, \{z_j^n\}),\\
    b_1^{n}:=b_1(W^n, U^n,  \{z_j^n(t)\}),  \\
    b_0^n:=\Re\{[\bar{F}^{n+1},\mathbb{H}](\frac{1}{Z^{n+1}_{\alpha}}-1)\}+2F^{n+1},\\
    b^n=b_1^n+b_0^n,\\
    A^{n}=A(W^n, U^n,  \{z_j^n(t)\}),\\
    D_t^{(n)}:=\partial_t+b^{n}\partial_{\alpha},\\
    Q^n:=-\sum_{j=1}^2 \frac{\lambda_j i}{2\pi }\dfrac{1}{Z^n(\alpha,t)-z_j^n(t)},\\
    d_{I,n}(t):=\inf_{\alpha\in \mathbb{R}}\min_{j=1,2}|Z^n(\alpha,t)-z_j(t)|,\\
    \end{cases}
\end{equation}
and define $\mathcal{U}^n$ by
$$\mathcal{U}^n(Z,t)=\frac{1}{2\pi i}\int_{-\infty}^{\infty}\frac{\partial_{\beta}Z^n(\beta,t)}{Z-Z^n(\beta,t)}F^n(\beta,t)d\beta.$$
Let $\Sigma^n(t)$ be the curve parametrized by $Z^n(\alpha,t)$, and $\Omega(t)^n$ the region bounded above by $Z^n$. Note that $b_0^n$ depends on the unknowns $F^{n+1}$ and $Z^{n+1}$.

\vspace*{1ex}

\noindent \textbf{Step 2.} $(U^{n+1}, W^{n+1})$ is defined as the solution of 
\begin{equation}\label{iteration_n1}
    \begin{cases}
    D_t^{(n)}U^{n+1}=A\Lambda W^{n+1}+G^n,\\
D^{(n)}_tW^{n+1}=-U^{n+1}-\Re [\overline{F^{n+1}},\mathbb{H}](\frac{1}{\partial_{\alpha}Z^n}-1))+R^n,\\
F^{n+1}=(I+\mathbb{H})U^{n+1},\\
    W^{n+1}(\cdot,0)=W_0,\quad \quad  U^{n+1}(\cdot,0)=U_0.
    \end{cases}
\end{equation}
Define $Z^{n+1}$ and $\{z_j^{n+1}\}$ by 
\begin{equation}\label{partialtZnplus1}
    Z^{n+1}(\alpha,t):=\alpha+(I+\mathbb{H})W^{n+1},
\end{equation}
\begin{equation}\label{pointvortexevolve}
    \begin{cases}
     \frac{d}{dt}z_j^{n+1}(t)=\overline{\mathcal{U}^{n}(z_j^{n}(t),t)}+\sum_{\substack{1\leq k\leq 2\\ k\neq j}}\frac{\lambda_k i}{2\pi} \dfrac{1}{\overline{z_k^n(t)-z_j^n(t)}},\\
     z_j^{n+1}(0)=z_j(0).
    \end{cases}
\end{equation}
We show that $(W^{n+1}, U^{n+1}, \{z_j^{n+1}\})$ satisfies (AS1)-(AS5).

\vspace*{2ex}
We choose $T$ such that
$$T=\begin{cases}\min\{T_1(C_0, \|(U_0, \partial_{\alpha}W_0)\|_{\dot{Y}_{L_0}\times X_{L_0}}), \frac{L_0}{2\delta_0},  \tau_0, \frac{4\pi x(0)(|y_0|-|y_0|^{9/10})}{|\lambda|}\}, \quad \quad &\lambda>0\\
\min\{T_2(C_0, \|(U_0, \partial_{\alpha}W_0)\|_{\dot{Y}_{L_0}\times X_{L_0}}), \tau_0, \frac{L_0}{2\delta_0}\}\quad \quad \quad &\lambda<0\end{cases}.$$
Here, $T_1, T_2$ depend continuously on its parameters.
\vspace*{2ex}

\noindent By Lemma \ref{estimateAminus1},  Lemma \ref{estimateforb},  Lemma \ref{estimate_Riemann_G}, we have $A^n-1\in C([0,T]; X_{\phi(t)})$, $b^n\in C([0,T]; Y_{\phi(t)})$, $G^n\in C([0,T]; Y_{\phi(t)})$ and $\partial_{\alpha}R^n\in C([0,T]; X_{\phi(t)})$. Moreover, for $t\in [0,T]$,
\begin{equation}\label{uniformbound}
\begin{split}
    \|b_1^n\|_{Y_{\phi(t)}}\leq & C(1+M_{\lambda, h})|\lambda|d_{I,n}(t)^{-5/2}\leq C|\lambda| d_{I,n}^{-5/2}\leq \begin{cases} C|y_0|^{-3/4}\quad \quad &\lambda>0, \\ C|y_0|^{-1}, \quad \quad &\lambda<0. \end{cases},\\
     \|A^n-1\|_{X_{\phi(t)}}\leq & C(1+|\lambda|^2 d_{I,n}(t)^{-5/2})\leq \begin{cases} C|y_0|^{3/4}, \quad \quad &\lambda>0\\
     C|y_0|^{1/2}, \quad \quad &\lambda<0.
     \end{cases},\\
          \|A^n-1\|_{L^{\infty}}\leq & C(1+|\lambda|^2 d_{I,n}(t)^{-3})\leq  \begin{cases} C(1+|y_0|^{\frac{3}{10}}), \quad \quad &\lambda>0,\\
    C, \quad \quad &\lambda<0.
    \end{cases},\\
      \|A^n-1\|_{\dot{X}_{\phi(t)}}\leq & C(M_{\lambda,h}^2+|\lambda|^2 d_{I}(t)^{-4})\leq \begin{cases} C|y_0|^{-3/4}, \quad \quad &\lambda>0\\
     C|y_0|^{-1}, \quad \quad &\lambda<0.
     \end{cases},\\
\end{split}
\end{equation}
for some constant $C$ depending only on $C_0, C_1, L_0^{-1}$ and $M_{\lambda, h}$. Since $M_{\lambda,h}\leq 1$, and $C_1\leq \|Z^n_{\alpha}\|_{L^{\infty}}\leq 1+\|Z^{n}_{\alpha}\|_{X_{\phi(t)}}\leq 1$, $L_0^{-1}\leq 1$,  we can chose $C$ depending only on $C_0$. Since we require 
$$
    \sup_{0\leq t\leq T}\Big(\frac{\delta_0}{L_0}-4-2d_0(\|A\|_{L^{\infty}}+\|A\|_{\dot{Y}_{\phi(t)}})-4d_1\|(U_0, \partial_{\alpha}W_0\|_{Y_{\phi(0)}\times X_{\phi(0)}}\Big)\geq 0,
$$
we choose $\delta_0=\begin{cases} |y_0|^{7/16}, \quad &\lambda>0\\ M_2, \quad &\lambda<0\end{cases}$. Here, $M_2$ is a large but absolute constant. Basing on these arguments, we can take
$$T=\begin{cases}  \tau,  \quad\quad\quad\quad \quad\quad\quad\quad \quad &\lambda<0\\ T=\frac{4\pi x(0)(|y_0|-|y_0|^{9/10})}{|\lambda|},  \quad &\lambda>0,\end{cases}$$
where $\tau>0$ is a small but absolute constant.

\vspace*{1ex}

By analyzing (\ref{pointvortexevolve}), using Lemma \ref{velocitypointvortices}, we obtain
\begin{equation}\label{xnplusoneone}
    |x^{n+1}(t)-x(0)|\leq  C\int_0^t |\Re\{\dot{x}^{n+1}(\tau)\}|d\tau\leq CT d_I(t)^{-3/2}\leq C|y_0|^{-1},
\end{equation}
and 
\begin{equation}\label{ynplusoneone}
    |y^{n+1}(t)-y(0)-\frac{\lambda}{4\pi x(0)}|\leq CTd_I(t)^{-3/2}\leq C|y_0|^{-1}.
\end{equation}

\vspace*{1ex}

By Theorem \ref{theoremlinear}, there is a unique solution $(U^{n+1}, \partial_{\alpha}W^{n+1})\in C([0,T]; Y_{\phi(t)})\times C([0,T]; X_{\phi(t)})$, to the system (\ref{iteration_n1}), such that 
\begin{equation}\label{evolvvelocity}
    \sup_{t\in [0,T]}(\|(U^{(n+1)}, \partial_{\alpha}W^{n+1})\|_{\dot{Y}_{\phi(t)}\times X_{\phi(t)}}^2\leq \frac{1}{4}M_{\lambda, h}^2e^{B^n T}+\int_0^T e^{B^n(T-\tau)}\mathcal{N}^n(\tau)d\tau,
\end{equation}
where by (\ref{uniformbound}),
$$B^n:=C(1+\|b_1^n\|_{C(;0,T];Y_{\phi(t))}})\leq C+C\sup_{0\leq t\leq T}|\lambda|d_{I}^{(n)}(t)^{-5/2}:=\gamma^n(T),$$
and 
\begin{align*}
    \mathcal{N}^n(t):=&\|R^n_{\alpha}\|_{X_{\phi(t)}}^2+\|G^n\|_{\dot{Y}_{\phi(t)}}^2+C\|b_1^n\|_{\dot{Y}_{\phi(t)}}^2\\
    \leq & CM_{\lambda, h}^2+C\lambda^2d_{I,n}(t))^{-3}\leq Cm_0^2+C\lambda^2 d_{I,n}(t)^{-7/2}:=\beta^n(t).
\end{align*}

\vspace*{2ex}

\noindent \textbf{Case 1: $\lambda<0$.} By Lemma \ref{refinedcontrol}, we have $d_{I,n}(t)\geq \frac{1}{2}(d_{I,0}+\frac{|\lambda|}{4\pi x(0)}t)$. 
So 
$$\gamma^n(t)\leq Cm_0+C\lambda^2(d_{I,0}+|\lambda|t)^{-3}\leq Cm_0+C\lambda^2 d_{I,0}^{-3},\quad \quad \beta^n(t)\leq Cm_0^2+C\lambda^2(d_{I,0}+|\lambda|t)^{-7/2}.$$
Denote 
$$\gamma_0:=Cm_0+C\lambda_2 d_{I,0}^{-3}.$$
So we have 
\begin{align*}
     \sup_{t\in [0,T]}(\|(U^{(n+1)}, \partial_{\alpha}W^{n+1})\|_{Y_{\phi(t)}\times X_{\phi(t)}}^2\leq & \frac{1}{4}m_0^2e^{\gamma_0 T}+C\int_0^T e^{\gamma_0 (T-\tau)}\frac{\lambda^2}{(d_{I,0}+|\lambda| t)^{7/2}} d\tau\\
     \leq &\frac{1}{4}M_{\lambda, h}^2e^{\gamma_0 T}+Ce^{\gamma_0T}\frac{|\lambda|}{d_{I,0}^{5/2}}.
\end{align*}
Choosing $T\leq \frac{1}{\gamma_0}$ and $|\lambda|^2\approx |d_{I,0}|^{3/2}\gg 1$. Then $Ce^{\gamma_0T}\frac{|\lambda|}{d_{I,0}^{5/2}}\leq \frac{C}{d_{I,0}}$. So we have 
\begin{equation}
    \sup_{t\in [0,T]}(\|(U^{(n+1)}, \partial_{\alpha}W^{n+1})\|_{\dot{Y}_{\phi(t)}\times X_{\phi(t)}}^2\leq m_0^2+C\frac{1}{d_{I,0}}.
\end{equation}
Similarly, we obtain
$$\sup_{0\leq t\leq T}\|U^{n+1}(\cdot,t)\|_{L^2}^2\leq 1, \quad \quad \sup_{0\leq t\leq T}\|U^{n+1}(\cdot,t)\|_{L^{\infty}}\leq M_{\infty}.$$

Therefore, $(W^{n+1}, U^{n+1}, \{z_j^{n+1}\})$ satisfies (AS2) and (AS5).

\vspace*{2ex}

\noindent \textbf{Case 2: $\lambda>0$.} Let's again consider the situation when $|y_0|\gg 1$. So we take $T=\frac{4\pi x(0)(|y_0|-|y_0|^{9/10})}{|\lambda|}$.

Using the similar argument, we have
\begin{equation}
    \sup_{t\in [0,T]}d_{I}(t)\geq \frac{1}{2}d_{I,0}^{9/10}, 
\end{equation}
and 
\begin{equation}
    \sup_{t\in [0,T]}(\|(U^{(n+1)}, \partial_{\alpha}W^{n+1})\|_{Y_{\phi(t)}\times X_{\phi(t)}}^2\leq m_0^2+C\frac{1}{d_{I,0}^{3/4}}.
\end{equation}
So $(W^{n+1}, U^{n+1}, \{z_j^{n+1}\})$ satisfies (AS2) and (AS5).

\vspace*{2ex}
 Let $\Sigma^{n+1}(t)$ be the curve parametrized by $Z^{n+1}(\alpha,t)$, and let $\Omega^{n+1}(t)$ be the region bounded above by $\Sigma^{n+1}(t)$. Define $\mathcal{U}^{n+1}$ by
$$\mathcal{U}^{n+1}(Z,t)=\frac{1}{2\pi i}\int_{-\infty}^{\infty}\frac{Z_{\beta}^{n+1}(\beta,t)}{Z-Z^{n+1}(\beta,t)}F^{n+1}(\beta,t)d\beta.$$
 
\noindent \underline{\textbf{Estimate $|Z^{n+1}(\alpha,t)-Z^{(n+1)}(\beta,t)|:$}}  Since $Z^{n+1}(\alpha,t)=\alpha+(I+\mathbb{H})U^{n+1}$, we have 
\begin{align*}
    \partial_t(Z^{n+1}(\alpha,t)-Z^{n+1}(\beta,t))=&[(\bar{F}^n(\alpha,t)-\bar{F}^n(\beta,t))+(\bar{Q}^n(\alpha,t)-\bar{Q}^n(\beta,t))]\\
    &-(b^n(\alpha,t)Z^{n+1}_{\alpha}(\alpha,t)-b^n(\beta,t)Z^{n+1}_{\beta}(\beta,t)).
\end{align*}
Note that 
\begin{equation}
    \begin{split}
        &\Big|[(\bar{F}^n(\alpha,t)-\bar{F}(\beta,t))+(\bar{Q}^n(\alpha,t)-\bar{Q}(\beta,t))]-(b^n(\alpha,t)Z_{\alpha}(\alpha,t)-b^n(\beta,t)Z_{\beta}(\beta,t))\Big|\\
        \leq & CM_{\lambda, h}|\alpha-\beta|.
    \end{split}
\end{equation}
So we obtain
\begin{equation}
    |Z_0(\alpha)-Z_0(\beta)|-CM_{\lambda, h}|\alpha-\beta|t\leq |Z^{n+1}(\alpha,t)-Z^{n+1}(\beta,t)|\leq |Z_0(\alpha)-Z_0(\beta)|t+CM_{\lambda, h}|\alpha-\beta|.
\end{equation}
So we obtain
\begin{equation}\label{chordardnplusone}
    \frac{1}{2}C_0|\alpha-\beta|\leq |Z^{n+1}(\alpha,t)-Z^{n+1}(\beta,t)|\quad \quad \forall~\alpha,\beta \in \mathbb{R}, \quad t\in [0, T].
\end{equation}
So $(Z^{n+1}, F^{n+1}, \{z_j^{n+1}\})$ satisfies (AS3).

\vspace*{2ex}

By (\ref{xnplusoneone}), (\ref{ynplusoneone}), and (\ref{chordardnplusone}), $(W^{n+1}, U^{n+1}, \{z_j^{n+1}\})$ satisfies (AS4). So $(W^{n+1}, U^{n+1}, \{z_j^{n+1}\})\in S_T$.

\vspace*{2ex}

\noindent \textbf{Error estimates and convergence of the approximate solutions.} We show that $\{(W^k, F^k, \{z_j^k\})\}$ is a Cauchy sequence in some Banach space. Let
\begin{equation}
    V^{k+1}:=-\Re\{[\bar{F}^{k+1},\mathbb{H}](\frac{1}{\partial_{\alpha}Z^k}-1)\}+R^k.
\end{equation}
Denote
\begin{equation}
    \begin{cases}
    \hat{U}=&U^{k+1}-U^k,\\
\hat{G}=&=-(b^k-b^{k-1})\partial_{\alpha}U^k-(A^k-A^{k-1})\Lambda W^k+G^k-G^{k-1},\\
\hat{V}=&-(b^k-b^{k-1})\partial_{\alpha}W^k+V^{k+1}-V^{k},\\
\hat{Z}=&Z^{k+1}-Z^{k},\\
\hat{W}=&W^{k+1}-W^k,\\
\hat{z}_j=& z_j^{k+1}-z_j^{k}.
    \end{cases}
\end{equation}
Then $\hat{U}$ and $\hat{W}$ satisfy
\begin{equation}
    \begin{cases}
   D_t^{(k)}\hat{U}=-A^k\Lambda \hat{W}+\hat{G},\\
   D_t^{(k)}\hat{W}=-\hat{U}+\hat{V},\\
    \hat{U}(\cdot,0)=0,\quad \hat{W}(\cdot,0)=0.
    \end{cases}
\end{equation}
And $\hat{z}_j$ satisfies
\begin{equation}
    \begin{cases}
    \frac{d}{dt}\hat{z}_j=\mathcal{U}_{error}^k+\mathcal{SP}_{error}^k,\\
    \hat{z}_j(0)=0.
    \end{cases}
\end{equation}
Here,
$$\mathcal{U}_{error}^k=\overline{\mathcal{U}^k(z_j^k(t),t)}-\overline{\mathcal{U}^{k-1}(z_j^{k-1}(t),t)},$$
and
$$\mathcal{SP}_{error}^k=\sum_{\substack{1\leq l\leq 2\\ l\neq j}}\frac{\lambda_l i}{2\pi}\Big(\dfrac{1}{\overline{z_l^{k}(t)-z_j^k(t)}}-\dfrac{1}{\overline{z_l^{k-1}(t)-z_j^{k-1}(t)}}).$$
Denote 
\begin{equation}
    \mathfrak{E}^k(t):= \|(\hat{F}, \hat{W})\|_{H^4\times H^4}^2+|\{\hat{z}_j\}|^2.
\end{equation}
It's elementary to check that 
\begin{equation}
    |\hat{z}_j(t)|^2\leq CM_{\lambda, h}^2 t\mathfrak{E}^{k-1}(t).
\end{equation}
Using Lemma \ref{estimate_Riemann_G}, we have 
\begin{equation}
    \|\hat{V}\|_{H^4}+\|\hat{G}\|_{H^{4}}\leq CM_{\lambda, h}\mathfrak{E}^{k-1}.
\end{equation}
Using energy estimates,
we obtain 
\begin{equation}\label{converge2nd}
\|(\hat{F}, \hat{H})\|_{H^4\times H^4}^2\leq CM_{\lambda, h}^2t\mathfrak{E}^{k-1}.
\end{equation}

To estimate $\hat{Z}$, using (\ref{partialtZnplus1}),  we obtain
\begin{equation}
    \|\hat{Z}\|_{H^4}\leq CM_{\lambda, h}^2t\mathfrak{E}^{k-1}.
\end{equation}
So we obtain
\begin{equation}
    \mathfrak{E}^k(t)\leq CM_{\lambda, h}^2t\mathfrak{E}^{k-1}.
\end{equation}
By (\ref{boundMlambdah}) and (\ref{boundsmallm}) and the assumption that $|y_0|$ large, we have
\begin{equation}
    \sup_{0\leq t\leq T}\mathfrak{E}^k(t)\leq c\sup_{0\leq t\leq T}\mathfrak{E}^{k-1}(t),
\end{equation}
for some constant $0<c<1$.
So $(W^n, U^n, \{z_j^n\})$ and therefore $(Z^n-\alpha, F^n, \{z_j^n\}))$ is a Cauchy sequence in $C([0,T]; H^4\times H^4\times \mathbb{C}^2)$. So 
\begin{equation}
    (W^n, U^n, \{z_j^n\})\rightarrow (W, U ,  \{z_j\})\quad 
\end{equation}
in $C([0,T]; H^4\times H^4\times \mathbb{C}^2)$. Since $(\partial_{\alpha}W^n, U^n)$ is bounded in $C([0,T]; X_{\phi(t)}\times Y_{\phi(t)})$, we have $$(W_{\alpha}, U)\in C([0,T]; X_{\phi(t)}\times Y_{\phi(t)}).$$
Also, \begin{equation}
    Q^n\rightarrow -\sum_{j=1}^2\frac{\lambda_j i}{2\pi}\frac{1}{Z(\alpha,t)-z_j(t)} \quad in ~~C([0,T]; Y_{\phi(t)}).
\end{equation}
\begin{equation}
    b^n\rightarrow b=\Re\{[D_tZ,\mathbb{H}](\frac{1}{Z_{\alpha}}-1)\}+2\Re \{Q\}+2\Re\{F\} \quad \quad \text{ in  } C([0,T]; Y_{\phi(t)}),
\end{equation}

\vspace*{2ex}

Let $\Sigma(t)$ be the curve parametrized by $Z(\alpha,t)$ andn $\Omega(t)$ the region bounded above by $\Sigma(t)$.
Then we have 
\begin{equation}
    \mathcal{U}^n(z,t)\rightarrow \frac{1}{2\pi i}\int \frac{Z_{\beta}(\beta,t)}{z-Z(\beta,t)}F(\beta,t)d\beta \quad \text{ uniformly in } \Omega(t).
\end{equation}
So we can verify that 
\begin{equation}
    \dot{z}_j(t)=\overline{\mathcal{U}(z_j(t),t)}+\sum_{\substack{1\leq k\leq 2\\k\neq j}}\frac{\lambda_k i}{2\pi}\dfrac{1}{\overline{z_k(t)-z_j(t)}},
\end{equation}
and $\{z_j(t)\}\in C^2([0,T];\Omega(t))$. So $(W ,U, \{z_j\})$ is the unique solution to (\ref{quasi2}) on $[0,T]$. So we complete the proof of the theorem.
\end{proof}

%\subsection{The proof of the theorem}\label{equivalenttwosystem} To prove Theorem \ref{theorem1}, we need to change of variables back to lagrangian coordinates. Solve 
%\begin{equation}\label{solveforh}
%    \begin{cases}
%    \frac{dh}{dt}=b(h,t),\\
%    h(\alpha,0)=\alpha.
%    \end{cases}
%\end{equation}
%By standard ODE existence and unique theorem, (\ref{solveforh}) admits a unique classical solution on $[0,T]$, and since $b\in C([0,T]; Y_{\phi(t)})$, we have $h\in C([0,T]; \phi(t))$.
%Moreover, there exists a $0<T_0\leq T$, depending on $\|b(t)\|_{H^2}$, such that 
%$$h(\alpha,t)-h(\alpha',t)\geq \frac{1}{2}(\alpha-\alpha'),$$
%for $0\leq t\leq T_0$, $\alpha<\alpha'$. and $h(\alpha,t)-\alpha\in C([0,T_0];Y_{\phi(t)})$. Let $z(\alpha,t)=Z(h(\alpha,t),t)$, $a(\alpha,t)=A(h(\alpha,t),t)h_{\alpha}(\alpha,t)$, $f(\alpha,t)=F(h(\alpha,t),t)$. Note that 
%\begin{equation}
%    \begin{cases}
%    z_{tt}-iaz_{\alpha}=-i,\\
%    \dot{z}_j=\overline{\mathcal{U}(z_j(t),t)}+\sum_{\substack{1\leq k\leq N\\k\neq j}}\frac{\lambda_j i}{2\pi}\dfrac{1}{\overline{z_k(t)-z_j(t)}},\\
%    (I-\mathfrak{H})f=0.
%    \end{cases}
%\end{equation}
%Then the proof of Theorem \ref{theorem1} is concluded.

\section{Applications: sign changing of the Taylor sign coefficient}\label{sectionfail}

In this section, we apply \emph{Theorem} \ref{theorem1} to prove \emph{Theorem} \ref{taylorsignfailtheorem2}.

Given appropriate initial data $(W_0, U_0, \{z_{1,0}, z_{2,0}\})$ and $\lambda>0$, we solve the water waves backward and forward in time. Note that the backward evolution is equivalent to solving the forward in time water waves with the same initial data $(W_0, U_, \{z_{1,0}, \{z_{2,0}\})$ but with $\lambda$ replaced by $-\lambda$. We choose $|\lambda|=O(|y_0|^{3/2})$.
\begin{itemize}
    \item [(1)]  If $\lambda>0$, the point vortices travel toward the free interface. At time $T_0=\frac{4\pi x(0)(|y_0|-|y_0|^{9/10})}{|\lambda|}$, the distance between the point vortices and the free interface is $\approx |y_0|^{9/10}$. This distance is significantly smaller than the initial one, so we can show that $\inf_{\alpha\in \mathbb{R}}A_1(\alpha, T_0)\leq -\eta_1$, for arbitrary large $\eta_1>0$, provided that we choose $|y_0|$ sufficiently large.

    \item [(2)] If $\lambda<0$, the point vortices travel away from the free interface. At time $T_0=O(1)$, the distance between the point vortices and the free interface is $\approx |y_0|^{3/2}$. This distance is significantly larger than the initial one, so we can show that $\inf_{\alpha\in \mathbb{R}}A_1(\alpha, T_0)\geq 1-\eta_0$, for arbitrary small $\eta_0>0$, provided that we choose $|y_0|$ sufficiently large.
\end{itemize}

\subsection{The Taylor sign coefficient}\label{basiccal}
Let $A_1:=A|Z_{\alpha}|^2$. we have 
\begin{proposition}\label{Taylorsign} (Corollary 4.2 in \cite{su2018long})
Let $(Z, F, \{z_j\})$ be a solution to the water waves system such that $(Z_{\alpha}-1, D_tZ)\in C([0,T_0]; H^2\times H^2)$, then we have
\begin{equation}\label{AONE}
\begin{split}
A_1=&1+\frac{1}{2\pi}\int \frac{|D_tZ(\alpha,t)-D_tZ(\beta,t)|^2}{(\alpha-\beta)^2}d\beta-Im\Big\{\sum_{j=1}^N \frac{\lambda_j i}{2\pi} \Big((I-\mathbb{H})\frac{Z_{\alpha}}{(Z(\alpha,t)-z_j(t))^2}\Big)(D_tZ-\dot{z}_j(t)) \Big\}.
\end{split}
\end{equation}
\end{proposition}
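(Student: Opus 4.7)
The plan is to obtain (\ref{AONE}) by combining a pointwise identity from the Euler equation in Riemann variables with the holomorphic/vortex splitting of the conjugate velocity trace and a commutator reformulation of the integral term. Pairing $(D_t^2 - iA\partial_\alpha)Z = -i$ with $\bar{Z}_\alpha$ and taking the imaginary part (using that $A_1 = A|Z_\alpha|^2$ is real-valued) immediately yields
\[
A_1 \;=\; \mathrm{Re}(Z_\alpha) \;-\; \mathrm{Im}(Z_\alpha D_t u),\qquad u := D_t\bar{Z}.
\]
Since $\mathrm{Re}(Z_\alpha) - 1 = W_\alpha$, establishing (\ref{AONE}) reduces to matching $W_\alpha - \mathrm{Im}(Z_\alpha D_t u)$ with $\mathcal{T} - (\text{vortex sum})$, where $\mathcal{T} := \frac{1}{2\pi}\int \frac{|D_tZ(\alpha,t)-D_tZ(\beta,t)|^2}{(\alpha-\beta)^2}\,d\beta$.

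Next I split $u = F + Q$ via Lemma \ref{almost}, with $F$ holomorphic in $\mathbb{P}_-$ (so $\mathbb{H}F = F$) and $Q = -\sum_j \frac{\lambda_j i}{2\pi(Z-z_j)}$ antiholomorphic (each pole $\Phi(z_j(t))$ lies in $\mathbb{P}_-$, so $\mathbb{H}Q = -Q$). Differentiating along the flow and using the ODE for $z_j$ gives $D_tQ = \sum_j \frac{\lambda_j i}{2\pi}\cdot\frac{D_tZ - \dot{z}_j}{(Z - z_j)^2}$, whence
\[
-\mathrm{Im}(Z_\alpha D_t Q) \;=\; -\sum_j \frac{\lambda_j}{2\pi}\,\mathrm{Re}\!\left(\frac{Z_\alpha(D_tZ - \dot{z}_j)}{(Z - z_j)^2}\right).
\]
Because $\frac{Z_\alpha}{(Z-z_j)^2} = -\partial_\alpha \frac{1}{Z-z_j}$ is antiholomorphic, one has $(I-\mathbb{H})\frac{Z_\alpha}{(Z-z_j)^2} = \frac{2Z_\alpha}{(Z-z_j)^2}$; this observation converts the displayed expression into exactly one half of the vortex coefficient appearing in (\ref{AONE}).

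The remaining step is to identify $W_\alpha - \mathrm{Im}(Z_\alpha D_t F)$ with $\mathcal{T}$ minus the other half of the vortex sum. For this I would use the commutator reformulation
\[
\mathcal{T} \;=\; \mathrm{Im}\bigl([D_t\bar{Z},\mathbb{H}]\partial_\alpha D_tZ\bigr),
\]
obtained by integration by parts in $\beta$ against $\partial_\beta(\alpha-\beta)^{-1}$. Substituting $D_t\bar{Z} = F + Q$ and $D_tZ = \bar{F} + \bar{Q}$ and expanding the commutator, the four cross pieces $[F,\mathbb{H}]\bar{F}_\alpha$, $[F,\mathbb{H}]\bar{Q}_\alpha$, $[Q,\mathbb{H}]\bar{F}_\alpha$, $[Q,\mathbb{H}]\bar{Q}_\alpha$ can be simplified using $\mathbb{H}F = F$, $\mathbb{H}\bar{F} = -\bar{F}$, $\mathbb{H}Q = -Q$, $\mathbb{H}\bar{Q} = \bar{Q}$, together with the holomorphicity of $Z_\alpha - 1$; the ``wave--wave'' piece should reproduce $W_\alpha - \mathrm{Im}(Z_\alpha D_tF)$, while the two ``wave--vortex'' cross pieces furnish the missing half of the vortex contribution, thereby assembling the $(I-\mathbb{H})\frac{Z_\alpha}{(Z-z_j)^2}$ coefficient in (\ref{AONE}).

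The main obstacle will be the bookkeeping of these four commutator cross terms: none individually reduces to a trivial expression, and one must exploit that $F\bar{Q}$ (holomorphic $\times$ holomorphic) and $Q\bar{F}$ (antiholomorphic $\times$ antiholomorphic) are (anti)holomorphic to eliminate spurious contributions and isolate precisely the claimed vortex coefficient, rather than, say, a bare $\frac{Z_\alpha}{(Z-z_j)^2}$ factor. Since (\ref{AONE}) is itself Corollary~4.2 of \cite{su2018long}, one may alternatively invoke that reference directly; the outline above simply reconstructs the derivation in Riemann-mapping variables.
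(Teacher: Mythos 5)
The paper does not prove this proposition; it states the identity as Corollary 4.2 of \cite{su2018long} and moves on, as you yourself note at the end. So the comparison has to be against your reconstruction, which contains a concrete error that would derail it.

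Your first step is fine: pairing $(D_t^2-iA\partial_\alpha)Z = -i$ with $\bar{Z}_\alpha$ and taking real parts (equivalently, imaginary parts after multiplying by $i$) gives $A_1 = \Re(Z_\alpha) - \Im(Z_\alpha D_t u)$ with $u = D_t\bar{Z}$, and the formula $D_tQ = \sum_j \tfrac{\lambda_j i}{2\pi}\cdot\tfrac{D_tZ-\dot{z}_j}{(Z-z_j)^2}$ is correct. The error is the assertion that $\frac{Z_\alpha}{(Z-z_j)^2}$ is ``antiholomorphic,'' so that $\mathbb{H}Q = -Q$ and $(I-\mathbb{H})\tfrac{Z_\alpha}{(Z-z_j)^2} = \tfrac{2Z_\alpha}{(Z-z_j)^2}$. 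Neither holds. The fact that $1/(z-z_j)$ is holomorphic on the \emph{physical} exterior $\Omega(t)^c$ is the content of Lemma \ref{almost}, stated for the \emph{curve} Hilbert transform $\mathfrak{H}$. Under the Riemann map, $\Phi$ sends $\Omega(t)$ to $\mathbb{P}_-$ but does \emph{not} send $\Omega(t)^c$ to $\mathbb{P}_+$, and $Z = \Phi^{-1}$ does not extend holomorphically to $\mathbb{P}_+$. What is true is that $w\mapsto 1/(\Phi^{-1}(w,t)-z_j(t))$ is meromorphic on $\mathbb{P}_-$ with a single simple pole at $w_0^j = \Phi(z_j)\in\mathbb{P}_-$, and the Appendix (Corollary \ref{corollaryA2}) computes
\[
(I-\mathbb{H})\frac{Z_{\alpha}}{(Z(\alpha,t)-z_j(t))^2}=\frac{2}{(\Phi^{-1})_z(\Phi(z_j(t)))\,(\alpha-\Phi(z_j(t)))^2},
\]
which differs from $\frac{2Z_\alpha}{(Z-z_j)^2}$ unless $Z(\alpha,t)\equiv\alpha$. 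Consequently the $D_tQ$ piece does \emph{not} produce ``exactly one half'' of the vortex coefficient in (\ref{AONE}); the discrepancy between the naive Cauchy kernel $\tfrac{Z_\alpha}{(Z-z_j)^2}$ and its $\mathbb{P}_-$-pole part $(I-\mathbb{H})\tfrac{Z_\alpha}{(Z-z_j)^2}$ would have to be supplied by the wave--vortex cross commutators, and that deferred bookkeeping is exactly where the proof would have to close. You seem half-aware of this tension in your final paragraph (needing to ``isolate precisely the claimed vortex coefficient, rather than\ldots a bare $\tfrac{Z_\alpha}{(Z-z_j)^2}$''), but that contradicts the identity asserted two sentences earlier. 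A minor further point: the commutator reformulation should read $\mathcal{T} = -\Im\bigl([D_t\bar{Z},\mathbb{H}]\partial_\alpha D_tZ\bigr)$, with a minus sign.
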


\begin{corollary}\label{goodformula}
Let $(Z, F, \{z_j\})$ be a solution to the water waves system such that $(Z_{\alpha}-1, D_tZ)\in C([0,T_0]; H^2\times H^2)$. Fix a time $t\in [0, T_0]$. Assume that $Z(\alpha,t)=\alpha$, $D_tZ=\sum_{j=1}^2\dfrac{\lambda_j i}{2\pi}\dfrac{1}{\overline{\alpha-z_j(t)}}$. Assume $z_1(t)=-x(t)+iy(t)$, $z_2(t)=x(t)+iy(t)$, with $x(t)>0$, $y(t)<0$, and $\lambda_1=-\lambda_2:=\lambda$. We simply write $x(t)$, $y(t)$ as $x$, $y$.  Then 
\begin{equation}
\begin{split}\label{complicated}
A_1(\alpha,t)=&1+\frac{\lambda^2}{\pi^2}\frac{3y\alpha^4+(x^2+y^2)y(3x^2-y^2+2\alpha^2)}{(\alpha^4+(x^2+y^2)^2+2\alpha^2(y^2-x^2))^2}\\
    & + \frac{\lambda^2}{4\pi^2}\frac{\alpha^2x^2+x^4+5x^2y^2}{((\alpha+x)^2+y^2)((\alpha-x)^2+y^2)(x^2+y^2)|y|}.\end{split}
\end{equation}
\end{corollary}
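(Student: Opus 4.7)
\textbf{Proof plan for Corollary \ref{goodformula}.}
The plan is to substitute the hypothesis $Z(\alpha,t)=\alpha$ directly into formula (\ref{AONE}) of Proposition \ref{Taylorsign} and evaluate each of the three contributions in closed form. Since $Z_\alpha\equiv 1$ and $\Omega(t)=\mathbb{P}_-$ contains $z_1,z_2$, the function $(z-z_j)^{-2}$ extends holomorphically to $\mathbb{P}_+=\Omega(t)^c$, so Lemma \ref{holomorphic}(a) (used exactly as in the proof of Lemma \ref{almost}) gives
\begin{equation*}
(I-\mathbb{H})\frac{1}{(\alpha-z_j)^2}=\frac{2}{(\alpha-z_j)^2}.
\end{equation*}
This turns the last term of (\ref{AONE}) into the purely pointwise rational expression $-\mathrm{Im}\bigl\{\sum_{j}\tfrac{\lambda_j i}{\pi}(\alpha-z_j)^{-2}(D_tZ(\alpha)-\dot{z}_j(t))\bigr\}$.

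Next I would pin down the vortex velocities. The ansatz $D_tZ=\sum_j \frac{\lambda_j i}{2\pi}\frac{1}{\overline{\alpha-z_j}}$ corresponds to the wave-free regime $F\equiv 0$, so (\ref{pointvorticesmotion}) collapses to the contribution of the partner vortex alone; using $\lambda_1=-\lambda_2=\lambda$ and $z_1-z_2=-2x$ one gets $\dot{z}_1=\dot{z}_2=\tfrac{\lambda i}{4\pi x}$. Note that $\dot{z}_j$ is purely imaginary, a fact that simplifies several factors when the outer $\mathrm{Im}\{\cdot\}$ in (\ref{AONE}) is extracted.

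For the nonlocal integral I would use the telescoping identity
\begin{equation*}
\frac{D_tZ(\alpha)-D_tZ(\beta)}{\alpha-\beta}=-\sum_{j=1}^2 \frac{\lambda_j i/(2\pi)}{(\alpha-\bar{z}_j)(\beta-\bar{z}_j)},
\end{equation*}
expand the squared modulus and evaluate each $\beta$-integral by residues. The diagonal pieces reduce to $\int |\beta-c|^{-2}\,d\beta=\pi/|\mathrm{Im}\,c|$; the cross pieces $\int(\beta-\bar{z}_j)^{-1}(\beta-z_k)^{-1}d\beta$, with $\bar{z}_j\in\mathbb{P}_+$ and $z_k\in\mathbb{P}_-$, contribute a single residue $\pm 2\pi i(\bar{z}_j-z_k)^{-1}$ after closing the contour in the appropriate half plane. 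Consequently both the integral term and the Hilbert term become rational functions of $\alpha$ whose denominators are built from the factors $(\alpha\pm x)^2+y^2$ and from $(\alpha-z_j)$, $(\alpha-\bar{z}_j)$.

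The main obstacle is the final algebraic consolidation into the two-fraction form displayed in (\ref{complicated}). Here one must exploit the reflection symmetry $z_2=-\bar{z}_1$ (which gives, for instance, $\bar{z}_j-z_j=-2iy$, $\bar{z}_2-z_1=2iy$, and $\bar{z}_1-z_2=-2(x+iy)$) together with the fact that $\dot{z}_j$ is purely imaginary; these force many cross-contributions in the Hilbert-transform term to cancel against pieces of the integral term and to recombine into the compact expression in the statement. The verification of this collapse is routine partial-fraction bookkeeping, but it is the most error-prone step and is the only place where the specific values of $\dot{z}_j$ and the $\bar{z}_j$--$z_k$ separations genuinely enter.
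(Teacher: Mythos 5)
Your plan is correct and follows the paper's proof in Appendix~\ref{appendixA} essentially step for step: the Hilbert-transform identity $(I-\mathbb{H})(\alpha-z_j)^{-2}=2(\alpha-z_j)^{-2}$ (the $\Phi=\mathrm{id}$ case of Corollary~\ref{corollaryA2}), the velocities $\dot z_1=\dot z_2=\lambda i/(4\pi x)$, and the telescoping-plus-residues evaluation of the nonlocal integral (the paper's Lemma~\ref{rational} and Corollary~\ref{residuetheorem}). Two small inaccuracies in your sketch, neither of which derails the route: you write $\bar z_2 - z_1 = 2iy$ whereas in fact $\bar z_2 - z_1 = 2(x-iy)$, and in the paper's final consolidation the Hilbert-transform term and the nonlocal-integral term each simplify \emph{independently} to one of the two fractions in~\eqref{complicated}, so no cross-cancellation between them is actually required.
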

The detail calculation for (\ref{complicated}) is in \S \ref{appendixgoodformula}.

 To simplify (\ref{complicated}), we take $x\approx 1$ and $|y|\gg 1$. We will take $|\lambda|=O(|y|^{3/2})$. Direct calculation yields the following lemma.
\begin{lemma}
Let $x\leq 1$  and $|y|\gg 1$ and $\lambda^2\leq C|y|^3$, where $C>0$ is a given constant. Then 
\begin{equation}
    \sup_{\alpha\in \mathbb{R}}\frac{\lambda^2}{4\pi^2}\frac{\alpha^2x^2+x^4+5x^2y^2}{((\alpha+x)^2+y^2)((\alpha-x)^2+y^2)(x^2+y^2)|y|}\leq \frac{C'}{y^2},
\end{equation}
\begin{equation}\label{refineG1}
    \sup_{\alpha\in \mathbb{R}}\frac{\lambda^2}{\pi^2}\Big|\frac{3y\alpha^4+(x^2+y^2)y(3x^2-y^2+2\alpha^2)}{(\alpha^4+(x^2+y^2)^2+2\alpha^2(y^2-x^2))^2}-\frac{3y\alpha^4+y^3(-y^2+2\alpha^2)}{(\alpha^4+y^4+2\alpha^2y^2)^2}\Big|\leq \frac{C'}{y^2},
\end{equation}
for some constant $C'$ depending on $C$ only. In particular, we have 
\begin{equation}
    |A_1(\alpha,0)-G(\alpha; y,\lambda)|\leq \frac{C'}{y^2}, 
\end{equation}
where 
\begin{equation}
    G(\alpha; y,\lambda):=1+\frac{\lambda^2}{\pi^2}\frac{3y\alpha^4+y^3(-y^2+2\alpha^2)}{(\alpha^4+y^4+2\alpha^2y^2)^2}.
\end{equation}
\end{lemma}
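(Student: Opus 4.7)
The plan is to establish each of the three displayed inequalities by elementary but careful algebraic estimates on rational functions, with (3) being an immediate consequence of (1), (2), and Corollary \ref{goodformula}. Throughout, I would systematically exploit the hypothesis $\lambda^{2} \leq C |y|^{3}$ together with $x \leq 1$, so that any quantity of size $O(|y|^{-5})$ in $\alpha$ becomes $O(|y|^{-2})$ after multiplication by $\lambda^{2}$.

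For (1), I would factor $x^{2}$ out of the numerator and bound it by $x^{2}(\alpha^{2} + 1 + 5y^{2})$. The denominator is bounded below by splitting into the two regimes $|\alpha| \leq 2|y|$ and $|\alpha| \geq 2|y|$. In the first regime, each factor $(\alpha \pm x)^{2} + y^{2}$ is at least $y^{2}$ and $x^{2} + y^{2} \geq y^{2}$, so the denominator is at least $|y|^{7}$ while the numerator is $O(y^{2})$, yielding a ratio of order $|y|^{-5}$. In the second regime, $(\alpha \pm x)^{2} + y^{2} \geq \alpha^{2}/4$ and the numerator is $O(\alpha^{2})$, so the ratio is $O(\alpha^{-2} |y|^{-3}) = O(|y|^{-5})$. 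Multiplying by $\lambda^{2} \leq C|y|^{3}$ gives the claim.

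The main obstacle is (2), where one must show that replacing $x^{2} + y^{2}$ by $y^{2}$ and $y^{2} - x^{2}$ by $y^{2}$ in a rational function with high-degree numerator and denominator only costs a factor of $x^{2}/y^{2}$ in the sup norm. I would use the algebraic identity
\begin{equation*}
\frac{N_{1}}{D_{1}} - \frac{N_{2}}{D_{2}} = \frac{(N_{1} - N_{2}) D_{2} + N_{2}(D_{2} - D_{1})}{D_{1} D_{2}}
\end{equation*}
and compute by direct expansion that
\begin{equation*}
N_{1} - N_{2} = y x^{2}(3x^{2} + 2y^{2} + 2\alpha^{2}), \qquad \sqrt{D_{1}} - \sqrt{D_{2}} = x^{2}(x^{2} + 2y^{2} - 2\alpha^{2}).
\end{equation*}
Both error terms are thus divisible by $x^{2}$, providing the expected gain. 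Splitting again into $|\alpha| \leq 2|y|$ and $|\alpha| \geq 2|y|$, in the first regime $D_{1}, D_{2} \gtrsim |y|^{8}$ and a direct polynomial count shows the numerator $(N_{1} - N_{2}) D_{2} + N_{2}(D_{2} - D_{1})$ is $O(x^{2} |y|^{11})$, yielding a pointwise bound of order $|y|^{-5}$; in the second regime, the denominators grow like $\alpha^{8}$ while the numerator grows at most like $\alpha^{6} |y|$, giving an even better decay. Combining with $\lambda^{2} \leq C |y|^{3}$ yields the $C'/y^{2}$ bound.

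Statement (3) then follows by writing $A_{1}(\alpha,0) - G(\alpha; y, \lambda)$ as the sum of the second term in (\ref{complicated}) and the $\lambda^{2}/\pi^{2}$-weighted difference appearing inside the absolute value of (2); the bounds from (1) and (2) apply term by term. The technical heart of the argument is the polynomial bookkeeping in (2): the two regions $|\alpha| \lesssim |y|$ and $|\alpha| \gtrsim |y|$ must both be treated, and a convenient device is the rescaling $\alpha = |y| s$, which reduces the claim to a uniform bound in $s$ on a parameter family of rational functions whose dependence on $x/|y|$ is explicit.
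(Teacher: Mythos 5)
Your proposal carries out the "direct calculation" the paper invokes without exhibiting, and the strategy is essentially the only sensible one: factor $x^2$ out of all error terms, split into $|\alpha|\lesssim|y|$ and $|\alpha|\gtrsim|y|$, and count degrees. The key algebraic identities $N_1-N_2=yx^2(3x^2+2y^2+2\alpha^2)$ and $\sqrt{D_1}-\sqrt{D_2}=x^2(x^2+2y^2-2\alpha^2)$ are correct, and so is the observation that $\sqrt{D_1}=((\alpha+x)^2+y^2)((\alpha-x)^2+y^2)$ (used for the lower bound on $D_1$), which makes the counting in the first regime clean: numerator $O(x^2|y|^{11})$ over $D_1D_2\gtrsim|y|^{16}$ gives $O(|y|^{-5})$, and multiplying by $\lambda^2\le C|y|^3$ yields the claimed $C'/y^2$ in both (1) and (2). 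Statement (3) is then an application of the triangle inequality to (\ref{complicated}).

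One quantitative slip: in the regime $|\alpha|\geq 2|y|$ for (2), you assert the numerator of the combined fraction grows "at most like $\alpha^6|y|$", but in fact both $(N_1-N_2)D_2$ and $N_2(D_2-D_1)$ are of size $O(x^2|y|\alpha^{10})$, not $O(x^2|y|\alpha^{6})$. This does not affect the conclusion — dividing by $D_1D_2\gtrsim\alpha^{16}$ and using $|\alpha|\geq 2|y|$ still gives a bound $O(x^2|y|\,\alpha^{-6})\lesssim|y|^{-5}$ — but the degree count as written is inaccurate and should be corrected.
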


\vspace*{2ex}

\noindent Denote $\gamma:=\frac{\lambda^2}{\pi^2|y|^3}$. Assume $\alpha=k|y|$, we have \footnote{Recall that $y<0$.}
\begin{equation}
    f(\gamma,k):=G(\alpha;y,\lambda)=1-\gamma g(k),
\end{equation}
where 
$$g(k)=\frac{3k^4+2k^2-1}{(k^2+1)^4}.$$
By routine calculus, we have
\begin{itemize}
\item [1.]  $g(k)$ obtains its absolute maximum $\frac{1}{4}$ at $k=\pm 1$.
\item [2.] $g(k)$ obtains its absolute minimum $-1$ at $k=0$.
\end{itemize}
So we conclude that
\begin{itemize}
    \item [i.] For $\gamma<4$, we have $f(\gamma,k)>0$ for all $k\in \mathbb{R}$.
    
    \item [ii.] For $\gamma>4$, we have $f(\gamma,\pm 1)<0$. 
    
    \item [iii.] $f(4,\pm 1)=0$, which is equivalent to $G(\pm |y|; -|y|, 2\pi|y|^{3/2})=0$.
    
\end{itemize}

\subsection{The case $\lambda_1=-\lambda_2>0$ (travel upward)}\label{caseupward}

\subsubsection{Initial data}\label{initialsection7}

Fix $|y_0|\gg 1$, $x_0=1$, and $\gamma>0$ (independent of $|y_0|$). Assume that $\lambda>0$ and let $\lambda=\gamma^{1/2}\pi |y_0|^{3/2}$. 
Let 
\begin{equation}
    z_{1}(0)=-x_0+i(y_0-1), \quad \quad z_{2}(0)=x_0+i(y_0-1).
\end{equation}
Choosing the initial $W_0$ and $U_0$ such that 
\begin{equation}
    \|(\partial_{\alpha}W_0, U_0)\|_{X_{10}\times Y_{10}}+\|W_0\|_{L^2}\leq \frac{1}{|y_0|}.
\end{equation}
We assume that $U_0$ and $W_0$ are odd functions. Denote $Q(\alpha,t)=\frac{\lambda}{2\pi i}\frac{1}{Z(\alpha,t)-z_1(t)}-\frac{\lambda}{2\pi i}\frac{1}{Z(\alpha,t)-z_2(t)}$.  We use $L_0=10$ and $\delta_0=1000$. In this case, $d_{I,0}>|y_0|$.

\subsubsection{Some basic estimates}
By Theorem \ref{theorem1}, there exists $T_0>0$ such that  (\ref{vortex_model_Riemann}) admits  a unique solution $(W, U, \{z_{1}(t), z_2(t)\})$ on $[0,T_0]$ such that
\begin{itemize}
    \item [(1)] $T_0=\frac{|y_0|-|y_0|^{9/10}}{\frac{|\lambda|}{4\pi x_0}}$. 
    
    \item [(2)] Define $Z=\alpha+(I+\mathbb{H})W$ and $F=(I+\mathbb{H})U$.  For all $t\in [0,T_0]$, 
    \begin{equation}\label{wavesmall}
        \|(Z_{\alpha}(\cdot,t)-1, F(\cdot,t))\|_{C([0,T_0];H^4\times \dot{H}^4)}\leq \frac{C}{|y_0|^{3/8}}, \quad \quad \|F\|_{C([0,T_0];L^2)}\leq 1.
    \end{equation}
    Here, $\dot{H}^4$ represents the homogeneous Sobolev space $\dot{H}^4$.
    
    \item [(3)] For each fixed $t\in [0, T_0]$, $W(\cdot, t)$ and $U(\cdot, t)$ are odd functions, and $\Re\{z_1(t)\}=-\Re\{z_2(t)\}$, $\Im\{z_1(t)\}=\Im\{z_2(t)\}$.
    
    \item [(4)] $\sup_{0\leq t\leq T_0}d_I(t)\geq \frac{1}{2}|y_0|^{9/10}$.
    
    \item [(5)] For $0\leq t\leq T_0$,
\begin{align*}
    \|Q\|_{L^{\infty}}=&\norm{\frac{\lambda}{2\pi}\Big(\frac{1}{Z(\alpha,t)-z_1(t)}-\frac{1}{Z(\alpha,t)-z_1(t)}\Big)}_{L^{\infty}}\leq C|\lambda| d_I(t)^{-2}\leq C|y_0|^{-3/10}.
\end{align*}
\end{itemize}
We simply bound $\|D_tZ(\cdot, t)\|_{L^{\infty}}$ by
\begin{equation}
   \sup_{t\in [0,T_0]} \|D_tZ(\cdot, t)\|_{L^{\infty}}\leq \sup_{0\leq t\leq T_0}(\|F(\cdot, t)\|_{L^{\infty}}+\|Q(\cdot,t)\|_{L^{\infty}})\leq 1,
\end{equation}
and simply bound $b$ by
\begin{equation}
    \sup_{0\leq t\leq T_0}\|b(\cdot,t)\|_{L^{\infty}}\leq 1.
\end{equation}
Using
\begin{align*}
    Z(\alpha,t)-\alpha=& \int_0^t \partial_{\tau}(Z(\alpha,\tau)-\alpha)d\tau\\
    =& \int_0^t D_{\tau}Z(\alpha,\tau)-b(\alpha,\tau)Z_{\alpha}(\alpha,\tau)d\tau,
    \end{align*}
we obtain
\begin{equation}\label{controlelevation}
    \|Z(\alpha,t)-\alpha\|_{H^4}\leq C\sup_{t\in [0, T_0]}(\|D_tZ(\cdot,t)\|_{L^{\infty}}+\|b(\cdot,t)\|_{L^{\infty}})t\leq C|y_0|^{-1/2}\leq C|y_0|^{-1/2}.
\end{equation}

\subsubsection{The velocity of the point vortices}
We have 
\begin{equation}\label{z1tz2t}
    \begin{split}
        &\frac{d}{dt}z_1(t)=-\frac{\lambda i}{2\pi}\dfrac{1}{\overline{z_1(t)-z_2(t)}}+\mathcal{U}(z_1(t),t)=\frac{\lambda i}{4\pi x(t)}+\mathcal{U}(z_1(t),t),\\
        &\frac{d}{dt}z_2(t)=\frac{\lambda i}{2\pi}\dfrac{1}{\overline{z_2(t)-z_1(t)}}+\mathcal{U}(z_2(t),t)=\frac{\lambda i}{4\pi x(t)}+\mathcal{U}(z_2(t),t).
    \end{split}
\end{equation}
 (\ref{holodecay1}) implies
 \begin{equation}\label{mathcalUupperbound}
     |\mathcal{U}(z_j(t),t)|\leq C d_I(t)^{-1/2}\leq C|y_0|^{-9/20}.
 \end{equation}
An application of  (\ref{realvelocity}) yields
\begin{equation}
    |\frac{d}{dt}x(t)|\leq C|d_I(t)|^{-3/2}\leq C|y_0|^{-27/20},  \quad\quad \text{for}\quad  0\leq t\leq T_0.
\end{equation}
So 
\begin{equation}\label{goodrealcontrol}
    1-C|y_0|^{-27/20}t\leq x(t)\leq 1+C|y_0|^{-27/20}t, \quad \quad 0\leq t\leq T_0.
\end{equation}
Integrating (\ref{z1tz2t}) in time, using (\ref{mathcalUupperbound}) and (\ref{goodrealcontrol}), we obtain
\begin{equation}\label{goodvortexcontrol}
\begin{cases}
    z_{1}(t)=-x_0+i(y_0+\frac{\lambda}{4\pi x_0}t)+O(t|y_0|^{-9/20}), \\ z_{2}(t)=x_0+i(y_0+\frac{\lambda}{4\pi x_0}t)+O(t|y_0|^{-9/20})
    \end{cases}
\end{equation}
In particular, at $T_0=\frac{4\pi x_0(|y_0|-|y_0|^{9/10})}{|y_0|^{3/2}}$, 
\begin{equation}
    z_1(T_0)=-|x_0|-i|y_0|^{9/10}+O(\frac{1}{|y_0|^{19/20}}), \quad \quad z_2(t_0)=|x_0|-i|y_0|^{9/10}+O(\frac{1}{|y_0|^{19/20}})
\end{equation}

\subsubsection{The Taylor sign at $T_0$}
By (\ref{AONE}), at $T_0$, 
\begin{align*}
A_1(\alpha,T_0)=&1+\frac{1}{2\pi}\int \frac{|D_tZ(\alpha,T_0)-D_tZ(\beta,T_0)|^2}{(\alpha-\beta)^2}d\beta\\
&-\Im\Big\{\sum_{j=1}^2 \frac{\lambda_j i}{2\pi} \Big((I-\mathbb{H})\frac{Z_{\alpha}}{(Z(\alpha,T_0)-z_j(t_0))^2}\Big)(D_tZ-\dot{z}_j(T_0)) \Big\}.\end{align*}
Denote
\begin{equation}
    G_1(\alpha; x, y, \lambda):=\frac{\lambda^2}{\pi^2}\frac{3y\alpha^4+(x^2+y^2)y(3x^2-y^2+2\alpha^2)}{(\alpha^4+(x^2+y^2)^2+2\alpha^2(y^2-x^2))^2}.
\end{equation}
\begin{equation}\label{formulaforG2}
    G_2(\alpha; x, y, \lambda):=\frac{\lambda^2}{4\pi^2}\frac{\alpha^2x^2+x^4+5x^2y^2}{((\alpha+x)^2+y^2)((\alpha-x)^2+y^2)(x^2+y^2)|y|}.
\end{equation}

\begin{proposition}\label{propositiontaylorsign}
Choosing the initial data as in \S \ref{initialsection7}, then
\begin{equation}
    A_1(\alpha, T_0)=1+G_1(\alpha;0, -|y_0|^{9/10},  \lambda)+O(\frac{1}{|y_0|^{1/2}}).
\end{equation}
\end{proposition}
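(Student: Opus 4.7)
The plan is to compare the actual Taylor-sign coefficient $A_1(\alpha,T_0)$ given by the general formula \eqref{AONE} against the closed-form expression \eqref{complicated} of Corollary \ref{goodformula}, which is valid in the \emph{idealized} configuration where $Z(\alpha,t)\equiv\alpha$ and $D_tZ$ equals the pure point-vortex trace. If we denote the idealized coefficient corresponding to the instantaneous vortex locations $z_1(T_0),z_2(T_0)$ by
\[
A_1^{\mathrm{id}}(\alpha) \;=\; 1 \;+\; G_1\bigl(\alpha;x(T_0),y(T_0),\lambda\bigr) \;+\; G_2\bigl(\alpha;x(T_0),y(T_0),\lambda\bigr),
\]
then the proposition reduces to the two claims
\[
(\mathrm{I})\quad \|A_1(\cdot,T_0)-A_1^{\mathrm{id}}\|_{L^\infty}\lesssim |y_0|^{-1/2},\qquad (\mathrm{II})\quad \bigl\|A_1^{\mathrm{id}}-1-G_1(\cdot\,;0,-|y_0|^{9/10},\lambda)\bigr\|_{L^\infty}\lesssim |y_0|^{-1/2}.
\]

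For claim (II), we use the trajectory control \eqref{goodvortexcontrol}, which at $T_0$ gives $x(T_0)=x_0+O(|y_0|^{-19/20})$ and $y(T_0)=-|y_0|^{9/10}+O(|y_0|^{-19/20})$. The $G_2$ piece is estimated directly from \eqref{formulaforG2}: with $\lambda^2\sim|y_0|^3$ and $|y(T_0)|\sim|y_0|^{9/10}$ one finds $\|G_2\|_{L^\infty}\lesssim \lambda^2/|y(T_0)|^5\lesssim|y_0|^{-3/2}$. The remaining difference $G_1(\alpha;x(T_0),y(T_0),\lambda)-G_1(\alpha;0,-|y_0|^{9/10},\lambda)$ is controlled by a direct Taylor expansion in the $x$ and $y$ parameters of the explicit rational function $G_1$, where the dominant contribution scales like $\lambda^2 x(T_0)^2/|y(T_0)|^5\lesssim |y_0|^{-3/2}$; the $y$-perturbation yields a comparable bound.

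For claim (I), the idea is to substitute, in the three terms of \eqref{AONE}, the decompositions $Z=\alpha+(Z-\alpha)$, $Z_\alpha=1+(Z_\alpha-1)$, $D_t Z=\bar Q+\bar F$, and $\dot z_j=\frac{\lambda_j i}{4\pi x(T_0)}+\bar{\mathcal{U}}(z_j(T_0),T_0)$, and then show that every term in which at least one ``non-ideal'' factor appears is $O(|y_0|^{-1/2})$. The required smallness comes from \eqref{wavesmall} and \eqref{controlelevation}, which give $\|Z(\cdot,T_0)-\alpha\|_{H^4}+\|F(\cdot,T_0)\|_{\dot H^4}\lesssim |y_0|^{-3/8}$, together with $\|F\|_{L^2}\lesssim 1$ and \eqref{mathcalUupperbound} which yields $|\bar{\mathcal{U}}(z_j(T_0),T_0)|\lesssim|y_0|^{-9/20}$. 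The $\dot H^{1/2}$-type integral is expanded as $\int\frac{|\bar Q(\alpha)-\bar Q(\beta)|^2}{(\alpha-\beta)^2}d\beta$ (which Corollary \ref{goodformula} computes in closed form) plus cross and $F$-only pieces, all of the form $\lesssim\|F\|_{\dot H^{1/2}}\|Q\|_{\dot H^{1/2}}+\|F\|_{\dot H^{1/2}}^2$; with $\|Q\|_{\dot H^{1/2}}\lesssim|\lambda|/|y_0|^{9/10\cdot 2}$ and $\|F\|_{\dot H^{1/2}}$ interpolated between $\|F\|_{L^2}$ and $\|F_\alpha\|_{L^2}$, these error contributions are $O(|y_0|^{-1/2})$. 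The Hilbert-transform term in \eqref{AONE} is handled by writing $(I-\mathbb{H})\frac{Z_\alpha}{(Z-z_j)^2}=(I-\mathbb{H})\frac{1}{(\alpha-z_j(T_0))^2}+(\text{remainder})$ and applying the Sobolev bounds above to each remainder, combined with the velocity expansion of $\dot z_j$.

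The main obstacle, and the place where the proof is most delicate, is this term-by-term error bookkeeping in step (I): the bounds have to be uniform in $\alpha$ and must beat the $|y_0|^{-1/2}$ threshold despite the fact that $\lambda^2$ itself is as large as $|y_0|^{3}$. Concretely, what has to be shown is that in every error term one pairs a ``small'' factor (of size $|y_0|^{-3/8}$ or $|y_0|^{-9/20}$) with a factor controlled by $\lambda$ or $1/d_I(T_0)$ carrying at most a mild $|y_0|$-power growth, so that the algebraic cancellation $|\lambda|/|y(T_0)|^{k}$ for suitable $k$ overcomes the size of $\lambda$. Once this bookkeeping is carried out (using the Sobolev embeddings of Gevrey spaces from Lemma \ref{sobolev}, the chord--arc condition of Theorem \ref{theorem1}(b), and the holomorphic decay Lemma \ref{holomorphic_decay}), the proposition follows.
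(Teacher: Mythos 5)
Your split into claims (I) and (II) is the right skeleton, and (II) is handled essentially as in the paper (compare Lemma \ref{lemmapartone} and the final step of Lemma \ref{lemmaparttwo} via (\ref{refineG1})). The genuine gap is in claim (I), in the treatment of the Hilbert-transform term.

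You propose to write $(I-\mathbb{H})\frac{Z_\alpha}{(Z-z_j)^2}=(I-\mathbb{H})\frac{1}{(\alpha-z_j(T_0))^2}+(\text{remainder})$ and then ``apply the Sobolev bounds above to each remainder.'' This does not close. That term is multiplied by $\lambda_j\dot z_j\sim\lambda^2\sim|y_0|^3$, and the remainder $h:=\frac{Z_\alpha}{(Z-z_j)^2}-\frac{1}{(\alpha-z_j)^2}$ only satisfies bounds such as
\[
\|h\|_{L^\infty}\lesssim \frac{\|Z_\alpha-1\|_{L^\infty}}{d_I(T_0)^2}+\frac{\|Z-\alpha\|_{L^\infty}}{d_I(T_0)^3}\lesssim\frac{|y_0|^{-1/2}}{|y_0|^{9/5}}\sim|y_0|^{-23/10},
\]
and its $H^1$ norm is no better. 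Applying $(I-\mathbb{H})$ and then multiplying by $\lambda^2$ produces an error of size $|y_0|^{3-23/10}=|y_0|^{7/10}$, which grows with $|y_0|$. The point is that the Hilbert projection annihilates the large holomorphic part of $h$, and this cancellation is invisible to naive Sobolev estimates on $h$.

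What the paper uses, and what you are missing, is the exact Riemann-mapping identity from Corollary \ref{corollaryA2},
\[
(I-\mathbb{H})\frac{Z_\alpha}{(Z-z_j)^2}=\frac{2}{c_0^j(\alpha-\omega_0^j)^2},\qquad \omega_0^j=\Phi(z_j),\quad c_0^j=(\Phi^{-1})_z(\omega_0^j),
\]
together with the estimate (\ref{riemannalmost}), namely $|\omega_0^j-z_j|\lesssim d_I^{-1/2}\|Z-\alpha\|_{L^2}\lesssim|y_0|^{-19/20}$ and $|c_0^j-1|\lesssim|y_0|^{-1}$. The extra factor $d_I^{-1/2}\sim|y_0|^{-9/20}$, compared to the naive $\|Z-\alpha\|_{L^\infty}\lesssim|y_0|^{-1/2}$, comes from the decay of the harmonic extension $\Phi^{-1}(z)-z$ deep inside the fluid where $z_j$ sits. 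With this, $\lambda^2\cdot d_I^{-3}|\omega_0^j-z_j|\sim|y_0|^{3-27/10-19/20}=|y_0|^{-13/20}=O(|y_0|^{-1/2})$, and the argument closes. You need to invoke this identity and the off-boundary decay; pointwise bounds on $h$ on the boundary alone cannot beat the $\lambda^2$ blow-up.

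A smaller issue: the quantity $\int\frac{|D_tZ(\alpha)-D_tZ(\beta)|^2}{(\alpha-\beta)^2}\,d\beta$ in (\ref{AONE}) is a pointwise-in-$\alpha$ expression, not $\|D_tZ\|_{\dot H^{1/2}}^2$, so interpolating the norms of $F$ and $Q$ does not directly give the required $L^\infty_\alpha$ bound. You need pointwise estimates on $Q$, $Q_\alpha$, $F$, $F_\alpha$ and a near/far splitting of the $\beta$-integral, as in Lemma \ref{lemmapartone}.
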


Decomposing the integral term in (\ref{AONE}) as follows:
\begin{align*}
    \frac{1}{2\pi}\int \frac{|D_tZ(\alpha,t)-D_tZ(\beta,t)|^2}{(\alpha-\beta)^2}d\beta
=&\frac{1}{2\pi}\int \frac{|Q(\alpha,t)-Q(\beta,t)|^2}{(\alpha-\beta)^2}d\beta+\frac{1}{2\pi}\int \frac{|F(\alpha,t)-F(\beta,t)|^2}{|\alpha-\beta|^2}d\beta\\
&+\frac{1}{\pi}\int \Re \frac{(F(\alpha,t)-F(\beta,t))\overline{Q(\alpha,t)-Q(\beta,t)}}{(\alpha-\beta)^2}d\beta\\
:=& I_1(\alpha,t)+I_2(\alpha,t)+I_3(\alpha,t).
\end{align*}
First, direct calculation yields the following estimates for $I_1, I_2$, and $I_3$. 
\begin{lemma}\label{lemmapartone}
For $0\leq t\leq T_0$, we have 
\begin{equation}
   |I_1(\alpha, t)|+ |I_2(\alpha,t)|+|I_3(\alpha,t)|=O(\frac{1}{|y_0|^{1/2}}).
\end{equation}
\end{lemma}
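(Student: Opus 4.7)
The main tool is the elementary pointwise inequality
\[
\int_{\mathbb{R}} \frac{|f(\alpha)-f(\beta)|^{2}}{(\alpha-\beta)^{2}}\,d\beta \;\leq\; 10\,\|f\|_{L^{\infty}}\,\|f'\|_{L^{\infty}}, \qquad f\in L^{\infty}\cap W^{1,\infty}(\mathbb{R}),
\]
which is proved by splitting the $\beta$-integration at $|\alpha-\beta|=\|f\|_{L^{\infty}}/\|f'\|_{L^{\infty}}$ and using the Lipschitz bound on the short-range part and $|f(\alpha)-f(\beta)|\leq 2\|f\|_{L^{\infty}}$ on the long-range part. I apply this to $f=F$ (for $I_{2}$) and to $f=Q$ (for $I_{1}$), and then use Cauchy--Schwarz under the $\beta$-integral to reduce $I_{3}$ to $2\,I_{1}^{1/2}I_{2}^{1/2}$. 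Thus the whole proof reduces to obtaining sufficiently strong $L^{\infty}$ control of $F,\,F_{\alpha},\,Q,\,Q_{\alpha}$.

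\textbf{Bounds on $F$ and $F_{\alpha}$.} The delicate point is that the $L^{2}$-based bound $\|F\|_{L^{2}}\leq 2$ from \emph{Theorem} \ref{theorem1} is not small, so a naive Sobolev embedding would yield only $\|F\|_{L^{\infty}}=O(1)$. Instead, since $F$ is the boundary trace of the holomorphic function $\mathcal{U}$ on $\Omega(t)$, \emph{Lemma} \ref{holomorphic_decay} gives $\|F(\cdot,t)\|_{L^{\infty}}\leq CM_{\infty}$; with the initial data of \S \ref{initialsection7} (in particular $d_{I,0}\geq|y_{0}|$ and $|\lambda|=\gamma^{1/2}\pi|y_{0}|^{3/2}$) the definition of $M_{\infty}$ yields $M_{\infty}\leq C|y_{0}|^{-1/2}$. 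For the derivative, \emph{Lemma} \ref{sobolev}(2) with $n=1$ and $\sigma=\phi(t)\geq L_{0}/2=5$, combined with the Gevrey bound $\|F\|_{\dot{Y}_{\phi(t)}}\leq M_{\lambda,h}\leq C|y_{0}|^{-3/8}$ from \emph{Theorem} \ref{theorem1}, gives $\|F_{\alpha}\|_{L^{\infty}}\leq C|y_{0}|^{-3/8}$. Combining, $I_{2}(\alpha,t)\leq C|y_{0}|^{-7/8}$.

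\textbf{Bounds on $Q$ and $Q_{\alpha}$, and conclusion.} Exploiting the counter-rotating pair, one has the explicit identity
\[
Q(\alpha,t) \;=\; \frac{\lambda\,i\,x(t)}{\pi\,(Z(\alpha,t)-z_{1}(t))(Z(\alpha,t)-z_{2}(t))},
\]
and differentiating produces $Q_{\alpha}(\alpha,t) = -\dfrac{\lambda\,i\,x(t)\,Z_{\alpha}(\alpha,t)(2Z(\alpha,t)-z_{1}(t)-z_{2}(t))}{\pi(Z(\alpha,t)-z_{1}(t))^{2}(Z(\alpha,t)-z_{2}(t))^{2}}$. Using $|\lambda|\leq C|y_{0}|^{3/2}$, $x(t)=O(1)$, $\|Z_{\alpha}\|_{L^{\infty}}\leq 2$, $\|Z-\alpha\|_{L^{\infty}}\leq C|y_{0}|^{-1/2}$ from (\ref{controlelevation}), $d_{I}(t)\geq\tfrac{1}{2}|y_{0}|^{9/10}$ from (AS4), and observing that the factor $|2Z-z_{1}-z_{2}|\leq C(|\alpha|+|y(t)|)$ compensates for one power of $|Z-z_{j}|$ in the denominator of $Q_{\alpha}$, I obtain $\|Q\|_{L^{\infty}}\leq C|\lambda|/d_{I}(t)^{2}\leq C|y_{0}|^{-3/10}$ and $\|Q_{\alpha}\|_{L^{\infty}}\leq C|\lambda|/d_{I}(t)^{3}\leq C|y_{0}|^{-6/5}$, so $I_{1}\leq C|y_{0}|^{-3/2}$. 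Finally, Cauchy--Schwarz yields $|I_{3}|\leq 2\,I_{1}^{1/2}I_{2}^{1/2}\leq C|y_{0}|^{-19/16}$, and summing, $|I_{1}|+|I_{2}|+|I_{3}|\leq C|y_{0}|^{-7/8}=O(|y_{0}|^{-1/2})$, as claimed. The decisive step is the $L^{\infty}$ bound on $F$: neither the Gevrey norm nor the $L^{2}$ bound on $F$ is small, and the required smallness comes only from the bulk holomorphic extension and the distance decay in \emph{Lemma} \ref{holomorphic_decay}.
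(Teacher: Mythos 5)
Your proposal is correct and follows the same route as the paper's proof: the quantity to control is a Carleson-type integral, and one splits the $\beta$-integration into a short-range part (bounded via the Lipschitz seminorm of the integrand) and a long-range part (bounded via the $L^{\infty}$-norm). The main differences are refinements that actually make the argument cleaner than what appears in the paper. The paper splits at the fixed scale $|\alpha-\beta|=|y_0|^{1/2}$; this works for $I_1$ but, with the available bound $\|F_{\alpha}\|_{L^{\infty}}\lesssim M_{\lambda,h}\approx|y_0|^{-3/8}$, the short-range part of $I_2$ then only yields $O(|y_0|^{-1/4})$ at that scale, so the paper's ``using the same argument'' for $I_2$ is not literal. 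Your optimized splitting at $R=\|f\|_{L^{\infty}}/\|f'\|_{L^{\infty}}$, encapsulated in the clean inequality $\int|f(\alpha)-f(\beta)|^{2}(\alpha-\beta)^{-2}\,d\beta\leq 10\|f\|_{L^{\infty}}\|f'\|_{L^{\infty}}$, removes the need to guess the right scale and, together with the explicit invocation of the $\|F\|_{L^{\infty}}\leq CM_{\infty}\approx|y_0|^{-1/2}$ estimate from Lemma~\ref{holomorphic_decay}, closes the argument with room to spare ($I_2=O(|y_0|^{-7/8})$). You are also right to single out this $L^{\infty}$ bound on $F$ as the decisive step: the Gevrey and $L^{2}$ norms of $F$ are not small, and the smallness really does come from the maximum principle and the distance decay of the holomorphic extension, which the paper relies on only implicitly. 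Your direct differentiation of the explicit form of $Q$ gives $\|Q_{\alpha}\|_{L^{\infty}}\lesssim|\lambda|d_I^{-3}$, which is a stronger bound than the Gevrey-based $|\lambda|d_I^{-5/2}$ used in the paper, though either suffices. Finally, your $I_3\leq 2I_1^{1/2}I_2^{1/2}$ via Cauchy--Schwarz under the integral is the same idea as the paper's $I_3\leq 2(I_1+I_2)$, just stated multiplicatively.
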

\begin{proof}
For $I_1$, we have 
\begin{align*}
    I_1=& \int_{|\alpha|\leq |y_0|^{1/2}}\frac{|Q(\alpha,t)-Q(\beta,t)|^2}{(\alpha-\beta)^2}d\beta+\int_{|\alpha|\geq |y_0|^{1/2}}\frac{|Q(\alpha,t)-Q(\beta,t)|^2}{(\alpha-\beta)^2}d\beta:= I_{11}+I_{12}.
\end{align*}
We use (\ref{productdecay2}) to bound  $\frac{Q(\alpha,t)-Q(\beta,t)}{\alpha-\beta}$ by $$\Big|\frac{Q(\alpha,t)-Q(\beta,t)}{\alpha-\beta}\Big|\leq \|\partial_{\alpha}Q(\cdot,t)\|_{L^{\infty}}\leq C|\lambda|d_I(t)^{-5/2}\leq C|y_0|^{-3/4}.$$
Therefore,
\begin{align*}
    I_{11}\leq & \int_{|\alpha|\leq |y_0|^{1/2}}\|\partial_{\alpha}Q(\cdot,t)\|_{L^{\infty}}^2 d\alpha\\
    \leq & C|y_0|^{1/2}|y_0|^{-3/2}=C|y_0|^{-1}.
\end{align*}
For $I_{12}$,
\begin{align*}
    I_{12}\leq & 2\|Q\|_{L^{\infty}}^2\int_{|\alpha-\beta|\geq |y_0|^{1/2}}\frac{1}{(\alpha-\beta)^2}d\beta\leq C|y_0|^{-1}.
\end{align*}
So we obtain $I_1(\alpha,t)|\leq C|y_0|^{-1}$. Using the same argument, we obtain
$$I_2\leq C|y_0|^{-1/2}.$$
For $I_3$, using Cauchy-Schwarz inequality, we have $I_3\leq 2(I_1+I_2)$. So we conclude the proof of the lemma.
\end{proof}
Second, we manipulate $-Im\Big\{\sum_{j=1}^2 \frac{\lambda_j i}{2\pi} \Big((I-\mathbb{H})\frac{Z_{\alpha}}{(Z(\alpha,t)-z_j(t))^2}\Big)(D_tZ-\dot{z}_j(t)) \Big\}$. 
\begin{lemma}\label{lemmaparttwo}
For $0\leq t\leq T_0$, we have 
$$-Im\Big\{\sum_{j=1}^2 \frac{\lambda_j i}{2\pi} \Big((I-\mathbb{H})\frac{Z_{\alpha}}{(Z(\alpha,t)-z_j(t))^2}\Big)(D_tZ-\dot{z}_j(t)) \Big\}=G_1(\alpha;0, y(t),\lambda)+O(\frac{1}{|y_0|^{1/2}}).$$
\end{lemma}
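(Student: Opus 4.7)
The plan is to reduce the left-hand side to the idealized case $Z(\alpha,t)=\alpha$, $D_tZ = \bar Q$, $\dot z_j = \frac{\lambda i}{4\pi x(t)}$ already handled in \emph{Corollary} \ref{goodformula}, control all remaining errors by the smallness of the wave part $(F,Z-\alpha)$ established in (\ref{wavesmall})--(\ref{controlelevation}), the smallness of the holomorphic correction $\overline{\mathcal{U}(z_j(t),t)}$ to the vortex velocity from (\ref{mathcalUupperbound}), and the explicit dependence of $G_1$ on $x$. Concretely, I first write $D_tZ-\dot z_j(t) = \bar F + \bar Q - \frac{\lambda i}{4\pi x(t)} - \overline{\mathcal{U}(z_j(t),t)}$ and denote the quantity of interest by $\mathcal{I}(\alpha,t) := -\mathrm{Im}\sum_{j=1,2}K_j(\alpha,t)(D_tZ-\dot z_j(t))$ with $K_j := \frac{\lambda_j i}{2\pi}(I-\mathbb{H})\frac{Z_\alpha}{(Z-z_j)^2}$. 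I split $K_j = K_j^{\mathrm{flat}} + (K_j - K_j^{\mathrm{flat}})$, where $K_j^{\mathrm{flat}} := \frac{\lambda_j i}{2\pi}(I-\mathbb{H})\frac{1}{(\alpha-z_j)^2}$; note that $\frac{1}{(\alpha-z_j)^2}$ has the explicit holomorphic extension to $\Omega(t)^c$, so $(I-\mathbb{H})\frac{1}{(\alpha-z_j)^2} = \frac{2}{(\alpha-z_j)^2}$.

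Next, I replace $D_tZ-\dot z_j$ by its ``main'' piece $\bar Q - \frac{\lambda i}{4\pi x(t)}$ in the flat kernel. With these substitutions the resulting expression equals, by direct algebra, exactly the right-hand side of Corollary \ref{goodformula}, namely $G_1(\alpha;x(t),y(t),\lambda) + G_2(\alpha;x(t),y(t),\lambda)$ (this is where I invoke the already-derived formula). The term $G_2$, given by (\ref{formulaforG2}), is bounded pointwise by $\lambda^2 / |y(t)|^5 \cdot (\alpha^2+y^2+1)/(\alpha^2+y^2) \lesssim |y_0|^{3-9/2} = |y_0|^{-3/2}$, which is absorbed into the error. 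For the discrepancy $G_1(\alpha;x(t),y(t),\lambda)-G_1(\alpha;0,y(t),\lambda)$, a direct expansion of the numerator and denominator of (\ref{AONE}) shows that the difference in numerators is $y(t)\cdot x(t)^2\cdot(3x^2+2y^2+2\alpha^2)$ and the denominator is $\simeq (\alpha^2+y^2)^4$, giving a pointwise bound $\lesssim \lambda^2 x(t)^2 /|y(t)|^5 = O(|y_0|^{-3/2})$, using $x(t)\leq 2$ from Theorem \ref{theorem1}(c).

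The remaining terms are genuine error terms, and I bound each one in $L^\infty_\alpha$. First, the contribution of $\overline{\mathcal{U}(z_j(t),t)}$ paired with $K_j$: since $|K_j(\alpha,t)|\lesssim |\lambda|/(|\alpha-x|^2+y^2)$ and $|\mathcal{U}(z_j(t),t)|\lesssim d_I(t)^{-1/2}\lesssim |y_0|^{-9/20}$ by (\ref{holodecay1}), the contribution is $\lesssim |\lambda|\cdot|y(t)|^{-2}\cdot|y_0|^{-9/20} \lesssim |y_0|^{3/2-9/5-9/20} = |y_0|^{-3/4}$. Second, the contribution of $\bar F$: writing $\bar F = D_tZ - \bar Q$ and using $\|F\|_{L^\infty}\lesssim M_\infty \lesssim |y_0|^{-9/20}$ together with the same pointwise bound on $K_j$ yields an $O(|y_0|^{-3/4})$ contribution as well. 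Third, the discrepancy $K_j - K_j^{\mathrm{flat}}$: writing $\frac{Z_\alpha}{(Z-z_j)^2} - \frac{1}{(\alpha-z_j)^2}$ as a telescoping sum and using (\ref{controlelevation}) together with standard $L^2\to L^\infty$ bounds on $I-\mathbb H$ applied to holomorphic perturbations of the model, I obtain a pointwise error $\lesssim \|Z-\alpha\|_{H^4}\cdot |\lambda|\cdot d_I(t)^{-5/2} \lesssim |y_0|^{-1/2}\cdot|y_0|^{3/2-9/4} = |y_0|^{-5/4}$.

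The hard part in this argument is the third step above, i.e. rigorously controlling $(I-\mathbb H)\Big(\frac{Z_\alpha}{(Z-z_j)^2}-\frac{1}{(\alpha-z_j)^2}\Big)$ pointwise in $\alpha$, since $I-\mathbb H$ is nonlocal and the integrand is concentrated near $z_j$; my plan is to write this difference using the identity $\frac{Z_\alpha}{(Z-z_j)^2} = -\partial_\alpha\frac{1}{Z-z_j}$ together with the decomposition $\frac{1}{Z-z_j}-\frac{1}{\alpha-z_j} = -\frac{Z-\alpha}{(Z-z_j)(\alpha-z_j)}$, and then to exploit that after applying $I-\mathbb H$ only the contributions arising from the boundary value of a function holomorphic in $\Omega(t)^c$ survive, reducing the problem to a contour-type estimate using the chord-arc property (AS3) and Lemma \ref{lemmaQj}. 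Collecting all contributions yields $\mathcal{I}(\alpha,t) = G_1(\alpha;0,y(t),\lambda) + O(|y_0|^{-1/2})$, as claimed.
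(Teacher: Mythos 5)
Your overall plan is the same as the paper's—substitute the idealized expressions into the kernel, invoke the exact algebraic formula, and push everything else into the error—but you bypass the paper's key structural input, and that is where the argument does not close. The paper applies Corollary~\ref{corollaryA2} to get the \emph{exact} identity
$(I-\mathbb{H})\frac{Z_{\alpha}}{(Z-z_j)^2}=\frac{2}{c_0^j(\alpha-w_0^j)^2}$
with $c_0^j=(\Phi^{-1})_z(\Phi(z_j))$, $w_0^j=\Phi(z_j)$, and then reduces the whole error analysis to the two scalar estimates of (\ref{riemannalmost}), $|c_0^j-1|\lesssim |y_0|^{-1}$ and $|w_0^j-z_j|\lesssim |y_0|^{-19/20}$, obtained from (\ref{controlelevation}) and the Cauchy integral formula for $\Phi^{-1}$. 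Your proposal instead postulates a pointwise decay bound $|K_j(\alpha,t)|\lesssim |\lambda|/(|\alpha-x|^2+y^2)$ and a pointwise control of $(I-\mathbb{H})\bigl(\frac{Z_\alpha}{(Z-z_j)^2}-\frac{1}{(\alpha-z_j)^2}\bigr)$ and defers both to a ``contour-type'' step you flag as hard. That deferral is the gap. The Gevrey/Sobolev machinery you cite (Lemma~\ref{lemmaQj}, (AS3)) gives only $L^\infty$ bounds on $(I-\mathbb{H})\frac{Z_\alpha}{(Z-z_j)^2}$, not the spatial decay $\sim(|\alpha-x|^2+y^2)^{-1}$ you rely on; and that decay, together with the cancellation between $j=1,2$, is essential because one of the factors you pair against $K_j$ is $\dot z_j\sim \lambda i/(4\pi x)$, of size $|\lambda|\sim|y_0|^{3/2}$, so the crude $L^\infty$ kernel bound produces a \emph{growing} contribution rather than an error. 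The route that closes this is exactly the paper's: convert the nonlocal quantity into an explicit rational function by the residue/Riemann-mapping identity, then quantify how close $(c_0^j,w_0^j)$ are to $(1,z_j)$.

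Two further misstatements should be corrected. First, in justifying $(I-\mathbb{H})\frac{1}{(\alpha-z_j)^2}=\frac{2}{(\alpha-z_j)^2}$ and again in your plan for the hard step, you appeal to holomorphicity ``in $\Omega(t)^c$''; but the $\mathbb{H}$ appearing here is the \emph{standard} Hilbert transform, which characterizes boundary values of functions holomorphic in $\mathbb{P}_\pm$, not in the physical domain $\Omega(t)$ or its complement. The identity holds because $\Im z_j<0$ puts $z_j$ below the real axis, so $\frac{1}{(z-z_j)^2}$ is holomorphic in $\mathbb{P}_+$ and vanishes at infinity; the role of $\Omega(t)$ enters only through the Riemann mapping, not through the kernel $\mathbb{H}$. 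Second, the flat substitution into $-\mathrm{Im}\{\sum_j K_j^{\mathrm{flat}}(\bar Q-\frac{\lambda i}{4\pi x})\}$ produces $G_1$ alone (see the appendix computation around (\ref{calculatevortices})); $G_2$ comes from the separate quadratic term $\frac{1}{2\pi}\int\frac{|D_tZ(\alpha)-D_tZ(\beta)|^2}{(\alpha-\beta)^2}d\beta$, which is not part of Lemma~\ref{lemmaparttwo} and is handled in Lemma~\ref{lemmapartone}. Finally, when passing from $G_1(\alpha;x(t),y(t),\lambda)$ to $G_1(\alpha;0,y(t),\lambda)$ you control only the numerator change; the denominator changes too (by $x^2(2y^2+x^2-2\alpha^2)$ before squaring), and although that extra piece is also $O(|y_0|^{-9/5})$ and hence harmless, it must be accounted for, as (\ref{refineG1}) does.
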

\begin{proof}
Indeed, by Corollary \ref{corollaryA2},
\begin{equation}
-Im\Big\{\sum_{j=1}^2 \frac{\lambda_j i}{2\pi} \Big((I-\mathbb{H})\frac{Z_{\alpha}}{(Z(\alpha,t)-z_j(t))^2}\Big)(D_tZ-\dot{z}_j(t)) \Big\}=-\sum_{j=1}^2 \frac{\lambda_j}{\pi} Re\Big\{\frac{D_tZ-\dot{z}_j}{c_0^j(\alpha-w_0^j)^2}\Big\},
\end{equation}
where 
\begin{equation}\label{goodapproximation}
c_0^j=(\Phi^{-1})_z(\omega_0^j),\quad \quad \omega_0^j=\Phi(z_j).
\end{equation}
We claim that \footnote{(\ref{riemannalmost}) is certainly not optimal. Indeed, most of the estimates in this section are quite rough. It is not our goal to obtain optimal estimates.}
\begin{equation}\label{riemannalmost}
    c_0^j=1+O(\frac{1}{|y_0|}), \quad \quad \omega_0^j=z_j(t)+O(\frac{1}{|y_0|^{19/20}}), \quad \quad t\in [0,T_0].
\end{equation}
Indeed, since $\Phi^{-1}$ has boundary value $Z(\alpha,t)$, we have 
\begin{equation}\label{rising}
    \Phi^{-1}(z,t)-z=\frac{1}{2\pi i}\int_{-\infty}^{\infty}\frac{1}{z-\beta}(Z(\beta,t)-\beta)d\beta
\end{equation}
Expanding $\Phi(z,t)$ about $z_j(t)$, we obtain
\begin{align*}
    \omega_0^j-z_j(t)=&\Phi(z_j(t),t)-z_j(t)=\Phi(z_j(t),t)-\Phi(\Phi^{-1}(z_j(t),t),t)\\
    \leq &\|\Phi_z\|_{L^{\infty}}|\Phi^{-1}(z_j(t),t)-z_j(t)|\\
    \leq &\frac{1}{\pi}\int_{-\infty}^{\infty}\frac{1}{|z_j(t)-\beta|}|Z(\beta,t)-\beta|d\beta\\
    \leq & Cd_I(t)^{-1/2}\Big(\int_{\mathbb{R}}|Z(\beta,t)-\beta|^2d\beta\Big)^{1/2}.
\end{align*}

Using (\ref{controlelevation}) and (\ref{goodvortexcontrol}), we obtain
\begin{equation}
    |\omega_0^j-z_j(t)|\leq C(|y_0|^{9/10})^{-1/2}|y_0|^{-1/2}\leq  C|y_0|^{-19/20}.
\end{equation}
Taking $\partial_z$ on both sides of (\ref{rising}) yields
\begin{align*}
    (\Phi^{-1})_z(z,t)=1-\frac{1}{2\pi i}\int_{-\infty}^{\infty}\frac{1}{(z-\beta)^2}(Z(\beta,t)-\beta)d\beta.
\end{align*}
By (\ref{controlelevation}), Cauchy-Schwarz and $\omega_0^j=z_j(t)+O(\frac{1}{|y_0|^{19/20}})$, we obtain

\begin{align*}
   |\Phi^{-1}_z(\omega_0^j,t)-1|\leq \Big|\int_{-\infty}^{\infty}\frac{1}{(\omega_0^j(t)-\beta)^2}(Z(\beta,t)-\beta)d\beta\Big|\leq C|y_0|^{-1}.
\end{align*}
So we obtain $|c_0^j-1|\leq C|y_0|^{-1}$ and therefore verify (\ref{riemannalmost}).
\vspace*{2ex}

Decomposing $D_tZ=\bar{F}+\bar{Q}$. We use the following rough estimate
\begin{align*}
    \Big|\sum_{j=1}^2 \frac{\lambda_j }{\pi}\frac{\bar{Q}}{c_0^j(\alpha-\omega_0^j)^2}\Big|\leq C|y|^{-1}.
\end{align*}
Using the symmetry $\lambda_1=-\lambda_2$,  the estimate $\|F\|_{L^{\infty}}\leq C|y_0|^{-1/3}$ (from Theorem \ref{theorem1}), (\ref{goodapproximation}), we obtain
\begin{align*}
    \Big|\sum_{j=1}^2 \frac{\lambda_j }{\pi}\frac{\bar{F}}{c_0^j(\alpha-\omega_0^j)^2}\Big|\leq C|\lambda| d_I(t)^{-3} \|F\|_{L^{\infty}}\leq  C|y_0|^{-1}.
\end{align*}
The same argument gives
\begin{align*}
    \Big|\sum_{j=1}^2 \frac{\lambda_j }{\pi}\frac{\mathcal{U}(z_j(t),t)}{c_0^j(\alpha-\omega_0^j)^2}\Big|\leq C|y_0|^{-1}.
\end{align*}
Using (\ref{riemannalmost}) and the symmetry $\lambda_1=-\lambda_2$, it is straightforward to verify that
$$\frac{1}{c_0^j(\alpha-\omega_0^j)^2}=\frac{1}{(\alpha-z_j(t))^2}+O(\frac{1}{d_I(t)^3}|y_0|^{-19/20}).$$
using $\lambda=O(|y_0|^{3/2})$ and $d_I(t)\geq C|y_0|^{9/10}$, we have
$$\sum_{j=1}^2 \frac{\lambda_j}{\pi} Re\Big\{\frac{\frac{\lambda i}{4\pi x(t)}}{c_0^j(\alpha-w_0^j)^2}\Big\}=\sum_{j=1}^2 \frac{\lambda_j}{\pi} Re\Big\{\frac{\frac{\lambda i}{4\pi x(t)}}{(\alpha-z_j(t))^2}\Big\}+O(|y_0|^{-1/2}).$$
Since $\dot{z_j}(t)=\frac{\lambda i}{4\pi x(t)}+\bar{\mathcal{U}}(z_j(t),t)$,  we obtain
\begin{align*}
   & -Im\Big\{\sum_{j=1}^2 \frac{\lambda_j i}{2\pi} \Big((I-\mathbb{H})\frac{Z_{\alpha}}{(Z(\alpha,t)-z_j(t))^2}\Big)(D_tZ-\dot{z}_j(t)) \Big\}\\
   =&-\sum_{j=1}^2 \frac{\lambda_j}{\pi} \Re\Big\{\frac{-\dot{z}_j}{c_0^j(\alpha-w_0^j)^2}\Big\}+O(|y_0|^{-1})\\
   =& -\sum_{j=1}^2 \frac{\lambda_j}{\pi}\Re\Big\{\frac{-\frac{\lambda i}{4\pi x(t)}}{(\alpha-z_j(t))^2}\Big\}+O(|y_0|^{-1})\\
   =& \frac{\lambda^2}{\pi^2}\frac{3y(t)\alpha^4+(x(t)^2+y(t)^2)y(t)(3x(t)^2-y(t)^2+2\alpha^2)}{(\alpha^4+(x(t)^2+y(t)^2)^2+2\alpha^2(y(t)^2-x(t)^2))^2}+O(|y_0|^{-1}),
\end{align*}
where the last equality follows from 
\begin{equation}\label{calculatevortices}   -\sum_{j=1}^2 \frac{\lambda_j}{\pi}\Re\Big\{\frac{-\frac{\lambda i}{4\pi x(t)}}{(\alpha-z_j(t))^2}\Big\}=\frac{\lambda^2}{\pi^2}\frac{3y(t)\alpha^4+(x(t)^2+y(t)^2)y(t)(3x(t)^2-y(t)^2+2\alpha^2)}{(\alpha^4+(x(t)^2+y(t)^2)^2+2\alpha^2(y(t)^2-x(t)^2))^2}.
\end{equation}
For the calculation of (\ref{calculatevortices}), see
\S \ref{appendixgoodformula}. So we obtain
\begin{align*}
    &-Im\Big\{\sum_{j=1}^2 \frac{\lambda_j i}{2\pi} \Big((I-\mathbb{H})\frac{Z_{\alpha}}{(Z(\alpha,t)-z_j(t))^2}\Big)(D_tZ-\dot{z}_j(t)) \Big\}\\
    =&G_1(\alpha; x(t), y(t),\lambda)+O(|y_0|^{-1}).
\end{align*}
Finally, using (\ref{refineG1}) and $|y(t)|\geq C |y_0|^{9/10}$, we obtain
\begin{equation}\label{errorerror}
    G_1(\alpha;x(t), y(t), \lambda)=G_1(\alpha;0, y(t), \lambda)+O(\frac{1}{|y_0|^{1/2}}).
\end{equation}
So we conclude the proof of the lemma.

\end{proof}

\begin{proof}[Proof of Proposition \ref{propositiontaylorsign}]
Proposition \ref{propositiontaylorsign} follows from  Lemma \ref{lemmapartone}, $y(T_0)=-|y_0|^{9/10}$, and Lemma \ref{lemmaparttwo}.
\end{proof}

Now we are able to calculate $A_1(\cdot, T_0)$.
Note that $$1+G_1(|y_0|^{9/10}; 0, -|y_0|^{9/10}, \lambda)=G(|y_0|^{9/10}; -|y_0|^{9/10},\lambda)=1-\frac{\lambda^2}{\pi^2 (|y_0|^{9/10})^3}\times \frac{1}{4}.$$
Therefore, (\ref{errorerror}) and Proposition \ref{propositiontaylorsign}, we have 
\begin{align*}
    A_1(|y_0|^{9/10}, T_0)=&1+G_1(|y_0|^{9/10};0, -|y_0|^{9/10}, \lambda)+O(\frac{1}{|y_0|^{1/2}})\\
    =& 1-\frac{\lambda^2}{(\pi^2|y_0|^{9/10})^3}\times \frac{1}{4}+O(\frac{1}{|y_0|^{1/2}})\\
    =& 1-\frac{\gamma |y_0|^3}{4\pi^2 |y_0|^{27/10}}+O(\frac{1}{|y_0|^{1/2}})\\
    =& 1-\frac{\gamma |y_0|^{3/10}}{4\pi^2}+O(\frac{1}{|y_0|^{1/2}}).
\end{align*}
So for any given $\eta_1$, choosing $|y_0|\geq \Big(\frac{4\pi^2(\eta_1+1)}{\gamma}\Big)^{10/3}$, we have 
$$A_1\leq -\eta_1.$$

\subsection{The case $\lambda_1=-\lambda_2<0$ (Travel downward).}\label{traveldown}

To distinguish from the case that $\lambda>0$, we write the solution as $(W_-,  U_-,  \{z_{j,-}\})$, and denote the corresponding Taylor sign coefficient by $A_{1,-}$. Denote the strength of the $j$-th point vortex by $\lambda_{j,-}, j=1,2$. We take 
$$\lambda_{1,-}=-|\lambda|, \quad \quad \lambda_{2,-}=|\lambda|,$$
so the point vortices travel downward. $Z_-$ and $F_-$ are defined as
$$Z_-(\alpha, t)=\alpha+(I+\mathbb{H})W_-, \quad \quad F_-=(I+\mathbb{H})U_-.$$
\subsubsection{The initial data}
Choosing the same initial data as in \S \ref{initialsection7}: Let $x_0$, $y_0$, $U_0$, $W_0$ and $\gamma$ be the same as in \S \ref{initialsection7}. 
Let 
\begin{equation}
    z_{1,-}(0)=-x_0+i(y_0-1), \quad \quad z_{2,-}(0)=x_0+i(y_0-1).
\end{equation}
We use $L_0=10$ and $\delta_0=1000$. 

\vspace*{1ex}

By Theorem \ref{theorem1}, there exists $T_{0,-}>0$ such that  (\ref{vortex_model_Riemann}) admits  a unique solution $$(\partial_{\alpha}W_-, U_-, \{z_{1,-}(0), z_{2,-}(0)\})$$
satisfying
\begin{itemize}
    \item [(1)] $T_{0,-}=O(1)$. In particular, $T_{0,-}$ does not depend on $|y_0|$. 
    
    \item [(2)] At $T_{0,-}$,  $$d_I(T_{0,-})=O(|\lambda|)=O(|y_0|^{3/2}).$$
    
     \item [(3)] $W$ and $U$ are odd functions, and $\Re\{z_1(t)\}=-\Re\{z_2(t)\}$, $\Im\{z_1(t)\}=\Im\{z_2(t)\}$.
    
    \item [(4)] For $0\leq t\leq T_{0,-}$, we have $\frac{1}{2}\leq x(t)\leq 2$, and $y(T_{0,-})\geq |y_0|+\frac{|\lambda|}{16\pi}T_{0,-}$. Then we have 
    $$Q(T_{0,-})\leq C\frac{|\lambda|}{d_I(T_{0,-})^{2}}\leq C\frac{|\lambda|}{|\lambda|^{2}}\leq C|y_0|^{-3/2}.$$
\end{itemize}
We simply bound $D_tZ_-$ by the rough estimate
$$\|D_tZ_{-}(\cdot, T_{0,-})\|_{L^{\infty}}\leq \|F_-(\cdot,T_{0,-})\|_{L^{\infty}}+\|Q(\cdot, T_{0,-})\|_{L^{\infty}}\leq 1.$$
Using 
$$|\dot{z}_{j,-}(t)|=\frac{|\lambda|}{4\pi x_0}+O(1),$$
we obtain
    \begin{align*}
       & \Big|Im\Big\{\sum_{j=1}^2 \frac{\lambda_{j,-} i}{2\pi} \Big((I-\mathbb{H})\frac{\partial_{\alpha}Z_-}{(Z_-(\alpha,t)-z_{j,-}(t))^2}\Big)(D_tZ_- -\dot{z}_{j,-}(t))\Big\}\Big|\\
       \leq & \Big|Im\Big\{\sum_{j=1}^2 \frac{\lambda_{j,-} i}{2\pi} \Big((I-\mathbb{H})\frac{\partial_{\alpha}Z_-}{(Z_-(\alpha,t)-z_{j,-}(t))^2}\Big)\Big\}\Big| \sum_{j=1}^2 |D_tZ_- -\dot{z}_{j,-}(t)|\\
        \leq & C\frac{|\lambda|}{d_I(T_{0,-})^3}(\|F_-\|_{L^{\infty}}+\|Q\|_{L^{\infty}}+|\lambda|)\\
        \leq & C\frac{|\lambda|^2}{(|\lambda|T_{0,-})^3}\\
        \leq & C\frac{1}{|\lambda| T_{0,-}^3}.
        \end{align*}
Since $T_{0,-}$ does not depend on $|\lambda|$, for given $\eta_0\in (0,1)$, we can take $|\lambda|$ sufficiently large such that $C\frac{1}{|\lambda|T_{0,-}^3}\leq \eta_0$. So we have 
\begin{equation}
    A_{1,-}(\alpha, T_{0,-})\geq 1-\Big|Im\Big\{\sum_{j=1}^2 \frac{\lambda_j i}{2\pi} \Big((I-\mathbb{H})\frac{\partial_{\alpha}Z_-}{(Z_-(\alpha,t)-z_{j,-}(t))^2}\Big)(D_tZ_--\dot{z}_{j,-}(t))\Big|\geq 1-\eta_0.
\end{equation}

\subsection{Conclude the proof of Theorem \ref{taylorsignfailtheorem2}}
Let $\lambda>0$. Let $(W, U, \{z_j\})$ and $(W_-, U_-, \{z_{j,-}\})$ be the solution constructed in \S \ref{caseupward} and \S \ref{traveldown}, respectively.
Let $(\tilde{W},  \tilde{U},  \{\tilde{z}_{j}\})$ be the solution to (\ref{quasi3}) with initial data 
\begin{equation}
    \begin{cases}
      \tilde{W}(\cdot, -T_{0,-}):=W_-(\cdot, T_{0,-}),\\
      \tilde{U}(\cdot, -T_{0,-}):=U_-(\cdot, T_{0,-}),\\
      \tilde{z}_j(-T_{0,-}):=z_{j,-}(T_{0,-}).
    \end{cases}
\end{equation}
Since (\ref{quasi3}) is time reversible and translation invariant in time, by the uniqueness of solutions, we have 
\begin{equation}  \tilde{W}(\cdot, t)=W_-(\cdot, T_{0,-}-t), \quad \tilde{U}(\cdot, t)=U_-(\cdot, T_{0,-}-t), \quad \tilde{z}_j(t)=z_{j,-}(T_{0,-}-t).\end{equation}
In particular, 
$$\tilde{W}(\cdot, T_{0,-})=W_-(\cdot,0),\quad \tilde{U}(\cdot, T_{0,-})=U_-(\cdot, 0), \quad \tilde{z}_j(T_{0,-})=z_{j,-}(0).$$
By the uniqueness of solution again, we have 
\begin{equation}  
\tilde{W}(\cdot, t)=W(\cdot, t), \quad \tilde{U}(\cdot, t)=U(\cdot, t), \quad \tilde{z}_j(t)=z_{j}(t), \quad \quad t\in [0,T_0].\end{equation}
Denote $\tilde{A}_1$ the Taylor sign coefficient corresponding to $(\tilde{W}, \tilde{U}, \{\tilde{z}_j\})$, then\begin{itemize}
    \item [(1)] $\inf_{\alpha\in \mathbb{R}}\tilde{A}_1(\alpha,-T_{0,-})\geq 1-\eta_0$.
    
    \item [(2)] $\inf_{\alpha\in \mathbb{R}}\tilde{A}_1(\alpha, T_0)\leq -\eta_1$. 
\end{itemize}
Up to a time translation $t\mapsto t+T_{0,-}$, we conclude the proof of Theorem \ref{taylorsignfailtheorem2}.

\section*{Acknowledgement}
The author would like to thank Prof. Sijue Wu for suggesting this problem and thank Gong Chen for invaluable comments.

\appendix

\section{The Taylor sign}\label{appendixA}
\begin{lemma}\label{rational}
Let $w_1, w_2\in \mathbb{P}_-$. Then 
\begin{equation}
\int_{-\infty}^{\infty}\frac{1}{(\beta-w_1)(\beta-\overline{w_2})}d\beta=\frac{2\pi i}{\overline{w_2}-w_1}
\end{equation}
\end{lemma}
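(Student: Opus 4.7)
The plan is to evaluate the integral by the residue theorem. Since $w_1 \in \mathbb{P}_-$, we have $\Im w_1 < 0$, and since $w_2 \in \mathbb{P}_-$, we have $\Im \overline{w_2} > 0$. Thus, viewed as a meromorphic function of $\beta \in \mathbb{C}$, the integrand
\[
f(\beta) := \frac{1}{(\beta - w_1)(\beta - \overline{w_2})}
\]
has exactly two simple poles: one at $\beta = w_1$ in the open lower half plane, and one at $\beta = \overline{w_2}$ in the open upper half plane. In particular, $w_1 \neq \overline{w_2}$, so the right-hand side of the claimed formula makes sense.

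First I would close the contour of integration in the upper half plane by adjoining, to the segment $[-R, R]$ on the real axis, the semicircular arc $\Gamma_R = \{Re^{i\theta} : 0 \leq \theta \leq \pi\}$. Since $|f(\beta)| = O(|\beta|^{-2})$ as $|\beta| \to \infty$, the contribution from $\Gamma_R$ satisfies $|\int_{\Gamma_R} f(\beta)\,d\beta| \leq \pi R \cdot O(R^{-2}) \to 0$ as $R \to \infty$. The only pole enclosed by the contour is $\beta = \overline{w_2}$, and the residue there is
\[
\mathrm{Res}_{\beta = \overline{w_2}} f(\beta) = \frac{1}{\overline{w_2} - w_1}.
\]

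Applying the residue theorem and passing to the limit $R \to \infty$ yields
\[
\int_{-\infty}^{\infty} \frac{1}{(\beta - w_1)(\beta - \overline{w_2})}\, d\beta = 2\pi i \cdot \frac{1}{\overline{w_2} - w_1},
\]
which is precisely the claimed identity. There is no genuine obstacle here — the computation is entirely routine residue calculus, and the only observation needed is that the two poles lie on opposite sides of the real axis, which is automatic from the hypothesis $w_1, w_2 \in \mathbb{P}_-$.
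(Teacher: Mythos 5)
Your proof is correct and follows essentially the same approach as the paper: close the contour in the upper half plane, observe that $\overline{w_2}$ is the only enclosed pole, and apply the residue theorem. The paper states this more tersely, while you explicitly verify the vanishing of the arc contribution, but the argument is identical.
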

\begin{proof}
$\overline{w_2}$ is the only residue of $\frac{1}{(\beta-w_1)(\beta-\overline{w_2})}$ in $\mathbb{P}_+$. By residue Theorem, 
$$\int_{-\infty}^{\infty}\dfrac{1}{(\beta-w_1)(\beta-\overline{w_2})}d\beta=\frac{2\pi i}{\overline{w_2}-w_1}.$$
\end{proof}

\begin{corollary}\label{residuetheorem}
 Assume further that $Z(\alpha,t)=\alpha$, $\bar{Q}=\sum_{j=1}^N\dfrac{\lambda_j i}{2\pi}\dfrac{1}{\overline{\alpha-z_j(t)}}$. Then we have 
\begin{equation}
\frac{1}{2\pi}\int \frac{|\bar{Q}(\alpha,t)-\bar{Q}(\beta,t)|^2}{(\alpha-\beta)^2}d\beta=\sum_{1\leq j,k\leq N}\frac{\lambda_j\lambda_k}{(2\pi)^2}\frac{1}{(\alpha-z_j)\overline{(\alpha-z_k)}} \frac{i}{\overline{z_k}-z_j}.
\end{equation}
\end{corollary}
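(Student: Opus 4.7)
\textbf{Proof proposal for Corollary \ref{residuetheorem}.}

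The plan is to explicitly expand the squared modulus $|\bar Q(\alpha,t)-\bar Q(\beta,t)|^2$ into a double sum, extract a factor of $(\alpha-\beta)^2$ that cancels the denominator, and then apply Lemma~\ref{rational} term by term to evaluate each $\beta$-integral via the residue theorem.

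First I would compute the divided difference of each elementary summand. Since $\alpha,\beta\in\mathbb R$, one has $\overline{\alpha-z_j}=\alpha-\overline{z_j}$, and thus
\[
\bar Q(\alpha,t)-\bar Q(\beta,t)=\sum_{j=1}^N\frac{\lambda_j i}{2\pi}\Big(\frac{1}{\alpha-\overline{z_j}}-\frac{1}{\beta-\overline{z_j}}\Big)=(\beta-\alpha)\sum_{j=1}^N\frac{\lambda_j i}{2\pi}\frac{1}{(\alpha-\overline{z_j})(\beta-\overline{z_j})}.
\]
Taking complex conjugates gives the analogous identity for $Q(\alpha,t)-Q(\beta,t)$ with $i$ replaced by $-i$ and $\overline{z_j}$ replaced by $z_j$. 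Multiplying the two and using $i\cdot(-i)=1$ produces
\[
\frac{|\bar Q(\alpha,t)-\bar Q(\beta,t)|^2}{(\alpha-\beta)^2}=\sum_{1\le j,k\le N}\frac{\lambda_j\lambda_k}{(2\pi)^2}\frac{1}{(\alpha-\overline{z_j})(\alpha-z_k)}\cdot\frac{1}{(\beta-\overline{z_j})(\beta-z_k)}.
\]
The $\alpha$-dependent prefactor is what will survive; the $\beta$-dependent factor is what gets integrated.

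Next I would integrate in $\beta$. Since every $z_k\in\mathbb{P}_-$ and every $\overline{z_j}\in\mathbb{P}_+$, Lemma~\ref{rational} applies directly with $w_1=z_k$ and $w_2=z_j$ and yields
\[
\int_{-\infty}^{\infty}\frac{d\beta}{(\beta-\overline{z_j})(\beta-z_k)}=\frac{2\pi i}{\overline{z_j}-z_k}.
\]
Substituting this into the integrated expression, multiplying by $\frac{1}{2\pi}$, and relabeling $j\leftrightarrow k$ in the outer sum (using the symmetry $\lambda_j\lambda_k=\lambda_k\lambda_j$) produces exactly
\[
\sum_{1\le j,k\le N}\frac{\lambda_j\lambda_k}{(2\pi)^2}\frac{1}{(\alpha-z_j)\overline{(\alpha-z_k)}}\frac{i}{\overline{z_k}-z_j},
\]
which is the claimed identity.

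There is no serious analytical obstacle here; the entire argument is a bookkeeping exercise built on Lemma~\ref{rational}. The only point where I would proceed with some care is the sign and conjugation tracking when expanding the modulus, since a single misplaced bar would change which half-plane the poles sit in and invalidate the residue computation. Fubini is justified because each of the finitely many summands $\frac{1}{(\beta-\overline{z_j})(\beta-z_k)}$ is integrable on $\mathbb R$ (it decays like $|\beta|^{-2}$ at infinity and has no real singularities, as $z_k,\overline{z_j}\notin\mathbb R$).
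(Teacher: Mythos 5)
Your proposal is correct and follows essentially the same route as the paper: factor $(\beta-\alpha)$ out of the divided difference, expand $|\cdot|^2$ into a double sum over $(j,k)$, and integrate each $\beta$-term with Lemma~\ref{rational}. The only cosmetic difference is that you carry the conjugate through as $(\alpha-\overline{z_j})(\beta-\overline{z_j})$ and therefore need a final $j\leftrightarrow k$ relabeling, whereas the paper keeps the bar over the full product $\overline{(\alpha-z_k)(\beta-z_k)}$ and lands directly on the stated index order.
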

\begin{proof}
We have
\begin{equation}
\begin{split}
\bar{Q}(\alpha,t)-\bar{Q}(\beta,t)=\sum_{j=1}^N \frac{\lambda_j i}{2\pi}\frac{\beta-\alpha}{\overline{(\alpha-z_j)(\beta-z_j)}}.
\end{split}
\end{equation}
So we have 
\begin{equation}
\begin{split}
&\Big| \frac{\bar{Q}(\alpha,t)-\bar{Q}(\beta,t)}{\alpha-\beta}\Big|^2=\Big|\sum_{j=1}^N\frac{\lambda_j}{2\pi}\frac{1}{(\alpha-z_j)(\beta-z_j)}\Big|^2\\
=&\sum_{j=1}^N\sum_{k=1}^N\frac{\lambda_j\lambda_k}{(2\pi)^2}\frac{1}{(\alpha-z_j)(\beta-z_j)\overline{(\alpha-z_k)(\beta-z_k)}}\\
%=& \Big(2\sum_{1\leq j<k\leq N}+\sum_{1\leq j=k\leq N}\Big)\frac{\lambda_j\lambda_k}{(2\pi)^2}\frac{1}{(\alpha-z_j)(\beta-z_j)\overline{(\alpha-z_k)(\beta-z_k)}}
\end{split}
\end{equation}
Apply lemma \ref{rational}, we have 
\begin{align*}
\int_{-\infty}^{\infty}\frac{1}{(\alpha-z_j)(\beta-z_j)\overline{(\alpha-z_k)(\beta-z_k)}}d\beta=\frac{1}{(\alpha-z_j)\overline{(\alpha-z_k)}}\frac{2\pi i}{\overline{z_k}-z_j}.
\end{align*}
So we have 
\begin{align*}
\frac{1}{2\pi}\int \frac{|\bar{Q}(\alpha,t)-\bar{Q}(\beta,t)|^2}{(\alpha-\beta)^2}d\beta= & \sum_{1\leq j,k\leq N}\frac{\lambda_j\lambda_k}{(2\pi)^3}\frac{1}{(\alpha-z_j)\overline{(\alpha-z_k)}} \frac{2\pi i}{\overline{z_k}-z_j}\\
=& \sum_{1\leq j,k\leq N}\frac{\lambda_j\lambda_k}{(2\pi)^2}\frac{1}{(\alpha-z_j)\overline{(\alpha-z_k)}} \frac{i}{\overline{z_k}-z_j}.
\end{align*}
\end{proof}

\begin{lemma}
Let $z_0\in \Omega(t)$. Assume that $Z(\alpha,t)=\Phi^{-1}(\alpha,t)$, where $\Phi: \Omega(t)\rightarrow\mathbb{P}_-$ is the Riemann mapping,  we have 
\begin{equation}
(I-\mathbb{H})\frac{1}{Z(\alpha,t)-z_0}=\frac{2}{c_1(\alpha-w_0)},\quad \quad c_1=(\Phi^{-1})_z(w_0),\quad \quad w_0=\Phi(z_0,t).
\end{equation}
\end{lemma}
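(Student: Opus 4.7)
The strategy is to split $\frac{1}{Z(\alpha,t)-z_0}$ into the boundary value of a function holomorphic in $\mathbb{P}_-$ plus an explicit rational piece, and then apply Lemma \ref{holomorphic}(a). Since $\Phi^{-1}:\mathbb{P}_-\to\Omega(t)$ is a biholomorphism sending $w_0=\Phi(z_0,t)$ to $z_0$, the function $w\mapsto\Phi^{-1}(w)-z_0$ has a simple zero at $w_0$ with derivative $c_1=(\Phi^{-1})_z(w_0)\neq 0$. Consequently
$$g(w):=\frac{1}{\Phi^{-1}(w)-z_0}-\frac{1}{c_1(w-w_0)}$$
is holomorphic throughout $\mathbb{P}_-$, and the normalization $\Phi_z\to 1$ at infinity (equivalently $\Phi^{-1}(w)-w\to 0$) together with the decay of each term individually forces $g(w)\to 0$ at infinity.

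Applying Lemma \ref{holomorphic}(a) to $g$ on $\Omega=\mathbb{P}_-$ (so that the associated Hilbert transform is $\mathbb{H}$) yields $(I-\mathbb{H})g\big|_{\mathbb{R}}=0$, which rearranges to
$$(I-\mathbb{H})\frac{1}{Z(\alpha,t)-z_0}=\frac{1}{c_1}(I-\mathbb{H})\frac{1}{\alpha-w_0}.$$
For the residual term, since $w_0\in\mathbb{P}_-$ the function $w\mapsto\frac{1}{w-w_0}$ is holomorphic on the upper half plane $\mathbb{P}_+$ and vanishes at infinity. By the analogue of Lemma \ref{holomorphic}(a) for $\mathbb{P}_+$ (equivalently, a direct residue computation closing the principal-value contour in the upper half plane), such functions satisfy $\mathbb{H}f=-f$, so $(I-\mathbb{H})\frac{1}{\alpha-w_0}=\frac{2}{\alpha-w_0}$. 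Substituting back gives the claimed formula.

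There is no real obstacle here; the only mild check is that $g$ has $L^p$ boundary values so that Lemma \ref{holomorphic}(a) genuinely applies. This is immediate because $z_0\in\Omega(t)$ and $w_0\in\mathbb{P}_-$ keep both denominators bounded below on $\mathbb{R}$ (using also the chord-arc property of $Z$), while at infinity both $\frac{1}{Z(\alpha,t)-z_0}$ and $\frac{1}{c_1(\alpha-w_0)}$ decay like $|\alpha|^{-1}$, so $g\in L^p(\mathbb{R})$ for every $1<p<\infty$.
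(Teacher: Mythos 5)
Your proof is correct and follows essentially the same route as the paper's: isolate the simple pole of $1/(\Phi^{-1}(w)-z_0)$ at $w_0$ via the local expansion with leading coefficient $c_1$, subtract off $1/(c_1(w-w_0))$ to obtain a function holomorphic on $\mathbb{P}_-$ annihilated by $I-\mathbb{H}$, and then evaluate $(I-\mathbb{H})$ on the residual term using that it is the boundary value of a function holomorphic in $\mathbb{P}_+$. The only difference is that you spell out the $L^p$/decay checks that the paper leaves implicit.
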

\begin{proof}
Note that $Z(\alpha,t)=\Phi^{-1}(\alpha,t)$. So $Z(\alpha,t)-z_0$ is the boundary value of $\Phi^{-1}(z,t)-z_0$ in the lower half plane. Since $\Phi^{-1}$ is 1-1 and onto, $\Phi^{-1}(z,t)-z_0$ has a unique  zero $w_0:=\Phi(z_0)$, so $\frac{1}{Z(\alpha,t)-z_0}$ has a exactly one pole of multiplicity one. For $z$ near $w_0$, we have 
\begin{equation}
\Phi^{-1}(z,t)-z_0=c_1(z-w_0)+\sum_{n=2}^{\infty}c_n(z-w_0)^n,\quad \quad where\quad c_1=(\Phi^{-1})_z(w_0)\neq 0.
\end{equation}
Therefore, we have $\frac{1}{Z(\alpha,t)-z_0}-\frac{1}{c_1(\alpha-w_0)}$ is holomorphic in $\mathbb{P}_-$, and hence 
\begin{equation}
(I-\mathbb{H})(\frac{1}{Z(\alpha,t)-z_0}-\frac{1}{c_1(\alpha-w_0)})=0.
\end{equation}
Since $\frac{1}{c_1(\alpha-w_0)}$ is holomorphic in $\mathbb{P}_+$, we have 
\begin{equation}
(I-\mathbb{H})\frac{1}{Z(\alpha,t)-z_0}=(I-\mathbb{H})\frac{1}{c_1(\alpha-w_0)}=\frac{2}{c_1(\alpha-w_0)}.
\end{equation}
\end{proof}

\begin{corollary}\label{corollaryA2}
Let $z_j(t)\in \Omega(t)$. Assume that $Z(\alpha,t)=\Phi^{-1}(\alpha,t)$, where $\Phi: \Omega(t)\rightarrow\mathbb{P}_-$ is the Riemann mapping, then 
\begin{equation}
(I-\mathbb{H})\frac{Z_{\alpha}}{(Z(\alpha,t)-z_j(t))^2}=\frac{2}{(\Phi^{-1})_z(\Phi(z_j(t)))(\alpha-\Phi(z_j(t)))^2}
\end{equation}
\end{corollary}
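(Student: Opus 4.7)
The plan is to reduce Corollary \ref{corollaryA2} to the preceding lemma by recognizing the integrand as a derivative. Specifically, by the chain rule
\begin{equation*}
\frac{Z_{\alpha}(\alpha,t)}{(Z(\alpha,t)-z_j(t))^2} = -\partial_{\alpha}\!\left(\frac{1}{Z(\alpha,t)-z_j(t)}\right),
\end{equation*}
so we may rewrite the left-hand side of the claim as a Hilbert transform applied to an $\alpha$-derivative.

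The next step will be to commute $\partial_{\alpha}$ through $I-\mathbb{H}$. Since $\mathbb{H}$ is translation invariant (a Fourier multiplier), it commutes with $\partial_{\alpha}$ on the relevant class of decaying functions, giving
\begin{equation*}
(I-\mathbb{H})\frac{Z_{\alpha}}{(Z(\alpha,t)-z_j(t))^2} = -\partial_{\alpha}\,(I-\mathbb{H})\frac{1}{Z(\alpha,t)-z_j(t)}.
\end{equation*}
Now the previous lemma applies directly with $z_0 = z_j(t)$, $w_0 = \Phi(z_j(t),t)$, and $c_1=(\Phi^{-1})_z(w_0)$, yielding
\begin{equation*}
(I-\mathbb{H})\frac{1}{Z(\alpha,t)-z_j(t)} = \frac{2}{c_1(\alpha-w_0)}.
\end{equation*}

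Finally I differentiate in $\alpha$: since $-\partial_{\alpha}\frac{2}{c_1(\alpha-w_0)} = \frac{2}{c_1(\alpha-w_0)^2}$, substituting $c_1$ and $w_0$ gives exactly the claimed identity
\begin{equation*}
(I-\mathbb{H})\frac{Z_{\alpha}}{(Z(\alpha,t)-z_j(t))^2} = \frac{2}{(\Phi^{-1})_z(\Phi(z_j(t)))(\alpha-\Phi(z_j(t)))^2}.
\end{equation*}
There is no real obstacle here: the only point that requires a brief justification is the commutation of $\mathbb{H}$ and $\partial_{\alpha}$, which is standard once one checks that $\tfrac{1}{Z(\alpha,t)-z_j(t)}$ and its derivative lie in the appropriate $L^p$ class so that $\mathbb{H}$ acts and the Fourier-symbol argument is valid; the decay at infinity follows from $Z(\alpha,t) - \alpha \to 0$ and $z_j(t) \in \Omega(t)$.
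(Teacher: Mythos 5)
Your proof is correct and follows essentially the same route as the paper: write $\frac{Z_{\alpha}}{(Z-z_j)^2}=-\partial_{\alpha}\frac{1}{Z-z_j}$, commute $\partial_{\alpha}$ with $I-\mathbb{H}$, invoke the preceding lemma, and differentiate. Your explicit remark on why the commutation is legitimate is a detail the paper leaves implicit, but the argument is the same.
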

\begin{proof}
We have 
\begin{align*}
(I-\mathbb{H})\frac{Z_{\alpha}}{(Z(\alpha,t)-z_j(t))^2}=-&\partial_{\alpha}(I-\mathbb{H})\frac{1}{Z(\alpha,t)-z_j(t)}\\
=&-\partial_{\alpha}\frac{2}{(\Phi^{-1})_z(\Phi(z_j(t)))(\alpha-\Phi(z_j(t)))}\\
=&\frac{2}{(\Phi^{-1})_z(\Phi(z_j(t)))(\alpha-\Phi(z_j(t)))^2}.
\end{align*}
\end{proof}
\begin{corollary}\label{corollaryformulaA1proof}
Assume that $Z(\alpha,t)=\Phi^{-1}(\alpha,t)$, where $\Phi: \Omega(t)\rightarrow\mathbb{P}_-$ is the Riemann mapping
\begin{equation}\label{formulaA1}
A_1=1+\frac{1}{2\pi}\int \frac{|\mathcal{D}_tZ(\alpha,t)-\mathcal{D}_tZ(\beta,t)|^2}{(\alpha-\beta)^2}d\beta-\sum_{j=1}^N \frac{\lambda_j}{\pi} Re\Big\{\frac{\mathcal{D}_tZ-\dot{z}_j}{c_0^j(\alpha-w_0^j)^2}\Big\},
\end{equation}
where 
\begin{equation}
c_0^j=(\Phi^{-1})_z(\omega_0^j),\quad \quad \omega_0^j=\Phi(z_j).
\end{equation}
\end{corollary}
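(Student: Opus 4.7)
The plan is to derive Corollary \ref{corollaryformulaA1proof} as a direct consequence of the formula \eqref{AONE} from Proposition \ref{Taylorsign} combined with the Riemann-mapping identity in Corollary \ref{corollaryA2}. In other words, this is not an independent computation but a rewriting step: I will substitute the explicit form of $(I-\mathbb{H})\frac{Z_\alpha}{(Z(\alpha,t)-z_j(t))^2}$ into the last term of \eqref{AONE} and then simplify the resulting imaginary part into a real part.

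First, I would take the expression
\[
-\operatorname{Im}\Big\{\sum_{j=1}^N \frac{\lambda_j i}{2\pi}\Big((I-\mathbb{H})\frac{Z_\alpha}{(Z(\alpha,t)-z_j(t))^2}\Big)(D_tZ-\dot z_j(t))\Big\}
\]
that appears in \eqref{AONE}, and apply Corollary \ref{corollaryA2}, which yields
\[
(I-\mathbb{H})\frac{Z_\alpha}{(Z(\alpha,t)-z_j(t))^2}=\frac{2}{c_0^j(\alpha-w_0^j)^2},
\]
where $c_0^j=(\Phi^{-1})_z(w_0^j)$ and $w_0^j=\Phi(z_j)$. This step is the substantive one, but it has already been carried out in the excerpt. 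Plugging it in gives
\[
-\operatorname{Im}\Big\{\sum_{j=1}^N \frac{\lambda_j i}{\pi}\cdot \frac{D_tZ-\dot z_j(t)}{c_0^j(\alpha-w_0^j)^2}\Big\}.
\]

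Next, I would use the elementary identity $\operatorname{Im}(iw)=\operatorname{Re}(w)$ for any complex $w$, applied to the summand (after factoring the $\lambda_j/\pi$ which is real), to turn the bracketed expression into
\[
-\sum_{j=1}^N \frac{\lambda_j}{\pi}\operatorname{Re}\Big\{\frac{D_tZ-\dot z_j(t)}{c_0^j(\alpha-w_0^j)^2}\Big\}.
\]
Inserting this back into \eqref{AONE} yields \eqref{formulaA1}, which is exactly the claim of Corollary \ref{corollaryformulaA1proof}.

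Since the argument is a two-line algebraic manipulation of previously established identities, I do not anticipate any real obstacle; the only point requiring minimal care is the bookkeeping of the factors of $2$ and $i$ between Proposition \ref{Taylorsign} and Corollary \ref{corollaryA2}, and the fact that $\mathcal{D}_tZ$ in the statement of Corollary \ref{corollaryformulaA1proof} is simply the material derivative $D_tZ$ used throughout the paper. No additional regularity hypotheses beyond those already present in Proposition \ref{Taylorsign} (namely $(Z_\alpha-1,D_tZ)\in C([0,T_0];H^2\times H^2)$) are needed, since Corollary \ref{corollaryA2} is a pointwise identity for holomorphic boundary values.
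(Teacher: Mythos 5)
Your proposal is correct and follows essentially the same route the paper implicitly takes: substitute the identity from Corollary~\ref{corollaryA2} into the last term of \eqref{AONE}, absorb the resulting factor of $2$ into the $\frac{\lambda_j i}{2\pi}$ to get $\frac{\lambda_j i}{\pi}$, and then use $\operatorname{Im}(iw)=\operatorname{Re}(w)$ (together with the fact that $\lambda_j\in\mathbb{R}$) to convert the imaginary part into a real part. The bookkeeping of constants and of $i$ is exactly as you describe, and no additional hypotheses are required.
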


\subsection{The proof of Corollary \ref{goodformula}}\label{appendixgoodformula}

\begin{proof}

we calculate $-Im\{\sum_{j=1}^2\frac{\lambda_j i}{\pi}\frac{1}{(\alpha-z_j)^2}(D_tZ(\alpha)-\dot{z}_j)\}$. We have 
$$\dot{z}_j=\frac{\lambda i}{4\pi x}.$$
We have 
\begin{align*}
\sum_{j=1}^2\frac{\lambda_j i}{\pi}\frac{1}{(\alpha-z_j)^2}=& \frac{\lambda i}{\pi}(\frac{1}{(\alpha-z_1)^2}-\frac{1}{(\alpha-z_2)^2})=\frac{\lambda i}{\pi}\frac{(\alpha-z_2)^2-(\alpha-z_1)^2}{(\alpha-z_1)^2(\alpha-z_2)^2}\\
=& \frac{\lambda i}{\pi} \frac{(z_1-z_2)(2\alpha-z_1-z_2)}{(\alpha-z_1)^2(\alpha-z_2)^2}
\end{align*}
We have 
\begin{align*}
z_t(\alpha)=& \frac{\lambda i}{2\pi}(\frac{1}{\overline{\alpha-z_1}}-\frac{1}{\overline{\alpha-z_2}})=\frac{\lambda i}{2\pi}\frac{\overline{z_1-z_2}}{\overline{(\alpha-z_1)(\alpha-z_2)}}
\end{align*}
So we have 
\begin{align*}
\sum_{j=1}^2\frac{\lambda_j i}{\pi}\frac{1}{(\alpha-z_j)^2}z_t(\alpha)=&-\frac{\lambda^2}{2\pi^2}\frac{|z_1-z_2|^2(2\alpha-2yi)}{|\alpha-z_1|^2|\alpha-z_2|^2(\alpha-z_1)(\alpha-z_2)}\\
=& -\frac{\lambda^2}{2\pi^2} \frac{8x^2(\alpha-yi)\overline{(\alpha-z_1)(\alpha-z_2)}}{|\alpha-z_1|^4|\alpha-z_2|^4}\\
=&-\frac{4\lambda^2 x^2}{\pi^2}\frac{(\alpha-yi)(\alpha^2-x^2-y^2+2y\alpha i)}{((\alpha+x)^2+y^2)^2((\alpha-x)^2+y^2)^2},
\end{align*}
here, we've used 
$$(\alpha-z_1)(\alpha-z_2)=\alpha^2-\alpha(z_1+z_2)+z_1z_2=\alpha^2-x^2-y^2-2y\alpha i.$$
Use also that 
$$((\alpha+x)^2+y^2)^2((\alpha-x)^2+y^2)^2=(\alpha^4+(x^2+y^2)^2+2\alpha^2(y^2-x^2))^2,$$
we have 
\begin{align*}
-Im\{\sum_{j=1}^2\frac{\lambda_j i}{\pi}\frac{1}{(\alpha-z_j)^2}z_t(\alpha)\}=& Im\Big\{\frac{4\lambda^2 x^2}{\pi^2}\frac{(\alpha-yi)(\alpha^2-x^2-y^2+2y\alpha i)}{(\alpha^4+(x^2+y^2)^2+2\alpha^2(y^2-x^2))^2}\Big\}\\
=& \frac{4\lambda^2 x^2}{\pi^2}\frac{2y\alpha^2-y(\alpha^2-x^2-y^2)}{(\alpha^4+(x^2+y^2)^2+2\alpha^2(y^2-x^2))^2}\\
=& \frac{4\lambda^2 x^2}{\pi^2}\frac{y\alpha^2+y(x^2+y^2)}{(\alpha^4+(x^2+y^2)^2+2\alpha^2(y^2-x^2))^2}
\end{align*}
We have 
\begin{align*}
\sum_{j=1}^2\frac{\lambda_j i}{\pi}\frac{1}{(\alpha-z_j)^2}(-\dot{z}_j)=&-\frac{\lambda i}{4\pi x}\sum_{j=1}^2\frac{\lambda_j i}{\pi}\frac{1}{(\alpha-z_j)^2}\\
=& -\frac{\lambda i}{\pi} \frac{(z_1-z_2)(2\alpha-z_1-z_2)}{(\alpha-z_1)^2(\alpha-z_2)^2}\frac{\lambda i}{4\pi x}\\
=& -\frac{\lambda^2}{\pi^2}\frac{\alpha-yi}{(\alpha-z_1)^2(\alpha-z_2)^2}\\
=& -\frac{\lambda^2}{\pi^2} \frac{(\alpha-yi)\overline{(\alpha-z_1)^2(\alpha-z_2)^2}}{|\alpha-z_1|^4|\alpha-z_2|^4}\\
=&-\frac{\lambda^2}{\pi^2}\frac{(\alpha-yi)\overline{(\alpha^2-x^2-y^2-2y\alpha i)^2}}{|\alpha-z_1|^4|\alpha-z_2|^4}\\
=&-\frac{\lambda^2}{\pi^2}\frac{(\alpha-yi)((\alpha^2-x^2-y^2)^2-4y^2\alpha^2+4y\alpha(\alpha^2-x^2-y^2)i)}{(\alpha^4+(x^2+y^2)^2+2\alpha^2(y^2-x^2))^2}
\end{align*}
So 
\begin{align*}
-Im \Big\{\sum_{j=1}^2\frac{\lambda_j i}{\pi}\frac{1}{(\alpha-z_j)^2}(-\dot{z}_j) \Big\}=& \frac{\lambda^2}{\pi^2}\frac{4y\alpha^2(\alpha^2-x^2-y^2)-y((\alpha^2-x^2-y^2)^2-4y^2\alpha^2)}{(\alpha^4+(x^2+y^2)^2+2\alpha^2(y^2-x^2))^2}.
\end{align*}

So we obtain
\begin{align*}
    &-Im\{\sum_{j=1}^2\frac{\lambda_j i}{\pi}\frac{1}{(\alpha-z_j)^2}(D_tZ(\alpha)-\dot{z}_j)\}\\
    =&\frac{4\lambda^2 x^2}{\pi^2}\frac{y\alpha^2+y(x^2+y^2)}{(\alpha^4+(x^2+y^2)^2+2\alpha^2(y^2-x^2))^2}\\
    &+\frac{\lambda^2}{\pi^2}\frac{4y\alpha^2(\alpha^2-x^2-y^2)-y((\alpha^2-x^2-y^2)^2-4y^2\alpha^2)}{(\alpha^4+(x^2+y^2)^2+2\alpha^2(y^2-x^2))^2}\\
    =&\frac{\lambda^2}{\pi^2}\frac{3y\alpha^4+(x^2+y^2)y(3x^2-y^2+2\alpha^2)}{(\alpha^4+(x^2+y^2)^2+2\alpha^2(y^2-x^2))^2}
\end{align*}

\vspace*{2ex}

On the other hand, we have 
\begin{align*}
&\sum_{1\leq j,k\leq 2}\frac{\lambda_j\lambda_k}{(2\pi)^2}\frac{1}{(\alpha-z_j)\overline{(\alpha-z_k)}} \frac{i}{\overline{z_k}-z_j}\\
=&\frac{\lambda^2}{4\pi^2}\frac{1}{|\alpha-z_1|^2}\frac{i}{\bar{z}_1-z_1}+\frac{\lambda^2}{4\pi^2}\frac{1}{|\alpha-z_2|^2}\frac{i}{\bar{z}_2-z_2}-\frac{\lambda^2}{4\pi^2}\frac{1}{(\alpha-z_1)(\alpha-\bar{z}_2)}\frac{i}{\bar{z}_2-z_1}\\
&-\frac{\lambda^2}{4\pi^2}\frac{1}{(\alpha-z_2)(\alpha-\bar{z}_1)}\frac{i}{\bar{z}_1-z_2}\\
=&\frac{\lambda^2}{4\pi^2}\frac{1}{(\alpha+x)^2+y^2}\frac{1}{-2y}+\frac{\lambda^2}{4\pi^2}\frac{1}{(\alpha-x)^2+y^2}\frac{1}{-2y}\\
&+\frac{\lambda^2}{4\pi^2} \frac{(\alpha^2-x^2+y^2)y-2x^2y}{((\alpha+x)^2+y^2)((\alpha-x)^2+y^2)(x^2+y^2)}\\
=& \frac{\lambda^2}{4\pi^2}\frac{\alpha^2x^2+x^4+5x^2y^2}{((\alpha+x)^2+y^2)((\alpha-x)^2+y^2)(x^2+y^2)|y|}
\end{align*}
So we obtain
\begin{align*}
    A_1(\alpha)=&1+\frac{\lambda^2}{\pi^2}\frac{3y\alpha^4+(x^2+y^2)y(3x^2-y^2+2\alpha^2)}{(\alpha^4+(x^2+y^2)^2+2\alpha^2(y^2-x^2))^2}\\
    & + \frac{\lambda^2}{4\pi^2}\frac{\alpha^2x^2+x^4+5x^2y^2}{((\alpha+x)^2+y^2)((\alpha-x)^2+y^2)(x^2+y^2)|y|}.
\end{align*}

\end{proof}
\bibliography{qingtang}{}
\bibliographystyle{plain}
\end{document}